\newcommand\abs[2][]{#1\lvert{#2}#1\rvert}
\newcommand\absbig[1]{\big\lvert{#1}\big\rvert}
\newcommand\alttext{\texorpdfstring}
\newcommand\AND{\text{ and }}
\newcommand\Ar{\operatorname{Ar}}
\newcommand\BiCone{\mathrm{BiCone}}
\newcommand\bo{\mathrm{bo}}
\newcommand\br[1]{\langle{}#1\rangle{}}
\newcommand\brbig[1]{\big<{#1}\big>}
\newcommand\cat[1]{\mathbf{#1}}
\newcommand\Cat{\mathcal{Cat}}
\newcommand\cA{\cat{A}}
\newcommand\cC{\cat{C}}
\newcommand\CC{\mathcal{C}}
\newcommand\cD{\cat{D}}
\newcommand\CD{\mathcal{D}}
\newcommand\cE{\cat{E}}
\newcommand\cF{\mathcal{F}}
\newcommand\Cocone{\mathrm{Cocone}}
\newcommand\cod{\mathrm{cod}}
\newcommand\Cone{\mathrm{Cone}}
\newcommand\Cong{\operatorname{Cong}}
\newcommand\const{\mathrm{const}}
\newcommand\coop{\mathrm{coop}}
\newcommand\co{\mathrm{co}}
\newcommand\cU{\mathcal{U}}
\newcommand\defeq{\mathrel{\vcenter{\baselineskip0.5ex \lineskiplimit0pt \hbox{.}\hbox{.}}} =}
\newcommand\defword[1]{\textbf{#1}}
\newcommand\DFib{\operatorname{DFib}}
\newcommand\DOF{\mathbf{DOF}}
\newcommand\dom{\mathrm{dom}}
\newcommand\dto{\dashrightarrow}
\newcommand\El{\mathrm{El}}
\newcommand\el{\mathrm{el}}
\newcommand\ev{\mathrm{ev}}
\newcommand\FinCat{\mathcal{FinCat}}
\newcommand\Fun{\operatorname{Fun}}
\newcommand\gpd{\mathrm{gpd}}
\newcommand\Gr{\mathrm{El}}
\newcommand\htox[2][]{\xhookrightarrow[#1]{#2}}
\newcommand\hto{\hookrightarrow}
\newcommand\id{\mathrm{id}}
\newcommand\inv{\mathrm{inv}}
\newcommand\iso{\mathrm{iso}}
\newcommand\I{^{-1}}
\newcommand\loccitnodot{\textit{loc.\@ cit}}
\newcommand\loccit{\textit{loc.\@ cit.\@}}
\newcommand\Mon{\mathrm{Mon}}
\newcommand\Nv{\mathrm{Nv}}
\newcommand\Ob{\operatorname{Ob}}
\newcommand\OFs{\mathrm{OF}_{\mathrm{s}}}
\newcommand\ol[1]{\overline{#1}}
\newcommand\op{\mathrm{op}}
\newcommand\ot{\leftarrow}
\newcommand\oT{\Leftarrow}
\newcommand\pbig[1]{\big({#1}\big)}
\newcommand\pBig[1]{\Big({#1}\Big)}
\newcommand\pbigg[1]{\bigg({#1}\bigg)}
\newcommand\pbsquare{{\text{\tikz[anchor=base, baseline]{\draw (0em,0em) rectangle (0.7em,0.7em) (0em,0.7em) rectangle (0.35em,0.35em)}}}} 
\newcommand\pb{\ar[rd, phantom, "\lrcorner", pos=0]}
\renewcommand\phi{\varphi}
\newcommand\plbig[1]{\big({#1}\big)}
\newcommand\pow{\mathcal{P}}
\newcommand\ps{\mathrm{ps}}
\newcommand\rstr[2]{{\left.#1\right|_{#2}}}
\newcommand\set[2][]{#1\{{#2}#1\}}
\newcommand\Set{\mathbf{Set}}
\newcommand\Sp{\mathrm{Sp}}
\newcommand\Sub{\mathrm{Sub}}
\newcommand\s{\mathrm{S}}
\newcommand\tm{\mathbf{1}}
\newcommand\tocellud[2]{\hspace{-5pt}\begin{tikzcd}[column sep=15pt, ampersand replacement=\&]{}\ar[r, shift left=3pt, "#1"{name=top}]\ar[r, shift right=3pt, "#2"'{name=bot}, start anchor=east, end anchor=west]\&{}\ar[Rightarrow, from=top, to=bot, shorten=0.5pt]\end{tikzcd}\hspace{-5pt}}
\newcommand\tocell{\hspace{-5pt}\begin{tikzcd}[column sep=15pt, ampersand replacement=\&]{}\ar[r, shift left=3pt, ""{name=top}]\ar[r, shift right=3pt, ""'{name=bot}, start anchor=east, end anchor=west]\&{}\ar[Rightarrow, from=top, to=bot, shorten=0.5pt]\end{tikzcd}\hspace{-5pt}}
\newcommand\toix[1]{\let\mytoixlength\relax\newlength{\mytoixlength}\setlength{\mytoixlength}{\maxof{\widthof{\ensuremath{#1}}}{18pt}}\!\!\!\begin{tikzcd}[ampersand replacement=\&, column sep=\the\mytoixlength]{}\ar[r, "#1", "\sim"']\&{}\end{tikzcd}\!\!\!}
\newcommand\toi{\xrightarrow{_\sim}}
\newcommand\ToT{\Leftrightarrow}
\newcommand\tox[2][]{\xrightarrow[#1]{#2}}
\newcommand\toy{{\overset{{}_{\sim}}{\to}}}
\newcommand\To{\Rightarrow}
\newcommand\tto{\rightarrowtail}
\newcommand\twocell[6][0pt]{\begin{tikzcd}[row sep=0pt, column sep=10pt, ampersand replacement=\&]\&\ar[dd, shorten >=2pt, shorten <=-2pt, Rightarrow, "\ #6", pos=0.4]\&[#1]\\[-5pt]#2\ar[rr, "#4", bend left=30pt, start anchor=east, end anchor=west, shift left=7pt]\ar[rr, "#5"', bend right=30pt, start anchor=east, end anchor=west, shift right=7pt]\&\&#3\\\&{}\&\end{tikzcd}}
\newcommand\und[1]{\underline{#1}}
\newcommand\wc[1]{\widecheck{#1}}
\newcommand\wh[1]{\widehat{#1}}
\newcommand\wt[1]{\widetilde{#1}}
\newcommand\wwc[1]{\wc{\wc{#1}}}
\newcommand\wwh[1]{\widehat{\widehat{{#1}\,}}\!\!}
\newcommand\xot[2][]{\xleftarrow[#1]{#2}}
\newcommand\Z{\mathbb{Z}}
\newcommand\rabel[1]{#1\phantomsection\protected@edef\@currentlabel{#1}}
\DeclareFontFamily{U}{mathx}{\hyphenchar\font45}
\DeclareFontShape{U}{mathx}{m}{n}{
      <5> <6> <7> <8> <9> <10>
      <10.95> <12> <14.4> <17.28> <20.74> <24.88>
      mathx10
      }{}
\DeclareSymbolFont{mathx}{U}{mathx}{m}{n}
\DeclareMathAccent{\widecheck}{0}{mathx}{"71} 
\theoremstyle{definition}
\newtheorem{defn}{Definition}[subsection]
\newtheorem{rmk}[defn]{Remark}
\theoremstyle{remark}
\theoremstyle{plain}
\newtheorem{thm}[defn]{Theorem}
\newtheorem{propn}[defn]{Proposition}
\newtheorem{cor}[defn]{Corollary}
\newtheorem{lem}[defn]{Lemma}
\let\oldproofname=\proofname
\renewcommand{\proofname}{\rm\bf{\oldproofname}}
\title{Internal 1-topoi in 2-topoi}
\author{Joseph Helfer}
\date{}
\dedicatory{To M.\ Makkai}
\begin{document}
\begin{abstract}
  We further develop the notion of elementary 2-topos, introduced by Weber, by proposing certain new axioms.
  We show that in a 2-category $\CC$ satisfying these axioms, the ``discrete opfibration (DOF) classifier'' $\s$ is always an internal elementary 1-topos, in an appropriate sense.
  The axioms introduced for this purpose are closure conditions on the DOFs having ``$\s$-small fibres''.
  Among these closure conditions, the most interesting one asserts that a certain DOF, analogous to the ``subset fibration'' over $\Set$, has small fibres.

  The remaining new axioms concern ``groupoidal'' objects in a 2-category, which are seen to play a significant role in the general theory.
  We prove two results to the effect that a 2-category $\CC$ satisfying these axioms is ``determined by'' its groupoidal objects: the first shows that $\CC$ is equivalent to a 2-category of internal categories built out of groupoidal objects, and the second shows that the groupoidal objects are \emph{dense} in $\CC$.
\end{abstract}

\maketitle

\setcounter{tocdepth}{1}
\tableofcontents

\section{Introduction}
The notion of \emph{elementary 2-topos} stands in relation to the 2-category $\Cat$ of (large) categories as that of \emph{elementary topos} does to the category of sets.

That is, an elementary 2-topos should be a 2-category $\CC$ satisfying a list of first-order (i.e., ``elementary'') axioms satisfied by $\Cat$ which, ideally, suffice to allow one to carry out category theory inside of $\CC$.
We take as our starting point Weber's paper \cite{weber-2-toposes} which, to our knowledge, marks the first use of the term elementary 2-topos (though Grothendieck 2-topoi were introduced earlier in \cite{street-two-sheaf}); see \S\ref{subsec:historical} below for further historical remarks.
We should note right away, however, that there is no settled definition of elementary 2-topos, and in fact one of our aims here is to propose certain new axioms that we believe should be part of the definition.

Hereafter, if we write 2-topos or topos, we will always mean elementary, unless stated otherwise.

Three of Weber's axioms parallel those of an elementary 1-topos: a 2-topos $\CC$ is required to have (i) finite limits, (ii) exponentials, and (iii) a so-called \emph{discrete opfibration (or DOF) classifier} $\s$.
There is then one additional axiom which is (iv) the existence of a \emph{duality involution}, which we shall return to below.

The most interesting of these axioms is the existence of the DOF classifier $\s$.
This plays an analogous role in a 2-topos $\CC$ to that played by the subobject classifier in a 1-topos, and in the archetypical example $\CC = \Cat$ it is given by the object $\Set \in \Cat$.

Here, the ``universal monomorphism'' $\tm \to \Omega$ carried by a subobject classifier is replaced by a ``universal DOF'' $\s_* \to \s$, which in $\Cat$ is the forgetful functor $p \colon \Set_* \to \Set$ from the category of pointed sets.

\subsection{The question of size}
Let us recall the universal property of the functor $p \colon \Set_* \to \Set$, which is thus taken as the defining property of a DOF classifier.

Given any category $\cC$ and functor $F \colon \cC \to \Set$, we can form the (strict) pullback
\[
  \begin{tikzcd}
    \El(F)\ar[r, ""]\ar[d, "p'"']\pb&\Set_*\ar[d, "p"]\\
    \cC\ar[r, "F"]&\Set,
  \end{tikzcd}
\]
which is given by the \emph{category of elements} $\El(F)$ with object set
\[
  \Ob\El(F) = \set{(A,x) \mid A \in \cC, x \in FA},
\]
and $p'$ is defined on objects by $p(A,x) = A$.
The functor $p'$ is a DOF (see \S\ref{subsec:dofs}) and moreover it has \emph{small fibres}, meaning that each fibre $p\I(A) \cong F(A)$ is a small set, as opposed to a proper class.

In this way, we obtain a functor from the functor category $\Fun(\cC,\Set)$ to the category of DOFs over $\cC$ with small fibres, and the universal property in question says that this is an equivalence for all $\cC$.

Now, when we seek to reproduce this universal property for a morphism $p \colon \s_* \to \s$ in a general 2-category $\CC$, there is no problem internalizing the concept of DOF.
However, there is no natural notion of ``DOF with small fibres''; on the contrary, the notion of ``small'' depends on a chosen universe, i.e., category of sets, and so it would be circular to try to have the universal property of the DOF classifier $\s$ depend on the notion of smallness.

Instead, by definition, for $p$ to be a DOF classifier, we (following \cite{weber-2-toposes}) only demand that for any $A \in \CC$, the functor $\CC(A,\s) \to \DOF(A)$ to the category of internal DOFs over $A$ be \emph{fully faithful}.
We then say that a DOF over $A$ is ``$\s$-small'' if it is in the essential image of this functor.

Note however, that with this condition alone, it could well happen that there simply \emph{are no $\s$-small DOFs}---for example, the empty category is a DOF classifier in $\Cat$.
We remedy this by imposing certain \emph{closure conditions} on the class of $\s$-small DOFs (see \S\ref{subsec:dofs}); we call a DOF classifier satisfying these conditions \emph{plentiful}.

There is a helpful analogy between this situation and that of class-set theories such as those of Von Neumann-Gödel-Bernays and Morse-Kelley.
There, one has a primitive notion of a given class being \emph{small} (i.e., a set), and one then postulates that the small classes are \emph{closed} under certain operations (union, power set, and so on).

The closure conditions for small DOFs are also very much in the spirit of the ``axioms for small maps'' from \cite{joyal-moerdijk-ast}.

We also note that the assumption that \emph{every} DOF is $\s$-small would correspond to the assumption in class-set theory that every class is a set, which of course leads to Russell's paradox.
The original motivation for this project was to prove that this assumption is contradictory in the present context as well, or more precisely that it implies that the ambient 2-topos $\CC$ is the trivial 2-category; this will be done in a subsequent paper \cite{helfer-paradoxes}.

\subsection{\alttext{$\s$}{S} as an internal topos; groupoids}\label{subsec:intro-internal-topos-groupoids}
Our main goal is to prove that in a 2-topos, any plentiful DOF classifier $\s$ is an \emph{internal topos} (see \S\ref{subsec:internal-power}), in much the same way as one shows that the subobject classifier in a 1-topos is an internal Heyting algebra.

There is an immediate obstacle to formulating this notion.
One would naturally like to say that for an internal topos $\s \in \CC$, each hom-category $\CC(A,\s)$ is a topos.
However, this already fails in $\Cat$: it is not true that for a topos $\cC$, each functor category $\Fun(\cA,\cC)$ is again a topos.
This \emph{does} hold, however, when $\cA$ is a groupoid.

Thus, for this and other reasons, we are led to consider \emph{groupoids} (or \emph{groupoidal objects}) in $\CC$ (that is, objects $G \in \CC$ for which each hom category $\CC(A,G)$ is a groupoid), and in particular to the natural axiom that each object $A \in \CC$ has an associated \emph{core}, or maximal subgroupoid $A^\iso$.

We also introduce an additional axiom concerning groupoids---akin to the effectiveness of internal equivalence relations in a 1-topos and of internal $\infty$-groupoids in an $\infty$-topos \cite[\S6.1]{lurie-htt}---which says that each object in $\CC$ can be recovered from a certain diagram of groupoids, and about which we will say more below in \S\ref{subsec:groupoids}.

Though this axiom is not needed in the proof that $\s$ is an internal topos, it nonetheless ends up playing an important role, and a significant part of this paper is devoted to studying its consequences and the attendant notions (\emph{viz.}, \S\S3~and~6, and the appendix).

\subsection{The power set functor}\label{subsec:intro-power-set}
The proof that a plentiful DOF-classifier $\s \in \CC$ is an internal topos primarily involves showing that $\CC(A,\s)$ is a topos for each groupoid $A \in \CC$, which is the same as saying that the subcategory $\DOF_\s(A) \subset \DOF(A)$ consisting of $\s$-small DOFs is a topos.
The existence of finite limits in $\DOF_\s(A)$ follows readily from the assumed closure conditions for $\s$-small DOFs.

The most interesting part of the proof is showing that $\DOF_\s(A)$ has power objects, and this correspondingly relies on the most interesting condition in the definition of a plentiful DOF classifier.

The crucial observation is that the DOF corresponding to the power set functor\footnote{%
  In general, there are two power set functors, a covariant and contravariant one, but they agree on isomorphisms, and we are here using their common restriction, as we do throughout the paper.
  We address the question of the full covariant and contravariant power functors in a 2-topos in Appendix~\ref{sec:groupoid-stuff}.
}
$\pow \colon \Set^\iso \to \Set$ is the subset fibration $\cat{Sub}^\iso\tox{\cod}\Set^\iso$, where $\cat{Sub}\subset\cat{Set^\to}$ is the full subcategory consisting of subset inclusions.
Hence, the corresponding condition on the plentiful DOF classifier $\s$ is that the analogous DOF $\Mon(\s)^\iso \to \s^\iso$ be $\s$-small.

We should mention at this juncture that throughout this paper, we work in a \emph{strict} context: that is, with strict 2-categories, strict 2-categorical limits, and so on.
However, this is merely a matter of convenience, and we take the position that the ``final'' notion of 2-topos should be in a completely weak context.
This is in keeping with the prevailing overall attitude toward higher category theory (see e.g. \cite{makkai-categorical-foundation}), and is also relevant to our reproduction of set-theoretic paradoxes in a 2-topos in \cite{helfer-paradoxes}, as we will want to know that this can be done under the weakest possible logical assumptions.

The reason we mention this now is that the fibration $\Mon(\s)^\iso \to \s^\iso$ we can construct in an arbitrary 2-topos is not directly analogous to the subset fibration, but rather, as the notation indicates, to the \emph{monomorphism fibration} $\Mon(\Set)^\iso \to \Set^\iso$, where $\Mon(\Set) \subset \Set^\to$ is the full subcategory consisting of monomorphisms.
The latter fibration, however, is not a DOF, but rather a (semi-)weak version of a DOF, which we call a \emph{setoidal opfibration}, or \emph{SOF}, since the fibres are not discrete categories, but rather \emph{setoids}, i.e., categories equivalent to discrete ones.
Thus, in this particular case, we must demand the $\s$-smallness not just of a DOF, but of a certain SOF (see \S\ref{subsec:sofs}).

In the ``fully weak'' context, such a distinction wouldn't exist, since we would only have the fully weak notion of discrete fibration available (namely a ``discrete'' variant of the \emph{Street (op)fibrations} of \cite[Definition~3.1]{johnstone-fibrations}).
Hence we see that even in the present ``fully strict'' context, we are nonetheless forced into considering this semi-weak notion.
This also brings out the interesting relationship between the issue of size and that of strictness, since the semi-weak fibration $\Mon(\Set)^\iso \to \Set^\iso$ has fibres that are only \emph{essentially} small.

\subsection{Groupoids}\label{subsec:intro-groupoids}
We now return to Weber's last axiom: the existence of a \emph{duality involution} on $\CC$, which is a 2-functor $\CC^\co \to \CC$ which plays in a general 2-topos the role of the 2-functor $\op \colon \Cat^\co \to \Cat$ in $\Cat$.
This is essential to the purposes of \cite{weber-2-toposes}, since the main emphasis there is the Yoneda embedding $A \to \s^{A^\op}$.

We, however, omit this axiom from our list, and the reason is that we can \emph{prove} the existence of a duality involution from our assumptions concerning groupoids.

In the same way that in a finitely complete category, one can form the kernel pair of any morphism and obtain an in internal equivalence relation, and in a finitely complete quasicategory, one can form the ``Čech nerve'' of a morphism and obtain an internal $\infty$-groupoid (Kan complex) \cite[\S6.1.2]{lurie-htt}---so in a (suitably complete) 2-category $\CC$, one can perform a similar construction on a given morphism and obtain an internal category in $\CC$.%
\footnote{%
  Below, we use the phrase ``internal double category'' rather than ``internal category'' for reasons we explain in Definition~\ref{defn:int-doub-cat}.
}
This is the basis for the notion of regular and exact 2-categories studied in \cite{street-two-sheaf,makkai-duality-and-definability,bourke-garner-2-reg-ex}.

What's more, given any object $A \in \CC$, we have a canonically associated morphism, namely the inclusion of $A^\iso \to A$ of the core, and performing the above construction, we obtain an internal category associated to $A$, which moreover consists level-wise of groupoids, and which we (following \cite[\S6.1.2]{lurie-htt}) call the \emph{nerve} of $A$.
Our axiom now says that this internal category is \emph{effective}, meaning that $A$ can be recovered from it as a certain weighted colimit; and furthermore, that any internal category satisfying a certain extra condition arises as a nerve.

The result is that $\CC$ turns out to be equivalent to a 2-category of internal categories in $\CC$ which are level-wise groupoidal (Theorem~\ref{thm:internal-cats-equivalence}).
In particular, we obtain the duality involution on this equivalent 2-category simply by exchanging the source and target maps of a given groupoid (which is just the way that the duality involutions in the examples in \cite{weber-2-toposes} are obtained).

As an aside, we note that a duality involution is not determined uniquely in any sense, and therefore had to be assumed as an extra \emph{structure} on a 2-topos in \cite{weber-2-toposes}.
(There is an interesting related circumstance with respect to the DOF classifier axiom; we return to this in \S\ref{sec:axioms}.)
Since, in the present axiomatization, we \emph{derive} the existence of the duality involution, this is no longer necessary.
This is in keeping with the idea that, as is the case with 1-topoi, a 2-topos should be a 2-category with certain \emph{properties}, but no extra \emph{structure}.

\subsection{2-Yoneda, and sketches}
In addition to the just mentioned Theorem~\ref{thm:internal-cats-equivalence}, we prove a second result in \S\ref{sec:suff-of-groupoids} to the effect that $\CC$ is ``determined by its groupoids'', having to do with the 2-dimensional Yoneda embedding.

Though we do not be appeal to it directly, the 2-dimensional Yoneda embedding motivates much of what we do in this paper.
Namely, there are several definitions of 2-categorical notions that we give ``representably''---for example, an object $X \in \CC$ is defined to have a certain property if each category $\CC(A,X)$ with $A \in \CC$ has that property.
Often, this amounts to saying that $X$ represents a certain 2-functor $\CC^\op \to \Cat$---which thus determines $X$ up to isomorphism by the 2-Yoneda embedding.

However, as we alluded to in \S\ref{subsec:intro-internal-topos-groupoids}, there are certain universal properties, such as that of $\s \in \CC$ being an internal topos, which only make reference to the categories $\CC(A,X)$ with $A$ \emph{groupoidal}.
Now the result in question (Theorem~\ref{thm:gpd-yoneda-rstr}) says that this is a ``legitimate'' procedure in the sense that the groupoids in $\CC$ form a \emph{dense} sub-2-category of $\CC$, meaning that the 2-Yoneda embedding remains an embedding if instead of 2-functors $\CC^\op \to \Cat$, we consider 2-functors $(\CC_\gpd)^\op \to \Cat$ defined on the full sub-2-category of $\CC$ consisting of groupoids.

This result is quite useful, and seems somehow more fundamental than Theorem~\ref{thm:internal-cats-equivalence}.
In \S\ref{subsec:opposites}, we apply it to give a ``representable'' definition of the opposite of an object, and in Appendix~\ref{sec:groupoid-stuff}, we apply it to construct the internal power object and exponential functors of an internal elementary topos.

These constructions necessitate a certain ancillary result that we prove in Appendix~\ref{sec:pbt-skectches}, and which is of independent interest.
Namely, we show that for a \emph{finite-limit sketch} $J$ and an object $X$ in a 2-category, we can (under certain assumptions) define the object $X^J$ of ``models of $J$ in $X$''.
In \cite{helfer-paradoxes}, further use will be made of these objects $X^J$, as well as generalizations of them.

\subsection{On the axioms}
We give our list of 2-topos axioms in \S\ref{sec:axioms}.

We do not necessarily want to suggest that this list be a final definition of 2-topos.
In fact, our general orientation toward selection of axioms is a liberal one: any reasonable property of $\Cat$ that is expressible in the general language of 2-categories should either be included as an axiom or at least be derivable from them.
Or perhaps better said: one should freely make such assumptions as one needs them, which is precisely the basis on which the present list was selected.

We note that in the case of 1-topoi, it so happens that one can get away with an extremely small set of axioms (finite limits, cartesian closed, and subobject classifier), from which one can derive everything else that one wants, for example, the existence of finite colimits.
However, had it so turned out that one could not \emph{derive} these properties from the topos axioms as they stand, we hold that it would have been perfectly reasonable to include them as axioms.
Indeed, \cite{lawvere-etcs} assumes the existence of finite colimits (as it was not yet known that it was derivable from the other axioms), as well as several other things.

On the other hand, it is important to note that \cite{lawvere-etcs} \emph{also} includes axioms that do \emph{not} hold in a general topos, most prominently that the terminal object is a generator, which axiom excludes most Grothendieck topoi.
Thus, we see that some care must be taken with the above principle ``freely assume any properties that hold in the motivating example'': it should be qualified by saying that we should make the \emph{weakest} assumptions possible that allow us to develop the general theory.

In any case, the axioms presented below seem to us to be reasonable to assume, and it remains an interesting question whether any of them is redundant, and also how much can be derived from them---and especially whether one can show the existence of finite (bi)colimits.

We note the crucial fact that, as with elementary 1-topoi, these axioms are all \emph{elementary}: they can be formulated in the first order language of 2-categories.

\subsection{Related work}\label{subsec:historical}
We mention some important related works, though this list is not exhaustive---especially as the idea of doing formal category theory has been central to the notion of 2-categories from the very beginning (see, e.g., \cite{gray-formal-category-theory}).

The idea of using the totality---in fact, the \emph{1-category}---of categories as a foundational system appeared already in Lawvere's paper \cite{lawvere-cat-of-cats}, not long after his paper \cite{lawvere-etcs} on the elementary theory of the category of sets, which eventually led to elementary topos theory, and at the end of which paper he in fact already alludes to the category of categories as a foundation.
Since \cite{lawvere-cat-of-cats} takes place in the 1-categorical setting---which is, so to speak, even more strict than the strict setting of this paper---certain things are different, and for example, the notion of DOF classifier is not available (which notion, however, according to \cite{weber-2-toposes}, is also due to Lawvere).
Two significant ideas that are already present, however, are (i) the special role played by the object $\Set \in \Cat$, and (ii) that of recovering each object as a certain colimit of an internal category (consisting of discrete, rather than groupoidal, objects), and using this to construct the opposite of an object.

The notion of a 2-category with a DOF classifier---or rather, with a presheaf-category functor---was introduced under the name \emph{elementary cosmos} in \cite{street-elementary-cosmoi-i,street-cosmoi-of-internal-cats}, which serves as a basis for \cite{weber-2-toposes}.
As mentioned, the idea of the DOF classifier is originally due to Lawvere, and is also discussed in \cite{gray-the-categorical-comprehension-scheme}.

It is observed in \cite{street-cosmoi-of-internal-cats} that any elementary cosmos has an associated \emph{Yoneda structure} in the sense of \cite{street-walters-yoneda-structure}, and this circumstance is also the main focus of \cite{weber-2-toposes}.
In a similar vein to the notion of a plentiful DOF classifier and to the small maps of \cite{joyal-moerdijk-ast}, a Yoneda structure also axiomatizes a class of ``small'' functors, namely those functors $F \colon \cC \to \cD$ with small hom sets $\cD(Fx,y)$.
By taking $F = \id_\cC$, this includes the notion of local smallness.
Crucially, one can recover from this the notion of (essential) smallness by demanding that both $\cC$ and $\Set^{\cC^\op}$ be locally small, as shown in \cite{freyd-street-size-of-categories} (see also Remark~\ref{rmk:cocompleteness}).

Though Weber does not consider closure conditions for $\s$-small DOFs in \cite{weber-2-toposes}, he has stated (in \cite{weber-ncafe-comment}) that the definition of 2-topos given there should be considered provisional, and that in particular further conditions should be imposed on $\s$, for example that it be \emph{cocomplete} in the sense studied in \cite{weber-2-toposes}.

We also note that Weber does show that the DOF classifier $\s$ has finite products (under suitable assumptions), and also that it is cartesian closed in a weak sense (under the aforementioned cocompleteness assumption); see Remark~\ref{rmk:cocompleteness}.

While this paper was nearing completion, it came to my attention that Mike Shulman has developed many of the same ideas as are presented here, as well as some of those we will pursue in \cite{helfer-paradoxes}, and written them up on his web page \cite{shulman-nlab-page}.
Among other things, he \emph{does} consider closure conditions for $\s$-small DOFs, as well as the idea of reconstructing a general object out of groupoids, the question of whether $\s$ is an internal topos, and the question of whether it is consistent for all DOFs to be $\s$-small.

\subsection{Acknowledgements}
This paper began as a project of B.\,Boshuk and M.\,Makkai some time around the year 2000, with the goal of reproducing the set-theoretic paradoxes in a 2-topos, as will be done in \cite{helfer-paradoxes}.

Makkai first told me about this idea in 2016, but I only began working on it seriously in 2022.
He has continued to be involved as I have been working on it, and several of the most important ideas in the paper are due to him.

I would like to thank: David Ayala and Arpon Raksit for helpful discussions; Richard Blute for inviting me to speak on this project at an early stage at the University of Ottawa Logic Seminar; Simon Henry, who attended that talk, for bringing my attention to related ongoing work of Cisinski, Nguyen and Walde in an $(\infty,2)$-categorical context; and John Bourke, who gave helpful feedback on the first version of the paper, and alerted me to the interesting related work \cite{hughes-miranda-et2cc}.

I would also like to thank the anonymous referee for their careful reading and for several suggestions which improved the paper.

\section{Preliminaries}\label{sec:prelim}
In this section and the next section, we recall some basic 2-categorical notions, and then provide the various definitions needed to state our 2-topos axioms in \S\ref{sec:axioms}.

We take as known the basic notions of 2-category theory, for which we refer to, e.g., \cite[\S4.1]{makkai-pare-accessible}.

We use the ``geometric'' order for composition of morphisms: the composite of $A \tox{f} B \tox{g} C$ is denoted $A \tox{fg} C$ (or $f \cdot g$ or $f \circ g$).
To be consistent with this, we also use this order for composition of functions (i.e., morphisms in $\Set$), and of functors and natural transformations.
However, we use the usual order for \emph{application}---thus, for example, for functors $\cC \tox{F} \cD \tox{G} \cE$ and an object $X \in \cC$, we have $(F \circ G)(X) = G(F(X))$.

Given morphisms $f,g \colon X \to Y$ in a 2-category, to indicate that $\alpha$ is a 2-cell $f \to g$, we may sometimes use the notation $\alpha \colon X \tocellud{f}{g} Y$, or even just $\alpha \colon X \tocell Y$ if $f$ and $g$ are clear from context or not relevant.

Given a morphism $f \colon X \to Y$ in a 2-category (or 1-category) $\CC$ and an object $A$, we write $f_*$ and $f^*$ for the functors (or functions) $f_* \colon \CC(A,X) \to \CC(A,Y)$ and $f^* \colon \CC(A,Y) \to \CC(A,X)$.
We may also denote these by $(f_*)_A$ and $(f^*)_A$ for emphasis.

Similarly, given a 2-cell $\alpha \colon X \tocellud{f}{g} Y$, we write $\alpha_*$ (or $(\alpha_*)_A$) for the natural transformation $\alpha_* \colon \CC(A,X) \tocellud{f_*}{g_*} \CC(A,Y)$ given by whiskering with $\alpha$; and $\alpha^*$ is defined similarly.

A morphism $f \colon X \to Y$ in a 2-category $\CC$ is \defword{faithful} or \defword{fully-faithful}, respectively, if $f_* \colon \CC(A,X) \to \CC(A,Y)$ is for each $A \in \CC$.

Almost everything we do in this paper is elementary, and our set-theoretic assumptions are accordingly minimal---with the exception \S\ref{subsec:examples-dof-classifiers}, where we are explicit about these assumptions.
In the few other places in the paper that depend on non-trivial set-theoretic assumptions such as Grothendieck universes, we trust the reader to supply the details.

For a 2-category $\CC$, we write $\abs{\CC}$ for its underlying 1-category.

When we say that a 2-category $\CC$ \emph{locally} has some property $P$, we mean that each hom category $\CC(X,Y)$ satisfies $P$; similarly, a 2-functor $F \colon \CC \to \CD$ locally satisfies $P$ if the functor $F \colon \CC(X,Y) \to \CC(FX,FY)$ satisfies $P$ for all $X,Y \in \CC$.

A 2-functor $F \colon \CC \to \CD$ is \defword{fully faithful} (resp.\ \defword{strictly fully faithful}) if it is locally an equivalence (resp.\ locally an isomorphism).
It is \defword{essentially surjective} if every $Z \in \CD$ is equivalent to some $FX$.
It is an \emph{equivalence} (resp. \defword{strict equivalence}) if it is essentially surjective and fully faithful (resp.\ strictly fully faithful).

We will be making use of \emph{anafunctors} (for which see \cite{makkai-avoiding-choice}) and taking for granted their basic properties, which are analogous those of ordinary functors.
Many of the functors which one comes across in category theory are really first of all anafunctors, which only become functors after applying the axiom of choice.
In such cases, we find it is usually preferable to work with the original anafunctor.

We will also be making use of \emph{ana-2-functors}, which are the obvious 2-dimensional generalization: given 2-categories $\CC,\CD$, an ana-2-functor $F \colon \CC \to \CD$ has for each $X \in \CC$ a set of specifications $\abs{F}(X)$, and for each $X,Y \in \CC$ and specifications $s \in \abs{F}(X)$ and $t \in \abs{F}(Y)$, an ana-functor $F_{s,t} \colon \CC(X,Y) \to \CC(F_sX,F_tY)$.
However, in the cases we consider, $F_{s,t}$ will in fact always be a \emph{functor}, and we will always use the term ana-2-functor in this more restrictive sense.

By a \defword{partially defined functor} or \defword{partial functor} between categories $\cC$ and $\cD$, we mean a functor $\cC' \to \cD$ defined on a \emph{full} subcategory $\cC' \subset \cC$.
Given a functor $G \colon \cD \to \cC$, by a \defword{partially defined left (resp. right) adjoint} to $G$, we mean a partial functor $F \colon \cC' \to \cD$ determined by the choice of a universal arrow from $X$ to $G$ (resp.\ from $G$ to $X$) for each $X \in \cC'$, in the sense of \cite[Iv.1~Theorem~2]{maclane-categories}.
Partially defined (adjoint) anafunctors are defined similarly.

We say that a functor \( F \colon \cC \to \cD \) is an \defword{isomorphism onto} a subcategory \( \cD' \subset \cD \) if it induces an isomorphism \( \cC \to \cD' \), or in other words, if it is injective on objects and on morphisms and has image \( \cD' \).

\subsection{Limits}\label{subsec:limits}
We recall some basic notions of limits in a 2-category $\CC$, and we also fix our terminology, as there are several competing conventions.
The corresponding colimit notions are obtained by dualizing.

We will mainly discuss the strict variants of these notions, and will comment briefly on the weaker notions afterwards.

The general notion of strict limit in a 2-category is that of a strict \emph{weighted} limit of a diagram (that is, a 2-functor) $D \colon J \to \CC$ ($J$ a 2-category) with respect to a \emph{weight} (that is, another 2-functor) $W \colon J \to \Cat$; it is called a \emph{finite} weighted limit if $D$ is a finite 2-category and each $W(d)$ is a finite category.
We will not spell out the general definition of weighted limit, as we will not need it, but see, e.g., \cite[\S5.1.1]{makkai-pare-accessible}.

It is a fact \cite[\S3]{kelly-2-cat-limits} that a category has all finite strict weighted limits (i.e., that it is \emph{finitely strictly complete}) as soon as it has strict pullbacks, a strict terminal object, and strict cotensors with $[1]$, all of which we now discuss.

Given a cospan
\begin{equation}\label{eq:cospan}
  D =
  \left(
    \begin{tikzcd}
      {}&X\ar[d, "f"]\\
      Y\ar[r, "g"]&Z
    \end{tikzcd}
  \right)
\end{equation}
in $\CC$ and an object $A \in \CC$, the category $2\Cone(A,D)$ of \emph{strict cones with vertex $A$} over $D$ has objects commuting squares
\begin{equation}\label{eq:square}
  \begin{tikzcd}
    A\ar[r, "h"]\ar[d, "k"']&X\ar[d, "f"]\\
    Y\ar[r, "g"]&Z,
  \end{tikzcd}
\end{equation}
which we write as a pair $(h,k)$, and morphisms $(\alpha,\beta) \colon (h,k) \to (h',k')$, consisting of 2-cells $\alpha \colon h \to h'$ and $\beta \colon k \to k'$ with $f\alpha = g\beta$, and composition defined in the obvious way.

Given any square $(h,k)$ as above and any $B \in \CC$, there is an evident functor $\CC(B,A) \to 2\Cone(B,D)$, and we say that $(h,k)$ is a \defword{strict pullback of $D$} if this functor is an \emph{isomorphism} for each $B$.

Note that if a given cospan (\ref{eq:cospan}) is known to have a strict pullback, then in order to check that (\ref{eq:square}) is a strict pullback, it suffices to check this in the underlying 1-category $\abs{\CC}$ of $\CC$, i.e., that the functor $\CC(B,A) \to 2\Cone(B,D)$ is a \emph{bijection on objects} for each $B \in \CC$.

\defword{Strict products} are defined in the same way by retaining $X$ and $Y$ in the above but omitting any reference to $(Z,f,g)$.
We say that $X \in \CC$ is a \defword{strict terminal object} if the category $\CC(A,X)$ has a single morphism for all $A \in \CC$.

Next, given a category $J$ and an object $X \in \CC$, a \defword{strict cotensor of $X$ with $J$} is an object $X^J \in \CC$ together with a functor $\ev = \ev_{J,X} \colon J \to \CC(X^J,X)$ satisfying the following universal property:
for each $A \in \CC$, the functor $\CC(A,X^J) \to \CC(A,X)^J$ defined on objects (and morphisms) by
\begin{equation}\label{eq:cotensor-morphism}
  f\mapsto
  \plbig{J \tox{\ev_{J,X}} \CC(X^J,X) \tox{f^*} \CC(A,X)}
\end{equation}
is an \emph{isomorphism}.

In light of this isomorphism, we will often simply identify an object in $\CC(A,X^J)$ with its image in $\CC(A,X)^J$, and refer to a morphism $A \to X^J$ as a ``diagram of shape $J$ in $\CC(A,X)$''.

It is called a \defword{finite} cotensor if $J$ is finite.

An important particular case is when $J$ is the free-standing arrow $J = [1]$, i.e., the category with two objects $0,1$ and one non-identity morphism $0 \to 1$.
In this case, we write $X^\to$ for the cotensor $X^{[1]}$, also called a (strict) \defword{arrow object}; it is characterized by having a universal 2-cell, which we will denote $\partial \colon X^\to \tocellud{\partial_0}{\partial_1} X$.

As mentioned above, if a category has strict pullbacks, a strict terminal object, and strict arrow objects, then it has all finite strict weighted limits, and in particular all strict finite cotensors.

Given any functor $F \colon J \to J'$ between categories and an object $X \in \CC$, if there exists strict cotensors $X^J$ and $X^{J'}$, then there is an induced morphism $X^F \colon X^{J'} \to X^J$ classifying the functor $J \tox{F} J' \tox{\ev} \CC(X^{J'},X)$.
Similarly, a morphism $f \colon X \to Y$ induces a morphism $f^J \colon X^J \to Y^J$ classifying $J \tox{\ev} \CC(X^J,X) \tox{f_*} \CC(X^J,Y)$.

\subsubsection{Pseudo-limits and bilimits}
We now briefly discuss the weaker notions.
A weak cotensor is defined in just the same way as a strict one, except that the comparison map $\CC(A,X^J) \to \CC(A,X)^J$ is only demanded to be an \emph{equivalence}.
Clearly, any strict cotensor is a weak one.

A weak pullback, or \defword{bipullback}, is defined in the same way as a strict pullback, but with two changes.

The first is that the category $2\Cone(A,D)$ of strict cones is replaced by the category $\BiCone(A,D)$ of weak cones, with objects $(h,k,\gamma)$, where $\gamma \colon h f \to k g$ is an invertible 2-cell, and morphisms $(\alpha,\beta) \colon (h,k,\gamma) \to (h',k',\gamma')$, where $\alpha \colon h \to h'$ and $\beta \colon k \to k'$ satisfy $(\alpha f)\gamma' = \gamma(\beta g)$; again, composition is defined in the obvious way.

The second change is, again, that the comparison functors $\CC(B,A) \to \BiCone(B,D)$ are demanded to be equivalences and not isomorphisms.
There is an intermediate notion, called a \defword{pseudo-pullback}, in which $\BiCone$ is used instead of $2\Cone$, but where the comparison functors are still required to be isomorphisms.

There is a general notion of \emph{weak weighted limit} (or \emph{weighted bilimit}), and again it is a fact that a 2-category (or more generally bicategory) has all finite weighted bilimits as soon as it has weak pullbacks, weak arrow objects, and a weak terminal object (this is implied, though not quite stated, in \cite[(1.27)]{street-fibrations-in-bicategories}).

It is clear that pseudo-pullbacks are always bipullbacks.
However, a strict pullback need \emph{not} be a bipullback in general.
Since we always have the weak setting in view, we will want to restrict ourselves to the consideration of those strict pullbacks which \emph{are} bipullbacks, a nice class of which is given as follows.

\subsubsection{Pita limits}\label{subsubsec:pita-limits}
We recall that a functor $F \colon \cC \to \cD$ is an \defword{isofibration} if for every $x \in \cC$ and isomorphism $f \colon Fx \to b$ in $\cD$, there is an isomorphism $p \colon x \to y$ in $\cC$ with $Fp = f$.
A morphism $f \colon X \to Y$ in a 2-category $\CC$ is an \defword{isofibration} if $f_* \colon \CC(A,X) \to \CC(A,Y)$ is an isofibration for all $A \in \CC$.

Now it is easy to see that any strict pullback square (\ref{eq:square}) in which $f$ is an isofibration is also a bipullback \cite{joyal-street-pullbacks} (and $k$ is then also an isofibration), and all the strict pullbacks we will consider will be of this form.

Thus, if a 2-category has strict terminal objects, strict arrow objects, and strict pullbacks of isofibrations, it has all finite bilimits.
We say that 2-category is \defword{pita} (for ``Pullbacks of Isofibrations, Terminal object, and Arrow objects'') or \defword{has pita limits} if it has these strict limits, and we call a 2-functor preserving strict limits of this kind a \defword{pita 2-functor}.

We now make some remarks on the relationship between pita limits and two well-studied classes of strict limits in 2-categories.

First, we have the finite \emph{PIE} limits (``products, inserters, and equifiers'').
These were introduced in \cite{power-robinson-pie-limits}, where an elegant intrinsic characterization of these limits (and those generated by them) is given.
If a 2-category has finite PIE limits it has all finite bilimits (and also all finite pseudo-limits).
It is easy to see that a pita category has finite PIE limits, hence having pita limits is a stronger condition.

The second class is the finite \emph{flexible} limits, which were introduced in \cite{bird-kelly-power-street-flexible}, where again a nice intrinsic definition is given, and where it is shown that a 2-category has flexible limits precisely if it has PIE limits as well as the splitting of idempotent equivalences (or all of all idempotents).
Moreover, in \cite{bourke-accessible-aspects}, it is shown that having flexible limits is equivalent to having finite products, cotensors, splitting of idempotents, and strict pullbacks of DOFs (or of so-called \emph{normal} isofibrations).
However, it does not seem that having flexible limits implies that \emph{all} isofibrations have strict pullbacks, nor that having pita limits implies the splitting of idempotents equivalences, hence having pita limits is neither strictly stronger or weaker than having flexible limits.

We do not know if there is an intrinsic description of pita limits as there is with PIE and flexible limits.

Finally, we point out an important class of isofibrations: the morphism $X^F \colon X^{J'} \to X^J$ between cotensors induced by a functor $F \colon J \to J'$ is an isofibration whenever $F$ is injective on objects.
(This is related to the well-known fact that there is a model structure on $\Cat$ in which the cofibrations and fibrations are the injective-on-objects functors and the isofibrations.)

In fact, the isofibrations $X^F$ are all \emph{normal} in the sense of \emph{op. cit.}, and since all the isofibrations that arise in this paper are either discrete or of the form $X^F$, it follows that everything we do would still hold with ``isofibration'' everywhere replaced by ``normal isofibration''.
In particular, the resulting notion of pita (or rather ``pnita'') 2-categories would include all 2-categories with finite flexible limits as examples.

\subsection{Groupoids}\label{subsec:groupoids}
Let $\CC$ be a 2-category.

An object $X \in \CC$ is \defword{groupoidal} \cite[(1.7)]{street-fibrations-in-bicategories} or simply a \defword{groupoid} if $\CC(A,X)$ is a groupoid for each $A \in \CC$.

We next want to define the \emph{core} $X^\iso$ of an arbitrary object $X \in \CC$, which when $\CC = \Cat$ should give the maximal subgroupoid of $X$.
To begin with, this should be a universal groupoid mapping to $X$, i.e., we have a morphism $i \colon X^\iso \to X$ through which any other morphism from a groupoid factors uniquely, or in other words, \( i_* \colon \cC(A,X^\iso) \to \cC(A,X) \) is bijective on objects for all groupoids \( A \).
The natural corresponding ``2-dimensional'' property is that \( i_* \) induces an isomorphism \(\cC(A,X^\iso) \to \cC(A,X)^\iso \), where the codomain is the maximal subgroupoid of \( \cC(A,X) \).
Either of these properties does determine $X^\iso$ up to isomorphism, but they are not strong enough for our purposes (though see Remark~\ref{rmk:cocores} below); in particular, we will also want a characterization of the morphisms into $X^\iso$ from non-groupoidal $A$.

\begin{defn}
  A morphism $f \colon A \to X$ in $\CC$ is an \defword{arrow-wise iso(morphism)} if $\alpha f$ is invertible for each 2-cell $\alpha \colon A' \tocell A$.
  Note that the composition of an arrow-wise iso with any morphism is again an arrow-wise iso.

  A (strict) \defword{core} of $X$ is an object $X^\iso$ together with an arrow-wise iso $i \colon X^\iso \to X$ with the following universal property:
  for each $A \in \CC$, the functor $i_* \colon \CC(A,X^\iso) \to \CC(A,X)$ is an isomorphism onto the subcategory of $\CC(A,X)$ consisting of arrow-wise isos, and invertible 2-cells between these.
  (It follows that $X^\iso$ is groupoidal.)

  We say that a 2-category is \defword{corepita} if it is pita and has cores.
\end{defn}

We will usually use the above notation $X^\iso \tox{i} X$ for a core of $X$.
From the universal property of the core, any morphism $f \colon X \to Y$ induces a morphism $f^\iso \colon X^\iso \to Y^\iso$.
When $X$ is already a groupoid, we have a canonical choice of core $\id_X \colon X \to X$, and we will always assume that $X^\iso$ has been so chosen unless indicated otherwise.
In particular, a morphism $f \colon X \to Y$ then induces $f^\iso \colon X \to Y^\iso$.

\begin{rmk}\label{rmk:cocores}
  There is a notion which is roughly dual to that of core:
  a (strict) \defword{cocore} of \( X \in \CC \) is an object \( \ol X \) together with a morphism \( \gamma \colon X \to \ol X \) such that for each \( A \in \CC \), the functor \( \gamma^* \colon \CC(\ol X,A) \to \CC(X,A) \) is an isomorphism onto the full subcategory of \( \CC(X,A) \) on the arrow-wise isos.
  (Assuming \( \CC \) has arrow objects, this is equivalent to demanding that \( \gamma \) be a bijection on objects for each \( A \).)
  In \( \Cat \), a cocore \( \ol X \) is given by the category obtained by formally inverting all of the morphisms in \( X \), or in other words the localization of \( X \) at all of its morphisms.

  Assuming that \( \CC \) \emph{has cocores}---i.e., that each object in \( X \) has a cocore---it follows that a morphism \( i \colon X^\iso \to X \) in \( \CC \) is a core of \( X \) as soon as it has the appropriate universal property with respect to \emph{groupoids}, i.e., as soon as \( i^* \colon \CC(A,X^\iso) \to \CC(A,X)^\iso \) is an isomorphism for every groupoid \( A \in \CC \).
\end{rmk}

\subsection{DOFs}\label{subsec:dofs}
We recall that a functor $F \colon \cC \to \cD$ is a \defword{discrete opfibration (or DOF)} if for each $x \in \cC$ and morphism $f \colon F(x) \to b$ in $\cD$, there is a unique morphism $p \colon x \to y$ with $F(p) = f$.

A morphism $f \colon X \to Y$ in a 2-category $\CC$ is a \defword{DOF} if $f_* \colon \CC(A,X) \to \CC(A,Y)$ is a DOF for all $A \in \CC$.
Clearly, every DOF is an isofibration.

\begin{rmk}\label{rmk:dofs-internally}
  If $X,Y \in \CC$ admit arrow objects $X^\to$ and $Y^\to$, then it is easy to see that $f \colon X \to Y$ is a DOF if and only if
  \[
    \begin{tikzcd}
      Y^\to\ar[r, "\partial_0"]\ar[d, "f^\to"']&Y\ar[d, "f"]\\
      X^\to\ar[r, "\partial_0"]&X
    \end{tikzcd}
  \]
  is a strict pullback square.
  In particular, this shows that pita 2-functors between pita 2-categories also preserve DOFs.

  In Appendix~\ref{sec:groupoid-stuff}, we give further ``non-representable'' reformulations of certain ``representably'' defined concepts.
\end{rmk}

We now discuss the properties of DOFs with respect to pullbacks.
First of all, as with isofibrations, they are stable under pullbacks: if $f$ is a DOF in a strict pullback square (\ref{eq:square}) (on p.~\pageref{eq:square}), then so is $k$.
Next, if (\ref{eq:square}) is only a \emph{bipullback} and $f$ is a DOF, then, by replacing $h$ with an isomorphic 1-cell, it can be made into a strictly commuting bipullback.
Finally, once this is done---i.e., assuming (\ref{eq:square}) is a strictly commuting bipullback square and $f$ is a DOF---if $k$ is \emph{also} a DOF, then the square is in fact a strict pullback.

\begin{defn}\label{defn:generic-dof}
  We say that a DOF $p \colon \s_* \to \s$ in a 2-category $\CC$ is \defword{generic}, and that $\s$ is a \defword{DOF classifier}, if, given any two strict pullback squares
  \begin{equation}\label{eq:pb-squares}
    \begin{tikzcd}
      F\ar[r, "\ddot{f}"]\ar[d, "\dot{f}"']\pb&\s_*\ar[d, "p"]\\
      A\ar[r, "f"]&\s
    \end{tikzcd}
    \quad\quad
    \begin{tikzcd}
      H\ar[r, "\ddot{h}"]\ar[d, "\dot{h}"']\pb&\s_*\ar[d, "p"]\\
      A\ar[r, "h"]&\s,
    \end{tikzcd}
  \end{equation}
  as well as a morphism $g \colon F \to H$ satisfying $g \dot{h} = \dot{f}$, there is a unique 2-cell $\gamma \colon f \to h$ for which there exists a (since \( p \) is a DOF necessarily unique) 2-cell $\ddot{\gamma} \colon \ddot{f} \to g \ddot{h}$ with $\ddot{\gamma} p = \dot{f}\gamma$.
  \begin{equation}\label{eq:generic-diags}
    \begin{tikzcd}
      F\ar[rrd, bend left, "\ddot{f}"{name=df}]\ar[ddr, "\dot{f}"', bend right]\ar[dr, "g"]
      \ar[from=df, to=2-2, Rightarrow, dashed, shorten >=0pt, shorten <=4pt, "\ddot{\gamma}"']
      &&\\
      &H\ar[r, "\ddot{h}"]\ar[d, "\dot{h}"']
      &\s_*\ar[d, "p"]\\
      &A
      \ar[r, "f"{name=f}, bend left]
      \ar[r, "h"'{name=h}, bend right]
      &\s.
      \ar[from=f, to=h, Rightarrow, shorten=3pt, "\gamma", dashed]
    \end{tikzcd}
  \end{equation}
  (We will reformulate this condition in \S\ref{subsec:sof-reformulation}.)

  Given a generic DOF $p \colon \s_* \to \s$, we say that a morphism is \defword{$p$-small} (or \defword{$\s$-small}) if it is a strict pullback of $p \colon \s_* \to \s$, as are $\dot{f}$ and $\dot{h}$ above.
  Note that any $p$-small morphism is a DOF and that, by the above-mentioned properties of DOFs, for a DOF to be $p$-small, it suffices for it to be a \emph{bipullback} of $p$.

  We say that a generic DOF $p \colon \s_* \to \s$ is \defword{pre-plentiful} if the $p$-small morphisms satisfy the following closure conditions:
  \begin{enumerate}[(i)]
  \item\label{item:pre-plentiful-monos} Every DOF which is a monomorphism (in $\abs{\CC}$) is $p$-small (and in particular, every isomorphism is $p$-small).
  \item\label{item:pre-plentiful-composites} The composite of $p$-small morphisms is $p$-small.
  \end{enumerate}
\end{defn}
We note that, in general, DOFs satisfy both of these closure conditions, as well as those in Proposition~\ref{propn:more-closure-props} below.

We will come to the full definition of \emph{plentiful} in \S\ref{subsec:mon-fib}.

\begin{rmk}\label{rmk:pseudo-monic}
  While the condition of being a monomorphism is unnatural in the 2-categorical setting, for a DOF $f \colon X \to Y$, it is equivalent to the more natural condition of being \emph{pseudo-monic} \cite[(2.7)]{carboni-et-al-modulated-bicats}, meaning that
  \[
    \begin{tikzcd}
      X\ar[r, "\id_X"]\ar[d, "\id_X"']&X\ar[d, "f"]\\
      X\ar[r, "f"]&Y
    \end{tikzcd}
  \]
  is a \emph{bipullback}, or equivalently that $f_* \colon \CC(A,X) \to \CC(A,Y)$ is full on isomorphisms and faithful for all $A \in \CC$.
\end{rmk}

\begin{propn}\label{propn:more-closure-props}
  We have the following further closure properties for $p$-small morphisms:
  \begin{enumerate}[(i)]
  \item[(iii)] $p$-small morphisms are closed under pullback.
  \item[(iv)] $p$-small morphisms satisfy the following 2-of-3 rule: given morphisms $P\tox{f}Q\tox{g}A$, if $g$ and $fg$ are both $p$-small, then $f$ is $p$-small.
  \end{enumerate}
\end{propn}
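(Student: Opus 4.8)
The plan is to derive both properties from the two pre-plentiful closure conditions together with the facts recalled above about strict pullbacks and DOFs; in particular I will use repeatedly that every $p$-small morphism is a DOF, hence an isofibration, so that (since $\CC$ is pita) all the strict pullbacks I form exist and are automatically bipullbacks.

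Property (iii) I would obtain directly from the pasting law for strict pullbacks. If $q \colon X \to A$ is $p$-small via a strict pullback square of $p$ along some classifying map $a \colon A \to \s$, and $r \colon B \to A$ is arbitrary, then the strict pullback of the isofibration $q$ along $r$ exists; pasting its square onto the classifying square for $q$ exhibits the resulting projection $B \times_A X \to B$ as a strict pullback of $p$ along $r a$, so it is $p$-small.

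For (iv), given $P \tox{f} Q \tox{g} A$ with $g$ and $fg$ both $p$-small, the idea is to factor $f$ through the relative fibre product. Let $R = P \times_A Q$ be the strict pullback of the cospan $P \tox{fg} A \xot{g} Q$ (it exists, both legs being isofibrations), with projections $\pi_P, \pi_Q$; then $f$ factors as
\[
  P \tox{\br{\id_P, f}} R \tox{\pi_Q} Q,
\]
and I would show each factor is $p$-small, so that $f$ is $p$-small by condition~(\ref{item:pre-plentiful-composites}). The projection $\pi_Q$ is by construction a strict pullback of $fg$ along $g$, hence $p$-small by (iii). For the graph $\br{\id_P, f}$, the plan is to realise it as a strict pullback of the relative diagonal $\Delta \colon Q \to Q \times_A Q$ (along the evident map $R \to Q \times_A Q$ induced by $\pi_P f$ and $\pi_Q$, which agree after composing with $g$ by the fibre-product relation), thereby reducing --- via (iii) once more --- to showing that $\Delta$ is $p$-small.

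This last point is where the real work lies. By condition~(\ref{item:pre-plentiful-monos}) it is enough to check that $\Delta$ is a DOF and a monomorphism in $\abs\CC$. Monicity is automatic, since $\Delta$ is a section of a projection. The main obstacle is to verify that $\Delta$ is a DOF: working representably, this reduces to the elementary fact that the relative diagonal $\cC \to \cC \times_\cD \cC$ of a discrete opfibration $F \colon \cC \to \cD$ of categories is again a discrete opfibration. That fact holds because a morphism of $\cC$ out of a fixed object is uniquely determined by its $F$-image, so any lift $(u,u')$ of a morphism of $\cC \times_\cD \cC$ emanating from a diagonal object $(c,c)$ must satisfy $u = u'$ and is therefore itself diagonal, yielding the required unique lifts. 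Assembling the $p$-small factors via (iii) and~(\ref{item:pre-plentiful-composites}) then completes the argument.
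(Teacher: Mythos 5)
Your proof is correct, and its overall shape coincides with the paper's: property (iii) by pasting strict pullback squares, and property (iv) by factoring $f$ as the graph $\br{\id_P,f} \colon P \to P \times_A Q$ followed by the projection $P \times_A Q \to Q$, with the projection handled by (iii) and the graph handled by the monic-DOF clause~(i) of pre-plentifulness, followed by closure under composition~(ii). The only divergence is in how the graph is shown to be a monic DOF. The paper does this in one line: $\br{\id_P,f}$ is a monomorphism (it has retraction $\pi_1$), and it is a DOF by the 2-of-3 property for DOFs, since $\pi_1$ (a pullback of the DOF $g$) and $\br{\id_P,f}\pi_1 = \id_P$ are DOFs. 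You instead exhibit $\br{\id_P,f}$ as a strict pullback of the relative diagonal $\Delta \colon Q \to Q \times_A Q$ of $g$, verify representably that the relative diagonal of a DOF is again a (monic) DOF, and then invoke (i) and (iii) once more. Both arguments are valid; yours is the standard ``graph as pullback of the diagonal'' maneuver and is self-contained, but it costs an extra pullback construction and an extra representable verification, and the unique-lifting observation you use to prove the diagonal is a DOF is essentially the same computation that underlies the 2-of-3 property for DOFs that the paper appeals to directly.
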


\begin{proof}
  Property (iii) follows immediately from the definition (by pasting pullback squares).

  Property (iv) follows from (i)-(iii), since $f$ factors as a composite of $p$-small DOFs $P \tox{\br{\id_P,f}} P \times_A Q \tox{\pi_1} Q$.

  Here, $\pi_1$ is the pullback of a $p$-small DOF, and $\br{\id_P,f}$ is a monomorphism, and it is also a DOF by the 2-of-3 property for DOFs, since $\id_P$ and $\pi_1$ are DOFs (the latter being the pullback of a DOF).
\end{proof}

\subsection{SOFs}\label{subsec:sofs}
As mentioned in the introduction, we will also need to a slight weakening of the concept of DOF.
We recall that the word \emph{setoid} usually denotes a set with an equivalence relation; we use it to mean the equivalent concept of a category which is both a groupoid and a preorder (this is called \emph{bidiscrete} in \cite[\S1.7]{street-fibrations-in-bicategories}).
Thus, just as a discrete fibration is a Grothendieck fibration with discrete fibres, we call a fibration \emph{setoidal} if it has setoidal fibres---or what amounts to the same, if it is faithful and every morphism in the total category is cartesian.
\begin{defn}
  A functor $F \colon \cC \to \cD$ is a \defword{setoidal opfibration} (or \defword{SOF}) if $F$ is faithful, and for each $x \in \cC$ and morphism $f \colon Fx \to b$, there exists a lift $p \colon x \to y$ with $F(p) = f$; and given a second such lift $p' \colon x \to y'$, there is a (necessarily unique) morphism $g \colon y \to y'$ with $p g = p'$ and $F(g) = \id_b$.
  (It follows that $g$ is an isomorphism.)

  We say that a morphism $f \colon X \to Y$ in a 2-category $\CC$ is a SOF if $f_* \colon \CC(A,X) \to \CC(A,Y)$ is a SOF for each $A \in \CC$.
\end{defn}

\begin{rmk}\label{rmk:strictness-and-choice}
  The obvious proof that a functor $f \colon \cC \to \cD$ which is a SOF is also a SOF in the 2-category $\Cat$ makes use of the axiom of choice (and in fact it cannot be proven without choice).
  This is one of several instances where choice is needed to prove that the 2-categorical generalization of some property of categories or functors does in fact specialize to the original notion in $\Cat$ (another is that of an object having finite limits, Definition~\ref{defn:has-finite-limits}).
  In particular, we need choice even to prove that $\Cat$ is an elementary 2-topos in our sense.

  However, these things should be seen side-effects of working in a strict setting.
  Specifically, if we instead consider the \emph{bicategory} $\mathcal{Ana}$ of categories and \emph{anafunctors} \cite{makkai-avoiding-choice}, we can indeed show \emph{without} choice that, for example, a SOF between two categories is a SOF in $\mathcal{Ana}$ (or rather the appropriate bicategorical analogue: the ``Street DOFs'' mentioned in \S\ref{subsec:intro-power-set}).
\end{rmk}

Next, we discuss the procedure of replacing a SOF by a DOF.

\begin{defn}
  Given morphisms $E \tox{p} B \xot{p'} E'$ in a 2-category $\CC$, an \defword{equivalence of morphisms over $B$} between $p$ and $p'$ consists of morphisms $f \colon E \rightleftarrows E' \colon g$ with $f p' = p$ and $g p = p'$, and invertible 2-cells $\alpha \colon f g \to \id_E$ and $\beta \colon g f \to \id_E'$ with $\alpha p = \id_p$ and $\beta p = \id_{p'}$.
  We say that $g$ is a \defword{quasi-inverse of $f$ over $B$}.
  \begin{equation}\label{eq:equiv-of-mors}
    \begin{tikzcd}[column sep=0pt]
      E\ar[rr, bend left, "f"]\ar[dr, "p"']
      \ar[loop, distance=3em, in=south west, out=north west, start anchor={[yshift=-5pt]north west}, end anchor={[yshift=5pt]south west}, ""{name=lloop}, "fg"']
      \ar[from=lloop, to=1-1, Rightarrow, "{}_\alpha"{pos=0.3}, "{}^\sim"'{pos=0.3}]
      &&
      E'\ar[ll, bend left, "g"]\ar[dl, "p'"]
      \ar[loop, distance=3em, in=south east, out=north east, start anchor={[yshift=-5pt]north east}, end anchor={[yshift=5pt]south east}, ""'{name=rloop, pos=0.47}, "gf"]
      \ar[from=rloop, to=1-3, Rightarrow, "{}_\beta"'{pos=0.3}, "{}^\sim"{pos=0.3}]
      \\
      &B&
    \end{tikzcd}
  \end{equation}
\end{defn}

\begin{propn}\label{propn:equiv-of-mors-props}
  Let $\CC$ be a 2-category and let $E \tox{p} B \xot{p'} E'$ be morphisms in $\CC$.
  \begin{enumerate}[(i)]
  \item\label{item:equiv-of-more-props-dof-sof} If $p$ is a DOF (or more generally, a SOF) and $p'$ is equivalent to $p$ over $B$, then $p'$ is a SOF.
  \item\label{item:equiv-of-more-props-extends} If $p$ is an isofibration, then any equivalence $f \colon E \to E'$ with $f p' = p$ extends to an equivalence of morphisms over $B$ as in (\ref{eq:equiv-of-mors}).
  \item\label{item:equiv-of-more-props-sof-unique} If $p$ is a SOF (or more generally, if $p$ is faithful), and $f \colon E \to E'$ is an equivalence over $B$ with quasi-inverse $g$, then there is a \emph{unique} 2-cell $\alpha \colon f g \to \id_E$ with $\alpha p = \id_p$.
  \item\label{item:equiv-of-more-props-dof-good} If $p$ is a DOF, then any equivalence $f \colon E \to E'$ over $B$ has a \emph{unique} quasi-inverse $g$, and moreover (with $\alpha$ and $\beta$ as in (\ref{eq:equiv-of-mors})) we have the equations $f g = \id_E$, $\alpha = \id_{\id_E}$, and $\beta g = \id_g$.
  \end{enumerate}
\end{propn}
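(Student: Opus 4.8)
The plan is to exploit that all the notions in play---DOF, SOF, isofibration, faithfulness, and equivalence---are \emph{representable}, i.e.\ defined by demanding that $\CC(A,-)$ have the corresponding property for every $A$, and that $\CC(A,-)$ carries an equivalence of morphisms over $B$ to an equivalence of functors over $\CC(A,B)$ (since it is a $2$-functor and respects whiskering, so $\alpha p = \id_p$ becomes $\alpha_* p_* = \id_{p_*}$, etc.). Thus a part that merely asserts a \emph{property}, like (i), reduces to the corresponding statement for functors in $\Cat$, whereas the \emph{uniqueness} and \emph{strictness} of the various $2$-cells will be extracted by evaluating at the test objects $A = E$ or $A = E'$ and invoking the lifting properties of $p_*$ there.

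For (i) I would reduce, via $\CC(A,-)$, to the following fact about functors: a functor equivalent over its base to a SOF is again a SOF. Faithfulness of $p'$ follows because $p = f p'$ with $f$ an equivalence (hence full and essentially surjective). The required lifts for $p'$ are produced by transporting a morphism of the base up to $E$ along $f$, lifting it through the SOF $p$, and pushing the result back down, with the uniqueness-up-to-isomorphism-over-the-identity inherited from $p$. This is routine once set up.

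Part (ii) is the technical heart, and the step I expect to require the most care. Starting from the underlying equivalence $f$, I would first promote it to an \emph{adjoint} equivalence $(f,g_0,\eta_0,\epsilon_0)$ with $\eta_0 \colon \id_E \To fg_0$ and $\epsilon_0 \colon g_0 f \To \id_{E'}$. Whiskering $\epsilon_0$ with $p'$ and using $fp' = p$ yields an invertible $2$-cell $\epsilon_0 p' \colon g_0 p \To p'$; since $p$ is an isofibration, $p_* \colon \CC(E',E) \to \CC(E',B)$ is one, so this lifts to an invertible $\tilde\epsilon \colon g_0 \To g$ with $gp = p'$. Transporting the adjoint equivalence along $\tilde\epsilon$ produces $(f,g,\eta,\epsilon)$, and a direct computation shows the new counit is normalized: $\epsilon p' = \id_{p'}$. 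The key point is that the unit is then normalized \emph{for free}: whiskering a triangle identity $(\eta f)(f\epsilon) = \id_f$ with $p'$ and using $\epsilon p' = \id_{p'}$ forces $\eta f p' = \id_p$, and $\eta f p' = \eta p$ because $fp' = p$, so $\eta p = \id_p$. Setting $\alpha = \eta^{-1} \colon fg \To \id_E$ and $\beta = \epsilon$ gives the desired equivalence of morphisms over $B$.

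For (iii), uniqueness is immediate from faithfulness: two $2$-cells $\alpha,\alpha' \colon fg \To \id_E$ with $\alpha p = \id_p = \alpha' p$ become equal after the faithful functor $p_* \colon \CC(E,E) \to \CC(E,B)$ (test object $A = E$), hence $\alpha = \alpha'$; existence of a normalized $\alpha$ is furnished by (ii) when $p$ is an isofibration---in particular a SOF---or directly by lifting an isomorphism through the SOF $p$. Finally, for (iv) I would first apply (ii) (a DOF is an isofibration) to obtain a normalized equivalence over $B$, and then sharpen it using the \emph{uniqueness} of DOF-lifts. At $A = E$: both $\alpha$ and $\id_{fg}$ are lifts of $\id_p$ out of $fg$ along the DOF $p_*$, so they coincide, giving $fg = \id_E$ and $\alpha = \id_{\id_E}$. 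At $A = E'$: for any two quasi-inverses $g,g'$ over $B$ one computes a $2$-cell $\beta g' \colon g \To g'$ (using $g(fg') = g\,\id_E = g$) with $(\beta g')p = \beta p' = \id_{p'}$, so $\beta g'$ is a lift of $\id_{p'}$ along the DOF $p_*$ and must equal $\id_g$; hence $g' = g$, and taking $g' = g$ gives $\beta g = \id_g$.
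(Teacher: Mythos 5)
Your proposal is correct and follows essentially the same route as the paper's proof: (i) by the identical reduction along $\CC(A,-)$ to $\Cat$; (ii) by lifting the invertible 2-cell $\epsilon_0 p'$ through the isofibration and then normalizing the unit via a triangle identity, which is exactly the paper's step of choosing (by full faithfulness of $f$) the unique $\alpha$ with $\alpha f = f\beta$; and (iii), (iv) by faithfulness and unique 2-cell lifting along the DOF $p_*$ at the test objects $E$ and $E'$, just as in the paper. The only cosmetic differences are your explicit use of adjoint equivalences and transport in (ii), and the redundant invocation of (ii) at the start of (iv), where the normalized data $(\alpha,\beta)$ is already given by hypothesis.
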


\begin{proof}
  Applying the 2-functors $\CC(A,-)$ for $A \in \CC$, reduces \ref{item:equiv-of-more-props-dof-sof} to the case $\CC = \Cat$, where it is easily checked directly.

  For \ref{item:equiv-of-more-props-extends}, fix $\tilde{g} \colon E' \to E$ and an invertible 2-cell $\tilde \beta \colon \tilde{g} f \toi \id_{E'}$ (these exist by the assumption that $f$ is an equivalence).
  Since $p$ is an isofibration, we can find $g \colon E' \to E$ and an invertible 2-cell $\gamma \colon \tilde{g} \toi g$ with $g p = p'$ and $\gamma p = \tilde{\beta} p'$.
  \[
    \begin{tikzcd}
      &[50pt] E\ar[d, "p"]\\[20pt]
      E'
      \ar[ru, "\tilde{g}", ""{name=fg}, bend left=30pt]
      \ar[ru, "g"', ""'{name=g}, dotted]
      \ar[r, "\tilde{g} f p' = \tilde{g} p", ""{name=gp}]
      \ar[r, "p'"', bend right, ""'{name=p}]
      &B
      \ar["\gamma", shorten <=3pt, shorten >=3pt, Rightarrow, from=fg, to=g, dashed, "\sim"' sloped]
      \ar["\tilde \beta p'", shorten <=3pt, shorten >=3pt, Rightarrow, from=gp, to=p, "\sim"' sloped]
    \end{tikzcd}
  \]
  We now set $\beta = (\gamma\I f) \tilde{\beta} \colon g f \toi \id_{E'}$ and we then have $\beta p' = (\gamma\I p) (\tilde{\beta} p') = \id_{p'}$ as desired.
  Next, since $f$ (being an equivalence) is fully faithful, there is a unique (invertible) 2-cell $\alpha \colon f g \toi \id_E$ with $\alpha f = f \beta \colon f g f \to f$, and we then have $\alpha p = \alpha f p' = f \beta p' = f \id_{p'} = \id_p$, as desired.

  The statement \ref{item:equiv-of-more-props-sof-unique} is obvious.

  The equations asserted in \ref{item:equiv-of-more-props-dof-good} follow from the DOF property of $p$, since $\alpha p = \id_p$ and $\beta g p = \beta p' = \id_{p'}$.
  The uniqueness of $g$ also follows from the DOF property of $p$: given equivalences $(f,g_i,\alpha_i,\beta_i)$ over $B$ for $i=1,2$, we have a 2-cell $g_1 \tox{g_1 \alpha_2\I} g_1 f g_2 \tox{\beta_1 g_2} g_2$ with $\pbig{(g_1 \alpha_2\I)(\beta_1 g_2)} p = (g_1 \id_p)(\beta_1 p') = \id_{p'} \cdot \id_{p'} = \id_{p'}$.
\end{proof}

\begin{defn}\label{defn:dof-collapse}
  Given a SOF $E'\tox{p'}B$ in a 2-category $\CC$, a \defword{DOF-collapse} of $p'$ is a DOF $E\tox{p}B$ together with an equivalence $g \colon E' \to E$ with $g p = p'$.
  \[
    \begin{tikzcd}[column sep=0pt]
      E\ar[rd, "p"']\ar[from=rr, "g"']&&E'\ar[dl, "p'"]\\
      &B&
    \end{tikzcd}
  \]

  By Proposition~\ref{propn:equiv-of-mors-props}, $g$ admits a section $f \colon E \to E'$ (i.e., $f g = \id_{E}$), and for any such $g$ there exists a unique $\beta \colon g f \toi \id_{E'}$ with $\beta g = \id_g$.

  We also note that (again by Proposition~\ref{propn:equiv-of-mors-props}) given a DOF $E \tox{p} B$, any equivalence $f \colon E \to E'$ with $f p' = p$ is a section of a unique DOF collapse $g \colon E' \to E$.
\end{defn}

\begin{propn}
  The DOF-collapse is uniquely determined up to isomorphism: given a SOF $p' \colon E' \to B$ in a 2-category $\CC$, DOFs $p_i \colon E_i \to B$ ($i = 1,2$) and equivalences $g_i \colon E' \to E_i$ with $g_i p_i = p'$, there is a unique morphism $h \colon E_1 \to E_2$ with $g_1 h = g_2$, and $h$ is moreover an isomorphism and satisfies $h p_2 = p_1$.
  \[
    \begin{tikzcd}
      &[10pt]E' \ar[ld, "g_1"'] \ar[rd, "g_2"] \ar[dd, "p'", pos=0.6]&[10pt]\\[-10pt]
      E_1 \ar[rd, "p_1"'] \ar[rr, "h"', near start, dashed, crossing over]
      &&E_2 \ar[dl, "p_2"]\\[10pt]
      &B&
    \end{tikzcd}
  \]
\end{propn}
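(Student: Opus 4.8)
The plan is to build $h$ by hand from sections of the $g_i$, and to exploit the rigidity of DOFs---the fact that an invertible 2-cell lying over an identity must itself be an identity---to pass from an \emph{equivalence} $E_1 \simeq E_2$ over $B$ to the \emph{strict} identities demanded in the statement. The data $(p_i,g_i)$ is exactly a DOF-collapse of $p'$ in the sense of Definition~\ref{defn:dof-collapse}, so by the discussion there (which rests on Proposition~\ref{propn:equiv-of-mors-props}\ref{item:equiv-of-more-props-dof-good}) each $g_i$ admits a section $f_i \colon E_i \to E'$ over $B$, with $f_i p' = p_i$ and the strict equation $f_i g_i = \id_{E_i}$, together with an invertible 2-cell $\beta_i \colon g_i f_i \to \id_{E'}$ satisfying $\beta_i g_i = \id_{g_i}$. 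I would first record the consequence $\beta_i p' = (\beta_i g_i) p_i = \id_{g_i} p_i = \id_{p'}$, using $p' = g_i p_i$.

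Uniqueness of $h$ then comes for free and pins down its definition: from $f_1 g_1 = \id_{E_1}$, any $h$ with $g_1 h = g_2$ satisfies $h = (f_1 g_1) h = f_1 (g_1 h) = f_1 g_2$, so $h \defeq f_1 g_2 \colon E_1 \to E_2$ is the only possible candidate. The real work is to verify that this $h$ does satisfy $g_1 h = g_2$ \emph{strictly}. I have the invertible 2-cell $\beta_1 g_2 \colon g_1 f_1 g_2 = g_1 h \to g_2$, and whiskering it by $p_2$ gives $(\beta_1 g_2) p_2 = \beta_1 (g_2 p_2) = \beta_1 p' = \id_{p'}$; that is, $\beta_1 g_2$ lies over the identity of $p'$. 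Since $p_2$ is a DOF, the unique-lifting property of $(p_2)_* \colon \CC(E',E_2) \to \CC(E',B)$ forces the only lift of $\id_{p'}$ starting at $g_1 h$ to be $\id_{g_1 h}$; hence $\beta_1 g_2 = \id_{g_1 h}$ and therefore $g_1 h = g_2$. This conversion of a canonical equivalence-over-$B$ into a strict equality is the crux of the argument, and is precisely where the DOF hypothesis on the $E_i$ (rather than merely the SOF hypothesis on $E'$) is used.

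It then remains to check the two supplementary assertions. The triangle $h p_2 = p_1$ is immediate: $h p_2 = f_1 g_2 p_2 = f_1 p' = p_1$. For invertibility I would run the symmetric construction to obtain $h' \defeq f_2 g_1 \colon E_2 \to E_1$ with $g_2 h' = g_1$; then $g_1 (h h') = (g_1 h) h' = g_2 h' = g_1$, and the uniqueness clause already established---now with $(E_1,p_1,g_1)$ playing the role of $(E_2,p_2,g_2)$, for which $\id_{E_1}$ is the unique $k$ with $g_1 k = g_1$---gives $h h' = \id_{E_1}$; symmetrically $h' h = \id_{E_2}$, so $h$ is an isomorphism with inverse $h'$.
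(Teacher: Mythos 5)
Your proof is correct and follows essentially the same route as the paper's: uniqueness and invertibility come from the cancellation afforded by the sections $f_i$ (the paper phrases this as $g_1,g_2$ being epimorphisms), and existence is obtained by setting $h = f_1 g_2$ and using the DOF property of $p_2$ to collapse the invertible 2-cell $\beta_1 g_2$, which lies over $\id_{p'}$, to an identity, giving $g_1 h = g_2$ strictly. The only difference is cosmetic: you spell out the split-epi cancellation and the symmetric argument for invertibility that the paper compresses into a single remark.
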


\begin{proof}
  Uniqueness of $h$, and the fact that it is an isomorphism, follows from $g_1,g_2$ being epimorphisms in the 1-categorical sense (since they each admit a section).

  For existence, let $f_1 \colon E_1 \to E'$ be a section of $g_1$ and set $h = f_1 g_2$.
  We then have $h p_2 = f_1 p' = p_1$, and we have a 2-cell $\beta_1 g_2 \colon g_1 h = g_1 f_1 g_2 \toi g_2$ with $(\beta_1 g_2) p_2 = \beta_1 p' = \id_{p'}$, and hence $g_1 h = g_2$ since $p_2$ is a DOF.
\end{proof}

Next, we discuss the relationship of SOFs with generic DOFs.
\begin{defn}
  Given a generic DOF $p \colon \s_* \to \s$ in a 2-category $\CC$, we say that a SOF $p' \colon E' \to B$ is \defword{$p$-small} (or $\s$-small) if there exists a bipullback square
  \begin{equation}\label{eq:psmall-sof-square}
    \begin{tikzcd}
      E'\ar[r, "h"]\ar[d, "p'"']&\s_*\ar[d, "p"]\\
      B\ar[r, "f"]&\s.
    \end{tikzcd}
  \end{equation}
\end{defn}

\begin{propn}\label{propn:small-sofs}
  The following are equivalent:
  \begin{enumerate}[(i)]
  \item $p' \colon E' \to B$ is $p$-small.
  \item There exist a \emph{strictly commuting} bipullback square (\ref{eq:psmall-sof-square}).
  \item $p'$ has a $p$-small DOF-collapse.
  \end{enumerate}
\end{propn}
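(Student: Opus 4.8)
The plan is to establish the cycle of implications (i) $\Rightarrow$ (ii) $\Rightarrow$ (iii) $\Rightarrow$ (i); the implication (ii) $\Rightarrow$ (i) is of course immediate, since a strictly commuting bipullback is in particular a bipullback, but it is convenient to route the argument through (iii). Throughout I will use freely the observations about DOFs and pullbacks recorded in \S\ref{subsec:dofs}: that strict pullbacks of the DOF $p$ are again DOFs and are automatically bipullbacks (as $p$ is an isofibration), and that a bipullback square whose right-hand leg is a DOF can be rectified by replacing its top edge with an isomorphic $1$-cell. For (i) $\Rightarrow$ (ii), I would start from a bipullback square (\ref{eq:psmall-sof-square}), whose right-hand leg $p$ is a DOF, and invoke the latter observation directly: replacing $h$ by an isomorphic $1$-cell $h'$ yields a square that commutes strictly while remaining a bipullback, which is exactly (ii).

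For (ii) $\Rightarrow$ (iii), I would begin with a strictly commuting bipullback square (\ref{eq:psmall-sof-square}) and form the strict pullback of $p$ along $f$, producing a $p$-small DOF $r \colon E \to B$ together with a projection $k \colon E \to \s_*$ satisfying $rf = kp$. Since $p$ is a DOF this strict pullback is also a bipullback, so $E$ and $E'$ are now two bipullbacks of the same cospan $B \tox{f} \s \xot{p} \s_*$. By the essential uniqueness of bilimits, the comparison map $g \colon E' \to E$ obtained from the universal property of the bipullback $E$ applied to the (strictly commuting) weak cone $(p', h)$ is an equivalence, and it comes equipped with an invertible $2$-cell $gr \cong p'$. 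The one remaining point is to rigidify this: because $r$ is an isofibration, the functor $r_* \colon \CC(E', E) \to \CC(E', B)$ is an isofibration, so I may lift the isomorphism $gr \cong p'$ to an isomorphism $g \cong g'$ in $\CC(E', E)$ with $g' r = p'$ on the nose. Then $g'$ is still an equivalence, and $(r, g')$ is a $p$-small DOF-collapse of $p'$ in the sense of Definition~\ref{defn:dof-collapse}, giving (iii).

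Finally, for (iii) $\Rightarrow$ (i), suppose $p'$ has a $p$-small DOF-collapse, i.e.\ a $p$-small DOF $r \colon E \to B$ and an equivalence $g \colon E' \to E$ with $gr = p'$. Since $r$ is $p$-small, I would choose a strict pullback square exhibiting $r$ as a pullback of $p$ along some $f \colon B \to \s$, with projection $k \colon E \to \s_*$ and $rf = kp$. Setting $h \defeq gk \colon E' \to \s_*$, the outer square with edges $p', h, p, f$ commutes strictly, since $hp = gkp = g(rf) = (gr)f = p'f$. Moreover, as the $E$-square is a strict pullback of the DOF $p$ it is a bipullback, and whiskering its cone with the equivalence $g$ shows that the $E'$-square is again a bipullback. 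Hence (\ref{eq:psmall-sof-square}) is a (strictly commuting) bipullback and $p'$ is $p$-small, which is (i).

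The main obstacle is the rigidification step in (ii) $\Rightarrow$ (iii): obtaining a \emph{strict} identity $g' r = p'$ rather than merely an invertible $2$-cell, which is what the definition of DOF-collapse demands. This is precisely where the isofibration property of the $p$-small DOF $r$ enters, and it is the reason the argument is routed through $r$ (a genuine DOF, hence an isofibration) rather than through the SOF $p'$ directly. Everything else reduces either to the recorded pullback/bipullback stability properties of DOFs from \S\ref{subsec:dofs} or to the essential uniqueness of bilimits, so I expect those steps to be routine.
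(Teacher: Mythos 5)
Your proof is correct, and its overall structure --- rectifying the bipullback using that $p$ is a DOF for (i)$\Rightarrow$(ii), comparing $E'$ against a strict pullback of $p$ along $f$ for (ii)$\Rightarrow$(iii), and pasting a strict pullback with an equivalence for (iii)$\Rightarrow$(i) --- is the same as the paper's. The one place you diverge is in (ii)$\Rightarrow$(iii): you invoke the \emph{bipullback} universal property of $E$, which only produces $g$ together with an invertible 2-cell $gr \cong p'$, and you then rigidify by lifting this isomorphism along the isofibration $r$. The paper instead applies the \emph{strict} universal property of the strict pullback $E$ to the cone $(p',h)$, which --- precisely because the square in (ii) commutes strictly --- yields $g$ with $gr = p'$ and $gk = h$ on the nose; the bipullback property of both squares is then used only to conclude that this $g$ is an equivalence. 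So the step you single out as ``the main obstacle'' can be bypassed entirely: the strictness hypothesis in (ii) is exactly what makes the strict universal property applicable, and no lifting along an isofibration is needed. Your rigidification argument is nevertheless valid (a DOF is an isofibration, and a 1-cell isomorphic to an equivalence is an equivalence), so the difference is one of economy rather than correctness.
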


\begin{proof}
  The equivalence of (i) and (ii) follows from the properties of DOFs with respect to pullbacks stated in \S\ref{subsec:dofs}.

  That (iii) implies (i) follows from the general fact that given a strictly commutative diagram
  \[
    \begin{tikzcd}
      E'\ar[dr, "g"]\ar[rrd, "h", bend left]\ar[rdd, "p'"', bend right]&[-10pt]{}\\[-10pt]
      &E\ar[r, "h"]\ar[d, "q"']&\s_*\ar[d, "p"]\\
      &B\ar[r, "f"]&\s,
    \end{tikzcd}
  \]
  in which the inner square is a strict pullback and $g$ is an equivalence, the outer square is a bipullback (this holds more generally if the inner square and two triangles only commute up to isomorphism, and the inner square is only a bipullback).

  To see that (ii) implies (iii), supposing we have a strictly commuting bipullback square (\ref{eq:psmall-sof-square}), we may form a strict pullback of $p$ along $f$ to obtain a strictly commuting diagram as above with $q$ a DOF.
  Since both squares are bipullbacks, it follows from the universal property of bipullbacks that $g$ is an equivalence.
\end{proof}

\subsection{Internal finite limits and the monomorphism fibration}\label{subsec:mon-fib}
We now describe the construction of the object of monomorphisms $\Mon(X)$ of a given object $X$.
This will require the assumption that $X$ have finite limits in the sense of \cite[(9.14)]{street-cosmoi-of-internal-cats}.

\begin{defn}\label{defn:has-finite-limits}
  We say an object $X$ in a 2-category $\CC$ \defword{has finite limits} if $\CC(A,X)$ has finite limits for each $A \in \CC$, and $f^* \colon \CC(A,X) \to \CC(A',X)$ preserves finite limits for any $f \colon A' \to A$ in $\CC$.
\end{defn}
In \loccit, it is remarked that, as in the familiar case $\CC = \Cat$, if $\CC$ has finite cotensors, then $X$ has finite limits if and only if the diagonal morphism $\Delta \colon X \to X^J$ (classifying the constant functor $J \tox{\const_{\id_X}} \CC(X,X)$) has a right adjoint for each finite category $J$.
In \S\ref{subsec:gpd-stuff-finite-lims}, we describe another alternative characterization of having finite limits.

\begin{rmk}\label{rmk:pointwise-limits}
  We say that a limit of a given diagram $J \to \CC(A,X)$ is \defword{stable} it if is preserved under $f^* \colon \CC(A,X) \to \CC(A',X)$ for all $f \colon A' \to A$ in $\CC$ (thus $X$ has finite limits if and only if for all $A \in \CC$, each finite diagram in $\CC(A,X)$ has a \emph{stable} limit).

  When $\CC = \Cat$, the stability condition is fulfilled as soon as it holds with $A' = \tm$ the terminal category---in other words, the stable limits are in this case the \emph{pointwise} limits.

  While the property $X \in \Cat$ having finite limits, in the usual sense, is indeed characterized by the above definition (assuming the axiom of choice---see Remark~\ref{rmk:strictness-and-choice}), it is also characterized by the simpler condition that $\Cat(A,X)$ have (not necessarily stable) finite limits.
  However, the present definition is the more natural one in the 2-categorical context, as is confirmed by the alternative characterizations mentioned above.
\end{rmk}

\begin{defn}
  A 2-cell $\alpha \colon A \tocell B$ in a 2-category $\CC$ is a \defword{stable monomorphism} if it is a monomorphism and $h\alpha$ is a monomorphism for every $h \colon C \to A$ in $\CC$.
\end{defn}
As with the notion of stable limit, the notion of stable monomorphism specializes to that of \emph{pointwise monomorphism} in the case $\CC = \Cat$.

\begin{defn}
  Given an object $X$ in a 2-category $\CC$, a \defword{monomorphism object for $X$} is an object $\Mon(X)$ together with a stable monomorphism 2-cell $\partial \colon \Mon(X) \tocellud{\partial_0}{\partial_1} X$ such that for any $A \in \CC$, the functor $\partial_* \colon \CC(A,\Mon(X)) \to \CC(A,X)^\to$ is an isomorphism onto the full subcategory of $\CC(A,X)^\to$ consisting of stable monomorphisms.
\end{defn}

We will generally use the above notation to denote a monomorphism object and the associated universal 2-cell.

Note that in the case $\CC = \Cat$, every $\cC \in \CC$ has a monomorphism object, given by the full subcategory $\Mon(\cC)\subset\cC^\to$ with objects the monomorphisms of $\cC$.

\begin{propn}\label{propn:mon-exists}
  Let $\CC$ be a pita 2-category.
  Then any $X \in \CC$ with finite limits admits a monomorphism object.
\end{propn}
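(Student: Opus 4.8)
The plan is to construct $\Mon(X)$ as an equifier sitting inside the arrow object $X^\to$, exploiting the elementary fact that a morphism in a category with kernel pairs is a monomorphism precisely when the two projections out of its kernel pair coincide.

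First, since $\CC$ is pita it has the arrow object $X^\to$ with its universal 2-cell $\partial \colon X^\to \tocellud{\partial_0}{\partial_1} X$; equivalently, $\partial$ is a morphism in the category $\CC(X^\to,X)$ from the object $\partial_0$ to the object $\partial_1$. Because $X$ has finite limits (Definition~\ref{defn:has-finite-limits}), the category $\CC(X^\to,X)$ has finite limits, so I would form there the kernel pair $P \defeq \partial_0 \times_{\partial_1}\partial_0$ of $\partial$, together with its two projections $\pi_1,\pi_2 \colon P \to \partial_0$. Read back in $\CC$, this makes $P$ a 1-cell $X^\to \to X$ and $\pi_1,\pi_2$ a parallel pair of 2-cells $P \To \partial_0$. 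Crucially, by the stability built into the definition of having finite limits (Remark~\ref{rmk:pointwise-limits}), this kernel pair and its projections are preserved by every restriction functor $\psi^* \colon \CC(X^\to,X) \to \CC(A,X)$.

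Next I would form the equifier $e \colon \Mon(X) \to X^\to$ of the pair $\pi_1,\pi_2$, which exists because a pita 2-category has all finite PIE limits (\S\ref{subsec:limits}); its (strict) universal property is that $e_*$ is an isomorphism of $\CC(A,\Mon(X))$ onto the full subcategory of $\CC(A,X^\to)$ on those $\psi$ with $\psi\pi_1 = \psi\pi_2$. I take the universal 2-cell of $\Mon(X)$ to be $\partial \defeq e\partial \colon \Mon(X) \tocellud{e\partial_0}{e\partial_1} X$. To identify the image of $\partial_* \colon \CC(A,\Mon(X)) \to \CC(A,X)^\to$, I use the arrow-object isomorphism $\CC(A,X^\to) \cong \CC(A,X)^\to$, under which $\psi$ corresponds to the arrow $\psi^*\partial$. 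By the stability just noted, $\psi^*$ carries $P$ and $\pi_1,\pi_2$ to the kernel pair and projections of $\psi^*\partial$, so the equifier condition $\psi\pi_1 = \psi\pi_2$ says exactly that these projections agree, i.e.\ that $\psi^*\partial$ is a monomorphism in $\CC(A,X)$. The same stability shows such a monomorphism is automatically \emph{stable}: applying any $h^*$ to the equation $\psi\pi_1=\psi\pi_2$ keeps the projections equal. Composing the equifier isomorphism with the arrow-object isomorphism therefore exhibits $\partial_*$ as injective with image precisely the full subcategory of stable monomorphisms; and taking $A = \Mon(X)$ with the identity shows the universal 2-cell $\partial$ is itself a stable monomorphism, as required.

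The only genuine subtlety — and the step I would watch most carefully — is the bookkeeping around stability: one must verify that the kernel pair of $\partial$ is a \emph{stable} limit (Remark~\ref{rmk:pointwise-limits}), since this is exactly what makes the equifier condition equivalent to ``$\psi^*\partial$ is a monomorphism'' and simultaneously forces every such monomorphism to be stable. The remaining ingredients — the existence of $X^\to$ and of equifiers, and the iso-onto-full-subcategory form of the equifier's universal property — are routine consequences of the standing pita hypotheses.
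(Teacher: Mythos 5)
Your proof is correct, but it takes a genuinely different route from the paper's. The paper deliberately gives no direct construction here: it derives Proposition~\ref{propn:mon-exists} in Corollary~\ref{cor:pbt-monos} as a special case of the general existence of cotensors with finite pbt sketches (Proposition~\ref{propn:sketch-cotensors-exist}), encoding ``monomorphism'' as the sketch on $[1]$ whose single distinguished square (with two identity edges) must become a pullback; that machinery is in turn assembled from $X^{\lrcorner}$, a chosen stable pullback of the universal cospan, the join $\toy * \lrcorner$, and strict pullbacks of isofibrations. Your construction --- form the kernel pair $\pi_1,\pi_2 \colon P \To \partial_0$ of the universal 2-cell $\partial$ in $\CC(X^\to,X)$ and then take the equifier of $\pi_1,\pi_2$ --- is exactly the kind of ``fairly direct construction'' the paper alludes to but does not spell out, and it is complete: the stability clause of Definition~\ref{defn:has-finite-limits} gives precisely the preservation of $P$ under every $\psi^*$ that you need (so the step you flag as the main subtlety is in fact immediate from the definition), the equifier exists because pita implies finite PIE limits, and your identification of the image of $\partial_*$ correctly handles the point that factoring through the equifier characterizes monomorphisms, which your stability argument then shows are automatically \emph{stable} monomorphisms, so the two full subcategories coincide. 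That byproduct --- every monomorphism 2-cell into an object with finite limits is stable --- is a nice 2-categorical analogue of the fact that monomorphisms in a functor category valued in a category with pullbacks are pointwise, and your route makes it transparent. What the paper's longer route buys is reusability: the sketch-cotensor machinery is needed anyway in Appendix~\ref{sec:groupoid-stuff} (for the cotensors $X^{\wc{K}}$, $X^{\wc{L}}$, and their variants), so obtaining $\Mon(X)$ as a one-line corollary avoids duplicating work; what your route buys is brevity and self-containedness for this particular proposition.
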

One can give a fairly direct construction of $\Mon(X)$, but in Corollary~\ref{cor:pbt-monos}, we obtain it as a consequence of a more general construction of cotensors of $X$ with arbitrary \emph{finite limit sketches}.

\begin{propn}
  Given an object $X$ in a 2-category $\CC$, a monomorphism object $\Mon(X)$, and cores $\Mon(X)^\iso$ and $X^\iso$, the induced morphism $\partial_1^\iso \colon \Mon(X)^\iso \to X^\iso$ is a SOF.
\end{propn}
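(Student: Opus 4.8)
The plan is to fix an arbitrary test object $A \in \CC$ and prove directly that $P \defeq (\partial_1^\iso)_* \colon \CC(A,\Mon(X)^\iso) \to \CC(A,X^\iso)$ is a SOF; as $A$ is arbitrary this is exactly the assertion. The first task is to make both hom-categories and the functor $P$ explicit via the relevant universal properties. By the universal property of the core, $\CC(A,X^\iso)$ is (isomorphic to) the groupoid whose objects are the arrow-wise isos $f \colon A \to X$ and whose morphisms are the invertible $2$-cells between them. By the universal property of $\Mon(X)$, the functor $\partial_* \colon \CC(A,\Mon(X)) \to \CC(A,X)^\to$ is injective with image the \emph{full} subcategory of stable monomorphisms; being injective with full image, it is an isomorphism onto that subcategory and in particular reflects invertibility of $2$-cells. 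Using the compatibility of $\partial_1^\iso$ with the core inclusions, one then checks that under these identifications $P$ sends a stable monomorphism $\mu \colon f_0 \to f_1$ to its codomain $f_1$ and a commuting square $(\sigma,\tau)$ to $\tau$ — exactly as the functor $\cod^\iso \colon \Mon(\cC)^\iso \to \cC^\iso$ does in $\Cat$.

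The key intermediate step is to identify the objects of $\CC(A,\Mon(X)^\iso)$, i.e.\ the arrow-wise isos $g \colon A \to \Mon(X)$. I claim $g$ is an arrow-wise iso if and only if both $g\partial_0$ and $g\partial_1$ are arrow-wise isos $A \to X$. Indeed, for any $2$-cell $\alpha \colon A' \tocell A$, associativity of whiskering gives $\partial_{i*}(\alpha g) = \alpha(g\partial_i)$, and since $\partial_*$ reflects invertibility, $\alpha g$ is invertible iff $\alpha(g\partial_0)$ and $\alpha(g\partial_1)$ both are; quantifying over all $\alpha$ yields the claim. Together with the observation that an invertible commuting square $(\sigma,\tau)$ between such objects lifts uniquely back through $\partial_*$ (which is full on stable monomorphisms and reflects invertibility) and then through the core (since $\CC(A,\Mon(X)^\iso)$ is a groupoid), this presents $\CC(A,\Mon(X)^\iso)$ as the groupoid of stable monomorphisms $\mu \colon f_0 \to f_1$ with $f_0,f_1$ arrow-wise isos, and invertible commuting squares between them.

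With this description in hand, verifying the three clauses of the SOF definition is a direct transcription of the argument in $\Cat$, now carried out inside the category $\CC(A,X)$. For \emph{faithfulness}: two squares $(\sigma_1,\tau),(\sigma_2,\tau) \colon \mu \to \mu'$ with the same image satisfy $\sigma_1\mu' = \mu\tau = \sigma_2\mu'$, so $\sigma_1 = \sigma_2$ because $\mu'$ is a monomorphism. For \emph{existence of lifts}: given $\mu \colon f_0 \to f_1$ and an invertible $\tau \colon f_1 \to f_1'$ with $f_1'$ arrow-wise iso, take $\mu' \defeq \mu\tau \colon f_0 \to f_1'$, which is again a stable monomorphism with arrow-wise iso endpoints (indeed $\mu\tau$ is monic, and for every $h \colon C \to A$ the whiskering $h(\mu\tau) = (h\mu)(h\tau)$ is monic since $h\mu$ is and $h\tau$ is invertible), and lift by the square $(\id_{f_0},\tau) \colon \mu \to \mu'$. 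For \emph{uniqueness}: given a second lift $(\sigma'',\tau) \colon \mu \to \mu''$, the required comparison $g$ must be a square $(\rho,\id_{f_1'})$ with $\sigma\rho = \sigma''$, forcing $\rho = \sigma\I\sigma''$ (here $\sigma$ is invertible because $\CC(A,\Mon(X)^\iso)$ is a groupoid); one then checks $\rho\mu'' = \mu'$ from $\sigma\mu' = \mu\tau = \sigma''\mu''$, so this $g$ is a well-defined morphism with $P(g) = \id_{f_1'}$.

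I expect the only real obstacle to be the middle step — pinning down $\CC(A,\Mon(X)^\iso)$ precisely. The delicate points are that the core selects objects by the representable ``arrow-wise iso'' condition rather than a componentwise one, so the equivalence with ``both endpoints arrow-wise iso'' must be proven via the whiskering identity above, and that one must confirm every invertible commuting square between the relevant objects genuinely lies in the image of $\CC(A,\Mon(X)^\iso) \to \CC(A,X)^\to$. Once these identifications are secured, the SOF axioms fall out formally, and no appeal to pita limits, to $X$ having finite limits, or to the existence of $\Mon(X)$ beyond its stated universal property is needed.
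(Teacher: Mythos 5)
Your proof is correct, and its backbone coincides with the paper's: both arguments hinge on using the injectivity/fullness clauses of the universal properties to identify $\CC(A,\Mon(X)^\iso)$ with the groupoid of stable monomorphisms $\mu \colon f_0 \to f_1$ whose endpoints are arrow-wise isos (with invertible squares as morphisms), and $\CC(A,X^\iso)$ with the groupoid of arrow-wise isos, after which everything happens inside the category $\CC(A,X)$. The packaging of the final step differs. The paper records the identifications as a commuting square of functors
\[
  \begin{tikzcd}
    \CC(A,\Mon(X)^\iso)\ar[r, "(i\partial)_*"]\ar[d, "(\partial_1^\iso)_*"']
    &\Mon(\CC(A,X))^\iso\ar[d, "\partial_1^\iso"]\\
    \CC(A,X^\iso)\ar[r, "i_*"]&\CC(A,X)^\iso
  \end{tikzcd}
\]
and argues in two stages: first, that the right-hand codomain functor is a SOF in $\Cat$; second, that a fully faithful subcategory whose objects are closed under codomains of cocartesian morphisms (which holds here because everything is a groupoid and ``stable mono'' and ``arrow-wise iso'' are isomorphism-invariant) inherits the SOF property. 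You instead verify the three SOF clauses by hand, with the cocartesian lift of $\mu$ along $\tau$ given explicitly by $\mu\tau$ and the square $(\id_{f_0},\tau)$. One small dividend of your route: the paper justifies its first stage by appealing to pullbacks in $\CC(A,X)$, which is not actually among the hypotheses of the proposition, whereas your lift uses only that $\tau$ is invertible — confirming your closing remark that no limit assumptions on $X$ are needed beyond the stated universal properties.
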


\begin{proof}
  We need to see that $\CC(A,\Mon(X)^\iso)\tox{(\partial_1^\iso)_*}\CC(A,X^\iso)$ is a SOF for all $A \in \CC$.
  Now, we have a commuting square of functors
  \[
    \begin{tikzcd}
      \CC(A,\Mon(X)^\iso)\ar[r, "(i\partial)_*"]\ar[d, "(\partial_1^\iso)_*"']
      &\Mon(\CC(A,X))^\iso\ar[d, "\partial_1^\iso"]\\
      \CC(A,X^\iso)\ar[r, "i_*"]&\CC(A,X)^\iso
    \end{tikzcd}
  \]
  with the horizontal arrows fully faithful and injective on objects.
  The objects in the image of $i_*$ are by definition the arrow-wise isos, and the objects in the image of the $(i\partial)_*$ are the stable monos $\alpha \colon f \to g$ with $f,g \colon A \to X$ both arrow-wise isos.

  Now, using that since $\CC(A,X)$ has pullbacks, one easily sees that the functor $\partial_1^\iso$ is a SOF.
  Because $(i\partial)_*$ is fully faithful, to see that $(\partial_1^\iso)_*$ is SOF, it then suffices to show that whenever the domain of a cocartesian morphism in $\Mon(\CC(A,X))^\iso$ is in the image of $(i\partial)_*$, then so is its codomain.
  But this is clear, since $\Mon(\CC(A,X))^\iso$ is a groupoid, and the properties of being a stable mono and of being an arrow-wise iso are invariant under isomorphism.
  \qedhere
\end{proof}

Recall the notion of pre-plentiful generic DOF from Definition~\ref{defn:generic-dof}.
\begin{defn}\label{defn:plentiful}
  We say that a pre-plentiful generic DOF $p \colon \s_* \to \s$ in a 2-category $\CC$ is \defword{plentiful} if there exists a monomorphism object $\Mon(\s)$ and cores $\s^\iso$ and $\Mon(\s)^\iso$, and the SOF
  \[
    \partial_1^\iso \colon \Mon(\s)^\iso \to \s^\iso
  \]
  is $p$-small.
\end{defn}
Below, we will be assuming that $\CC$ is corepita, and we will see in \S\ref{subsec:s-has-finite-lims} that any pre-plentiful DOF classifier has finite limits, so the assumptions that $\Mon(\s)$, $\s^\iso$, and $\Mon(\s)^\iso$ exist will hold automatically.

\section{Congruences}\label{sec:congruences}
We take for granted the definition of an internal category $C$ in a 1-category $\cC$, for which we will typically use the notation
\begin{equation}\label{eq:cong-diag}
  \begin{tikzcd}
    C_2\ar[r,shift left=10pt, "\pi_{01}"]\ar[r,shift right=10pt, "\pi_{12}"]\ar[r, "\pi_{02}"]&
    C_1\ar[r, shift left=10pt, "\pi_0"]\ar[r, shift right=10pt, "\pi_1"]\ar[from=r, "e"']&
    C_0,
  \end{tikzcd}
\end{equation}
as well as the definitions of internal functor and internal natural transformation---see, e.g., \cite[Definition~5.2]{helfer-sentai}.
We recall in particular that each object $A \in \cC$ gives rise to a 1-category with objects $\cC(A,C_0)$ and arrows $\cC(A,C_1)$, which we denote by $\cC(A,C)$---and each morphism $u \colon A' \to A$ gives rise to a functor $u^* \colon \cC(A',C) \to \cC(A,C)$.

We introduce some further notation connected with an internal category $C$.
The associativity condition in the definition of $C$ being an internal category makes use of the pullback $C_3 = C_2 \times_{C_1} \times C_2$ (which is thus assumed to exist in $\cC$), which we call an \defword{object of triples} of $C$.
We write $\pi_{012},\pi_{123} \colon C_3 \to C_2$ for the associated projection maps:
\[
  \begin{tikzcd}
    C_3\ar[r, "\pi_{012}"]\ar[d, "\pi_{123}"']\pb&C_2\ar[d, "\pi_{12}"]\\
    C_2\ar[r, "\pi_{01}"]&C_1.
  \end{tikzcd}
\]
We also define $\pi_{02}\defeq \pi_{012}\pi_{02} \colon C_3 \to C_1$ and $\pi_{12}\defeq \pi_{012}\pi_{12} = \pi_{123}\pi_{01} \colon C_3 \to C_1$ and define $\pi_{01},\pi_{13},\pi_{23} \colon C_3 \to C_1$ and $\pi_0,\pi_1,\pi_2,\pi_3 \colon C_3 \to C_0$ similarly.
Given an object $A \in \cC$ and morphisms $f_{01},f_{12},f_{23} \colon A \to C_1$, we write $\br{f_{01},f_{12},f_{23}} \colon A \to C_3$ for the unique morphism with $\br{f_{01},f_{12},f_{23}}\pi_{i(i+1)} = f_{i(i+1)}$ for $i = 0,1,2$.

We now turn to the 2-categorical case.
\begin{defn}\label{defn:int-doub-cat}
  An \defword{internal double category}%
  \footnote{%
    This terminology is consistent with using ``internal category'' in a 2-category $\CC$ to simply refer to an object of $\CC$ (and ``internal elementary topos'' to refer to a certain kind of object), and using ``internal set'' in a 1-category $\cC$ to refer to an object of $\cC$.
  }
  $C$ in a pita 2-category $\CC$ is an internal category in $\abs{\CC}$ such that the morphism $\br{\pi_0,\pi_1} \colon C_1 \to C_0 \times C_0$ is an isofibration.
  We say that $C$ is \defword{groupoidal} if $C_0,C_1,C_2$ are.

  The definition of $C$ being an internal category in $\abs{\CC}$ involves the pullback condition $C_2 \cong C_1 \times_{C_0} \times C_1$ and also involves the object of triples $C_3 = C_2 \times_{C_1} \times C_2$.
  We note that these pullbacks in $\abs{\CC}$ exist and are strict pullbacks in $\CC$, since $\pi_0 \colon C_1 \to C_0$ is an isofibration (being the composite $C_1\tox{\br{\pi_0,\pi_1}}C_0 \times C_0\tox{\pi_0}C_0$ of isofibrations) and $\pi_{01} \colon C_2 \to C_1$ is an isofibration (being a pullback of $\pi_0$).
\end{defn}
\begin{rmk}
  The only reason we impose the condition that $\br{\pi_0,\pi_1}$ is an isofibration is so that the relevant pullback squares are in fact bipullbacks, and hence that our notion of internal double category becomes a special case of the obvious ``weak/bicategorical'' version of the same concept (as in \cite[\S1]{street-bicategories-of-stacks}).
  We note in particular that the ``weak'' analogue of an isofibration (which might be called a ``Street isofibration'' following \cite{johnstone-fibrations}) is simply an arbitrary morphism.

  In any case, this will not matter very much, since the internal double categories we will actually be considering will satisfy the stronger condition that $\br{\pi_0,\pi_1}$ is a \emph{DOF}.
  Similarly, the assumption in Definition~\ref{defn:int-doub-cat} that $\CC$ is pita is convenient, but not really necessary (just as one needn't assume that a category $\cC$ has all finite limits in order to consider internal categories in $\cC$).

  Finally, we note that we should really call this notion a ``strict internal double category'', and likewise for the other notions introduced in this section, but we will drop the adjective ``strict''.
\end{rmk}

We recall that an internal double category $C$ in $\Cat$ is a \emph{double category} in the usual sense (see e.g., \cite{kelly-street-elements}), with objects $\Ob C_0$, \emph{horizontal} arrows $\Ob C_1$, \emph{vertical} arrows $\Ar C_0$, and 2-cells (squares) $\Ar C_1$.
The \emph{horizontal category} of $C$ is $C_0$, and we write $\Ob C$ for the \emph{vertical category}.
The condition that $C$ is groupoidal corresponds to all the vertical arrows being invertible, and all the 2-cells being vertically invertible (i.e., invertible with respect to vertical composition).

Given a 2-cell $\alpha$ in a double category, we refer to the two horizontal arrows and two vertical arrows forming its horizontal and vertical source and target as its \defword{boundary}.

Now if $C$ is an internal double category in an arbitrary pita 2-category $\CC$, applying the limit-preserving 2-functors $\CC(A,-) \colon \CC \to \Cat$ gives us double categories $\CC(A,C)$ for each $A \in \CC$.
We have that $C$ is groupoidal if and only if each $\CC(A,-)$ is.

\begin{defn}
  A \defword{groupoidal bo-congruence} (or just \defword{gbo-congruence}) $C$ in a pita 2-category $\CC$ is a groupoidal internal double category such that $\br{\pi_0,\pi_1} \colon C_1 \to C_0 \times C_0$ is a DOF.

  Note that $C$ is a gbo-congruence if and only if $\CC(A,C)$ is one for each $A \in \CC$.
\end{defn}

\begin{rmk}
  The name ``bo-congruence'' comes from \cite[Proposition~22]{bourke-garner-2-reg-ex}, where they are called ``$\cF_\bo$-congruences'' (``bo'' is for ``bijective-on-objects'').
  That paper studies several natural variants of the notion of congruence in a 2-category.

  The condition that $\br{\pi_0,\pi_1}$ is a DOF is analogous to the requirement that $R \to X \times X$ be a monomorphism in the definition of an internal equivalence relation $R$ on an object $X$ in a 1-category.

  In the non-groupoidal case, the corresponding condition is that $C_0\xot{\pi_0}C_1\tox{\pi_1}C_0$ is a \emph{2-sided discrete fibration}, see \loccitnodot.
  Most of the results of this section have counterparts in the non-groupoidal case.
\end{rmk}

Let us now see what it means for a double category to be a gbo-congruence.
Fix a groupoidal internal double category $C$ in $\Cat$.

We define a relation $vRh$ between vertical arrows $v \colon A \to B$ and horizontal arrows $h \colon A \to B$, by declaring that $v R h$ if there exists a 2-cell
  \begin{equation}\label{eq:R-def-diags}
    \begin{tikzcd}
      A\ar[d, "v"']\ar[r, "h"]\ar[dr, phantom, "\alpha"]&B\ar[d, "\id_B"]\\
      B\ar[r, "\id_B"']&B
    \end{tikzcd}
    \quad
    \text{or equivalently}
    \quad
    \begin{tikzcd}
      A\ar[d, "\id_A"']\ar[r, "\id_A"]\ar[dr, phantom, "\alpha"]&A\ar[d, "v"]\\
      A\ar[r, "h"']&B.
    \end{tikzcd}
  \end{equation}
  (To see that they are equivalent: given $\alpha$ as on the left, consider the vertical composite $\id_v\cdot\alpha\I$, where $\alpha\I$ is the vertical inverse of $\alpha$.)

  \label{propn:R-funct-props}
  It is immediate that $\id_A R\,\id_A$ and one sees upon contemplating the diagram
  \[
    \begin{tikzcd}[sep=25pt]
      A\ar[r, "h_1"]\ar[d, "v_1"']
      \ar[dr, phantom, "\alpha_1"]
      &B\ar[r, "h_2"]\ar[d, "\id"']
      \ar[dr, phantom, "\id_{h_2}"]
      &C\ar[d, "\id"]\\
      B\ar[r, "\id"]\ar[d, "v_2"']
      \ar[dr, phantom, "\id_{v_2}"]
      &B\ar[r, "h_2"]\ar[d, "v_2"]
      \ar[dr, phantom, "\alpha_2"]
      &C\ar[d, "\id"]\\
      C\ar[r, "\id"']&C\ar[r, "\id"']&C
    \end{tikzcd}
  \]
  that $v_1 R h_1$ and $v_2 R h_2$ imply $(v_1v_2)R(h_1h_2)$.

  We note that the functor $\br{\pi_0,\pi_1} \colon C_1 \to C_0 \times C_0$ is \emph{faithful} (a weaker condition than being a DOF) if any only if any two 2-cells in $C$ with the same boundary are equal---hence, in this case the above 2-cell $\alpha$ is uniquely determined.

\begin{propn}\label{propn:cat-gbo-char}
  Let $C$ be a groupoidal internal double category in $\Cat$.
  Then the functor $\br{\pi_0,\pi_1} \colon C_1 \to C_0 \times C_0$ is a DOF if and only if the following two conditions hold:
  \begin{enumerate}[(i)]
  \item The above relation $R$ is a function (i.e., for all $v$ there is a unique $h$ with $vRh$, in which case we write $h = R(v)$), and
  \item Given morphisms $h,k,u,v$ as below, there is at most one 2-cell $\alpha$ with the given boundary, and such an $\alpha$ exists if and only if $h\cdot R(v) = R(u)\cdot k$.
    \[
      \begin{tikzcd}
        A\ar[d, "u"']\ar[r, "h"]\ar[dr, phantom, "\alpha"]&B\ar[d, "v"]\\
        C\ar[r, "k"']&D
      \end{tikzcd}
    \]
  \end{enumerate}
\end{propn}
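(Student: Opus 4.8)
The plan is to translate the DOF condition on $\br{\pi_0,\pi_1} \colon C_1 \to C_0 \times C_0$ into statements about cocartesian lifts of squares and then match these against (i) and (ii). Recall that an object of $C_1$ is a horizontal arrow $h \colon A \to B$, a morphism of $C_1$ is a square, and $\br{\pi_0,\pi_1}$ sends a square to the pair (left edge, right edge) of vertical arrows. Thus the DOF condition says exactly: for every horizontal arrow $h \colon A \to B$ and every pair of vertical arrows $u \colon A \to C$, $v \colon B \to D$, there is a unique square $\alpha$ with top $h$, left $u$, right $v$ (its bottom $k$ being then determined). Throughout I will use that, by (i) together with the functoriality of $R$ recorded before the statement (namely $\id_A R\,\id_A$ and the compatibility with composition), the assignment $v \mapsto R(v)$ is an identity-on-objects functor from the vertical to the horizontal category; since $C$ is groupoidal the vertical arrows are invertible, so each $R(u)$ is an invertible horizontal arrow with $R(u)\I = R(u\I)$.

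For the direction DOF $\Rightarrow$ (i), (ii): condition (i) is immediate, since the unique cocartesian lift of $(\id_A, v)$ at the object $\id_A \in C_1$ is precisely a square exhibiting $v R h$ for a unique $h$ (using the second diagram in (\ref{eq:R-def-diags})). The ``at most one'' clause of (ii) is just the faithfulness of $\br{\pi_0,\pi_1}$, which holds because DOFs are faithful. The main work is the existence clause of (ii). Given a boundary $(h,k,u,v)$, let $\widehat\alpha \colon h \to h'$ be the unique cocartesian lift of $(u,v)$; then a square with boundary $(h,k,u,v)$ exists if and only if $k = h'$, so it suffices to show that $h'$ is characterized by $h \cdot R(v) = R(u) \cdot h'$. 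This is the crux: I will paste $\widehat\alpha$ horizontally with the defining square $\rho_v$ of $v R R(v)$ to obtain a square $\sigma$ with top $h \cdot R(v)$, bottom $h'$, left $u$, right $\id_D$, and separately paste $\rho_u$ (the defining square of $u R R(u)$) with the vertical identity on $h'$ to obtain a square $\tau$ with top $R(u) \cdot h'$, the same bottom $h'$, and the same edges $(u, \id_D)$. Since $C_1$ is a groupoid, $\sigma$ and $\tau$ are invertible, and $\sigma\I, \tau\I$ are both cocartesian lifts of $(u\I, \id_D)$ at $h'$; by the uniqueness in the DOF property they coincide, whence $\sigma = \tau$ and in particular their domains agree, giving $h \cdot R(v) = R(u) \cdot h'$. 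Using the invertibility of $R(u)$, this equation determines $h'$ uniquely, yielding the asserted equivalence.

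For the converse, assume (i) and (ii). To verify the DOF property, fix $h \colon A \to B$ and vertical arrows $u \colon A \to C$, $v \colon B \to D$, and set $k \defeq R(u\I) \cdot h \cdot R(v)$, so that $R(u) \cdot k = h \cdot R(v)$. By the existence clause of (ii) there is a square $\alpha$ with boundary $(h,k,u,v)$, giving the required lift. For uniqueness, any square $\beta$ with top $h$ and vertical edges $(u,v)$ has some boundary $(h,k',u,v)$; the existence clause of (ii) forces $h \cdot R(v) = R(u) \cdot k'$, hence $k' = k$ by invertibility of $R(u)$, and then the ``at most one'' clause of (ii) gives $\beta = \alpha$. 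Thus $\br{\pi_0,\pi_1}$ is a DOF. The only genuine obstacle is the pasting-and-inversion argument of the previous paragraph; the remaining steps are bookkeeping with the functor $R$ and the uniqueness of cocartesian lifts.
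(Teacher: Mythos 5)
Your proof is correct and takes essentially the same approach as the paper: both directions hinge on pasting the candidate square horizontally with the defining squares of $R(u)$ and $R(v)$ and invoking uniqueness of DOF lifts, and your converse (forcing $k = R(u)^{-1}\cdot h\cdot R(v)$ and then applying the two clauses of (ii)) is exactly the paper's. The only cosmetic difference is in the pasting bookkeeping: the paper composes three squares so that the vertical edges become identities and compares with the identity square on $h$, whereas you compose two squares in two ways and compare their vertical inverses using groupoidality; both are valid.
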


\begin{proof}
  First suppose $\br{\pi_0,\pi_1}$ is a DOF.

  It follows immediately that $R$ is a function, since given three of the four sides of either square in (\ref{eq:R-def-diags}) (namely the sides except for $h$) the DOF property implies that there is a unique $h$ and a unique $\alpha$ for which the boundary of $\alpha$ is the given square.

  Regarding (ii), we have already noted that uniqueness of $\alpha$ follows from $\br{\pi_0,\pi_1}$ being faithful.
  Now supposing that such an $\alpha$ exists, we can form the composite 2-cell
  \[
    \begin{tikzcd}
      A\ar[r, "\id"]\ar[d, "\id"']&
      A\ar[d, "u"']\ar[r, "h"]\ar[dr, phantom, "\alpha"]&B\ar[d, "v"]
      \ar[r, "\id"]&B\ar[d, "\id"]
      \\
      A\ar[r, "R(u)"']&
      C\ar[r, "k"']&D
      \ar[r, "R(v)\I"']&B,
    \end{tikzcd}
  \]
  and it then follows from the DOF condition that $h = R(u) \cdot k \cdot R(v)\I$ as desired.
  The converse (that this equation implies the existence of a 2-cell $\alpha$) is proved similarly.

  Now, conversely, suppose that conditions (i)-(ii) hold.
  To prove that $\br{\pi_0,\pi_1}$ is a DOF, we need to show that given morphisms $h,u,v$ as in (ii) above, there is a unique morphism $k$ and 2-cell $\alpha$ as shown.
  However, condition (ii) says that we must take $k = R(u)\I\cdot h\cdot R(v)$, and that there will then be a unique such $\alpha$.
\end{proof}

The function $R$ appearing above in the case of a double category has a counterpart for internal double categories in a general 2-category:
\begin{defn}
  Let $C$ be a groupoidal internal double category in a pita 2-category $\CC$, and fix an arrow object $C_0^\to$ of $C_0$.

  We say that a morphism $r \colon C_0^\to \to C_1$ is a \defword{reflection morphism} if there exists a 2-cell $\bar \rho \colon C_0^\to \tocellud{r}{\partial_1e} C_1$ with $\bar \rho\br{\pi_0,\pi_1} = \br{\partial,\id_{\partial_1}}$, or equivalently, a 2-cell $\und \rho \colon C_0^\to \tocellud{\partial_0e}{r} C_1$ with $\und \rho\br{\pi_0,\pi_1} = \br{\partial,\id_{\partial_1}}$.
  \[
    \begin{tikzcd}
      &[40pt] C_1\ar[d, "\br{\pi_0,\pi_1}"]\\[20pt]
      C_0^\to
      \ar[r, "\br{\partial_0,\partial_1}", ""{name=d0d1, pos=0.57}]
      \ar[r, "\br{\partial_1,\partial_1}"', bend right, ""'{name=d1d1}]
      \ar[ru, "r", ""{name=R}, bend left=30pt]
      \ar[ru, "\partial_1e"', ""'{name=d1e}]
      &C_0 \times C_0
      \ar["\bar{\rho}", shorten <=3pt, shorten >=3pt, Rightarrow, from=R, to=d1e]
      \ar["\br{\partial,\id_{\partial_1}}"'{pos=0.4}, shorten <=3pt, shorten >=3pt, Rightarrow, from=d0d1, to=d1d1]
    \end{tikzcd}
    \qquad
    \begin{tikzcd}
      &[40pt] C_1\ar[d, "\br{\pi_0,\pi_1}"]\\[20pt]
      C_0^\to
      \ar[r, "\br{\partial_0,\partial_0}", ""{name=d0d0, pos=0.57}]
      \ar[r, "\br{\partial_0,\partial_1}"', bend right, ""'{name=d0d1}]
      \ar[ru, "\partial_0e", ""{name=d0e}, bend left=30pt]
      \ar[ru, "r"', ""'{name=R}]
      &C_0 \times C_0
      \ar["\und\rho", shorten <=3pt, shorten >=3pt, Rightarrow, from=d0e, to=R]
      \ar["\br{\id_{\partial_0},\partial}"'{pos=0.4}, shorten <=3pt, shorten >=3pt, Rightarrow, from=d0d0, to=d0d1]
    \end{tikzcd}
  \]
  (To see the equivalence of the two conditions: given $\und \rho$, set $\bar{\rho} = \und\rho\I(\partial e)$.)
\end{defn}

\begin{rmk}\label{rmk:reflector-in-cat}
  If $\br{\pi_0,\pi_1}$ is faithful, then there is at most one reflection morphism, and if $\br{\pi_0,\pi_1}$ is a DOF, then there is exactly one.
  In the latter case, for each $A \in \CC$, the induced functor $\CC(A,C_0)^\to \toix{(\partial_*)\I} \CC(A,C_0^\to) \tox{r_*} \CC(A,C_1)$ is given on objects by the function $R$ appearing in Proposition~\ref{propn:cat-gbo-char}, and on arrows, it takes each square
  \[
    \begin{tikzcd}
      W\ar[r, "u"]\ar[d, "w"']&X\ar[d, "x"]\\
      Y\ar[r, "v"]&Z
    \end{tikzcd}
  \]
  in the vertical category $\CC(A,C_0)$ of $\CC(A,C)$ (considered as a morphism $u \to v$ in $\CC(A,C_0)^\to$) to the unique 2-cell
  \[
    \begin{tikzcd}
      W\ar[r, "Ru"]\ar[d, "w"']\ar[dr, phantom, "\alpha"]&X\ar[d, "x"]\\
      Y\ar[r, "Rv"']&Z
    \end{tikzcd}
  \]
  in $\CC(A,C)$ with the displayed boundary.

  In particular (by taking $A$ to be the terminal category), this gives a description of the reflection morphism for gbo-congruences in $\Cat$.
\end{rmk}
We also have a counterpart of Proposition~\ref{propn:cat-gbo-char} in a general pita 2-category $\CC$.
To state it, fix a groupoidal internal double category $C$ in $\CC$, suppose $r$ is a reflection morphism for $C$, and consider the strict pullbacks (of isofibrations)
\[
  \begin{tikzcd}
    C_1 \times_{C_0} C_0^\to\ar[r, ""]\ar[d, ""'] \pb& C_0^\to\ar[d, "\partial_0"]\\
    C_1\ar[r, "\pi_1"]&C_0
  \end{tikzcd}
  \qquad
  \begin{tikzcd}
    C_0^\to \times_{C_0} C_1\ar[r, ""]\ar[d, ""'] \pb&C_1\ar[d, "\pi_0"]\\
    C_0^\to\ar[r, "\partial_1"]&C_0
  \end{tikzcd}
\]
\[
  \begin{tikzcd}
    (C_1 \times_{C_0} C_0^\to) \times_{C_1} (C_0^\to \times_{C_0} C_1)\ar[r, ""]\ar[d, ""'] \pb&
    C_0^\to \times_{C_0} C_1\ar[d, "(r \times_{C_0} \id_{C_1})\pi_{02}"]\\
    C_1 \times_{C_0} C_0^\to\ar[r, "(\id_{C_1} \times_{C_0} r)\pi_{02}"]&C_1.
  \end{tikzcd}
\]
We then have a morphism
\[
  \brbig{
    \br{\partial_0,\pi_1^\to},
    \br{\pi_0^\to,\partial_1}
  } \colon
  C_1^\to \to
  (C_1 \times_{C_0} C_0^\to) \times_{C_1} (C_0^\to \times_{C_0} C_1).
\]
\begin{propn}
  A groupoidal internal double category $C$ in a pita 2-category $\CC$ is a gbo-congruence if and only if it admits a reflection morphism $r$ such that the above morphism
  $\brbig{
    \br{\partial_0,\pi_1^\to},
    \br{\pi_0^\to,\partial_1}
  }$
  is an isomorphism.
\end{propn}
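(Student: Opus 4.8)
The plan is to reduce the statement to the case $\CC = \Cat$, which is already essentially treated in Proposition~\ref{propn:cat-gbo-char}. For each $A \in \CC$, the representable 2-functor $\CC(A,-) \colon \CC \to \Cat$ preserves all the strict limits in sight — the arrow objects $C_0^\to$ and $C_1^\to$, the induced morphisms $\pi_0^\to,\pi_1^\to$, and the iterated strict pullbacks of isofibrations defining the codomain — and it detects isomorphisms, in the sense that a morphism of $\CC$ is invertible if and only if each $\CC(A,-)$ carries it to an isomorphism of categories. Moreover $C$ is a gbo-congruence if and only if each $\CC(A,C)$ is one, and by Remark~\ref{rmk:reflector-in-cat} a reflection morphism $r$ for $C$ induces one for each $\CC(A,C)$ (while conversely, when $C$ is a gbo-congruence, $r$ exists and is unique). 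Hence it suffices to prove the proposition for a groupoidal internal double category in $\Cat$.

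The heart of the argument is then to unwind the codomain. Writing a square $\sigma$ of the double category $C$ with top and bottom horizontal arrows $h,k$ and left and right vertical arrows $u,v$, the functor $\br{\partial_0,\pi_1^\to}$ sends $\sigma$ to $(h,v) \in C_1 \times_{C_0} C_0^\to$ and $\br{\pi_0^\to,\partial_1}$ sends it to $(u,k) \in C_0^\to \times_{C_0} C_1$. The two legs into $C_1$ over which the outer fibre product is formed, namely $(\id_{C_1} \times_{C_0} r)\pi_{02}$ and $(r \times_{C_0} \id_{C_1})\pi_{02}$, send these to the horizontal composites $h \cdot R(v)$ and $R(u) \cdot k$ respectively. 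Thus the fibre product $(C_1 \times_{C_0} C_0^\to) \times_{C_1} (C_0^\to \times_{C_0} C_1)$ is exactly the category of boundaries $(h,u,v,k)$ (with the evident source–target compatibilities) satisfying $h \cdot R(v) = R(u) \cdot k$, and $\brbig{\br{\partial_0,\pi_1^\to}, \br{\pi_0^\to,\partial_1}}$ is the functor carrying a square to its boundary. Under this identification, being an isomorphism on objects says precisely that every compatible boundary satisfying the displayed equation has a unique filling square, which is the content of conditions (i) and (ii) of Proposition~\ref{propn:cat-gbo-char}; the morphism-level statement follows by the same reasoning applied to $C_1^\to$, using groupoidality.

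With this identification in hand, both directions follow from Proposition~\ref{propn:cat-gbo-char}. If $C$ is a gbo-congruence then $\br{\pi_0,\pi_1}$ is a DOF, $r$ exists by Remark~\ref{rmk:reflector-in-cat}, conditions (i)--(ii) hold, and these say exactly that the boundary functor is a bijection on objects (and, by the analogous argument, on morphisms), hence an isomorphism. Conversely, given $r$ with the morphism an isomorphism, surjectivity and injectivity on objects yield (ii), and reading off the boundaries of the degenerate squares witnessing $v\,R\,R(v)$ yields (i); Proposition~\ref{propn:cat-gbo-char} then gives that $\br{\pi_0,\pi_1}$ is a DOF. I expect the main obstacle to be the bookkeeping of the previous paragraph: verifying that the outer fibre product really cuts out the relation $h\cdot R(v) = R(u)\cdot k$ (i.e.\ that the two composite legs compute the asserted horizontal composites), and, in the converse direction, extracting the single-valuedness in (i). The latter amounts to checking that $r$ respects units, so that $R(\id) = \id$ and consequently each $R(u)$ is horizontally invertible with inverse $R(u^{-1})$; both points are routine given groupoidality but must be carried out carefully in the strict setting.
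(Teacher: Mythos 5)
Your route is essentially the paper's: pass to hom-categories, identify the outer fibre product with the category of boundaries $(h,u,v,k)$ satisfying $h\cdot R(v)=R(u)\cdot k$ (your bookkeeping here, including the point that this equation forces the corner compatibilities, is correct), and conclude via Proposition~\ref{propn:cat-gbo-char}; the paper merely economizes by noting that invertibility of the comparison morphism is equivalent to bijectivity of the induced maps on \emph{objects} of $\CC(A,-)$ for all $A$, so it can skip the morphism-level verification you sketch. The genuine gap is in your converse direction, at precisely the step you declare routine: that condition (i) of Proposition~\ref{propn:cat-gbo-char} follows because ``$r$ respects units, so that $R(\id)=\id$''.

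Nothing in the definition of reflection morphism, in groupoidality, or in the isomorphism hypothesis forces $R(\id_B)=\id_B$: applying the commutativity of the comparison cone to the witnessing square $\bar\rho_{\id_B}$ yields only the idempotence $R(\id_B)\cdot R(\id_B)=R(\id_B)$. In fact the step fails. Consider the one-object double category $C$ with a single vertical arrow, two horizontal arrows $I$ (the horizontal identity) and $J$ with $I\cdot J=J\cdot I=J\cdot J=J$, and exactly one square for each ordered pair of horizontal arrows; thus $C_0=\tm$ and $C_1$ is the indiscrete groupoid on $\set{I,J}$, and $C$ is a groupoidal internal double category in $\Cat$. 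Taking $r$ to pick out $J$, with $\bar\rho$ the unique square $J\to I$, gives a reflection morphism, since every defining equation lands in $C_0\times C_0=\tm$ and is vacuous. Both legs of the outer pullback are then constant at $J$, so the comparison functor exists and is an isomorphism (both sides are indiscrete on four objects); yet $\br{\pi_0,\pi_1}\colon C_1\to\tm$ is not a DOF: the relation $vRh$ relates the vertical identity to both $I$ and $J$, so (i) fails, and indeed $R(\id)=J\neq I$. Note also that you cannot appeal to Proposition~\ref{propn:refl-is-internal-fun} to supply the unit equation (its unit clause is exactly what you need): the same example contradicts its conclusion, since an internal functor $\Nv(C_0)\to C$ extending $\id_{C_0}$ must send the unique object of $\Nv(C_0)_1$ to $I$, not $J$. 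So deriving (i) requires an extra hypothesis on $r$ (the unit equation $R(\id)=\id$), under which your deduction of (i) from (ii) does go through. For calibration: the paper's own proof silently elides the identical point---it invokes only condition (ii) and asserts this is equivalent to being a gbo-congruence---so the weakness is inherited from the source; but your claim that the point is routine is exactly where your write-up has a gap that cannot be closed as stated.
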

\begin{proof}
  We have already noted that $C$ being a gbo-congruence implies the existence of a reflection morphism $r$.

  Now, given a reflection morphism $r$, the condition that
  $\brbig{
    \br{\partial_0,\pi_1^\to},
    \br{\pi_0^\to,\partial_1}
  }$
  is an isomorphism is equivalent to
  $\brbig{
    \br{\partial_0,\pi_1^\to},
    \br{\pi_0^\to,\partial_1}
  }_* \colon
  \Ob\CC(A,C_1^\to)
  \to
  \Ob\pbig{
    A,
    (C_1 \times_{C_0} C_0^\to) \times_{C_1} (C_0^\to \times_{C_0} C_1)
  }
  $
  being a bijection for all $A \in \CC$.
  But unfolding the definitions, we see that this condition is equivalent to each of the double categories $\CC(A,C)$ satisfying the condition (ii) from Proposition~\ref{propn:cat-gbo-char}, and is thus equivalent to each $\CC(A,C)$ being a gbo-congruence, and thus to $C$ itself being a gbo-congruence.
\end{proof}

The fact the function $R$ from Proposition~\ref{propn:cat-gbo-char} preserves composites and identity morphisms also has a counterpart for the reflection morphism $r$.
Given a groupoid $X$ in a pita 2-category $\CC$, we obtain a (groupoidal) internal double category (this is precisely the \emph{nerve} $\Nv(X)$ of $X$---see \S\ref{subsec:nerves} below)
\[
  \begin{tikzcd}
    X^{[2]}\ar[r,shift left=10pt, "\pi_{01}"]\ar[r,shift right=10pt, "\pi_{12}"]\ar[r, "\pi_{02}"]&
    X^\to\ar[r, shift left=10pt, "\pi_0"]\ar[r, shift right=10pt, "\pi_1"]\ar[from=r, "e"']&
    X,
  \end{tikzcd}
\]
where $[2]$ is the category corresponding to the linear order $\set{0<1<2}$.
\begin{propn}\label{propn:refl-is-internal-fun}
  Given a groupoidal internal double category $C$ in a pita 2-category $\CC$ and a reflection morphism $r$ for $C$, the morphisms $\id_{C_0} \colon C_0 \to C_0$ and $r \colon C_0^\to \to C_1$ constitute an internal functor $\Nv(C_0) \to C$.
\end{propn}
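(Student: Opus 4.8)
The plan is to verify the claim representably. Applying the limit-preserving $2$-functors $\CC(A,-)$ (which preserve the cotensors and arrow objects involved) sends $\Nv(C_0)$ to the nerve $\Nv(\CC(A,C_0))$ of the groupoid $\CC(A,C_0)$, sends $C$ to the double category $\CC(A,C)$, and sends the candidate data $(\id_{C_0},r)$ to $(\id_{\CC(A,C_0)},r_*)$. Since two morphisms of $\abs{\CC}$ with the same source and target agree as soon as they agree after applying every $\CC(A,-)$ (indeed, $f=f_*(\id)$), and since the internal functor axioms are a conjunction of such equalities, it suffices to show that $(\id,r_*)$ is an internal functor of double categories in $\Cat$ for each $A$. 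By Remark~\ref{rmk:reflector-in-cat}, $r_*$ acts on objects as the function $R$ of Proposition~\ref{propn:cat-gbo-char}, sending an arrow $v$ of $\CC(A,C_0)$ to the horizontal arrow $R(v)$, and on morphisms (squares of the nerve, i.e.\ commuting squares of $\CC(A,C_0)$) as the assignment of $2$-cells described there; so we are reduced to checking that this data respects source, target, identities, and composition.

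Source and target are immediate: the defining boundary condition on the reflection morphism, $r\br{\pi_0,\pi_1}=\br{\partial_0,\partial_1}$ (read off from the whiskering condition on $\und\rho$ or $\bar\rho$), says precisely that $R(v)$ has the same source and target as $v$, so the first two axioms hold even before passing to $\Cat$. The remaining two axioms are the internal counterparts of the facts, recorded just before Proposition~\ref{propn:cat-gbo-char}, that $R$ preserves identities and composites. For identities, the witnessing square $\und\rho_*(\id_X)$ (an instance of the second square in (\ref{eq:R-def-diags})) has all three of its top, left, and right sides equal to $\id_X$ and its bottom equal to $R(\id_X)$; since $\br{\pi_0,\pi_1}$ is a DOF the square with these three prescribed sides is unique, and the identity square on the horizontal arrow $\id_X$ provides one with bottom $\id_X$, whence $R(\id_X)=\id_X$. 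For composites, I would paste the witnessing squares for $v_1\,R\,R(v_1)$ and $v_2\,R\,R(v_2)$ exactly as in the display preceding Proposition~\ref{propn:cat-gbo-char} to obtain a square witnessing $(v_1v_2)\,R\,\bigl(R(v_1)R(v_2)\bigr)$; single-valuedness of $R$ then forces $R(v_1v_2)=R(v_1)R(v_2)$.

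The action on squares is then routine: a commuting square of $\CC(A,C_0)$ is sent to the unique $2$-cell of $\CC(A,C)$ with the matching boundary, and compatibility with the source/target/identity/composition structure of $C_1$ is exactly the content of Remark~\ref{rmk:reflector-in-cat} and needs no further argument. I expect the composition axiom to be the main obstacle: unlike source, target, and identities — which are essentially forced by the boundary data of the reflection morphism — it genuinely requires the pasting argument of Proposition~\ref{propn:cat-gbo-char} together with the single-valuedness of $R$ (i.e.\ the DOF property of $\br{\pi_0,\pi_1}$), and some care to match the pasted witness against the internally defined composite $F_2\,\pi_{02}$, where $F_2=\br{\pi_{01}r,\pi_{12}r}$ is the induced morphism on composable pairs.
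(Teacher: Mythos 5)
Your overall strategy is the same as the paper's: reduce representably to double categories in $\Cat$ (using that equalities of morphisms in $\abs{\CC}$ can be tested after applying every $\abs{\CC}(A,-)$), identify the arrows of $\abs{\CC}\pbig{A,\Nv(C_0)}$ with the vertical arrows of the double category $\CC(A,C)$, and then verify functoriality of $(\id,r_*)$ by means of the relation $R$. The gap lies in what you allow yourself to assume about $C$: the proposition is stated for an arbitrary \emph{groupoidal internal double category} equipped with a reflection morphism, so $\br{\pi_0,\pi_1} \colon C_1 \to C_0 \times C_0$ is only an isofibration --- it is \emph{not} assumed to be a DOF, and $C$ need not be a gbo-congruence. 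Yet your argument invokes the DOF property at precisely its two load-bearing steps: for identity preservation you argue ``since $\br{\pi_0,\pi_1}$ is a DOF the square with these three prescribed sides is unique'', and for composition you appeal to ``single-valuedness of $R$''. You also cite Remark~\ref{rmk:reflector-in-cat} for the identification of $r_*$ with the function $R$ of Proposition~\ref{propn:cat-gbo-char}, but that remark asserts this only ``in the latter case'', i.e.\ when $\br{\pi_0,\pi_1}$ is a DOF. Under the actual hypotheses, $R$ is in general only a relation (not single-valued), reflection morphisms need not be unique, and all one can say is that $r_*$ is \emph{some} function contained in $R$. So, as written, your proof establishes the proposition only for gbo-congruences, which is a strictly weaker statement than the one asked.

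By contrast, the paper's proof is careful to use only data available in the stated generality: after the same reduction and the same identification of $\abs{\CC}\pbig{A,\Nv(C_0)}$ with the vertical category of $\CC(A,C)$, it observes merely that $r_*$ is a function which is a \emph{subset} of the relation $R$, and then invokes the two properties established on p.~\pageref{propn:R-funct-props} for any groupoidal internal double category --- namely $\id_A\, R\, \id_A$ and $(v_1 R h_1 \wedge v_2 R h_2) \To (v_1v_2) R (h_1h_2)$ --- never appealing to Proposition~\ref{propn:cat-gbo-char} or to the DOF property. I will add that your difficulty points at something real: extracting the on-the-nose equalities $r_*(\id)=e$ and $r_*(v_1v_2)=r_*(v_1)r_*(v_2)$ from mere membership in a relation closed under identities and composition is exactly the delicate point, and the paper's concluding sentence is very terse about it (your instinct that single-valuedness is what makes the composition axiom go through is a fair one). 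But the way to meet the statement as given is to make that step work without the DOF hypothesis; importing that hypothesis, as your proof does, leaves a genuine gap relative to the proposition as stated.
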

\begin{proof}
  It suffices to show that for each $A \in \CC$, the maps $\id_* \colon \abs{\CC}(A,C_0) \to \abs{\CC}(A,C_0)$ and $r_* \colon \abs{\CC}(A,C_0^\to) \to \abs{\CC}(A,C_1)$ constitute a functor $\abs{\CC}\pbig{A,\Nv(C)} \to \abs{\CC}(A,C)$.
  It is easy to see that the category $\abs{\CC}\pbig{A,\Nv(C_0)}$ is identified (via $\id \colon \abs{\CC}\pbig{A,C_0} \to \Ob\CC(A,C_0)$ and $\partial_* \colon \abs{\CC}(A,C_0^\to) \to \Ar\CC(A,C_0)$) with the category $\CC(A,C_0)$---i.e., the category of vertical arrows in the double category $\CC(A,C)$---whereupon $r_*$ is identified with (a function which is a subset of the relation) $R$ from p.~\pageref{propn:R-funct-props}.

  The claim thus follows from the properties $\id_A R\,\id_A$ and $(v_1 R h_2 \wedge v_2 R h_2) \To (v_1v_2) R (h_1h_2)$ of $R$.
\end{proof}

\subsection{Complete congruences}
We now discuss Rezk's \emph{complete Segal} condition, as introduced in \cite{rezk-homotopy-of-homotopies}.
The condition we are about to state is slightly different from the one in \emph{op. cit.}, but in Proposition~\ref{propn:rezk-equiv-crit}, we will show that they are equivalent.

\begin{defn}\label{defn:rezk-complete}
  Given a groupoid $X$ in a 2-category and an arrow object $X^\to$ for $X$, we define the \defword{inversion morphism} $\tau \colon X^\to \to X^\to$ of $X^\to$ to be the unique morphism  satisfying $\tau \partial = \partial\I$.

  Next, we say that a gbo-congruence $C$ in a pita 2-category $\CC$ is \defword{Rezk complete} (or just \defword{complete}) if the square
  \begin{equation}\label{eq:completeness-cond-square}
    \begin{tikzcd}
      C_0^\to\ar[r, "\br{r,\tau r,r}"]\ar[d, "\br{\partial_0,\partial_1}"']&C_3\ar[d, "\br{\pi_{02},\pi_{13}}"]\\
      C_0 \times C_0\ar[r, "e \times e"]&C_1 \times C_1
    \end{tikzcd}
  \end{equation}
  is a strict pullback square, where $r$ is the reflection morphism.
  We note that if this is a strict pullback square, it is a bipullback; indeed, we have that the composite
  \[
    C_3
    \tox{\br{\pi_{02},\pi_{13}}}
    C_1 \times C_1
    \tox{\br{\pi_0\pi_0,\pi_1\pi_0,\pi_0\pi_1,\pi_1\pi_1}}
    C_0 \times C_0 \times C_0 \times C_0,
  \]
  is a DOF by (two applications of) Lemma~\ref{lem:ob-proj-dof} below, and the second factor is a also a DOF, and hence the first factor is as well by the 2-of-3 property of DOFs.

  We will use \defword{complete congruence} as a shorthand for ``complete gbo-congruence''.
  We note that a gbo-congruence $C$ in $\CC$ is complete if and only if $\CC(A,C)$ is complete for each $A \in \CC$.
\end{defn}

\begin{lem}\label{lem:ob-proj-dof}
  Let $A,B,C,X,Y$ be objects in a 2-category $\CC$, and suppose we are given strict products $A \times B$ and $B \times C$, morphisms $\br{f,g} \colon X \to A \times B$ and $\br{h,k} \colon Y \to B \times C$, and a strict pullback square
  \[
    \begin{tikzcd}
      X \times_BY\ar[r, "\pi_1"]\ar[d, "\pi_0"']&Y\ar[d, "h"]\\
      X\ar[r, "g"]&B.
    \end{tikzcd}
  \]
  Then if $\br{f,g}$ and $\br{g,h}$ are DOFs, so is
  \[
    \br{\pi_0f,\pi_0g,\pi_1k} =
    \br{\pi_0f,\pi_1h,\pi_1k} \colon X \times_BY \to A \times B \times C.
  \]
\end{lem}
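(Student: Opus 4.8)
The plan is to first reduce to the case $\CC = \Cat$, and then exhibit the map in question as a composite of two pullbacks of the given DOFs.

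For the reduction: by definition a morphism is a DOF precisely when $\CC(T,-)$ sends it to a discrete opfibration of categories for every $T \in \CC$, and each representable 2-functor $\CC(T,-)$ preserves strict products and strict pullbacks. Hence applying $\CC(T,-)$ carries the whole hypothesis---the products $A\times B$, $B\times C$, $A\times B\times C$, the pullback $X\times_BY$, and the DOFs $\br{f,g}$ and $\br{h,k}$---to the analogous data in $\Cat$, and it suffices to treat $\CC = \Cat$, where all the pullbacks I will need exist.

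The key structural observation is that $A\times B\times C$ is a strict pullback of the cospan $A\times B\tox{\pi_B}B\xot{\pi_B}B\times C$ given by the two projections to $B$; and since $\pi_B\br{f,g}=g$ and $\pi_B\br{h,k}=h$, the triple $(\br{f,g},\id_B,\br{h,k})$ is a morphism of cospans from $X\tox{g}B\xot{h}Y$ to this one. Under this identification the map $\br{\pi_0f,\pi_0g,\pi_1k}=\br{\pi_0f,\pi_1h,\pi_1k}$ (the two agree because $\pi_0g=\pi_1h$) is precisely the induced comparison morphism $X\times_BY\to(A\times B)\times_B(B\times C)=A\times B\times C$. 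I would then factor this comparison through the intermediate pullback $(A\times B)\times_BY$. A routine pasting-of-pullbacks check shows that the first factor $X\times_BY\to(A\times B)\times_BY$ is a strict pullback of $\br{f,g}$ (along the projection $(A\times B)\times_BY\to A\times B$), and the second factor $(A\times B)\times_BY\to A\times B\times C$ is a strict pullback of $\br{h,k}$. Since DOFs are stable under pullback and closed under composition (\S\ref{subsec:dofs}), the comparison map is a DOF.

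The only real content---what I expect to be the crux---is the compatibility over $B$ that makes this work: a morphism of the fibre product $X\times_BY$ is exactly a pair of morphisms of $X$ and $Y$ whose images in $B$ agree, and it is this matching of the shared middle ($B$-)component that lets the two separate lifting properties of $\br{f,g}$ and $\br{h,k}$ be assembled into a single lift, and dually forces uniqueness. Concretely, in $\Cat$ this is the direct check that a morphism $(\alpha,\beta,\gamma)$ out of $\br{\pi_0f,\pi_0g,\pi_1k}(x,y)=(fx,gx,ky)$ lifts uniquely: one lifts $(\alpha,\beta)$ through the DOF $\br{f,g}$ and $(\beta,\gamma)$ through the DOF $\br{h,k}$, the common component $\beta$ guaranteeing that the two lifts agree in $B$ and hence define a morphism of $X\times_BY$, with uniqueness inherited from the uniqueness of each separate lift.
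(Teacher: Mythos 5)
Your proof is correct and takes essentially the same route as the paper's: both identify the morphism in question as the fibre product $\br{f,g} \times_B \br{h,k} \colon X \times_B Y \to (A \times B) \times_B (B \times C)$, factor it through the intermediate pullback, and conclude by stability of DOFs under pullback and closure under composition. Your preliminary reduction to $\Cat$ via the representables $\CC(T,-)$ is a harmless extra step that in fact tidies up a point the paper's terse proof leaves implicit, namely the existence of the intermediate pullback $(A \times B) \times_B Y$ in an arbitrary 2-category.
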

\begin{proof}
  This follows from the closure of DOFs under composition and pullbacks, since the morphism in question is just
  \[
    \br{f,g} \times_B\br{g,h} \colon
    X \times_BY \to (A \times B) \times_B(B \times C).\qedhere
  \]
\end{proof}

Next, we clarify the notion of completeness by means of an alternative characterization.

\begin{defn}\label{defn:ob-of-isos}
  Given an internal double category $C$ in a pita 2-category $\CC$ and a morphism $f \colon A \to C_1$, we say that a morphism $g \colon A \to C_1$ is a \defword{$C$-inverse} of $f$ (and that $f$ is \defword{$C$-invertible}) if it is an inverse to $f$ in the category $\abs{\CC}(A,C)$ (i.e., a horizontal inverse in $\CC(A,C)$).
  Note that if $f$ has a $C$-inverse, it is uniquely determined.

  An \defword{object of isomorphisms} of $C$ is an object $C_{\cong} \in \CC$ equipped with a $C$-invertible morphism $c \colon C_{\cong} \to C_1$ and such that each $C$-invertible morphism $f \colon A \to C_1$ factors uniquely through $c$ (so in particular, $c$ is a monomorphism%
  \footnote{
    It will follow from Proposition~\ref{propn:ob-of-iso-char} that, for an object of isomorphism $c \colon C_{\cong} \to C_1$, it is also the case that each 2-cell $\alpha \colon A \tocell C_1$---i.e., 2-cell in $\CC(A,C)$---which is horizontally invertible factors through a unique 2-cell $\alpha \colon A \tocell C_{\cong}$, and hence that $c$ is fully faithful.
    We mention this because it suggests what the ``weak'' analogue of an object of isomorphisms should be.
  }%
  ).
\end{defn}
Of course, if an object of isomorphisms exists, it is determined uniquely up to isomorphism.
Let us next see that one always exists:
\begin{propn}\label{propn:ob-of-iso-char}
  Let $C$ be an internal double category in a pita 2-category $\CC$.
  Then
  \begin{enumerate}[(i)]
  \item Given a morphism $f \colon A \to C_1$ in $\CC$, there exists a commutative square
    \[
      \begin{tikzcd}
        A\ar[r, "\br{f,g,h}"]\ar[d, "\br{x,y}"']&
        C_3\ar[d, "\br{\pi_{02},\pi_{13}}"]\\
        C_0 \times C_0\ar[r, "e \times e"]&C_1 \times C_1
      \end{tikzcd}
    \]
    if and only if $f$ is $C$-invertible.
    In this case, there is a unique such square; more precisely, we have $f = h$, $g$ is the $C$-inverse of $f$, and $\br{x,y} = f\br{\pi_0,\pi_1}$.
  \item If $f \colon A \to C_1$ is $C$-invertible, then the above commutative square is a strict pullback square if and only if $f \colon A \to C_1$ is an object of isomorphisms for $C$.
  \end{enumerate}
\end{propn}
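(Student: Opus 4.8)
The plan is to treat part~(i) as an essentially $1$-categorical computation inside the ordinary category $\abs{\CC}(A,C)$, and then to deduce part~(ii) from part~(i) together with the criterion from \S\ref{subsec:limits} that, for a cospan already known to admit a strict pullback, being a strict pullback can be checked on objects alone.

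For part~(i), I would first unpack the square by composing with the projections. Since $\br{f,g,h}$ names a composable triple of arrows in $\abs{\CC}(A,C)$, the composites $\br{f,g,h}\pi_{02}$ and $\br{f,g,h}\pi_{13}$ are precisely the internal composites $f\cdot g$ and $g\cdot h$ computed in $\abs{\CC}(A,C)$, while $\br{x,y}(e\times e)=\br{xe,ye}$ records the identity arrows on $x$ and $y$. Thus commutativity of the square is equivalent to the two equations $f\cdot g=xe$ and $g\cdot h=ye$ holding in $\abs{\CC}(A,C)$. These force $x=\pi_0 f$ and $y=\pi_1 f$, so that $\br{x,y}=f\br{\pi_0,\pi_1}$, and the one-line computation $f=f\cdot(g\cdot h)=(f\cdot g)\cdot h=h$ gives $h=f$; then $f\cdot g=\id$ together with $g\cdot f=g\cdot h=\id$ exhibits $g$ as the (necessarily unique) $C$-inverse of $f$. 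Conversely, if $f$ is $C$-invertible with $C$-inverse $g$, then setting $h=f$ and $\br{x,y}=f\br{\pi_0,\pi_1}$ visibly makes the square commute. This simultaneously yields existence, the explicit description, and uniqueness.

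For part~(ii), the key preliminary observation is that $\br{\pi_{02},\pi_{13}}\colon C_3\to C_1\times C_1$ is a DOF, hence an isofibration --- this is exactly the fact established in the discussion following Definition~\ref{defn:rezk-complete} via Lemma~\ref{lem:ob-proj-dof} and the $2$-of-$3$ property of DOFs. Since $\CC$ is pita, the cospan $C_3\to C_1\times C_1\xot{e\times e}C_0\times C_0$ (write $D$ for it) therefore admits a strict pullback, so by the remark in \S\ref{subsec:limits} the square of part~(i) is a strict pullback if and only if the comparison functor $\CC(B,A)\to 2\Cone(B,D)$ is a bijection on objects for every $B$. Now part~(i), applied with $A$ replaced by $B$, identifies the objects of $2\Cone(B,D)$ --- i.e.\ the strictly commuting cones with vertex $B$ --- bijectively and naturally in $B$ with the $C$-invertible morphisms $B\to C_1$, the identification sending a cone to its $\pi_{01}$-component. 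Under this identification the comparison map becomes $u\mapsto uf$, which lands among $C$-invertible morphisms because precomposition is a functor $\abs{\CC}(A,C)\to\abs{\CC}(B,C)$ and hence preserves invertibility. Thus the square is a strict pullback precisely when $u\mapsto uf$ is a bijection from $\CC(B,A)$ onto the $C$-invertible morphisms $B\to C_1$ for every $B$ --- which is exactly the universal property defining $f$ as an object of isomorphisms.

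The routine content is the diagram-chasing in part~(i); the one point requiring care there is the bookkeeping that correctly identifies $\pi_{02}$ and $\pi_{13}$ with the internal composites and checks that the two equations pin down $x,y,g,h$. The genuinely load-bearing step is the reduction in part~(ii): one must invoke that $\br{\pi_{02},\pi_{13}}$ is an isofibration so that the cospan has a strict pullback and the strict-pullback condition collapses to a bijection on objects. Otherwise one would also have to match the $2$-cells of $\CC(B,A)$ with the morphisms of $2\Cone(B,D)$, which the $1$-cell universal property of an object of isomorphisms does not directly address --- that $2$-cell statement is instead obtained only as a consequence, as indicated in the footnote to Definition~\ref{defn:ob-of-isos}.
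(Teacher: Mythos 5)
Your part~(i) is correct and is the same argument as the paper's: both read the commutativity of the square as the equations $f\cdot g=xe$ and $g\cdot h=ye$ in $\abs{\CC}(A,C)$ and solve them for $g,h,x,y$. The core reduction in your part~(ii) --- using (i) to identify strictly commuting cones with $C$-invertible morphisms, under which the comparison map becomes $u\mapsto uf$, and then matching the two universal properties --- is also exactly the paper's argument. The problem is the step you yourself flag as load-bearing. The fact that $\br{\pi_{02},\pi_{13}}\colon C_3\to C_1\times C_1$ is a DOF is established in the discussion inside Definition~\ref{defn:rezk-complete} only for \emph{gbo-congruences}: the two applications of Lemma~\ref{lem:ob-proj-dof} there require DOF hypotheses that come from $\br{\pi_0,\pi_1}\colon C_1\to C_0\times C_0$ being a DOF, which is part of the gbo-congruence definition. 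Proposition~\ref{propn:ob-of-iso-char}, however, concerns an arbitrary internal double category, where $\br{\pi_0,\pi_1}$ is only an isofibration, and there the claim genuinely fails. For instance, take $\CC=\Cat$ and let $C$ be the one-object double category attached to the strict monoidal groupoid of finite ordinals and bijections under addition, so that $C_0=\tm$, $C_3=C_1^3$, and $\br{\pi_{02},\pi_{13}}$ sends $(\alpha,\beta,\gamma)$ to the pair of block sums $(\alpha\oplus\beta,\beta\oplus\gamma)$; already over the object $(1,1,0)$ the transposition in $S_2$ has no preimage, so this functor is not even an isofibration, let alone a DOF. Consequently your appeal to pita-ness to obtain a strict pullback of the cospan, and with it the objects-only criterion of \S\ref{subsec:limits}, is unavailable at the proposition's stated generality, and what is then left unproved is precisely the direction ``object of isomorphisms $\Rightarrow$ strict pullback'': nothing matches the 2-cells of $\CC(B,A)$ with the morphisms of the cone category, since the defining property of an object of isomorphisms is purely 1-cell-level.

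In fairness, your instinct here is sound, and in every case where the paper actually applies the proposition (Proposition~\ref{propn:cat-gbo-cong-ob-of-isos}, Corollary~\ref{cor:complete-iff-ref-is-iso-ob}) the double category $C$ \emph{is} a gbo-congruence, so your argument is complete there --- indeed it is more explicit than the paper's own proof, which works only with cones in $\abs{\CC}$, i.e.\ establishes the bijection on objects (equivalently, that the square is a pullback in $\abs{\CC}$), and leaves the promotion of this 1-dimensional statement to the 2-dimensional strict-pullback property implicit. To repair your write-up you should either restrict part~(ii) to the case where the cospan is known to admit a strict pullback (e.g.\ gbo-congruences, or 2-categories such as $\Cat$ and $\Cat(\cC)$ where pullbacks in the underlying 1-category are automatically strict pullbacks), or else supply a separate argument handling the 2-cells; the citation as it stands does not cover the statement being proved.
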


\begin{proof}
  Regarding (i), a morphism $\br{f,g,h} \colon A \to C_3$ is the same as three consecutive morphisms $f,g,h$ in the category $\abs{\CC}(A,C)$, and a morphism $\br{x,y} \colon A \to C_0 \times C_0$ is the same as two objects $x,y$ in $\abs{\CC}(A,C)$.
  The commutativity of the above square then says precisely that the composition in $\abs{\CC}(A,C)$ of $f$ and $g$ is the identity at the object $x$, and the composition of $g$ and $h$ is the identity at $y$.
  But this is the same as saying that $f$ and $h$ are equal and have $C$-inverse $g$, and that $x = \dom_{\abs{\CC}(A,C)}(f) = f\pi_0$ and $y = \cod_{\abs{\CC}(A,C)}(f) = f\pi_1$.

  Regarding (ii), let $D$ be the cospan $C_0 \times C_0\tox{e \times e}C_1 \times C_1\xot{\br{\pi_{02},\pi_{13}}}C_3$, and for any object $A' \in \CC$, let $\Cone(A',D)$ be the category of cones (in $\abs{\CC}$) over $D$ with vertex $A'$, and write $\abs{\CC}(A',C_1)_\inv$ for the set of $C$-invertible morphisms in $\abs{\CC}(A',C)$.
  Write $\Phi \colon \Cone(A',D) \to \abs{\CC}(A',C_1)_\inv$ for the map taking a cone $C_0 \times C_0 \ot A' \tox{u} C_3$ to $u \pi_{01}$.
  By (i), $\Phi$ is a bijection.

  Then, given a commuting square as in (i), we have, for each $A'$ in $\CC$, a commuting triangle
  \[
    \begin{tikzcd}
      &\Cone(A',D)\ar[dd, "\Phi", "\sim"'{sloped}]\\[-20pt]
      \abs{\CC}(A',A)\ar[ru]\ar[rd, "f_*"']\\[-20pt]
      &\abs{\CC}(A',C_1)_\inv.
    \end{tikzcd}
  \]
  But now the square in (i) is a pullback square if and only if the upper horizontal morphism in the above triangle is a bijection for all $A'$, and $f \colon A \to C_1$ is a object of isomorphisms if and only if the lower morphism $f_*$ is a bijection for all $A'$.
  The claim follows.
\end{proof}
We conclude:
\begin{cor}\label{cor:complete-iff-ref-is-iso-ob}
  Given a gbo-congruence $C$ in a pita 2-category $\CC$, the reflection isomorphism $r \colon C_0^\to \to C_1$ is an object of isomorphisms if and only if $C$ is complete.
\end{cor}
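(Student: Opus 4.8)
The plan is to recognize the square~\eqref{eq:completeness-cond-square} defining completeness as exactly the canonical square attached to the morphism $r$ by Proposition~\ref{propn:ob-of-iso-char}, and then to read off the equivalence from part~(ii) of that proposition. Concretely, I would apply Proposition~\ref{propn:ob-of-iso-char} with $A = C_0^\to$ and $f = r$. Two things must be in place for this: first, that $r$ is $C$-invertible, so that the canonical square exists and is unique by part~(i); and second, that its three ingredients---the top map, the $C$-inverse occurring as its middle component, and the left-hand leg---coincide with those of~\eqref{eq:completeness-cond-square}.

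For the $C$-invertibility I would invoke Proposition~\ref{propn:refl-is-internal-fun}, which provides an internal functor $\Nv(C_0) \to C$ whose action on horizontal arrows is post-composition with $r$. Applying $\abs{\CC}(C_0^\to,-)$ yields an ordinary functor $\abs{\CC}(C_0^\to,\Nv(C_0)) \to \abs{\CC}(C_0^\to,C)$. Since $C$ is groupoidal, $C_0$ is a groupoid, so the identity $\id_{C_0^\to}$, viewed as a horizontal arrow in $\abs{\CC}(C_0^\to,\Nv(C_0))$, is invertible, its inverse being the inversion morphism $\tau$. Functors preserve invertibility, so its image $r$ is invertible in $\abs{\CC}(C_0^\to,C)$---that is, $C$-invertible---with $C$-inverse $\tau r$. (Under the identification of $r$ with the function $R$ of Remark~\ref{rmk:reflector-in-cat}, this simply records that $R(v)\I = R(v\I)$, which follows from $R$ preserving composition and identities.)

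It then remains to match the two legs. The defining reflection-morphism identity $\bar\rho\br{\pi_0,\pi_1} = \br{\partial,\id_{\partial_1}}$ forces $r\pi_0 = \partial_0$ and $r\pi_1 = \partial_1$, whence $r\br{\pi_0,\pi_1} = \br{\partial_0,\partial_1}$, which is precisely the left-hand leg of~\eqref{eq:completeness-cond-square}. Now Proposition~\ref{propn:ob-of-iso-char}(i), applied to the $C$-invertible morphism $r$, produces a \emph{unique} commuting square of the prescribed shape: its top map is $\br{r,\tau r,r}$ (here $h = f = r$ and the middle component is the $C$-inverse $\tau r$) and its left leg is $\br{\partial_0,\partial_1}$. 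This is exactly the square~\eqref{eq:completeness-cond-square}. Consequently, by Proposition~\ref{propn:ob-of-iso-char}(ii), this square is a strict pullback---i.e.\ $C$ is complete---if and only if $r$ is an object of isomorphisms, which is the assertion.

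The only step carrying real content is the $C$-invertibility of $r$ together with the identification of its $C$-inverse as $\tau r$; this is where the groupoidality of $C_0$ and the internal-functor structure of Proposition~\ref{propn:refl-is-internal-fun} are used, and I expect it to be the main (albeit modest) obstacle. Once it is secured, the rest is a direct comparison of the two squares via the defining property of the reflection morphism, and the equivalence is immediate from Proposition~\ref{propn:ob-of-iso-char}.
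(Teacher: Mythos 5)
Your proof is correct and follows exactly the route the paper intends: the corollary is stated as an immediate consequence of Proposition~\ref{propn:ob-of-iso-char}, with the square of Definition~\ref{defn:rezk-complete} recognized as the canonical square for $f = r$. Your argument for the $C$-invertibility of $r$ with $C$-inverse $\tau r$ (via Proposition~\ref{propn:refl-is-internal-fun} and the fact that $\tau$ inverts the universal arrow of $\Nv(C_0)$) is precisely the one the paper records in Remark~\ref{rmk:reflector-is-hor-invertible}.
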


\begin{rmk}\label{rmk:reflector-is-hor-invertible}
  We note that, for any gbo-congruence $C$ in a pita 2-category $\CC$, the reflection morphism $r \colon C_0^\to \to C_1$ has $C$-inverse $\tau r$ (i.e., the square (\ref{eq:completeness-cond-square}) in Definition~\ref{defn:rezk-complete} is always commutative).
  This follows from Proposition~\ref{propn:refl-is-internal-fun} and the fact that $\tau$ is a horizontal inverse morphism for the internal double category $\Nv(C)$ (i.e., $f \tau$ is a $C$-inverse of $f$ for each $f \colon A \to C_0^\to$ in $\CC$).
\end{rmk}

We next consider complete congruences in $\Cat$.
\begin{propn}\label{propn:cat-gbo-cong-ob-of-isos}
  Given a gbo-congruence $C$ in $\Cat$, the inclusion $j \colon C_{\mathrm{horiso}} \hto C_1$ of the full subcategory $C_{\mathrm{horiso}}$ of $C_1$ whose objects are the horizontal isomorphisms is an object of isomorphisms for $C$.
\end{propn}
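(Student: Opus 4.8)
The plan is to reduce the statement to a pointwise criterion for $C$-invertibility. Recall that a functor $f \colon A \to C_1$ is the same thing as a horizontal arrow of the double category $\Cat(A,C)$, and that $f$ being $C$-invertible means exactly that it is horizontally invertible there. The heart of the matter is the following lemma: \emph{a functor $f \colon A \to C_1$ is $C$-invertible if and only if, for every object $a \in A$, the horizontal arrow $f(a) \in \Ob C_1$ is a horizontal isomorphism of the double category $C$.} Granting this, the proposition follows immediately. Since $C_{\mathrm{horiso}} \hto C_1$ is the inclusion of a \emph{full} subcategory, $j$ is injective, so any factorization through $j$ is automatically unique. Applying the lemma to $j$ itself---whose value at each object $\ell$ of $C_{\mathrm{horiso}}$ is $\ell$, a horizontal isomorphism by definition---shows that $j$ is $C$-invertible; and if $f \colon A \to C_1$ is any $C$-invertible morphism, the lemma shows that each $f(a)$ is a horizontal isomorphism, hence an object of $C_{\mathrm{horiso}}$, so that $f$ factors through $j$. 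This is precisely the universal property demanded of an object of isomorphisms in Definition~\ref{defn:ob-of-isos}. (Alternatively, one could verify the pullback condition of Proposition~\ref{propn:ob-of-iso-char}(ii), but this appears to require the same analysis of squares.)

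The forward direction of the lemma is immediate: if $g$ is a $C$-inverse of $f$, then because horizontal composition in $\abs{\Cat}(A,C)$ is computed objectwise, evaluating the identities $f \cdot g = f\pi_0 e$ and $g \cdot f = f\pi_1 e$ (the horizontal identities at the source and target of $f$) at each object $a$ exhibits $g(a)$ as a horizontal inverse of $f(a)$.

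The substantive direction is the converse, for which I would invoke the explicit description of the squares of a gbo-congruence in Proposition~\ref{propn:cat-gbo-char}. Assume each $f(a)$ is a horizontal isomorphism, and define $g \colon A \to C_1$ on objects by $g(a) = f(a)\I$. For a morphism $\phi \colon a \to a'$ of $A$, write $u$ and $w$ for the left and right (vertical) edges of the square $f(\phi)$, so that $f(\phi)$ has top $f(a)$, bottom $f(a')$, left $u$, and right $w$. By the square criterion of Proposition~\ref{propn:cat-gbo-char}(ii), the existence of $f(\phi)$ is equivalent to the identity $f(a) \cdot R(w) = R(u) \cdot f(a')$ in the horizontal category, where $R$ is the reflection function. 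Multiplying on the left by $f(a)\I$ and on the right by $f(a')\I$ transforms this into $f(a)\I \cdot R(u) = R(w) \cdot f(a')\I$, which is exactly the criterion for the existence of a (necessarily unique) square with top $f(a)\I$, bottom $f(a')\I$, left $w$, and right $u$; I declare this square to be $g(\phi)$.

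Finally, the functoriality of $g$ and the verification that it is a horizontal inverse of $f$ are both forced by the faithfulness of $\br{\pi_0,\pi_1}$---that is, by the uniqueness of a square with a prescribed boundary. Indeed, $g(\id_a)$ and $g(\phi) \cdot g(\psi)$ have the same boundaries as $\id_{g(a)}$ and $g(\phi\psi)$, so $g$ is a functor; and the horizontal composites $f(\phi) \cdot g(\phi)$ and $g(\phi) \cdot f(\phi)$ have the same boundaries as $(f\pi_0 e)(\phi)$ and $(f\pi_1 e)(\phi)$, so that $f \cdot g$ and $g \cdot f$ are the respective horizontal identities. The one genuinely nontrivial point is the manipulation with $R$ in the previous paragraph; everything else is bookkeeping enforced by faithfulness.
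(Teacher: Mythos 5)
Your proof is correct, but it is organized differently from the paper's. The paper first observes that every square between horizontal isomorphisms is horizontally invertible, uses this to define an inversion functor $\tau \colon C_{\mathrm{horiso}} \to C_{\mathrm{horiso}}$, and then invokes Proposition~\ref{propn:ob-of-iso-char}(ii), so that the claim reduces to checking that the square built from $\br{j,\tau j,j}$ and $\br{\pi_{02},\pi_{13}}$ is a strict pullback in $\Cat$ (a verification the paper leaves to the reader). You bypass Proposition~\ref{propn:ob-of-iso-char} altogether: your key lemma --- $f \colon A \to C_1$ is $C$-invertible if and only if every $f(a)$ is a horizontal isomorphism --- lets you verify the factorization property of Definition~\ref{defn:ob-of-isos} directly, with uniqueness coming for free from injectivity of the full-subcategory inclusion. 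The substantive computation is the same in both cases: Proposition~\ref{propn:cat-gbo-char}(ii) produces the inverse square with prescribed boundary (your construction of $g(\phi)$ is exactly what makes the paper's $\tau$ well defined on morphisms), and faithfulness of $\br{\pi_0,\pi_1}$ forces all remaining identities. What your route buys is a reusable pointwise criterion for $C$-invertibility in $\Cat$ and an essentially formal deduction of the universal property; what the paper's route buys is that, once $\tau$ is in hand, the already-established equivalence of Proposition~\ref{propn:ob-of-iso-char} does the structural work, leaving only an elementary pullback check.
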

\begin{proof}
  Note first that by the assumption that $C$ is a gbo-congruence, every 2-cell between horizontal isomorphisms in $C$ is horizontally invertible.
  We thus have an inversion functor $\tau \colon C_{\mathrm{horiso}} \to C_{\mathrm{horiso}}$ taking each horizontal isomorphism in $C$ (i.e., object in $C_{\mathrm{horiso}}$) to its inverse, and taking each 2-cell between horizontal isomorphisms in $C$ (i.e., morphism in $C_{\mathrm{horiso}}$) to its horizontal inverse.

  By Proposition~\ref{propn:ob-of-iso-char}, to prove our claim, it now suffices to show that
  \[
    \begin{tikzcd}
      C_{\mathrm{horiso}}\ar[r, "\br{j,\tau j,j}"]\ar[d, "\br{\pi_0,\pi_1}"']&
      C_3\ar[d, "\br{\pi_{02},\pi_{13}}"]\\
      C_0 \times C_0\ar[r, "e \times e"]&C_1 \times C_1.
    \end{tikzcd}
  \]
  is a pullback square in $\Cat$, i.e., that for each pair of objects $(X,Y) \in (C_0 \times C_0) \times C_3$, there is a unique object $Z \in C_{\cong}$ mapping to $X$ and $Y$ under the given functors, and likewise with arrows.
  This is easily verified.
\end{proof}

\begin{propn}\label{propn:cat-compl-gbo-char}
  For a gbo-congruence $C$ in $\Cat$, the following are equivalent:
  \begin{enumerate}[(i)]
  \item $C$ is complete
  \item The function $R$ appearing in Proposition~\ref{propn:cat-gbo-char} is a bijection between vertical morphisms of $C$ and horizontal \emph{isomorphisms} of $C$.
  \item $C$ is isomorphic to the \emph{double category of vertically invertible commutative squares} in the horizontal category $D = \Ob C$ of $C$---this is the double category with $D$ as its horizontal category, the core $D^\iso$ of $D$ as its vertical category, and whose 2-cells are the commutative squares in $D$ whose left and right sides are isomorphisms.
  \end{enumerate}
\end{propn}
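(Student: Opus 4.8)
The plan is to establish (i)$\Leftrightarrow$(ii) using the theory of objects of isomorphisms developed above, and then (ii)$\Leftrightarrow$(iii) by writing down an explicit comparison double functor and reading off when it is an isomorphism. Throughout I will freely use that $\langle\pi_0,\pi_1\rangle$, being a DOF, is faithful, so that every 2-cell of $C$ is determined by its boundary, and that the reflection morphism $r$ sends a vertical arrow $v$ to the horizontal arrow $R(v)$ (Remark~\ref{rmk:reflector-in-cat}), which is moreover always horizontally invertible (Remark~\ref{rmk:reflector-is-hor-invertible}).

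For (i)$\Leftrightarrow$(ii), I first observe that since $R(v)$ is always a horizontal isomorphism, $r$ factors through the inclusion $j\colon C_{\mathrm{horiso}}\hto C_1$ as $r=r'j$ for a unique functor $r'\colon C_0^\to\to C_{\mathrm{horiso}}$; by Remark~\ref{rmk:reflector-in-cat}, $r'$ is $R$ on objects and sends a square of vertical arrows to the unique 2-cell of $C$ with the corresponding boundary. By Corollary~\ref{cor:complete-iff-ref-is-iso-ob}, $C$ is complete exactly when $r$ is an object of isomorphisms; since $j$ is already an object of isomorphisms (Proposition~\ref{propn:cat-gbo-cong-ob-of-isos}), the uniqueness of objects of isomorphisms shows that this holds precisely when the comparison $r'$ is an isomorphism of categories. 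Now $r'$ is bijective on objects if and only if $R$ restricts to a bijection from vertical arrows onto horizontal isomorphisms, which is exactly condition (ii); and I expect that, granting (ii), full faithfulness of $r'$ comes for free: a morphism $u\to v$ in $C_0^\to$ is a commuting square of vertical arrows $ux=wv$, a morphism $R(u)\to R(v)$ in $C_{\mathrm{horiso}}$ is (by faithfulness and Proposition~\ref{propn:cat-gbo-char}(ii)) a pair $(w,x)$ with $R(u)\,R(x)=R(w)\,R(v)$, and the two conditions correspond under the injectivity of $R$ together with its multiplicativity $R(ux)=R(u)\,R(x)$ from Proposition~\ref{propn:refl-is-internal-fun}. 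This gives (i)$\Leftrightarrow$(ii).

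For (ii)$\Leftrightarrow$(iii), let $E$ denote the double category of vertically invertible commutative squares in $D=\Ob C$, and consider the comparison double functor $\Phi\colon C\to E$ that is the identity on objects and on horizontal arrows, is $R$ on vertical arrows, and sends a 2-cell of $C$ with boundary $(h,k,u,v)$ to the square in $D$ with horizontal sides $h,k$ and vertical sides $R(u),R(v)$ (which commutes by Proposition~\ref{propn:cat-gbo-char}(ii)). Multiplicativity of $R$ (Proposition~\ref{propn:refl-is-internal-fun}) makes $\Phi$ a double functor, and Proposition~\ref{propn:cat-gbo-char}(ii) shows that $\Phi$ is a bijection on 2-cells onto the squares with the prescribed iso vertical sides; hence $\Phi$ is an isomorphism if and only if its action $R$ on vertical arrows is a bijection onto the isomorphisms of $D$, i.e.\ exactly (ii). Conversely, computing the reflection inside $E$ shows that there $R$ is simply the inclusion of the isomorphisms of $D$ among all its arrows --- in particular a bijection onto the horizontal isomorphisms --- and since the reflection and the relation $R$ are invariants of the double-category structure, an isomorphism $C\cong E$ forces $R$ on $C$ to be such a bijection, giving (ii).

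The conceptual content is all contained in Proposition~\ref{propn:cat-gbo-char} and in the uniqueness of objects of isomorphisms; the only real labor is bookkeeping, and the main place to be careful is matching up the boundaries and orientations (top/bottom versus left/right, and the direction of the squares) so that the commutativity condition $h\,R(v)=R(u)\,k$ of Proposition~\ref{propn:cat-gbo-char}(ii) lines up correctly with commutativity of the corresponding square in $D$, and so that $\Phi$ is verified to preserve both the horizontal and the vertical compositions.
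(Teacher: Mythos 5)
Your proof is correct and takes essentially the same route as the paper: the equivalence (i)$\Leftrightarrow$(ii) via Corollary~\ref{cor:complete-iff-ref-is-iso-ob} combined with the explicit descriptions of the reflection morphism and the object of isomorphisms from Remark~\ref{rmk:reflector-in-cat} and Proposition~\ref{propn:cat-gbo-cong-ob-of-isos}, and (ii)$\Leftrightarrow$(iii) by unwinding Proposition~\ref{propn:cat-gbo-char}. The paper states these steps without detail, so your write-up merely supplies the bookkeeping (the comparison functor $r'$ and the explicit double functor $\Phi$) that the paper declares clear.
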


\begin{proof}
  The equivalence of (ii) and (iii) is clear in light of Proposition~\ref{propn:cat-gbo-char}.

  The equivalence of (i) and (ii) follows from Corollary~\ref{cor:complete-iff-ref-is-iso-ob} using the explicit descriptions of the reflection morphism and object of isomorphisms of $C$ from Remark~\ref{rmk:reflector-in-cat} and Proposition~\ref{propn:cat-gbo-cong-ob-of-isos}, respectively.
\end{proof}

Let us now see that our definition of completeness is equivalent to (an analogue of) the original definition from \cite{rezk-homotopy-of-homotopies}.
Note that the morphism $e \colon C_0 \to C_1$ admits a $C$-inverse (namely $e$ itself) and hence factors through a unique morphism $\bar{e} \colon C_0 \to C_{\cong}$.

\begin{propn}\label{propn:rezk-equiv-crit}
  Let $C$ be a gbo-congruence in a pita 2-category $\CC$.
  Then $C$ is complete if and only if the morphism $\bar{e} \colon C_0 \to C_{\cong}$ is an equivalence.
\end{propn}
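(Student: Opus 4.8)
The plan is to reduce the statement to the case $\CC = \Cat$ and then analyze the functor $\bar e$ directly. Both sides of the desired equivalence are ``local'' in $A \in \CC$: as remarked in Definition~\ref{defn:rezk-complete}, $C$ is complete if and only if $\CC(A,C)$ is complete for each $A$; and a morphism in a $2$-category is an equivalence if and only if it is representably so, i.e.\ $\bar e$ is an equivalence if and only if $\bar e_* \colon \CC(A,C_0) \to \CC(A,C_{\cong})$ is an equivalence of categories for each $A \in \CC$. To match these up, I would first observe that, by Proposition~\ref{propn:ob-of-iso-char}(ii), the object of isomorphisms $C_{\cong}$ arises as a \emph{strict pullback}, which is therefore preserved by each representable $2$-functor $\CC(A,-) \colon \CC \to \Cat$; hence $\CC(A,C_{\cong})$ is the object of isomorphisms of the gbo-congruence $\CC(A,C)$, and $\bar e_*$ is identified with the corresponding morphism $\bar e$ for $\CC(A,C)$. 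This reduces the proposition to the assertion that, for a gbo-congruence $C$ in $\Cat$, the functor $\bar e \colon C_0 \to C_{\cong}$ is an equivalence if and only if $C$ is complete.

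For the $\Cat$-level statement I would invoke the explicit descriptions already established. By Proposition~\ref{propn:cat-gbo-cong-ob-of-isos}, $C_{\cong}$ is the full subcategory $C_{\mathrm{horiso}} \subset C_1$ of horizontal isomorphisms, and $\bar e$ sends each object $X$ to the horizontal identity and each vertical arrow $v$ to the horizontally degenerate square on $v$. By Proposition~\ref{propn:cat-compl-gbo-char} together with Remark~\ref{rmk:reflector-is-hor-invertible} (which guarantees that the function $R$ of Proposition~\ref{propn:cat-gbo-char} always takes values in horizontal \emph{isomorphisms}), $C$ is complete precisely when $R$ is a bijection from the vertical arrows onto the horizontal isomorphisms. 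It therefore suffices to show that $\bar e$ is fully faithful if and only if $R$ is injective, and that $\bar e$ is essentially surjective if and only if $R$ is surjective onto the horizontal isomorphisms.

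The fully faithful part is a direct computation using Proposition~\ref{propn:cat-gbo-char}(ii). A morphism $\bar e(X) \to \bar e(Y)$ in $C_{\mathrm{horiso}}$ is a square with horizontal sides the identities and vertical sides $u,w \colon X \to Y$; such a square exists, and is then unique, exactly when $R(u) = R(w)$, while $\bar e$ realizes precisely the ``diagonal'' squares with $u = w$. Hence $\bar e$ is always faithful, and it is full if and only if $R(u) = R(w)$ forces $u = w$, i.e.\ if and only if $R$ is injective. For essential surjectivity, note that since $C$ is groupoidal every square is vertically invertible, so an isomorphism in $C_{\mathrm{horiso}}$ is exactly a square between two horizontal isomorphisms. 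If $R$ is surjective onto horizontal isomorphisms and $f \colon X \to Y$ is such an isomorphism, choosing a vertical $v$ with $R(v) = f$ and applying Proposition~\ref{propn:cat-gbo-char}(ii) produces a vertically invertible square from the identity on $X$ to $f$, exhibiting $f \cong \bar e(X)$. Conversely, an isomorphism in $C_{\mathrm{horiso}}$ between $f$ and some identity yields, via the boundary condition of Proposition~\ref{propn:cat-gbo-char}(ii), an equation expressing $f$ as a product of values of $R$ and their horizontal inverses; using the functoriality of $R$ from Proposition~\ref{propn:R-funct-props} and that these values are honestly invertible, one rewrites this as $f = R(v)$ for a single vertical arrow $v$, so $R$ is surjective onto the horizontal isomorphisms. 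Combined with the criterion above, this gives the result.

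I expect the main subtlety to be the converse half of essential surjectivity: it relies on the \emph{a priori} non-obvious fact (Remark~\ref{rmk:reflector-is-hor-invertible}) that $R$ already lands in horizontal isomorphisms for an arbitrary gbo-congruence, without which one could not invert the factors $R(w)$ appearing in the boundary equation. The remaining ingredients---preservation of strict pullbacks by representables, the representable characterization of equivalences, and the bookkeeping of square boundaries---are routine.
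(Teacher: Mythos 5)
Your proposal is correct and follows essentially the same route as the paper's proof: reduce to $\CC = \Cat$ via the representable 2-functors $\CC(A,-)$, identify $C_{\cong}$ with the category of horizontal isomorphisms using Proposition~\ref{propn:cat-gbo-cong-ob-of-isos}, show $\bar{e}$ is full iff $R$ is injective and essentially surjective iff $R$ is surjective onto horizontal isomorphisms, and conclude by Proposition~\ref{propn:cat-compl-gbo-char}. The only difference is that you spell out the boundary-condition verifications (and the role of Remark~\ref{rmk:reflector-is-hor-invertible} in inverting the factors $R(u)$) that the paper leaves to the reader with ``one verifies that.''
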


\begin{proof}
  Since gbo-congruences, completeness, equivalences, and objects of isomorphisms are preserved and jointly reflected by the 2-functors $\CC(A,-)$, we may reduce the statement to the case $\CC = Cat$.

  In this case, we have by Proposition~\ref{propn:cat-gbo-cong-ob-of-isos} that $C_{\cong} \subset C_1$ is the category of horizontal isomorphisms.
  Now, the functor $\bar{e} \colon C_0 \to C_{\cong}$ (taking each object and vertical morphism to its horizontal identity) is always faithful (since it admits a retraction $\pi_0 \colon C_1 \to C_0$), and one verifies that it is \emph{full} if and only if the function $R$ from Proposition~\ref{propn:cat-gbo-char} is \emph{injective}, and is \emph{essentially surjective} if and only if $R$ is \emph{surjective}.
  Hence, the claim follows from Proposition~\ref{propn:cat-compl-gbo-char}.
\end{proof}

\subsection{Nerves and quotients}\label{subsec:nerves}
For $k \in \Z_{\ge0}$, let $[k]$ denote the linear order with $k+1$ elements $\set{0,\ldots,k}$, which we consider as a category.
We write $\delta_{a_0 \ldots a_k} \colon [k] \to [l]$ for the functor taking $i$ to $a_i$ for each $i = 0,\ldots,k$.

Now fix an object $X$ in a pita 2-category $\CC$, and fix cotensors $X^{[0]}$, $X^{[1]} = X^\to$ and $X^{[2]}$, where we may take $X^{[0]} = X$.

We have the usual internal category
\begin{equation}\label{eq:internal-cat-in-cat}
  \begin{tikzcd}
    {[2]} {
      \ar[r,shift left=10pt, "{}_{\delta_{01}}"]
      \ar[r,shift right=10pt, "{}_{\delta_{12}}"]
      \ar[r, "{}_{\delta_{02}}"]
    }&
    {[1]} {
      \ar[r, shift left=10pt, "{}_{\delta_{0}}"]
      \ar[r, shift right=10pt, "{}_{\delta_{1}}"]
      \ar[from=r, "{}_{\delta_{00}}"']
    }&
    {[0]}
  \end{tikzcd}
\end{equation}
in $\abs{\Cat}^\op$ and hence an internal category
\begin{equation}\label{eq:pre-nerve}
  \begin{tikzcd}
    X^{[2]} {
      \ar[r,shift left=10pt, "{}_{X^{\delta_{01}}}"]
      \ar[r,shift right=10pt, "{}_{X^{\delta_{12}}}"]
      \ar[r, "{}_{X^{\delta_{02}}}"]
    }&
    X^\to {
      \ar[r, shift left=10pt, "{}_{X^{\delta_0}=\partial_0}"]
      \ar[r, shift right=10pt, "{}_{X^{\delta_1}=\partial_1}"]
      \ar[from=r, "{}_{X^{\delta_{00}}}"']
    }&[10pt]
    X
  \end{tikzcd}
\end{equation}
in $\abs{\CC}$; this is indeed an internal category (and in fact an internal double category in $\CC$) since the (partially defined) anafunctor $X^{(-)} \colon \abs{\Cat}^\op \to \abs{\CC}$ preserves finite limits (as it is a partially defined right adjoint to the functor $\CC(-,X) \colon \abs{\CC} \to \abs{\Cat}^\op$).

\begin{defn}\label{defn:nerve}
  Given an object $X$ in a pita 2-category $\CC$, we define a \defword{nerve} of $X$ to be an internal double category $\Nv(X)$ of the form
  \[
    \begin{tikzcd}
      (X^{[2]})^\iso {
        \ar[r,shift left=12pt, "\pi_{01}"]
        \ar[r,shift right=12pt, "\pi_{12}"]
        \ar[r, "\pi_{02}"]
      }&
      (X^\to)^\iso {
        \ar[r, shift left=12pt, "\pi_0"]
        \ar[r, shift right=10pt, "\pi_1"]
        \ar[from=r, "e"']
      }&
      X^\iso,
    \end{tikzcd}
  \]
  where $\pi_{ij} = \pbig{X^{\delta_{ij}}}^\iso$, $\pi_j = (X^{\delta_j})^\iso = (\partial_j)^\iso$, and $e = \pbig{X^{\delta_{00}}}^\iso$.

  That this is indeed an internal double category follows from the fact that the anafunctor $(-)^\iso \colon \CC \to \CC$ preserves pullbacks (Lemma~\ref{lem:core-preserves-pullbacks} below).

  For each $A \in \CC$, the double category $\CC\pbig{A,\Nv(X)}$ is isomorphic (via $i_* \colon \CC(A,X^\iso) \to \CC(A,X)$ and $(i\partial)_* \colon \CC\pbig{A,(X^\to)^\iso} \to \CC(A,X)^\to$) to the double category of vertically invertible commutative squares in the full subcategory of $\CC(A,X)$ on the arrow-wise isos (see Proposition~\ref{propn:cat-compl-gbo-char}).

  In particular (taking $A$ to be the terminal category), we see that for $X \in \Cat$, $\Nv(X)$ is isomorphic to the double category of vertically invertible commutative squares in $X$.
\end{defn}
\begin{rmk}
  The universal property of the core of a cotensor $(X^J)^\iso$ can be described directly, without reference to (or even assuming the existence of) the intermediate object $X^{J}$: there is a functor $\wt\ev \colon J \to \CC\pbig{(X^J)^\iso,X}$ (namely $\wt\ev = \ev_{X,J}\circ i^*$) such that for any $A \in \CC$, the functor $\CC(A,(X^J)^\iso) \to \CC(A,X)^J$ (taking $f$ to $\wt\ev\circ f^*$) is an isomorphism onto the subcategory of $\CC(A,X)^J$ consisting of functors valued in arrow-wise isomorphisms, and natural isomorphisms between these.

  In particular, for each 2-cell $\alpha \colon A \tocell X$ between arrow-wise isos, there is a unique morphism $f \colon A \to (X^\to)^\iso$ with $fi\partial = \alpha$.
\end{rmk}

\begin{rmk}
  The nerve as defined here is precisely the ``2-kernel complex'' of $e \colon X^\iso \to X$ in the sense of \cite[S2]{makkai-duality-and-definability}, and the ``$\cF_\bo$-kernel'' of \cite[\S5.1]{bourke-garner-2-reg-ex}.
  (By contrast, the ``congruence associated with an arrow'' of \cite[\S1.9]{street-two-sheaf} contains more data, and corresponds to the ``$\cF_\mathrm{so}$-kernel'' of \cite[\S5.2]{bourke-garner-2-reg-ex}.)
\end{rmk}

\begin{lem}\label{lem:core-preserves-pullbacks}
  Given a strict pullback square
  \[
    \begin{tikzcd}
      A\ar[r, "h"]\ar[d, "k"']\pb&B\ar[d, "f"]\\
      C\ar[r, "g"]&D
    \end{tikzcd}
  \]
  in a 2-category, if the objects $A,B,C,D$ all have strict cores, then the induced square
  \[
    \begin{tikzcd}
      A^\iso\ar[r, "h^\iso"]\ar[d, "k^\iso"']&B^\iso\ar[d, "f^\iso"]\\
      C^\iso\ar[r, "g^\iso"]&D^\iso
    \end{tikzcd}
  \]
  is also a strict pullback square.
\end{lem}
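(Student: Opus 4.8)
The plan is to verify the strict pullback property \emph{representably}: I must show that for every object $A'$, the canonical comparison functor $\CC(A', A^\iso) \to 2\Cone(A', D')$ is an isomorphism of categories, where $D'$ denotes the core cospan $B^\iso \tox{f^\iso} D^\iso \xot{g^\iso} C^\iso$. Write $i_A \colon A^\iso \to A$, and similarly $i_B, i_C, i_D$, for the core maps, so that $f^\iso i_D = i_B f$, $g^\iso i_D = i_C g$, $h^\iso i_B = i_A h$, and $k^\iso i_C = i_A k$ by the universal property of the core; since $i_{D*}$ is injective, these identities also show that the core square commutes (so the comparison functor $w' \mapsto (w' h^\iso, w' k^\iso)$ is well-defined). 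The main tool is that the universal property identifies $\CC(A', A^\iso)$, via whiskering with $i_A$, with the subcategory of $\CC(A', A)$ whose objects are the arrow-wise isos $A' \to A$ and whose morphisms are the invertible 2-cells between them, and similarly for $B, C, D$.

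First I would translate $2\Cone(A', D')$ through the cores. Sending a cone $(u, v)$ over $D'$ to $(u i_B, v i_C)$ and using injectivity of $i_{D*}$ on objects and on 2-cells, I would show that the cone condition $u f^\iso = v g^\iso$ is equivalent to $u i_B f = v i_C g$ (and likewise for the morphism-compatibility condition), so that whiskering yields an isomorphism between $2\Cone(A', D')$ and the (non-full) subcategory $S \subset 2\Cone(A', D)$ consisting of those cones $(\tilde u, \tilde v)$ over the \emph{original} cospan $D$ in which both legs $\tilde u \colon A' \to B$ and $\tilde v \colon A' \to C$ are arrow-wise isos, with only the invertible 2-cell morphisms between them.

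The crux of the argument — and the step I expect to be the main obstacle — is the claim that a morphism $w \colon A' \to A$ is an arrow-wise iso \emph{if and only if} both $wh$ and $wk$ are. The forward direction is immediate, since arrow-wise isos are closed under postcomposition. For the converse, given any 2-cell $\alpha \colon A'' \tocell A'$, I would invoke the hypothesis that the original square is a strict pullback at the object $A''$: the isomorphism of categories $\CC(A'', A) \cong 2\Cone(A'', D)$ sends $\beta$ to $(\beta h, \beta k)$, and an isomorphism of categories reflects invertibility, so $\alpha w$ is invertible precisely when its image $\bigl(\alpha(wh), \alpha(wk)\bigr)$ is invertible in $2\Cone(A'', D)$, i.e.\ precisely when both components are. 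If $wh$ and $wk$ are arrow-wise isos, both components are invertible for every $\alpha$, whence $w$ is an arrow-wise iso.

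Finally I would assemble the pieces. Applying the strict pullback isomorphism $\CC(A', A) \cong 2\Cone(A', D)$, $w \mapsto (wh, wk)$, the claim just proved shows it restricts to an isomorphism between the subcategory of $\CC(A', A)$ on arrow-wise isos (with invertible 2-cells) and the subcategory $S$; again, invertibility of 2-cells is reflected componentwise, so the morphisms match up. The former subcategory is identified with $\CC(A', A^\iso)$ by the core universal property, and $S$ with $2\Cone(A', D')$ by the first step. A short check using $h^\iso i_B = i_A h$ and $k^\iso i_C = i_A k$ confirms that the composite isomorphism $\CC(A', A^\iso) \cong 2\Cone(A', D')$ is exactly the canonical comparison functor. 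As this holds for every $A'$, the core square is a strict pullback, as required.
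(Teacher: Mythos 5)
Your proof is correct and rests on the same key fact as the paper's: invertibility of 2-cells into the pullback is detected componentwise, because an inverse can be formed componentwise (the paper phrases this as $\br{\beta\I,\gamma\I} = \br{\beta,\gamma}\I$), which then yields that a morphism into $A$ is an arrow-wise iso exactly when its two projections are. The paper's proof is just a terser version of yours, stating these two claims directly rather than spelling out the full representable bookkeeping with the cone categories.
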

\begin{proof}
  This comes down to the claims, for any object $U$, that (i) given arrow-wise isos $b \colon U \to B$ and $c \colon U \to C$ such that $bf = cg$, the induced morphism $\br{b,c} \colon U \to A$ is again an arrow-wise iso, and (ii), given invertible 2-cells $\beta \colon U \tocell B$ and $\gamma \colon U \tocell C$, the induced 2-cell $\br{\beta,\gamma} \colon U \tocell A$ is also invertible.

  One sees that (i) reduces to (ii), and (ii) holds since $\br{\beta\I,\gamma\I} = \br{\beta,\gamma}\I$.
\end{proof}

\begin{propn}
  If $\CC$ is a pita 2-category, then any nerve in $\CC$ is a complete congruence.
  If $\CC = \Cat$, then conversely, any complete congruence is a nerve.
\end{propn}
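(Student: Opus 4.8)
The plan is to prove the two directions separately, in each case reducing to the explicit combinatorial description of complete congruences in $\Cat$ from Proposition~\ref{propn:cat-compl-gbo-char}, together with the identification of the nerve recorded in the final paragraph of Definition~\ref{defn:nerve}. The key point is that, once those two facts are in hand, almost nothing remains to be computed: being a gbo-congruence and being complete are both detected representably, so everything reduces to the case $\CC = \Cat$.

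For the first assertion, I would argue that $\Nv(X)$ is a complete gbo-congruence by checking this for each double category $\CC(A,\Nv(X))$, using that both ``being a gbo-congruence'' and ``being complete'' hold for $\Nv(X)$ if and only if they hold for every $\CC(A,\Nv(X))$. By Definition~\ref{defn:nerve}, $\CC(A,\Nv(X))$ is isomorphic to the double category of vertically invertible commutative squares in the full subcategory of $\CC(A,X)$ on the arrow-wise isos. Thus it suffices to show that, for an arbitrary category $\cD$, the double category of vertically invertible commutative squares in $\cD$ is a complete gbo-congruence. That it is a gbo-congruence is a direct verification of conditions (i) and (ii) of Proposition~\ref{propn:cat-gbo-char}: the relation $R$ sends a vertical arrow (an isomorphism of $\cD$) to itself, regarded as a horizontal arrow, so $R$ is a function, and a filling square with prescribed boundary exists exactly when the boundary commutes, in which case it is unique. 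Completeness then follows from the implication (iii)$\Rightarrow$(i) of Proposition~\ref{propn:cat-compl-gbo-char}, since this double category is by construction of the form described in (iii), with its own horizontal category $\cD$ playing the role of $D$. Hence each $\CC(A,\Nv(X))$ is a complete gbo-congruence, and therefore so is $\Nv(X)$.

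For the converse, let $C$ be a complete congruence in $\Cat$. By the implication (i)$\Rightarrow$(iii) of Proposition~\ref{propn:cat-compl-gbo-char}, $C$ is isomorphic to the double category of vertically invertible commutative squares in its horizontal category $D = \Ob C$. On the other hand, the last paragraph of Definition~\ref{defn:nerve} identifies $\Nv(D)$ with precisely this double category. Combining the two isomorphisms gives $C \cong \Nv(D)$, so $C$ is a nerve, as required.

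I expect the only step requiring genuine care to be the verification that the double category of vertically invertible commutative squares in an arbitrary category is a gbo-congruence; this is what licenses the application of Proposition~\ref{propn:cat-compl-gbo-char} in the first direction. Everything else is a formal consequence of the representable detection of the relevant properties and of the explicit descriptions already established, so the proof should be short.
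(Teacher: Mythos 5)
Your proof is correct and follows essentially the same route as the paper's: both directions reduce, via the last paragraph of Definition~\ref{defn:nerve} and the representable detection of the relevant properties, to the identification of (hom-double-categories of) nerves with double categories of vertically invertible commutative squares, and then apply Proposition~\ref{propn:cat-compl-gbo-char}. Your explicit verification via Proposition~\ref{propn:cat-gbo-char} that such a double category is a gbo-congruence is a detail the paper leaves implicit, and it is a worthwhile addition, since Proposition~\ref{propn:cat-compl-gbo-char} presupposes the gbo-congruence property.
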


\begin{proof}
  The case $\CC = \Cat$ follows from Proposition~\ref{propn:cat-compl-gbo-char} since the nerves in $\Cat$ are (up to isomorphism) precisely the double categories of vertically invertible commutative squares.
  The general case follows since $\CC\pbig{A,\Nv(X)}$ is isomorphic to a double category of vertically commutative invertible squares for each $A \in \CC$, hence each $\CC\pbig{A,\Nv(X)}$ is a complete gbo-congruence, hence $\Nv(X)$ is one as well.
\end{proof}

\begin{rmk}
  One of our axioms for a 2-topos $\CC$ will be that every complete congruence is a nerve.
\end{rmk}

We will also want later on to identify the reflection morphism in a nerve.

\begin{lem}\label{lem:nv-refl}
  Given an object $X$ in a corepita 2-category $\CC$, the reflection morphism $r \colon (X^\iso)^\to \to (X^\to)^\iso$ of $\Nv(X)$ is the morphism determined by the condition $r i \partial = \partial i$.
\end{lem}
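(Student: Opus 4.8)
The plan is to play the universal property of the core $(X^\to)^\iso$ off against the explicit description of the reflection morphism in Remark~\ref{rmk:reflector-in-cat}. Throughout, I write $i$ for the various core inclusions and $\partial$ for the various universal 2-cells, the relevant domains being clear from context; in particular $\partial \colon (X^\iso)^\to \tocellud{\partial_0}{\partial_1} X^\iso$ denotes the universal 2-cell of the arrow object of $X^\iso$, while $\partial \colon X^\to \tocellud{\partial_0}{\partial_1} X$ and $i \colon (X^\to)^\iso \to X^\to$ are those attached to $C_1 = (X^\to)^\iso$. The argument splits into showing (a) that the equation $r i \partial = \partial i$ determines a unique morphism, and (b) that the reflection morphism of $\Nv(X)$ satisfies it.

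For (a), I would use the universal property of $(X^\to)^\iso$ recorded in the remark following Definition~\ref{defn:nerve}: for any $A$ the assignment $g \mapsto g i \partial$ is injective from $\CC(A, (X^\to)^\iso)$ into the 2-cells $A \tocell X$ between arrow-wise isos. Taking $A = (X^\iso)^\to$ yields the asserted uniqueness. For existence (so that ``the morphism determined by $r i \partial = \partial i$'' is well-defined), I note that $\partial i \colon (X^\iso)^\to \tocellud{\partial_0 i}{\partial_1 i} X$ is itself a 2-cell between arrow-wise isos: its boundary morphisms $\partial_0 i$ and $\partial_1 i$ are arrow-wise isos, since $i \colon X^\iso \to X$ is one and composing an arrow-wise iso with any morphism again yields an arrow-wise iso. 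Hence the same universal property produces a unique $\tilde r \colon (X^\iso)^\to \to (X^\to)^\iso$ with $\tilde r i \partial = \partial i$.

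For (b), I would identify $\tilde r$ with the reflection morphism $r$, which exists and is unique because $\Nv(X)$ is a (complete) congruence, so that $\br{\pi_0,\pi_1}$ is a DOF (Remark~\ref{rmk:reflector-in-cat}). It suffices to show the two morphisms induce the same map on every hom-category $\CC(A,-)$. Under the isomorphism of $\CC(A, \Nv(X))$ with the double category of vertically invertible commutative squares in the full subcategory $D_A \subset \CC(A,X)$ on the arrow-wise isos (Definition~\ref{defn:nerve}, Proposition~\ref{propn:cat-compl-gbo-char}), the domain $\CC(A, (X^\iso)^\to) \cong \CC(A, X^\iso)^\to$ is the set of vertical arrows (invertible 2-cells between arrow-wise isos), and the codomain $\CC(A, (X^\to)^\iso)$, via $g \mapsto g i \partial$, is the set of horizontal arrows. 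A vertical arrow, corresponding to $f \colon A \to (X^\iso)^\to$, is sent by $\tilde r$ to $f\tilde r$, which under $g \mapsto g i \partial$ becomes $f \tilde r i \partial = f(\partial i) = (f\partial) i$, i.e. the same underlying invertible 2-cell now regarded as a horizontal arrow; and this is exactly the function $R$ for $D_A$, since by \eqref{eq:R-def-diags} $R$ sends a vertical iso to the equal horizontal one. By Remark~\ref{rmk:reflector-in-cat}, $r$ realizes precisely this same $R$ on each $\CC(A,-)$. Hence $\tilde r$ and $r$ induce identical maps for all $A$, so $\tilde r = r$.

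The steps are all routine once set up; there is no genuine mathematical difficulty, and the only real care needed — the place I expect to spend the most effort — is the bookkeeping of the several distinct core inclusions and universal 2-cells (for $X$, $X^\iso$, $X^\to$, and $(X^\to)^\iso$), together with checking that, under the identifications of Definition~\ref{defn:nerve}, the universal 2-cell $\partial$ of $(X^\iso)^\to$ is precisely the vertical arrow at which evaluating $R$ reproduces the equation $r i \partial = \partial i$.
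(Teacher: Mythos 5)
Your proof is correct and follows essentially the same route as the paper's: define the candidate morphism via the equation $r i \partial = \partial i$ (using the universal property of $(X^\to)^\iso$), then verify it equals the reflection morphism by passing to each hom-category $\CC(A,-)$ and invoking the explicit description of Remark~\ref{rmk:reflector-in-cat}. Your write-up merely makes explicit the existence/uniqueness step and the double-category bookkeeping that the paper leaves implicit.
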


\begin{proof}
  Let $f \colon (X^\iso)^\to \to (X^\to)^\iso$ be the morphism satisfying $f i \partial = \partial i$.
  To show that $f = r$, it suffices to show that $f_* = r_* \colon \CC(A,(X^\iso)^\to) \to \CC(A,(X^\to)^\iso)$ for all $A \in \CC$.
  Now, $r_*$ is the reflection morphism for the gbo-congruence $\CC(A,\Nv(X))$ in $\Cat$.
  But using the explicit description of the latter (Remark~\ref{rmk:reflector-in-cat}), we see that it agrees with $f_*$ as well.
\end{proof}

Next, we discuss quotients of congruences.
Let $C$ be an internal double category in a pita 2-category $\CC$, or in fact any diagram of the shape (\ref{eq:cong-diag}) on p.~\pageref{eq:cong-diag} satisfying the applicable identities $\pi_{01}\pi_1 = \pi_{12}\pi_0$, $\pi_{02}\pi_0 = \pi_{01}\pi_0$, $\pi_{02}\pi_1 = \pi_{12}\pi_1$, and $e\pi_0 = e\pi_1 = \id_{C_0}$.

We define a quotient of $C$ to be a weighted colimit with respect to the weight given by the internal category (\ref{eq:internal-cat-in-cat}) on p.~\pageref{eq:internal-cat-in-cat} in $\abs{\Cat}^\op$.

Let us spell this out:
\begin{defn}
  Given an object $X$ in a 2-category $\CC$, and with $C$ as above, a \defword{cocone under $C$ with vertex $X$} consists of a morphism $\gamma \colon C_0 \to X$ and 2-cell $\gamma_{01} \colon \pi_0\gamma \to \pi_1\gamma$
\[
  \begin{tikzcd}
    C_2\ar[r,shift left=10pt, "\pi_{01}"]\ar[r,shift right=10pt, "\pi_{12}"]\ar[r, "\pi_{02}"]&
    C_1\ar[r, shift left=10pt, "\pi_0"]\ar[r, shift right=10pt, "\pi_1"]\ar[from=r, "e"']
    \ar[d, shift right=7pt, shorten <=5pt, "\pi_0\gamma"'{name=0}]
    \ar[d, shift left=7pt, shorten <=5pt, "\pi_1\gamma"{name=1}]&
    C_0\ar[ld, "\gamma", bend left, shorten <=8pt]\\[10pt]
    &X
    \ar[from=0, to=1, Rightarrow, shorten=1pt, "\gamma_{01}"]
  \end{tikzcd}
\]
satisfying
\[
  \begin{array}{rl}
    e\gamma_{01} \hspace{-8pt} & = \id_\gamma\\
    \pi_{02}\gamma_{01} \hspace{-8pt} & =
    (\pi_{01}\gamma_{01})
    (\pi_{12}\gamma_{01})
  \end{array}
  \qquad
  \begin{tikzcd}[row sep=10pt, column sep=40pt]
    \pi_{01}\pi_0\gamma\ar[dd, equals]\ar[r, "\pi_{01}\gamma_{01}"]&
    \pi_{01}\pi_1\gamma\ar[d, equals]\\
    &\pi_{12}\pi_0\gamma\ar[r, "\pi_{12}\gamma_{01}"]&\pi_{12}\pi_1\gamma\ar[d, equals]\\
    \pi_{02}\pi_0\gamma\ar[rr, "\pi_{02}\gamma_{01}"]&&\pi_{02}\pi_1\gamma.
  \end{tikzcd}
\]
We obtain a category $\Cocone(C,X)$ of cocones under $C$ with vertex $X$, where a morphism $(\gamma,\gamma_{01}) \to (\gamma',\gamma_{01}')$ is a 2-cell $\alpha \colon \gamma \to \gamma'$ with $(\pi_0\alpha)\gamma_{01}' = \gamma_{01}(\pi_1\alpha)$, and composition is defined in the obvious way.

A cocone $(\gamma,\gamma_{01})$ with vertex $X$ is a (strict) \defword{quotient of $C$} if for each $Y \in \CC$, the evident functor $\CC(X,Y) \to \Cocone(C,Y)$ is an \emph{isomorphism}.
\end{defn}

\begin{rmk}
  There is also a notion of \emph{weak} quotient, defined as a \emph{weak} weighted colimit (or ``weighted bicolimit''), rather than a strict one.
  We note that, since the weight involved is a \emph{pie weight} (see \cite{power-robinson-pie-limits}), any strict quotient is also a weak one.
\end{rmk}

Any nerve comes with a canonical cocone
\[
  \begin{tikzcd}
    (X^{[2]})^\iso\ar[r,shift left=10pt, "\pi_{01}"]
    \ar[r,shift right=10pt, "\pi_{12}"]\ar[r, "\pi_{02}"]&
    (X^{\to})^\iso\ar[r, shift left=10pt, "\pi_0"]
    \ar[r, shift right=10pt, "\pi_1"]\ar[from=r, "e"']
    \ar[d, shift right=7pt, shorten <=5pt, "\pi_0\gamma"'{name=0}]
    \ar[d, shift left=7pt, shorten <=5pt, "\pi_1\gamma"{name=1}]&
    X^\iso\ar[ld, "\gamma=i", bend left, shorten <=8pt]\\[10pt]
    &X
    \ar[from=0, to=1, Rightarrow, shorten=1pt, "\gamma_{01}"]
  \end{tikzcd}
\]
where $\gamma_{01} = i\partial$.

\begin{defn}
  We call the above cocone the \defword{tautological cocone} under $\Nv(X)$.
\end{defn}

\section{The axioms}\label{sec:axioms}
Let $\CC$ be a 2-category.
Our axioms are:
\begin{enumerate}
\item[\rabel{(L)}\label{item:ax-L}]
  $\CC$ has pita limits.
\item[\rabel{(C)}\label{item:ax-C}]
  $\CC$ has strict cores.
\item[\rabel{(N)}\label{item:ax-N}]
  Every complete congruence in $\CC$ is a nerve.
\item[\rabel{(Q)}\label{item:ax-Q}]
  The tautological cocone under any nerve is a strict quotient.
\item[\rabel{(D)}\label{item:ax-D}]
  $\CC$ has a plentiful generic DOF.
\end{enumerate}
(In fact, we will not really make use of Axiom~\ref{item:ax-D}: though we will often be working with and proving things about a fixed plentiful generic DOF $\s$, we will not draw any consequences from the mere existence of one---see also Axiom~\ref{item:ax-UA} below.)

In order to have everything in one place, let us also repeat here the definition of \emph{plentiful} (Definitions~\ref{defn:generic-dof}~and~\ref{defn:plentiful}): a DOF classifier $\s$ is plentiful if
\begin{enumerate}
  \item[\rabel{(D1)}\label{item:plentiful-ax-monos}\label{item:plentiful-ax-first}] every DOF which is a monomorphism is $\s$-small,
  \item[\rabel{(D2)}\label{item:plentiful-ax-composites}\label{item:plentiful-ax-pre-last}] the composite of $\s$-small morphisms is $\s$-small, and
  \item[\rabel{(D3)}\label{item:plentiful-ax-pow}\label{item:plentiful-ax-last}] the monomorphism fibration $\partial_1^\iso \colon \Mon(\s)^\iso \to \s^\iso$ is $\s$-small.
\end{enumerate}

We note, concerning Axiom~\ref{item:ax-L}, that the main point is to demand all finite bilimits; the choice to require pita limits is simply a matter of convenience.
This brings out a certain arbitrariness in the selection of the strict 2-topos axioms: the fully weak versions of the axioms are more definite, and the strict versions consist in assuming the existence of certain convenient ``strictifications''.
Of course, these strictifications should ideally satisfy the constraint that any bicategory satisfying the weak 2-topos axioms should be biequivalent to a strict 2-category satisfying the strict versions.

Another remark about Axiom~\ref{item:ax-L} is that, as noted in \S\ref{subsubsec:pita-limits}, everything in this paper goes through if \ref{item:ax-L} is replaced by the weaker, and possibly more natural, axiom
\begin{enumerate}
\item[\rabel{(L$_{\mathrm n}$)}\label{item:ax-Ln}]
  $\CC$ has pnita limits.
\end{enumerate}

We will loosely use the term \defword{2-topos} to refer to a 2-category satisfying the above axioms, as well as those we discuss below, or whatever subset of these axioms is relevant to the situation at hand.
However, it will be useful to have a name for a 2-category satisfying the first four axioms.
\begin{defn}\label{defn:groupoidant}
  We say that a 2-category is \defword{groupoidant} if it satisfies Axioms~\ref{item:ax-L},~\ref{item:ax-C},~\ref{item:ax-N},~and~\ref{item:ax-Q}.
\end{defn}

\subsection{Further natural axioms}
We now discuss further axioms which we will not make any use of in this paper, but which are natural to include.

The first, from \cite{weber-2-toposes} is:
\begin{enumerate}
\item[\rabel{(CC)}\label{item:ax-CC}] $\CC$ is a cartesian-closed 2-category.
\end{enumerate}
As we noted in \S\ref{subsec:intro-groupoids}, the remaining axiom---or rather, datum---from \cite{weber-2-toposes}, namely that of a duality involution, is omitted, since we will prove from our axioms that one exists; see \S\ref{subsec:opposites} below.

Next, concerning \ref{item:ax-D}, we note that \cite{weber-2-toposes} in fact takes a fixed generic DOF as one of the \emph{data} of a 2-topos.
As mentioned in \S\ref{subsec:intro-groupoids}, we generally consider that a 2-topos should be a 2-category with extra \emph{properties}, and so we make the trivial modification of demanding the \emph{existence} of a generic DOF.

However, this brings out a very interesting difference between 1-topoi and 2-topoi: whereas the universal property of the subobject classifier determines it up to isomorphism, the DOF classifier is not even uniquely determined up to equivalence.
Indeed, $\Cat$ has a DOF classifier $\Set_{<\kappa}$ for each strongly inaccessible cardinal $\kappa$.
This situation is familiar from the context of large cardinal axioms in set theory, and is probably unavoidable:
in set theory, given any fixed set of axioms, one can always introduce a new axiom asserting the existence of a universe satisfying all of the previous axioms.

In this connection, it is natural, rather than just assuming the existence of \emph{one} plentiful generic DOF, to introduce an axiom for 2-topoi which is an analogue of the Axiom of Universes stating that every set belongs to some Grothendieck universe:
\begin{enumerate}
\item[\rabel{(UA)}\label{item:ax-UA}] For any two SOFs $p_1,p_2$ in $\CC$, there is a plentiful generic DOF $p$ such that $p_1$ and $p_2$ are both $p$-small.
\end{enumerate}
By induction (by applying the axiom to the plentiful generic DOFs themselves), this implies that any finite set of SOFs are classified by a single DOF classifier.

We note that, in light of Proposition~\ref{propn:small-sofs}, \ref{item:ax-UA} is equivalent to the corresponding statement only involving \emph{DOFs} \( p_1 \) and \( p_2 \), together with the axiom: every SOF in \( \CC \) has a DOF collapse.
We also note that it would be natural to demand the condition in \ref{item:ax-UA} not only for \emph{SOFs}, but for their ``fully weak'' analogues, the ``Street DOFs'' mentioned in \S\ref{subsec:intro-power-set}~and~Remark~\ref{rmk:strictness-and-choice}.
However, assuming \( \CC \) is pita, this already follows from \ref{item:ax-UA}, since in a pita 2-category, every Street DOF is equivalent to a SOF.

Regarding the non-uniqueness of DOF classifiers, see also Theorem~\ref{thm:plentiful-dof-ess-unique}.

Finally, the Axioms~\ref{item:ax-N}~and~\ref{item:ax-Q} are essentially special cases of the ``(bijective-on-objects,fully-faithful)-exactness'' condition of \cite{bourke-garner-2-reg-ex}.
Hence, we could instead simply assume this wholesale:
\begin{enumerate}
\item[\rabel{(EX)}\label{item:ax-EX}] $\CC$ is $\cF_{\bo}$-exact in the sense of \cite[\S5.1]{bourke-garner-2-reg-ex}.
\end{enumerate}
Again, it would be very interesting if, as in the case of 1-topoi, this axiom followed automatically from the others.

In the presence of \ref{item:ax-EX}, the Axioms~\ref{item:ax-N}~and~\ref{item:ax-Q} could then (as we prove in Lemma~\ref{lem:exact-implies-groupoidant}) be replaced by:
\begin{enumerate}
\item[\rabel{(BO)}\label{item:ax-BO}] For any $X \in \CC$ having a strict core, the core inclusion $X^\iso\tox{i}X$ is ``bijective on objects'', i.e., is an $\cF_{\bo}$-strong epi in the sense of \cite[\S\S2,5.1]{bourke-garner-2-reg-ex}; and for any $\cF_{\bo}$-strong epi $f \colon A \to X$ with $A$ a groupoid, the induced morphism $\bar{f} \colon A \to X^\iso$ with $\bar{f} i = f$ is also an $\cF_{\bo}$-strong epi.
\end{enumerate}
We suspect that the second clause in Axiom~\ref{item:ax-BO} may follow from the first.
At any rate, the examples we consider in \S\ref{sec:examples} will satisfy the stronger condition that for \emph{any} two morphisms $A \tox{f} B \tox{g} C$, if $f$ and $f g$ are $\cF_{\bo}$-strong epis, then so is $f$.

Next, as we mentioned in the introduction, it is natural to demand
\begin{enumerate}
\item[\rabel{(BC)}\label{item:ax-BC}] $\CC$ has finite bicolimits
\end{enumerate}
(or some stricter variant of this), assuming that it doesn't follow from the remaining axioms.
Relatedly, we would also want
\begin{enumerate}
\item[\rabel{(CO)}\label{item:ax-CO}] $\CC$ has (strict) cocores
\end{enumerate}
in the sense of Remark~\ref{rmk:cocores}---though it seems plausible that this in fact follows from \ref{item:ax-BC}.

A related important operation which we would want to have is that of \emph{localizing} a category at a given set of morphisms.
However, this can be achieved using \ref{item:ax-BC} and \ref{item:ax-CO}, as the localization of \( \cC \) at the morphisms in the image of \( \cC' \to \cC \) is the pushout of \( \ol{\cC'} \ot \cC' \to \cC \), where \( \ol{\cC'} \) is the cocore of \( \cC' \).

\section{\alttext{$\s$}{S} is an internal topos}
\subsection{Reformulation of generic DOF}\label{subsec:sof-reformulation}
We now set about proving that in a 2-category satisfying the axioms of the previous section---more specifically, in any corepita 2-category---any plentiful DOF classifier $\s$ is an internal elementary topos, in the sense of Definition~\ref{defn:power-objs} below.

In doing so, it will be convenient to reformulate the genericity condition from Definition~\ref{defn:generic-dof} as the full-faithfulness of the pullback (ana)functor $\CC(A,\s) \to \DOF(A)$ for each $A \in \CC$.
This latter condition is in fact the definition of generic DOF given in \cite{weber-2-toposes}.

Throughout this section, we fix a 2-category $\CC$ and a DOF $p \colon \s_* \to \s$ in $\CC$.

\begin{defn}
  For any object $A \in \CC$, we define the category $\DOF(A)$ as having objects the DOFs in $\CC$ over $A$, and morphisms strictly commuting triangles.
\end{defn}
\begin{rmk}
  $\DOF(A)$ has a natural 2-category structure, but the hom-categories are all discrete.
\end{rmk}

Suppose we have two pullback squares as in (\ref{eq:pb-squares}) on p.~\pageref{eq:pb-squares}, and let us denote them by $\check{F} = (A,F,f,\dot{f},\ddot{f})$ and $\check{H} = (A,H,h,\dot{h},\ddot{h})$.
Recall from Definition~\ref{defn:generic-dof} that the condition for $p$ to be generic was that for each such $\check{F}$ and $\check{H}$ and each \( g \colon F \to H \) with \( g \dot h = \dot f \), there is a unique $\gamma \colon f \to h$ for which there exists a (necessarily unique) $\ddot{\gamma} \colon \ddot{f} \to g \ddot{h}$ with \( \dot{f} \gamma = \ddot{\gamma} p \).

We first show that this is precisely the condition that a certain function (namely $\gamma \mapsto g$) is a bijection.
We leave the following to the reader; see also \cite[\S4]{weber-2-toposes}.
\begin{lem}\label{lem:gr-is-a-function}
  Given $\check{F}$ and $\check{H}$ as above, for each 2-cell $\gamma \colon A \tocellud{f}{h} S$, there is a unique $g \colon F \to H$ satisfying $g \dot{h} = \dot{f}$ and for which there exists a 2-cell $\ddot{\gamma} \colon \ddot{f} \to g \ddot{h}$ with $\ddot{\gamma} p = \dot{f} \gamma$.
\end{lem}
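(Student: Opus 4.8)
The plan is to deduce the statement directly from two ingredients: the universal property of the strict pullback $\check{H}$, which reduces the construction of $g$ to producing a single suitable morphism $F \to \s_*$, and the defining lifting-and-uniqueness property of the DOF $p$, applied to the hom-category functor $p_* \colon \CC(F,\s_*) \to \CC(F,\s)$.

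First I would record what the universal property of the strict pullback $\check{H}$ says: giving a morphism $g \colon F \to H$ with $g\dot{h} = \dot{f}$ is the same as giving a morphism $b \colon F \to \s_*$ (which will be $g\ddot{h}$) satisfying the commutation $\dot{f}h = bp$, and such a $g$ is then \emph{uniquely} determined by the pair $(\dot{f},b)$. To produce $b$, I would whisker $\gamma$ on the left by $\dot{f}$. Since the square $\check{F}$ commutes we have $\dot{f}f = \ddot{f}p = p_*(\ddot{f})$, so $\dot{f}\gamma$ is a morphism in $\CC(F,\s)$ with source $p_*(\ddot{f})$ and target $\dot{f}h$. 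Because $p$ is a DOF, $p_*$ is a discrete opfibration, so there is a unique $2$-cell $\ddot{\gamma} \colon \ddot{f} \to b$ in $\CC(F,\s_*)$ lifting $\dot{f}\gamma$, i.e.\ with $\ddot{\gamma}p = \dot{f}\gamma$; its target $b$ then automatically satisfies $bp = \dot{f}h$, which is exactly the required commutation. Feeding $(\dot{f},b)$ into the pullback property yields the desired $g$, and $\ddot{\gamma} \colon \ddot{f} \to g\ddot{h}$ is the demanded lifting $2$-cell.

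For uniqueness, suppose $g'$ also satisfies $g'\dot{h} = \dot{f}$ and admits some $\ddot{\gamma}' \colon \ddot{f} \to g'\ddot{h}$ with $\ddot{\gamma}'p = \dot{f}\gamma$. Then $\ddot{\gamma}$ and $\ddot{\gamma}'$ are both lifts of $\dot{f}\gamma$ along $p_*$ sharing the source $\ddot{f}$, so the uniqueness clause of the discrete-opfibration property forces $\ddot{\gamma} = \ddot{\gamma}'$, and in particular $g\ddot{h} = g'\ddot{h}$. Combined with $g\dot{h} = g'\dot{h} = \dot{f}$, the universal property of the strict pullback $\check{H}$ gives $g = g'$. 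I do not expect any genuine obstacle here; the only thing requiring care is the bookkeeping of the geometric composition order and correctly locating the whiskered $2$-cell $\dot{f}\gamma$ as a morphism of the fibre category $\CC(F,\s)$ emanating from the object $\ddot{f}$, which is precisely the datum to which the DOF lifting applies. (This function $\gamma \mapsto (g,\ddot{\gamma})$ is the one whose bijectivity is the content of genericity, so the uniqueness of $\ddot{\gamma}$ noted in the statement is what makes the assignment well-defined in the first place.)
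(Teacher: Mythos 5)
Your proof is correct, and it is exactly the argument the paper intends: the paper leaves this lemma to the reader (citing Weber's \S4), and the standard route is precisely yours — whisker $\gamma$ by $\dot{f}$ to get $\dot{f}\gamma \colon \ddot{f}p \to \dot{f}h$ in $\CC(F,\s)$, lift it uniquely along the DOF $p_* \colon \CC(F,\s_*) \to \CC(F,\s)$ to obtain the target $b$ with $bp = \dot{f}h$, and then invoke the one-dimensional universal property of the strict pullback $\check{H}$ for both existence and uniqueness of $g$. All the type-checking (e.g.\ $g\ddot{h}p = g\dot{h}h = \dot{f}h$) and the uniqueness bookkeeping in your write-up are sound.
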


\begin{defn}
  We write $\Gr^A_{\check{F},\check{H}} \colon \CC(A,\s)(f,h) \to \DOF(A)(\dot{f},\dot{h})$ for the function taking each $\gamma \colon f \to h$ to the unique $g$ as in Lemma~\ref{lem:gr-is-a-function}.
\end{defn}

We conclude
\begin{cor}
  The DOF $p \colon \s_* \to \s$ is generic if and only if the function $\Gr^A_{\check{F},\check{H}} \colon \CC(A,\s)(f,h) \to \DOF(A)(\dot{f},\dot{h})$ is a bijection for each $A \in \CC$ and $\check{F},\check{H}$ as above.
\end{cor}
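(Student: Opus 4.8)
The plan is to deduce the Corollary by unwinding Definition~\ref{defn:generic-dof} and the definition of $\Gr^A_{\check{F},\check{H}}$ against Lemma~\ref{lem:gr-is-a-function}. Throughout I fix $A \in \CC$ and the two pullback data $\check{F},\check{H}$, and I record at the outset the two features of the DOF $p$ that will be used repeatedly: for every object, the functor $p_*$ is faithful and lifts $2$-cells uniquely. The first of these makes the auxiliary $2$-cell $\ddot\gamma$ attached to a given $\gamma$ in Lemma~\ref{lem:gr-is-a-function} unique, and the second pins down its codomain. The upshot of Lemma~\ref{lem:gr-is-a-function} is that the passage $\gamma \mapsto (g,\ddot\gamma)$, with $g=\Gr^A_{\check{F},\check{H}}(\gamma)$, is well defined; the genericity condition is exactly the reverse passage, producing from the morphism $g$ together with its connecting $2$-cell $\ddot\gamma$ a unique $\gamma$ with $\ddot\gamma p=\dot f\gamma$. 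So the whole statement amounts to checking that these two passages are mutually inverse.

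For the forward implication I assume $p$ generic and verify that $\Gr^A_{\check{F},\check{H}}$ is injective and surjective. For surjectivity, given a morphism $g \colon F \to H$ with $g\dot h=\dot f$, I feed it, together with a connecting $2$-cell $\ddot\gamma \colon \ddot f \to g\ddot h$, into the genericity condition to obtain a $2$-cell $\gamma \colon f \to h$ with $\ddot\gamma p = \dot f\gamma$; by the uniqueness clause of Lemma~\ref{lem:gr-is-a-function} this forces $\Gr^A_{\check{F},\check{H}}(\gamma)=g$. For injectivity, if $\Gr^A_{\check{F},\check{H}}(\gamma_1)=\Gr^A_{\check{F},\check{H}}(\gamma_2)=g$, then the associated $2$-cells $\ddot\gamma_1,\ddot\gamma_2$ both have domain $\ddot f$ and codomain $g\ddot h$; comparing $\dot f\gamma_i=\ddot\gamma_i p$ and using faithfulness of $p_*$ together with the uniqueness of the $\gamma$ supplied by genericity yields $\gamma_1=\gamma_2$. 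The converse implication is the same bookkeeping run backwards: assuming $\Gr^A_{\check{F},\check{H}}$ bijective, surjectivity supplies the required $\gamma$ for the genericity condition and injectivity (again with the unique lifting of $2$-cells by $p$) supplies its uniqueness.

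The one point requiring care---and the main obstacle---is the correct handling of the auxiliary $2$-cell $\ddot\gamma$. The genericity condition is phrased in terms of the pair $(g,\ddot\gamma)$, whereas $\Gr^A_{\check{F},\check{H}}$ records only $g$; the content of the argument is that, because $p$ is a DOF, $\ddot\gamma$ is uniquely determined by $\gamma$ (equivalently, once a compatible $\gamma$ exists, by $g$), so that the correspondence genuinely descends to a bijection $\gamma \leftrightarrow g$ rather than merely to a correspondence $\gamma \leftrightarrow (g,\ddot\gamma)$. This is precisely where the DOF hypotheses (faithfulness and unique lifting of $2$-cells by $p_*$) enter, and it is what reconciles the present formulation with Weber's full-faithfulness condition for the pullback functor $\CC(A,\s) \to \DOF(A)$.
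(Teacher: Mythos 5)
Your proof hinges on the claim in your final paragraph that, because $p$ is a DOF, the auxiliary 2-cell $\ddot{\gamma}$ is uniquely determined ``equivalently, once a compatible $\gamma$ exists, by $g$''. The first half of that sentence is correct: $\ddot{\gamma}$ is the unique lift of $\dot{f}\gamma$ with domain $\ddot{f}$, so it is determined by $\gamma$. The second half is false, and it is precisely the half your argument needs. Faithfulness of $p_*$ only says that distinct 2-cells $\ddot{f} \to g\ddot{h}$ lie over distinct 2-cells $\dot{f}f \to \dot{f}h$; it does not make such a 2-cell unique, nor force it to lie over a whiskered cell $\dot{f}\gamma$. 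Concretely, take $\CC = \Cat$, $p \colon \Set_* \to \Set$, $A = \tm$, $f = h$ the functor picking out a set $S$ with at least two elements, and $g = \id$. Then $F = H = \El(f)$ is the discrete category on $S$, with $\ddot{f}(x) = (S,x)$, and a 2-cell $\ddot{f} \to g\ddot{h} = \ddot{f}$ is an arbitrary family $(\phi_x \colon S \to S)_{x \in S}$ with $\phi_x(x) = x$ (naturality is vacuous since $F$ is discrete). Only constant families arise as lifts of whiskered 2-cells $\dot{f}\gamma$, so the pair $(g,\ddot{\gamma})$ carries strictly more information than $g$; indeed, for a non-constant family there is no $\gamma$ whatsoever with $\ddot{\gamma}p = \dot{f}\gamma$.

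This defeats each step of your argument. In the surjectivity step you ``feed $g$, together with a connecting 2-cell $\ddot{\gamma}$,'' into genericity, but nothing supplies such a $\ddot{\gamma}$ from $g$ alone. In the injectivity step, the uniqueness clause of genericity is quantified per pair $(g,\ddot{\gamma}_i)$, so when $\ddot{\gamma}_1 \neq \ddot{\gamma}_2$ it cannot identify $\gamma_1$ with $\gamma_2$; and the appeal to faithfulness of $p_*$ is circular, since $\ddot{\gamma}_1 p = \ddot{\gamma}_2 p$ is exactly $\dot{f}\gamma_1 = \dot{f}\gamma_2$, which is what is at issue. In fact, under your per-pair reading of Definition~\ref{defn:generic-dof} the corollary is simply false: the example above is one where $\Gr^A_{\check{F},\check{H}}$ is bijective (this is the classical correspondence between natural transformations $f \to h$ and functors $\El(f) \to \El(h)$ over $A$) while the per-pair condition fails, so no bookkeeping can rescue the equivalence. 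The reading the paper actually operates with --- see how the definition is deployed in the proof of Lemma~\ref{lem:generic-dofs-in-cat}, where only the codomains of the $\ddot{\gamma}_{\hat{a}}$ are ever used --- is: for each $g$ with $g\dot{h} = \dot{f}$ there is a unique $\gamma$ for which there \emph{exists} a (then automatically unique) $\ddot{\gamma}$ with $\ddot{\gamma}p = \dot{f}\gamma$. Under that reading the corollary requires no argument at all: by Lemma~\ref{lem:gr-is-a-function}, the condition ``there exists $\ddot{\gamma}$ with $\ddot{\gamma}p = \dot{f}\gamma$'' is synonymous with ``$\Gr^A_{\check{F},\check{H}}(\gamma) = g$'', so existence of $\gamma$ for each $g$ is surjectivity and its uniqueness is injectivity --- with no use of faithfulness or of unique lifting of 2-cells. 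That is why the paper states it with ``We conclude'' and no proof: all the content sits in Lemma~\ref{lem:gr-is-a-function}, not in the corollary.
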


We would now like to further reformulate this condition as saying that a certain (ana)functor $\CC(A,\s) \to \DOF(A)$ is fully faithful for each $A$.

\begin{defn}
  We define the anafunctor $\Gr^A \colon \CC(A,\s) \to \DOF(A)$ for each $A \in \CC$ as follows.
  Given $f \in \CC(A,\s)$, a specification of $\Gr^A$ for $f$ is a pullback square $\check{F} = (A,F,f,\dot{f},\ddot{f})$ as above, and $\Gr^A_{\check{F}}(f) = \dot{f}$.
  Next, given $f,h \in \CC(A,\s)$ with specifications $\check{F},\check{H}$, the map $\Gr^A_{\check{F},\check{H}} \colon \CC(A,\s)(f,h) \to \DOF(A)(\dot{f},\dot{h})$ is the one defined above.
\end{defn}

\begin{propn}
  $\Gr^A$, thus defined, is an anafunctor.
\end{propn}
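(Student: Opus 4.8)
The plan is to verify the two defining properties of an anafunctor directly: that each object of $\CC(A,\s)$ admits at least one specification, and that the transition maps $\Gr^A_{\check{F},\check{H}}$ are functorial in the specifications, i.e.\ preserve identities and composites. The coherence of the comparison isomorphisms attached to two specifications of a single object then comes for free, since it is exactly the instance of functoriality obtained by feeding in identity $2$-cells (so that $\Gr^A_{\check{F},\check{F}'}(\id_f)$ is invertible with inverse $\Gr^A_{\check{F}',\check{F}}(\id_f)$, and the transition maps are compatible).

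For the existence of specifications, I would note that $p$ is a DOF, hence an isofibration, so its strict pullback along any $f \colon A \to \s$ exists in the (corepita) setting we work in, yielding a specification $\check{F} = (A,F,f,\dot{f},\ddot{f})$; moreover $\dot{f}$ is again a DOF by the stability of DOFs under pullback recalled in \S\ref{subsec:dofs}, so that $\Gr^A_{\check{F}}(f) = \dot{f}$ is genuinely an object of $\DOF(A)$. The maps $\Gr^A_{\check{F},\check{H}}$ are already well defined by Lemma~\ref{lem:gr-is-a-function}, and their values $g$ lie in $\DOF(A)(\dot{f},\dot{h})$ because they satisfy $g\dot{h} = \dot{f}$.

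It remains to check functoriality, where the one tool needed is the uniqueness clause of Lemma~\ref{lem:gr-is-a-function}. For identities, the pair $g = \id_F$ and $\ddot{\gamma} = \id_{\ddot{f}}$ satisfies $g\dot{f} = \dot{f}$ and $\ddot{\gamma}p = \id_{\ddot{f}p} = \dot{f}\,\id_f$, so uniqueness gives $\Gr^A_{\check{F},\check{F}}(\id_f) = \id_F = \id_{\dot{f}}$. For composites, given $\gamma \colon f \to h$ and $\delta \colon h \to k$ with images $g_1 = \Gr^A_{\check{F},\check{H}}(\gamma)$ and $g_2 = \Gr^A_{\check{H},\check{K}}(\delta)$ and respective witnessing $2$-cells $\ddot{\gamma} \colon \ddot{f} \to g_1\ddot{h}$ and $\ddot{\delta} \colon \ddot{h} \to g_2\ddot{k}$, I would take $g = g_1g_2$, which satisfies $g\dot{k} = g_1\dot{h} = \dot{f}$, with witness the vertical composite $\ddot{\epsilon} = \ddot{\gamma}\cdot(g_1\ddot{\delta}) \colon \ddot{f} \to (g_1g_2)\ddot{k}$. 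Post-whiskering by $p$ and using that whiskering preserves vertical composites then gives $\ddot{\epsilon}p = (\ddot{\gamma}p)\cdot(g_1\ddot{\delta}p) = (\dot{f}\gamma)\cdot(\dot{f}\delta) = \dot{f}(\gamma\delta)$, the middle equality using $g_1\dot{h} = \dot{f}$. Uniqueness then forces $\Gr^A_{\check{F},\check{K}}(\gamma\delta) = g_1g_2 = \Gr^A_{\check{F},\check{H}}(\gamma)\cdot\Gr^A_{\check{H},\check{K}}(\delta)$. The computation is entirely routine; the only point requiring slight care is tracking which whiskerings are pre- and which are post-compositions, and the essential mechanism throughout is just the uniqueness of lifted $2$-cells guaranteed by $p$ being a DOF.
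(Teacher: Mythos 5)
Your proof is correct and follows essentially the same route as the paper's: identities via the witness $(\id_F,\id_{\ddot f})$, and composites via the vertical composite $\ddot\gamma\cdot(g_1\ddot\delta)$ together with the uniqueness clause of Lemma~\ref{lem:gr-is-a-function}, with the same whiskering computation $(g_1\ddot\delta)p = g_1\dot h\delta = \dot f\delta$. The only difference is your added remark on existence of specifications (pullbacks of the DOF $p$), which the paper leaves implicit.
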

\begin{proof}
  We have that $\Gr^A_{\check{F},\check{F}}(\id_f) = \id_{\dot{f}}$, since, taking $g = \id_F$ and $\ddot{\gamma} = \id_{\ddot{f}}$, we have $\ddot{\gamma} p = \id_{\ddot{f} p} = \id_{\dot{f} f} = \dot{f} \gamma$.

  Next, given $f_1,f_2,f_3 \in \CC(A,\s)$ with specifications $\check{F}_i = (A,F_i,f_i,\dot{f}_i,\ddot{f}_i)$ and 2-cells $f_1\tox{\gamma_{12}}f_2\tox{\gamma_{23}}f_3$ and morphisms $\dot{f}_1\tox{g_{12}}\dot{f}_2\tox{g_{23}}\dot{f}_3$ with $\Gr^A_{\check{F},\check{H}}(\gamma_{ij}) = g_{ij}$, we have $\Gr^A_{\check{F},\check{H}}(\gamma_{12}\gamma_{23}) = g_{12}g_{23}$.

  Indeed, given 2-cells $\ddot{\gamma}_{ij} \colon \ddot{f}_i \to \ddot{g}_{ij}f_j$ with $\ddot{\gamma}_{ij} p = \dot{f}_i\gamma_{ij}$, we define $\ddot{\gamma}_{13} = \ddot{\gamma}_{12}(g_{12}\ddot{\gamma}_{23}) \colon \ddot{f}_1 \to (g_{12}g_{23})\ddot{f}_3$, and we then have
  \[
    \ddot{\gamma}_{13}p =
    (\ddot{\gamma}_{12} p)\plbig{g_{12}\ddot{\gamma}_{23}p} =
    (\dot{f}_1\gamma_{12})\plbig{g_{12}\dot{f}_2\gamma_{23}}
    =\dot{f}_1(\gamma_{12}\gamma_{23}),
  \]
  as required.
  \[
    \begin{tikzcd}[row sep=25pt, column sep=25pt, baseline=(f3.base)]
      |[yshift=30pt]|F_1\ar[rrr, "\ddot{f}_1", ""{name=ddf1, pos=0.38}, bend left=40pt]
      \ar[rrd, "\dot{f}_1"'{pos=0.3, name=df1}, bend right]
      \ar[r, "g_{12}"{pos=0.3}]
      &|[alias=F2,yshift=15pt]|F_2
      \ar[rr, bend left=30pt, "\ddot{f}_2"{pos=0.3}, ""name=ddf2]
      \ar[dr, "\dot{f}_2"'{pos=0.3,name=df2}]
      \ar[r, "g_{23}"{pos=0.3}]
      &|[alias=F3]|F_3\ar[r, "\ddot{f}_3"']\ar[d, "\dot{f}_3"'{pos=0.4}]
      &\s_*\ar[d, "p"]\\
      &&A\ar[r, "f_1"{name=f1}, bend left=50pt]
      \ar[r, "f_2"{pos=0.8}, ""{name=f2}]
      \ar[r, "f_3"'{name=f3}, bend right=50pt]
      &\s
      \ar[r, from=ddf1, to=F2, Rightarrow, shorten >=2pt, shorten <=4pt, "\ddot{\gamma}_{12}"'{pos=0.7}]
      \ar[r, from=ddf2, to=F3, Rightarrow, shorten >=2pt, shorten <=4pt, "\ddot{\gamma}_{23}"{pos=0.3}]
      \ar[r, from=f1, to=f3, Rightarrow, shorten >=15pt, shorten <=4pt, "\gamma_{12}"'{pos=0.3}]
      \ar[r, from=f2, to=f3, Rightarrow, shorten >=2pt, shorten <=4pt, "\gamma_{23}"']
    \end{tikzcd}
    \qedhere
  \]
\end{proof}
We conclude:
\begin{cor}
  The DOF $p \colon \s_* \to \s$ is generic if and only if the anafunctor $\Gr^A \colon \CC(A,\s) \to \DOF(A)$ is fully faithful for each $A \in \CC$.
\end{cor}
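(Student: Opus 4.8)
The plan is to recognize that this corollary is essentially a restatement of the preceding one, obtained by unwinding the definition of full-faithfulness for an anafunctor. First I would recall that an anafunctor $F$ is \emph{fully faithful} precisely when, for every pair of objects in its domain and every choice of specifications for them, the associated function on hom-sets is a bijection. Applied to $\Gr^A$, whose hom-functions between specifications $\check{F}$ and $\check{H}$ are, by the very construction recalled just above, exactly the maps $\Gr^A_{\check{F},\check{H}} \colon \CC(A,\s)(f,h) \to \DOF(A)(\dot{f},\dot{h})$, this says that $\Gr^A$ is fully faithful (for a fixed $A$) if and only if each such $\Gr^A_{\check{F},\check{H}}$ is a bijection, ranging over all pairs of pullback-square specifications $\check{F},\check{H}$.

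The second and final step is simply to compare this with the preceding Corollary, which asserts that $p$ is generic if and only if the function $\Gr^A_{\check{F},\check{H}}$ is a bijection for every $A \in \CC$ and every pair of specifications $\check{F},\check{H}$. Since the quantifications over $A$, $\check{F}$, and $\check{H}$ match verbatim, the two conditions coincide, and the corollary follows at once.

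There is no real obstacle here beyond bookkeeping: the one thing to confirm is that the notion of full-faithfulness being invoked for anafunctors is exactly the pointwise-in-specifications bijectivity of the hom-functions, which is immediate from the definition of anafunctor recalled in the preliminaries (following \cite{makkai-avoiding-choice}), combined with the already-established fact that $\Gr^A$ is a well-defined anafunctor. For the reader's orientation one might add the remark that the specification $\check{F}$ chosen for a given $f$ is genuinely non-canonical — distinct pullback squares produce isomorphic but distinct DOFs $\dot{f}$ — which is precisely why the anafunctor $\Gr^A$, rather than a strictly chosen functor $\CC(A,\s) \to \DOF(A)$, is the appropriate vehicle for expressing the genericity condition.
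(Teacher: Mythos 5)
Your proposal is correct and matches the paper's (implicit) argument exactly: the paper states this corollary with no separate proof, as the immediate combination of the preceding corollary (genericity $\Leftrightarrow$ bijectivity of each $\Gr^A_{\check{F},\check{H}}$) with the fact that these functions are precisely the hom-functions of the anafunctor $\Gr^A$, so that their bijectivity is by definition its full-faithfulness.
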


\subsection{\alttext{$\s$}{S} has finite limits}\label{subsec:s-has-finite-lims}
\begin{propn}\label{propn:generic-dof-limits-are-stable}
  Let $\CC$ be a 2-category having strict pullbacks of DOFs.
  For any generic DOF $p \colon \s_* \to \s$ and any morphism $f \colon A' \to A$ in $\CC$, the functor $f^* \colon \CC(A,\s) \to \CC(A',\s)$ preserves finite limits.
\end{propn}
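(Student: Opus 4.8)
The plan is to leverage the genericity reformulation just established, namely that $\Gr^A \colon \CC(A,\s) \to \DOF(A)$ is fully faithful for each $A$, with essential image the full subcategory $\DOF_\s(A)$ of $\s$-small DOFs, so that $\Gr^A$ is an equivalence onto $\DOF_\s(A)$. The first key step is to identify, under these equivalences, the functor $f^*$ with pullback of DOFs along $f$. Concretely, given $g \colon A \to \s$ with a chosen pullback square exhibiting $\dot g = \Gr^A(g)$, the composite $fg = f^*(g)$ is classified by the outer rectangle obtained by pasting the pullback of $\dot g$ along $f$ onto the square defining $\dot g$; by the pasting lemma this outer rectangle is again a strict pullback of $p$, so $\Gr^{A'}(f^*(g)) \cong f^*_{\DOF}(\dot g)$, where $f^*_{\DOF} \colon \DOF(A) \to \DOF(A')$ denotes pullback along $f$. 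Checking the same on morphisms (using Lemma~\ref{lem:gr-is-a-function}) yields a commuting square $\Gr^{A'} \circ f^* \cong f^*_{\DOF} \circ \Gr^A$ of (ana)functors.

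With this square in hand, the problem reduces to showing that $f^*_{\DOF}$ preserves finite limits. Here I would first observe that $\DOF(A)$ has finite limits, computed as in the slice $\abs{\CC}/A$: binary products are fibered products over $A$, and equalizers are pullbacks of diagonals $\Delta \colon Y \to Y \times_A Y$, all of which exist because $\CC$ has strict pullbacks of DOFs, and are again DOFs since DOFs are closed under pullback and composition (the diagonals being DOFs by the $2$-of-$3$ property). Pullback along $f$ is the right adjoint of post-composition $\Sigma_f \colon \abs{\CC}/A' \to \abs{\CC}/A$, so $f^*_{\DOF}$ preserves all limits that exist in these slices, and in particular the finite limits of DOFs just described.

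It then remains to transport this across the fully faithful $\Gr$, and this matching of limits is the step I expect to be the main obstacle. Since $\Gr^A$ and $\Gr^{A'}$ are fully faithful they reflect limits, so the real task is to see that a finite limit $L$ existing in $\CC(A,\s)$ is carried by $\Gr^A$ to the corresponding finite limit of DOFs in $\DOF(A)$ --- equivalently, that $\Gr^A$ preserves the finite limits that exist. I would establish this by testing the universal property directly against $\s$-small DOFs and transposing through the adjunction $\Sigma_f \dashv f^*_{\DOF}$: a cone from an $\s$-small $\dot m$ over $A'$ into the $f^*_{\DOF}(\dot g_j)$ corresponds, via the commuting square and the slice adjunction, to a cone from $\Sigma_f \dot m$ into the $\dot g_j$ over $A$, and the desired unique factorization through $f^*_{\DOF}(\Gr^A L)$ should follow from the universal property of $\Gr^A L$. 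The delicate issue is precisely that $\DOF_\s(A)$ need not a priori be closed under finite limits inside $\DOF(A)$, so the argument must be run at the level of the $\s$-small test objects that represent the hom-functors of $\CC(A',\s)$, rather than by naively passing to ambient limits in $\DOF(A)$; verifying that the two notions of limit agree on the relevant objects is the crux of the proof.
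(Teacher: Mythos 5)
Your first two steps are exactly the paper's proof: the paper establishes the commutativity up to isomorphism of the square $\Gr^{A'}(f^*g)\cong f^*(\Gr^A g)$ by pasting pullback squares (with the bookkeeping done at the level of anafunctor specifications), and then invokes the fact that pullback of DOFs along $f$ preserves limits because it has the left adjoint $\Sigma_f$, the vertical anafunctors being fully faithful. Moreover, the crux you flag is real: full faithfulness of $\Gr^A$ and $\Gr^{A'}$ yields only \emph{reflection} of limits, so to run the square argument one must know that $\Gr^A$ carries a limit cone of $\CC(A,\s)$ to a limit cone in the ambient category $\DOF(A)$, i.e.\ that $\DOF_\s(A)$ is suitably closed under finite limits; the paper's wording (``since the vertical anafunctors are fully faithful, and the bottom anafunctor preserves limits'') leaves precisely this step implicit.

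The problem is that your proposed way of discharging the crux is circular. Transposing along $\Sigma_f\dashv f^*$ converts a cone with vertex an $\s$-small DOF $\dot m\colon M\to A'$ into a cone with vertex $\Sigma_f\dot m$, i.e.\ the composite $M\tox{\dot m}A'\tox{f}A$; since $f$ is an arbitrary morphism of $\CC$, this composite is in general not a DOF over $A$ at all, let alone an $\s$-small one. But the only universal property you have for $\Gr^A L$ --- that of a limit in $\CC(A,\s)\simeq\DOF_\s(A)$ --- factors cones whose vertex is an $\s$-small DOF over $A$, and says nothing about cones from $\Sigma_f\dot m$; demanding that it handle them is exactly the transposed form of the statement being proved. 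Note also that the missing preservation can genuinely fail for a bare generic DOF: for the generic DOF $p\colon\emptyset\to\tm$ in $\Cat$, one has $\CC(A,\s)\simeq\tm$, whose unique object corresponds to the empty DOF over $A$; this is terminal in $\DOF_\s(A)$ but not in $\DOF(A)$, so $\Gr^A$ does not even preserve the empty limit (the proposition's conclusion holds there for trivial reasons, but no argument routed through preservation along $\Gr^A$ can apply). What actually closes the gap, in every use the paper makes of this proposition, is the standing hypothesis that $p$ is pre-plentiful: Proposition~\ref{propn:gr-preserves-reflects-limits} then shows that $\DOF(A)$ has finite limits and that the $\s$-small DOFs are closed under them, whence a limit in the full subcategory $\DOF_\s(A)$ must coincide with the ambient one, so $\Gr^A$ preserves finite limits, $f^*$ of DOFs preserves the ambient limit, and full faithfulness of $\Gr^{A'}$ reflects it back to $\CC(A',\s)$. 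Your write-up needs this closure statement (or some substitute for it) as an explicit input to be a complete proof.
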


\begin{proof}
  This follows from the commutativity up to isomorphism of the square of anafunctors
  \[
    \begin{tikzcd}
      \CC(A,\s)\ar[r, "f^*"]\ar[d, "\Gr^A"']&\CC(A',\s)\ar[d, "\Gr^{A'}"]\\
      \DOF(A)\ar[r, "f^*"]&\DOF(A'),
    \end{tikzcd}
  \]
  where the $f^*$ on the bottom is the usual pullback anafunctor, since the vertical anafunctors are fully faithful, and the bottom anafunctor preserves limits (more generally, the anafunctor $f^* \colon \abs{\CC}/A' \to \abs{\CC}/A$ preserves limits, since it has a left adjoint $\Sigma_f$).

  Let us spell out the commutativity above the square.
  To define an isomorphism $\alpha \colon \Gr^A f^* \toi f^*\Gr^{A'}$ of anafunctors, it suffices to specify, for each $g \in \CC(A,S)$, each specification $s_1$ for $\Gr^A$ at $g$, and each specification $s_2$ for $f^*$ at $\Gr^A_{s_1}(g)$, a specification $s_3$ of $\Gr^{A'}$ at $f^*(g)$ and an isomorphism $\alpha_{s_1,s_2,s_3} \colon f^*_{s_2}\pbig{\Gr^A_{s_1}(g)} \toi \Gr^{A'}_{s_3}(f^*g)$ satisfying the appropriate naturality conditions \cite[1.7~and~p.~126]{makkai-avoiding-choice}.

  The specifications $s_1$, $s_2$ are given by pullback squares as shown below.
  \[
    (s_1)
    \quad
    \begin{tikzcd}
      G\ar[r, "\ddot{g}"]\ar[d, "\dot{g}"']\pb&\s_*\ar[d, "p"]\\
      A\ar[r, "g"]&\s
    \end{tikzcd}
    \quad
    \qquad
    (s_2)
    \quad
    \begin{tikzcd}
      f^*G\ar[r, "\pi_2"]\ar[d, "\pi_1"']\pb&G\ar[d, "\dot{g}"]\\
      A'\ar[r, "f"]&A
    \end{tikzcd}
    \quad
    \qquad
    (s_3)
    \quad
    \begin{tikzcd}
      f^*G\ar[r, "\pi_2 \ddot{g}"]\ar[d, "\pi_1"']\pb&\s_*\ar[d, "p"]\\
      A'\ar[r, "fg"]&\s
    \end{tikzcd}
  \]
  Given these, we can take the specification $s_3$ to be the displayed pullback square (the pasting of $s_1$ and $s_2$).

  We then have $f^*_{s_2}\pbig{\Gr^A_{s_1}(g)} = (f^*G,\pi_1) = \Gr^{A'}_{s_3}(f^*g)$, and we can take $\alpha_{s_1,s_2,s_3} = \id$.
  Given this definition of $\alpha_{s_1,s_2,s_3}$, the naturality statement becomes: given a morphism $\beta \colon g \to g'$ in $\CC(A,S)$ and specifications $s_1',s_2',s_3'$ as above, we have $f^*_{s_2,s_2'}\pbig{\Gr^A_{s_1,s_1'}(\beta)} = \Gr^{A'}_{s_3,s_3'}(f^*\beta)$.
  After unravelling the definitions, this follows from inspecting the diagram
\[
    \begin{tikzcd}
      f^*G\ar[rr, "\pi_2"]
      \ar[ddr, "\pi_1"', bend right]\ar[dr, "f^*b"]&&
      G\ar[rrd, bend left, "\ddot{g}"{name=dg}]
      \ar[ddr, "\dot{g}"', bend right]\ar[dr, "b"]
      \ar[from=dg, to=2-4, Rightarrow, shorten >=0pt, shorten <=4pt, "\ddot{\beta}"']\\
      &f^*H\ar[d, "\pi_1"]\ar[rr, "\pi_2"{pos=0.4}, crossing over]&&H\ar[r, "\ddot{h}"]\ar[d, "\dot{h}"]&
      \s_*\ar[d, "p"]\\
      &A'\ar[rr, "f"]&&A
      \ar[r, "g"{name=g}, bend left]
      \ar[r, "h"'{name=h}, bend right]
      &\s
      \ar[from=g, to=h, Rightarrow, shorten=3pt, "\beta"]
    \end{tikzcd}
  \]
  and in particular noting that $(\pi_2\ddot{\beta})p = \pi_1(f\beta)$.
\end{proof}

In light of the above proposition, to show that $\s$ has finite limits (Definition~\ref{defn:has-finite-limits}), it suffices to show that $\CC(A,\s)$ has finite limits for each $A \in \CC$.
In fact, we have:
\begin{propn}\label{propn:gr-preserves-reflects-limits}
  Let $\CC$ be a 2-category having strict pullbacks of DOFs.

  Then for each $A \in \CC$, the category $\DOF(A)$ has finite limits, and if $p \colon \s_* \to \s$ is any pre-plentiful generic DOF in $\CC$, the essential image of the fully faithful anafunctor $\Gr^A \colon \CC(A,\s) \to \DOF(A)$ (i.e., the set of $\s$-small DOFs) is closed under finite limits.
\end{propn}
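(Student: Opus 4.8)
Since $\DOF(A)$ has discrete hom-categories it is an ordinary $1$-category, and a category has all finite limits as soon as it has a terminal object and pullbacks; so it suffices to treat these two cases. My plan is to compute both as limits in the slice category $\abs{\CC}/A$ and to check that the constructions remain inside the full subcategory $\DOF(A)$ (and, for the second assertion, inside the $\s$-small DOFs). The terminal object is $\id_A \colon A \to A$, which is a DOF; for any DOF $q \colon E \to A$ the only morphism $q \to \id_A$ over $A$ is $q$ itself, so $\id_A$ is terminal in $\DOF(A)$.

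For pullbacks, the crucial preliminary observation is that \emph{every} morphism of $\DOF(A)$ is itself a DOF: if $q_1 \colon E_1 \to A$ and $q_3 \colon E_3 \to A$ are DOFs and $g \colon E_1 \to E_3$ satisfies $g q_3 = q_1$, then the $2$-of-$3$ property for DOFs, applied to $E_1 \tox{g} E_3 \tox{q_3} A$ with $q_3$ and $q_1 = g q_3$ DOFs, shows that $g$ is a DOF. Hence, given a cospan $q_1 \tox{g_1} q_3 \xot{g_2} q_2$ in $\DOF(A)$, the leg $g_2$ is a DOF, so the strict pullback $E_1 \times_{E_3} E_2$ exists by hypothesis; its projection to $E_1$ is a DOF (a pullback of $g_2$), and post-composing with $q_1$ exhibits $E_1 \times_{E_3} E_2 \to A$ as a composite of DOFs, hence a DOF. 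As this strict pullback computes the pullback in $\abs{\CC}/A$ and $\DOF(A)$ is full therein, it is the pullback in $\DOF(A)$. Thus $\DOF(A)$ has finite limits.

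For the second assertion I would rerun exactly the same argument one level up, replacing ``DOF'' by ``$\s$-small DOF'' and invoking the corresponding closure properties. The terminal object $\id_A$ is an isomorphism, hence $\s$-small by the first closure condition of pre-plentifulness (Definition~\ref{defn:generic-dof}). Given a cospan $q_1 \tox{g_1} q_3 \xot{g_2} q_2$ of $\s$-small DOFs, the $2$-of-$3$ rule for $\s$-small morphisms (Proposition~\ref{propn:more-closure-props}(iv), applied to $q_3$ and $q_2 = g_2 q_3$) shows $g_2$ is $\s$-small; its pullback to $E_1$ is $\s$-small by closure under pullback (Proposition~\ref{propn:more-closure-props}(iii)), and composing with the $\s$-small $q_1$ keeps it $\s$-small by the closure of $\s$-small morphisms under composition. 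Since $\s$-smallness is invariant under isomorphism in $\DOF(A)$ (the $\s$-small DOFs being precisely the bipullbacks of $p$), the essential image of $\Gr^A$ is closed under the terminal object and pullbacks, and hence under all finite limits.

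I do not expect a genuine obstacle; the one point deserving care is the observation that morphisms of (resp.\ $\s$-small) DOFs are again (resp.\ $\s$-small) DOFs, since it is precisely this that lets the pullbacks be formed as strict pullbacks of DOFs and keeps the construction inside the relevant subcategory. Everything else is a direct application of the stability of DOFs and of $\s$-small morphisms under pullback and composition.
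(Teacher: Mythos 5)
Your proposal is correct and follows essentially the same route as the paper's proof: take $\id_A$ as the terminal object, use the 2-of-3 property to see that the legs of a cospan in $\DOF(A)$ are themselves DOFs (resp.\ $\s$-small), form the strict pullback, and conclude via stability under pullback and closure under composition. The only cosmetic difference is that you make explicit the isomorphism-invariance of $\s$-smallness and the comparison with the slice $\abs{\CC}/A$, which the paper leaves implicit.
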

\begin{proof}
  $\DOF(A)$ has a terminal object, namely $\id_A$, which is $\s$-small by assumption of $\s$ being pre-plentiful.

  Now suppose we are a cospan $(Q,q)\tox{f}(S,s)\xot{g}(R,r)$ in $\DOF(A)$.
  We wish to show that it has a pullback in $\DOF(A)$.

  Form a strict pullback square in $\CC$:
  \[
    \begin{tikzcd}
      P\ar[r, "\pi_1"]\ar[d, "\pi_2"']\pb&Q\ar[d, "f"]\\
      R\ar[r, "g"']&S
    \end{tikzcd}
  \]
  Since $q,r,s$ are DOFs, it follows from the 2-of-3 property that $f$ and $g$ are as well, and hence by stability under pullback that $\pi_1$ and $\pi_2$ are.

  By closure under composition, it thus follows that $(P,p)\defeq(P,\pi_1 q) = (P,\pi_2 r)$ is a DOF over $A$, hence the above square gives a square in $\DOF(A)$, which is immediately seen to be a pullback.

  We need to see that if $q,r,s$ are $\s$-small, then so is $p$.
  By the by the 2-of-3 property for $\s$-small morphisms, it follows that $f$ is $\s$-small, hence by the stability under pullbacks, $\pi_2$ is $\s$-small, and hence $p = \pi_2 r$ is $\s$-small by closure under composition.
\end{proof}

\begin{cor}\label{cor:s-has-limits}
  If $\CC$ is a 2-category having strict pullbacks of DOFs, then any pre-plentiful DOF classifier $\s \in \CC$ has finite limits.
\end{cor}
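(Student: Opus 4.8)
The plan is to verify the two requirements of Definition~\ref{defn:has-finite-limits} separately, each by invoking one of the two preceding propositions. The stability requirement---that each $f^* \colon \CC(A,\s) \to \CC(A',\s)$ preserve finite limits for every $f \colon A' \to A$---is exactly the content of Proposition~\ref{propn:generic-dof-limits-are-stable}, which applies since a pre-plentiful DOF classifier carries in particular a generic DOF and $\CC$ is assumed to have strict pullbacks of DOFs. Hence that half is immediate, and the real work is to show that each hom-category $\CC(A,\s)$ has finite limits.

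For this I would appeal to the characterization of genericity: since the generic DOF $p$ is in particular generic, the anafunctor $\Gr^A \colon \CC(A,\s) \to \DOF(A)$ is fully faithful for every $A \in \CC$. By Proposition~\ref{propn:gr-preserves-reflects-limits}, the category $\DOF(A)$ has finite limits and the essential image of $\Gr^A$---namely the $\s$-small DOFs---is closed under finite limits. These are exactly the ingredients needed to transport finite limits back along $\Gr^A$.

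The key step is the standard principle that a fully faithful (ana)functor into a finitely complete category, whose essential image is closed under finite limits, reflects those limits onto its domain. Concretely, given a finite diagram $D \colon J \to \CC(A,\s)$, I would form the composite $\Gr^A D$, take its limit $L$ in $\DOF(A)$, and observe that $L$ lies in the essential image of $\Gr^A$, since each $\Gr^A D(j)$ does and the essential image is closed under finite limits. Writing $L \cong \Gr^A(c)$, the chain of natural isomorphisms
\[
  \CC(A,\s)(x,c)
  \cong \DOF(A)(\Gr^A x, \Gr^A c)
  \cong \lim_j \DOF(A)(\Gr^A x, \Gr^A D(j))
  \cong \lim_j \CC(A,\s)(x, D(j)),
\]
obtained from full faithfulness at both ends and the universal property of $L$ in the middle, exhibits $c$ as a limit of $D$ in $\CC(A,\s)$. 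Thus $\CC(A,\s)$ has finite limits, which together with the stability already noted completes the verification.

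The only real subtlety---and the point I would take most care over---is that $\Gr^A$ is an \emph{ana}functor rather than a functor, so the manipulations above must be carried out in the ana-setting. Since anafunctors share all the relevant formal properties of ordinary functors (in particular, a fully faithful anafunctor is essentially injective on objects and induces the displayed hom-isomorphisms), this principle transfers without essential change, so I do not expect any genuine obstacle beyond bookkeeping of specifications.
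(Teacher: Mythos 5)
Your proposal is correct and matches the paper's own (implicit) argument: the paper derives the corollary exactly by combining Proposition~\ref{propn:generic-dof-limits-are-stable} for stability with Proposition~\ref{propn:gr-preserves-reflects-limits} for existence of finite limits in each $\CC(A,\s)$, transported back along the fully faithful anafunctor $\Gr^A$ whose essential image is closed under finite limits. Your explicit hom-isomorphism chain simply spells out the standard reflection step that the paper leaves unstated.
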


\subsection{Universal property of \alttext{$\wt\pow$}{P̃}}\label{subsec:pow-univ-prop}
Let $\CC$ be a corepita 2-category, and let $p \colon \s_* \to \s$ be a pre-plentiful generic DOF.
Fix a monomorphism fibration $\partial_1^\iso \colon \Mon(\s)^\iso \to \s^\iso$ for $\s$ (\S\ref{subsec:mon-fib}) and a DOF collapse $\dot{\pow} \colon \wt\pow \to \s^\iso$ of $\partial_1^\iso$.
\begin{equation}
  \begin{tikzcd}[column sep=0pt]
    \Mon(\s)^\iso\ar[rr, "\pi"]\ar[dr, "\partial_1^\iso"']&&
    \wt\pow\ar[dl, "\dot{\pow}"]\\
    &\s^\iso&
  \end{tikzcd}
\end{equation}
In the case $\CC = \Cat$, the DOF $\dot{\pow} \colon \wt\pow \to \s$ is isomorphic to the subset fibration mentioned in \S\ref{subsec:intro-power-set}.

We now prove a universal property characterizing $\wt\pow$, so that we can work with it directly, without reference to $\Mon(\s)^\iso$.
\begin{propn}\label{propn:pow-univ-prop}
  If $\CC$ is a corepita 2-category, and $p \colon \s_* \to \s$ is a pre-plentiful generic DOF in $\CC$, then the DOF $\dot{\pow} \colon \wt\pow \to \s^\iso$ defined above is equipped with a stable monomorphism 2-cell
  \[
    \twocell{\wt\pow}{\s}{\tilde{\partial}_0}{\tilde{\partial}_1\defeq\dot{\pow} i}
    {\tilde{\partial}}
  \]
  with the following universal property: for any $A \in \CC$ and any stable monomorphism 2-cell
  \begin{equation}\label{eq:pow-univ-prop-data}
    \twocell{A}{\s}{f}{g}{\alpha}
  \end{equation}
  with $f,g$ arrow-wise isos (hence for arbitrary $f,g$ if $A$ is groupoidal), there is a unique $u \colon A \to \wt\pow$ such that $u\tilde{\partial}_1 = g$ and such that there exists a (necessarily unique) isomorphism $\xi \colon u\tilde{\partial}_0 \toi f$ making the following square in $\CC(A,\s)$ commute.
  \begin{equation}\label{eq:pow-univ-prop-square}
    \begin{tikzcd}
      u\tilde{\partial}_0\ar[r, "\sim"', "\xi"]\ar[d, >->, "u\tilde{\partial}"']&f\ar[d, >->, "\alpha"]\\
      u\tilde{\partial}_1\ar[r, equals]&g
    \end{tikzcd}
  \end{equation}
\end{propn}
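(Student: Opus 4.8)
The plan is to transport the universal property of $\Mon(\s)^\iso$ across the equivalence $\pi$ exhibiting $\wt\pow$ as a DOF-collapse, using the DOF property of $\dot{\pow}$ to rigidify at the end. First I would record the relevant universal property of $\Mon(\s)^\iso$: combining the defining property of the monomorphism object $\partial\colon\Mon(\s)\tocellud{\partial_0}{\partial_1}\s$ with that of the core $i\colon\Mon(\s)^\iso\to\Mon(\s)$ --- exactly as in the proof that $\partial_1^\iso$ is a SOF --- one sees that for each $A\in\CC$ the functor $(i\partial)_*\colon\CC(A,\Mon(\s)^\iso)\to\CC(A,\s)^\to$ is injective, with image the subcategory whose objects are the stable monomorphisms $\alpha\colon f\to g$ with $f,g$ arrow-wise isos and whose morphisms are the \emph{invertible} squares between these.

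I would then set up the data. Since $\dot{\pow}$ is a DOF, Proposition~\ref{propn:equiv-of-mors-props} applied to the DOF-collapse furnishes a section $\sigma\colon\wt\pow\to\Mon(\s)^\iso$ of $\pi$ with $\sigma\pi=\id$ (so that $\sigma\partial_1^\iso=\sigma\pi\dot{\pow}=\dot{\pow}$) together with an invertible $2$-cell $\beta\colon\pi\sigma\toi\id$ satisfying $\beta\partial_1^\iso=\id$. I define $\tilde{\partial}_0\defeq\sigma\, i\partial_0$ and the $2$-cell $\tilde{\partial}\defeq\sigma(i\partial)$; its target is $\sigma\, i\partial_1=\sigma\,\partial_1^\iso\, i=\dot{\pow}\, i=\tilde{\partial}_1$ (the two $i$'s being the core inclusions of $\Mon(\s)$ and of $\s$), as required. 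As $i\partial$ is a stable monomorphism (a whiskering of the stable monomorphism $\partial$) and stable monomorphisms are closed under pre-whiskering, $\tilde{\partial}$ is a stable monomorphism.

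For existence in the universal property, given a stable monomorphism $\alpha\colon A\tocellud{f}{g}\s$ with $f,g$ arrow-wise isos, the universal property of $\Mon(\s)^\iso$ gives a unique $a\colon A\to\Mon(\s)^\iso$ with $a(i\partial)=\alpha$, and I would take $u\defeq a\pi$. Then $u\tilde{\partial}_1=a\pi\dot{\pow}\, i=a\,\partial_1^\iso\, i=a\, i\partial_1=g$, and I set $\xi\defeq a\beta\, i\partial_0\colon u\tilde{\partial}_0\toi a\, i\partial_0=f$, which is invertible. Commutativity of the square~(\ref{eq:pow-univ-prop-square}) reduces, by the interchange law for the whiskerings of $\beta$ and of $i\partial$ (pre-whiskered by $a$), to the relation $\beta\partial_1^\iso=\id$, which yields $a\beta\, i\partial_1=\id$ and hence $\xi\cdot\alpha=u\tilde{\partial}$; uniqueness of $\xi$ is then immediate since $\alpha$ is a monomorphism in $\CC(A,\s)$.

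The main obstacle is the uniqueness of $u$, and here the DOF property of $\dot{\pow}$ is essential, since $\pi$ by itself determines $u$ only up to a canonical isomorphism. Given another $u'$ with the stated properties and isomorphism $\xi'\colon u'\tilde{\partial}_0\toi f$, I would set $a'\defeq u'\sigma$, so that $a'(i\partial)=u'\tilde{\partial}=\xi'\alpha$. The invertible square $(\xi'^{-1},\id_g)\colon\alpha\to\xi'\alpha$ lies in the image of $(i\partial)_*$, hence lifts to a unique invertible $2$-cell $\theta\colon a\to a'$ with $\theta\, i\partial_1=\id_g$. Writing $\theta\, i\partial_1=\theta\,\partial_1^\iso\, i$ and using that $i\colon\s^\iso\to\s$ is a faithful monomorphism, I conclude $\theta\,\partial_1^\iso=\id$. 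Whiskering by $\pi$ gives $\theta\pi\colon u\to u'$ with $(\theta\pi)\dot{\pow}=\theta\,\partial_1^\iso=\id$; since $\dot{\pow}$ is a DOF, the only lift of an identity $2$-cell is an identity, so $\theta\pi=\id_u$ and therefore $u'=u$. I expect this final rigidification --- upgrading uniqueness-up-to-isomorphism to strict uniqueness via the unique-lifting property of the DOF $\dot{\pow}$ --- to be the crux.
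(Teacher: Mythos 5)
Your proof is correct and follows essentially the same route as the paper's: the same definitions $\tilde{\partial}_0 = \sigma i\partial_0$, $\tilde{\partial} = \sigma(i\partial)$, the same construction $u = a\pi$ and $\xi = a\beta\, i\partial_0$ verified by the interchange computation with $\beta\partial_1^\iso = \id$, and the same uniqueness argument lifting to $\Mon(\s)^\iso$, killing the $\partial_1^\iso$-component, and invoking the DOF property of $\dot{\pow}$. The only (immaterial) difference is presentational: you package the universal properties of $\Mon(\s)$ and its core into a single statement about $(i\partial)_*$, where the paper invokes them in two steps via $v$ and $\bar{v}$.
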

\begin{proof}
  Recall from Definition~\ref{defn:dof-collapse} that we have a morphism $\sigma \colon \wt\pow \to \Mon(\s)^\iso$ and an invertible 2-cell $\phi \colon \pi\sigma \toi \id_{\Mon(\s)^\iso}$ satisfying $\sigma\pi = \id_{\wt\pow}$ and $\phi \pi = \id_{\pi}$ (and hence also $\sigma \partial_1^\iso = \dot{\pow}$ and $\phi \partial_1^\iso = \id_{\partial_1^\iso}$).
  \vspace{-0.5cm}
  \begin{equation}\label{eq:mon-collapse}
    \hspace{-1cm}
    \begin{tikzcd}[column sep=0pt]
      \Mon(\s)^\iso\ar[rr, "\pi"']\ar[dr, "\partial_1^\iso"']
      \ar[loop, distance=3em, in=south west, out=north west, start anchor={[yshift=-5pt]north west}, end anchor={[yshift=5pt]south west}, ""{name=loop}", "\pi\sigma"']
      \ar[from=loop, to=1-1, Rightarrow, "{}_\phi"{pos=0.3}, "{}^\sim"'{pos=0.3}]
      &&
      \wt\pow\ar[ll, bend right, "\sigma"']\ar[dl, "\dot{\pow}"]
      \\
      &\s^\iso&
    \end{tikzcd}
  \end{equation}
  Recall also that we have a universal stable monomorphism 2-cell $\partial \colon \Mon(\s) \tocellud{\partial_0}{\partial_1} \s$.

  We define $\tilde{\partial}_0 = \sigma i\partial_0$ and $\tilde{\partial} = \sigma i\partial$.
  Note that $\tilde{\partial}_1 = \dot{\pow} i = \sigma\partial_1^\iso i = \sigma i\partial_1$, so that indeed $\tilde{\partial} \colon \tilde{\partial}_0 \to \tilde{\partial}_1$.

  Now suppose we are given data as in (\ref{eq:pow-univ-prop-data}).
  By the universal property of $\Mon(\s)$, there is a unique morphism $v \colon A \to \Mon(\s)$ with $v\partial = \alpha$.

  From the fact that $f,g$ are arrow-wise isos and using the 2-categorical part of the universal property of $\Mon(\s)$, it follows that $v$ is a arrow-wise iso, and hence we have an induced morphism $\bar{v} \colon A \to \Mon(\s)^\iso$ with $\bar{v} i = v$.

  \begin{equation}\label{eq:induced-pow-mor}
    \begin{tikzcd}
      &&\wt\pow\ar[d, "\dot{\pow}"]\ar[dl, bend right, "\sigma"']\\
      &\Mon(\s)^\iso\ar[ru, "\pi"]\ar[r, "\partial_1^\iso"]\ar[d, "i"]&\s^\iso\ar[d, "i"]\\
      &\Mon(\s)\ar[r, bend left, "\partial_0"{name=par1,pos=0.4}]
      \ar[r, bend right=5pt, "\partial_1"'{name=par2,pos=0.3}]&\s\\
      A\ar[ru, "v"]\ar[ruu, "\bar{v}"]
      \ar[rru, "f"{name=f}, bend right=10pt]
      \ar[rru, "g"'{name=g}, bend right=35pt]&
      \ar[Rightarrow, from=f, to=g, shorten=3pt, "\alpha"]
      \ar[Rightarrow, from=par1, to=par2, shorten=3pt, "\partial"]
    \end{tikzcd}
  \end{equation}
  We now set $u = \bar{v}\pi$ and we have
  \[
    u\tilde{\partial}_1 =
    \bar{v}\pi\dot{\pow} i =
    \bar{v}\partial_1^\iso i =
    \bar{v} i\partial_1 =
    v\partial_1 =
    g
  \]
  as required.
  Taking $\xi = \bar{v}\phi i\partial_0$, the square (\ref{eq:pow-univ-prop-square}) then becomes
  \[
    \begin{tikzcd}
      \bar{v}\pi\sigma i\partial_0\ar[r, "\sim"', "\bar{v}\phi i\partial_0"]\ar[d, "\bar{v}\pi\sigma i\partial"']&\bar{v} i\partial_0\ar[d, "\bar{v}i\partial"]\\
      \bar{v}\pi\sigma i\partial_1\ar[r, equals]&\bar{v} i\partial_1.
    \end{tikzcd}
  \]
  Now the composite of the top and right morphisms is seen to be the horizontal composite
  \[
    \begin{tikzcd}
      A {
        \ar[r, bend left, "\bar{v}\pi\sigma"{name=sp}, pos=0.55,
            start anchor=north east, end anchor=north west]
        \ar[r, bend right, "\bar{v}"'{name=id}, pos=0.55,
            start anchor=south east, end anchor=south west]
      }&
      \Mon(\s)^\iso {
        \ar[r, bend left, "i\partial_0"{name=p1}, pos=0.45,
            start anchor=north east, end anchor=north west]
        \ar[r, bend right, "i\partial_1"'{name=p2}, pos=0.45,
            start anchor=south east, end anchor=south west]
      }&
      \s.
      \ar[from=sp, to=id, "\bar{v}\phi", "\sim" sloped, Rightarrow, shorten=3pt]
      \ar[from=p1, to=p2, "i\partial", Rightarrow, shorten=3pt]
    \end{tikzcd}
  \]
  But this same horizontal composite is also equal to
  \[
    \plbig{(\bar{v}\pi\sigma)(i\partial)}
    \plbig{(\bar{v}\phi)(i\partial_1)} =
    \plbig{\bar{v}\pi\sigma i\partial}
    \plbig{\bar{v}\phi\partial_1^\iso i} =
    \plbig{\bar{v}\pi\sigma i\partial}
    \plbig{\bar{v}\id_{\partial_1^\iso}i} =
    \bar{v}\pi\sigma i\partial,
  \]
  as required.

  It remains to verify the uniqueness of $\bar{v}\pi \colon A \to \wt\pow$.
  Supposing we had another $u$ with the same property, we would then have a commuting square
  \[
    \begin{tikzcd}
      u\sigma i\partial_0\ar[r, "\sim"]\ar[d, "u\sigma i\partial"']&\bar{v} i\partial_0\ar[d, "\bar{v}i\partial"]\\
      u\sigma i\partial_1\ar[r, equals]&\bar{v} i\partial_1.
    \end{tikzcd}
  \]
  The universal properties of $\Mon(\s)$ and $\Mon(\s)^\iso$ then imply that there is an isomorphism $\beta \colon u \sigma \toi \bar{v}$ with $\beta\partial_1^\iso = \id_{\bar{v}\partial_1^\iso}$.
  We therefore have an isomorphism
  \[
    u = u \sigma \pi
    \xrightarrow{\beta\pi}
    \bar{v} \pi
  \]
  with $(\beta\pi) \dot{\pow} = \id_{\bar{v}\pi\dot{\pow}}$.
  Because $\dot{\pow}$ is a DOF, it follows that $\beta \pi = \id_u$ and that $u = \bar{v} \pi$, as desired.
\end{proof}

\subsection{Internal power objects}\label{subsec:internal-power}
Given an object $X \in \cC$ in a category $\cC$ with finite limits, recall that a \defword{power object} of $X$ is an object $P(X) \in \cC$ equipped with a universal monomorphism
\[
  j \colon \epsilon_X\tto X \times P(X),
\]
i.e., for any other product $X \times Y$ with $X$ and monomorphism $R\tto X \times Y$, there is a unique $r \colon Y \to P(X)$ such that there exists a pullback square.
\[
  \begin{tikzcd}
    R\ar[r, ""]\ar[d, >->, ""']\pb&[10pt]\epsilon_X\ar[d, >->, "j"]\\
    X \times Y\ar[r, "\id \times r"]&X \times P(X)
  \end{tikzcd}
\]
We say that $\cC$ \defword{has power objects} if every $X \in \cC$ has a power object.
We recall that $\cC$ is a topos if and only if it has finite limits and power objects \cite[IV.1]{mac-lane-moerdijk}.

Next, a finite limit preserving functor $F \colon \cC \to \cD$ \defword{preserves the power object $P(X)$} if it preserves the product $X \times P(X)$, and $F(j) \colon F(X) \to F(X \times P(X))$ is again a universal monomorphism.
A \defword{morphism of elementary topoi}, or \defword{logical functor} is a functor preserving finite limits and all power objects.

\begin{defn}\label{defn:power-objs}
  An object $X \in \CC$ with finite limits in a 2-category $\CC$ \defword{has power objects} (or is an \defword{internal elementary topos}---or just an \defword{elementary topos}) if $\CC(A,X)$ has power objects for each \emph{groupoid} $A \in \CC$, and $f^* \colon \CC(A,X) \to \CC(A',X)$ preserves finite limits and power objects for each groupoid $A' \in \CC$ and $f \colon A' \to A$.
\end{defn}

\begin{rmk}
  Remark~\ref{rmk:pointwise-limits} applies, \emph{mutatis mutandis}, with regard to $X$ having power objects rather than finite limits.
\end{rmk}

\begin{rmk}
  The reader may find it unsettling that the definition of $X \in \CC$ having power objects makes reference only to the groupoids in $\CC$.
  As mentioned in \S\ref{subsec:intro-groupoids}, the reason is that it is simply not the case, even for $\CC = \Cat$, that $\CC(\cC,\cD)$ is an elementary topos whenever $\cD$ is, if we don't assume $\cC$ is a groupoid.

  More directly, we can observe that the universal monomorphism $\epsilon_X \tto X \times P(X)$ is not functorial in $X$ (except along isomorphisms!), hence given a functor $F \colon \cC \to \cD$, taking power objects $P(F(X))$ object-wise does not result in a power object of $F$ in $\Fun(\cC,\cD)$ (again, unless $\cC$ is a groupoid).

  That this is nonetheless a reasonable definition, at least when $\CC$ is groupoidant, is evidenced by Theorem~\ref{thm:gpd-yoneda-rstr} stating that the groupoidal objects are in this case \emph{dense} in $\CC$, as well as by the results of Appendix~\ref{sec:groupoid-stuff}
\end{rmk}

We now set out to show that a plentiful generic DOF classifier has power objects.

Let $\CC$ be a corepita 2-category, and let $p \colon \s_* \to \s$ be a plentiful generic DOF.
Fix a monomorphism fibration $\partial_1^\iso \colon \Mon(\s)^\iso \to \s^\iso$ and a DOF collapse $\dot{\pow} \colon \wt\pow \to \s^\iso$ as in \S\ref{subsec:pow-univ-prop}.

Since $\s$ is plentiful, the DOF $\dot{\pow}$ is classified by a morphism $\pow \colon \s^\iso \to \s$, so that we have a pullback square
\[
  \begin{tikzcd}
    \wt\pow\ar[r, "\ddot{\pow}"]\ar[d, "\dot{\pow}"']\pb&\s_*\ar[d, "p"]\\
    \s^\iso\ar[r, "\pow"]&\s.
  \end{tikzcd}
\]
As mentioned in \S\ref{subsec:intro-power-set}, in the case $\CC = \Cat$, the functor $\pow$ is precisely the power set functor.

\begin{propn}\label{propn:s-power-univ-first}
  If $\CC$ is a corepita 2-category, and $\s \in \CC$ is a plentiful DOF classifier, then with $\pow \colon \s^\iso \to \s$ defined as above, we have:

  For each groupoid $U \in \CC$ and each $a \colon U \to \s$, the object $\bar{a}\pow \in \CC(U,\s)$ is a power object of $a$, where $\bar{a} \colon U \to \s^\iso$ is the unique morphism with $\bar{a}i = a$.
\end{propn}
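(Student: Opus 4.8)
The plan is to prove the statement in its ``representable'' form: I will show that $\bar{a}\pow$ represents the functor $b\mapsto\Sub_{\CC(U,\s)}(a\times b)$ sending an object $b\in\CC(U,\s)$ to the set of subobjects of $a\times b$, and then extract the universal monomorphism $\epsilon_a\tto a\times(\bar a\pow)$ as the subobject corresponding to $\id_{\bar a\pow}$. Since $\CC(U,\s)$ has finite limits (Corollary~\ref{cor:s-has-limits}), this is equivalent to exhibiting a power object. Throughout I use the equivalence $\Gr^U\colon\CC(U,\s)\toi\DOF_\s(U)$ onto the $\s$-small DOFs over $U$, under which $a$ corresponds to its total space $\dot a\colon\El(a)\to U$ (the pullback of $p$ along $a$), $b$ to $\dot b\colon\El(b)\to U$, and---since $\Gr^U$ preserves finite limits (Proposition~\ref{propn:gr-preserves-reflects-limits})---products to fibre products over $U$. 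I also record the key fact that $\El(b)$ is a groupoid: as $\dot b$ is a DOF and $U$ is a groupoid, each $\CC(A,\El(b))\to\CC(A,U)$ is a DOF onto a groupoid, hence (as in $\Cat$) $\CC(A,\El(b))$ is a groupoid.

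The core of the argument is a chain of natural bijections. First, a morphism $b\to\bar a\pow$ in $\CC(U,\s)$ corresponds to a map $\El(b)\to\bar a^*\wt\pow$ of DOFs over $U$, and by the universal property of the defining pullback of $\dot{\pow}$ along $\bar a$, this is the same as a morphism $u\colon\El(b)\to\wt\pow$ with $u\dot{\pow}=\dot b\bar a$. Because $i\colon\s^\iso\to\s$ is a monomorphism and $\tilde\partial_1=\dot\pow i$, the condition $u\dot\pow=\dot b\bar a$ is equivalent to $u\tilde\partial_1=\dot b^* a$, where $\dot b^* a$ denotes the restriction $\El(b)\tox{\dot b}U\tox a\s$ (a morphism out of the groupoid $\El(b)$, hence an arrow-wise iso). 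Now Proposition~\ref{propn:pow-univ-prop}, applied with $A=\El(b)$ a groupoid, identifies the set of such $u$ with the set of (stable, equivalently arbitrary, since $\El(b)$ is groupoidal) monomorphisms into $\dot b^* a$ in $\CC(\El(b),\s)$, taken up to isomorphism---that is, with $\Sub_{\CC(\El(b),\s)}(\dot b^* a)$.

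It remains to match $\Sub_{\CC(\El(b),\s)}(\dot b^* a)$ with $\Sub_{\CC(U,\s)}(a\times b)$, and this is the step I expect to be the main obstacle. Under the two equivalences, both sides are sets of sub-objects $M\hto E$ of the single object $E=\El(a)\times_U\El(b)$ in $\abs\CC$; the difference is only that a subobject of $a\times b$ requires $M\to U$ to be an $\s$-small DOF, while a subobject of $\dot b^* a$ requires $M\to\El(b)$ to be one. Since $\dot b\colon\El(b)\to U$ is itself an $\s$-small DOF, these two conditions coincide by the 2-of-3 properties for DOFs and for $\s$-small morphisms (the latter from Proposition~\ref{propn:more-closure-props}(iv), together with closure under composition): if $M\to\El(b)$ is an $\s$-small DOF then so is the composite $M\to U$, and conversely if $M\to U$ is an $\s$-small DOF then, $\dot b$ being one, so is $M\to\El(b)$. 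One checks that the monomorphism condition is likewise base-independent, so the two subobject sets agree.

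Composing these bijections yields a natural isomorphism $\CC(U,\s)(b,\bar a\pow)\cong\Sub_{\CC(U,\s)}(a\times b)$. Naturality in $b$ is inherited from each step (the pullback universal property, Proposition~\ref{propn:pow-univ-prop}, and the base change of subobjects). By the Yoneda lemma this representability is equivalent to the existence of a universal monomorphism $\epsilon_a\tto a\times(\bar a\pow)$---namely the subobject corresponding to $\id_{\bar a\pow}$---with the property that every subobject $R\tto a\times b$ is the pullback of $\epsilon_a$ along $\id_a\times r$ for a unique $r\colon b\to\bar a\pow$. This is exactly the assertion that $\bar a\pow$ is a power object of $a$. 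The main work, beyond invoking Proposition~\ref{propn:pow-univ-prop}, lies in the base-change identification of the third paragraph and in verifying that all the identifications are natural in $b$.
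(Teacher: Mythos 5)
Your proposal is correct and is in substance the paper's own proof: the engine in both is Proposition~\ref{propn:pow-univ-prop} applied over the groupoidal total space of $b$ (your $\El(b)$, the paper's $B$), together with the closure of $\s$-small morphisms under composition, monic DOFs and the 2-of-3 rule, and your change-of-base between $U$ and $\El(b)$ is precisely the step in Proposition~\ref{propn:s-power-univ-final} where the monomorphism $k \colon R \tto A \times_U B$ in $\DOF(U)$ is re-read as a monomorphism in $\DOF(B)$ and classified by a stable monomorphism 2-cell. The difference is only organizational -- the paper constructs the universal monomorphism $j_A = \Gr^{P_A}(\pi_A\tilde{\partial})$ and verifies the pullback-square property directly, while you derive it from the natural bijection $\CC(U,\s)(b,\bar{a}\pow) \cong \Sub_{\CC(U,\s)}(a \times b)$ plus Yoneda -- and your one small slip is harmless: monomorphisms in $\CC(\El(b),\s)$ coincide with stable ones not because $\El(b)$ is groupoidal but because $\s$ has stable finite limits (Proposition~\ref{propn:generic-dof-limits-are-stable}, Corollary~\ref{cor:s-has-limits}).
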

Fix such a $U$ and $a$.
Proving this proposition amounts to constructing a monomorphism into $a \times (\bar{a} \pow)$, which we will do presently, and proving that it is universal, which will be done in Proposition~\ref{propn:s-power-univ-final} below.

Now, as in \S\ref{subsec:s-has-finite-lims}, we recast this problem inside of $\DOF(U)$ via the fully faithful anafunctor $\Gr^{U} \colon \CC(U,\s) \to \DOF(U)$.
Fix pullbacks
\begin{equation}\label{eq:pow-setup}
  \begin{tikzcd}
    A\ar[r, "\ddot{a}"]\ar[d, "\dot{a}"']\pb&\s_*\ar[d, "p"]\\
    U\ar[r, "a"]&\s
  \end{tikzcd}
  \qquad\text{and}\qquad
  \begin{tikzcd}
    P_A\ar[r, "\ddot{P}_A"]\ar[d, "\dot{P}_A"']\pb&\s_*\ar[d, "p"]\\
    U\ar[r, "\bar{a}\pow"]&\s.
  \end{tikzcd}
\end{equation}
and note that we have an induced pullback square
\[
  \begin{tikzcd}
    P_A\ar[rr, "\ddot{P}_A", bend left]\ar[d, "\dot{P}_A"']
    \ar[r, "\pi_A", dashed]\pb
    &\wt\pow\ar[r, "\ddot{\pow}"]\ar[d, "\dot{\pow}"']\pb&\s_*\ar[d, "p"]\\
    U\ar[r, "\bar{a}"]&\s^\iso\ar[r, "\pow"]&\s.
  \end{tikzcd}
\]

Now, by definition, the anafunctor $\Gr^U$ takes $a,\bar{a}\pow \in \CC(U,\s)$ to $\dot{a},\dot P_A \in \DOF(U)$, respectively.

Hence, we need to construct a morphism $j \colon \epsilon_A \to A \times_{U} P_A$ in $\DOF(U)$, where we have fixed a strict pullback square
\[
  \begin{tikzcd}
    A \times_UP_A \ar[r, "\pi_1"] \ar[d, "\pi_0"'] \pb & P_A \ar[d, "\dot{P}_A"]\\
    A \ar[r, "\dot{a}"] & U.
  \end{tikzcd}
\]
We also set $\pi \defeq \pi_0 \dot{a} = \pi_1\dot{P}_A \colon A \times_UP_A \to U$.

Fixing a universal stable monomorphism
\[
  \twocell{\wt\pow}{\s}{\tilde{\partial}_0}{\tilde{\partial}_1 = \dot{\pow} i}{\tilde{\partial}}
\]
as in Proposition~\ref{propn:pow-univ-prop}, we now set $\dot{e}_A = \Gr^{P_A}(\pi_A\tilde{\partial}_0) \colon \epsilon_A \to P_A$ and let $j_A \colon \epsilon_A \to A \times_{U} P_A$ be the morphism $\Gr^{P_A}(\pi_A\tilde{\partial})$ in $\DOF(P_A)$ corresponding to $\pi_A\tilde{\partial}$:
\[
  \begin{tikzcd}
    \epsilon_A\ar[dr, "j_A"']\ar[ddr, bend right, "\dot{e}_A"']
    \ar[drr, "\ddot{e}_A"{name=dde}, bend left=8pt]&&[5pt]\\
    &A \times_{U} P_A\ar[r, "\pi_0\ddot{a}"']\ar[d, "\pi_1"']&\s_*\ar[d, "p"]\\
    &P_A\ar[r, "\pi_A\dot{\pow} i"'{name=dp}, bend right, pos=0.47]
    \ar[r, "\pi_A\tilde{\partial}_0"{name=p1s}, bend left, pos=0.47]&\s,
    \ar[r, from=p1s, to=dp, Rightarrow, shorten=3pt, "\pi_A\tilde{\partial}"]
    \ar[r, from=dde, to=2-2, Rightarrow, shorten <=3pt, shorten >=-5pt]
  \end{tikzcd}
\]
where we are using that the DOF $\pi_1 \colon A \times_U P_A \to P_A$ is classified by
\[
  \dot{P}_Aa =
  \dot{P}_A\bar{a} i =
  \pi_A\dot{\pow} i.
\]
Since $\Gr^{P_A} \colon \CC(P_A,\s) \to \DOF(P_A)$ preserves, finite limits, it follows that $j_A$ is a monomorphism in $\DOF(P_A)$.
In fact, since $\dom \colon \DOF(P_A) \to \CC$ preserves and reflects pullbacks (see the proof of Proposition~\ref{propn:gr-preserves-reflects-limits}), this is equivalent to $j_A$ being a monomorphism in $\abs{\CC}$, and in particular, $j_A$ remains a monomorphism in $\DOF(U)$:
\[
  \begin{tikzcd}[column sep=0pt]
    \epsilon_A\ar[rr, >->, "j_A"]\ar[dr, "\dot{e}_A\dot{P}_A"']&&
    A \times_{U} P_A\ar[dl, "\pi"]\\
    &U.&
  \end{tikzcd}
\]

We can now state a more precise version of Proposition~\ref{propn:s-power-univ-first}:
\begin{propn}\label{propn:s-power-univ-final}
  If $\CC$ is a corepita 2-category, and $\s \in \CC$ is a plentiful DOF classifier, then with $\pow \colon \s^\iso \to \s$ defined as above, we have, for each groupoid $U \in \CC$ and each $(A\tox{\dot{a}}U) \in \DOF(U)$:

  The monomorphism $j_A \colon \epsilon_A\tto A \times_{U} P_A$ in $\DOF(U)$ defined above is universal, i.e., it makes $P_A \tox{\dot{P}_A} U$ into a power object of $A \tox{\dot{a}} U$ in $\DOF(U)$.
\end{propn}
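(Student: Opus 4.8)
The plan is to transport the universal property of $\wt\pow$ from Proposition~\ref{propn:pow-univ-prop}, which lives in the hom-category $\CC(S,\s)$ of a suitable groupoid $S$, into $\DOF(U)$ by means of the fully faithful anafunctors $\Gr$ and their compatibility with base change (Proposition~\ref{propn:generic-dof-limits-are-stable}). So fix an object $(S\tox{\dot s}U)\in\DOF(U)$ together with a monomorphism $m\colon R\tto A\times_US$ in $\DOF(U)$; I must produce a unique $r\colon S\to P_A$ over $U$ exhibiting $m$ as a pullback of $j_A$ along $\id_A\times r$. The first observation is that, since $\dot s$ is a DOF and $U$ is a groupoid, $S$ is itself a groupoid: for each $B$ the functor $\dot s_*\colon\CC(B,S)\to\CC(B,U)$ is a DOF onto a groupoid, and a DOF over a groupoid has groupoidal total category. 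In particular every morphism $S\to\s$ is an arrow-wise iso, so the full strength of Proposition~\ref{propn:pow-univ-prop} (for arbitrary $f,g$) is available over $S$.

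Next I pass to the slice over $S$. The DOF $\pi_2\colon A\times_US\to S$ is the pullback of the $\s$-small $A\tox{\dot a}U$ along $\dot s$, hence $\s$-small (Proposition~\ref{propn:more-closure-props}) and classified by $\dot s a\colon S\to\s$ (by the compatibility square of anafunctors in Proposition~\ref{propn:generic-dof-limits-are-stable}). Viewing $m$ as a morphism of DOFs over $S$, it is itself a DOF (by the $2$-of-$3$ property for DOFs) and a monomorphism, hence $\s$-small; as $R\to S$ is then the composite of the $\s$-small $m$ and the $\s$-small $\pi_2$, it too is $\s$-small, say classified by $\rho\colon S\to\s$. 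Because $\Gr^S$ is fully faithful and compatible with finite limits (Proposition~\ref{propn:gr-preserves-reflects-limits}), $m$ corresponds to a $2$-cell $\alpha\colon\rho\to\dot s a$ in $\CC(S,\s)$, which is moreover a \emph{stable} monomorphism by the pullback-stability of monomorphisms of DOFs together with the naturality of $\Gr$ in the base. Feeding the stable mono $\alpha$ into Proposition~\ref{propn:pow-univ-prop} (with the groupoid there taken to be $S$) yields a unique $u\colon S\to\wt\pow$ with $u\tilde\partial_1=\dot s a$ and an isomorphism $\xi\colon u\tilde\partial_0\toi\rho$ filling the square~(\ref{eq:pow-univ-prop-square}). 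Since $u\tilde\partial_1=u\dot\pow i=\dot s\bar a i$ and $i$ is a monomorphism, we get $u\dot\pow=\dot s\bar a$; thus $u$ together with $\dot s$ determines, via the defining pullback $P_A=U\times_{\s^\iso}\wt\pow$, a unique $r\colon S\to P_A$ with $r\dot P_A=\dot s$ and $r\pi_A=u$. This is the required morphism of $\DOF(U)$.

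It remains to check that $r$ classifies $m$. Here I again use that $\Gr$ commutes with base change. Over $P_A$ the subobject $j_A\colon\epsilon_A\tto A\times_UP_A$ is, by construction, $\Gr^{P_A}(\pi_A\tilde\partial)$, with domain classified by $\pi_A\tilde\partial_0$ and codomain by $\pi_A\tilde\partial_1=\dot P_A a$. Applying the isomorphism $\Gr^S r^*\cong r^*\Gr^{P_A}$ (Proposition~\ref{propn:generic-dof-limits-are-stable}, with $r$ as the base morphism) identifies the pullback of $j_A$ along $\id_A\times r$ with $\Gr^S(u\tilde\partial)\colon\Gr^S(u\tilde\partial_0)\tto A\times_US$. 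The commuting square~(\ref{eq:pow-univ-prop-square}) then says precisely that $\Gr^S(\xi)$ is an isomorphism of subobjects of $A\times_US$ from this pullback to $m\colon R\tto A\times_US$; translated back, this is exactly a pullback square exhibiting $m$ as the pullback of $j_A$ along $\id_A\times r$ (a pullback in $\DOF(S)$, hence in $\CC$, hence in $\DOF(U)$, since $\dom$ reflects pullbacks). Uniqueness of $r$ follows by running the correspondence backwards: any $r'$ over $U$ classifying $m$ produces, via $u'=r'\pi_A$, data satisfying the hypotheses of the uniqueness clause of Proposition~\ref{propn:pow-univ-prop}, forcing $u'=u$ and hence $r'=r$ by the universal property of $P_A$.

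I expect the main obstacle to be the base-change bookkeeping in the middle of the argument: keeping straight which morphism into $\s$ classifies which DOF over which base, and verifying in particular that pulling the universal membership relation $\epsilon_A\tto A\times_UP_A$ back along $\id_A\times r$ really corresponds under $\Gr^S$ to the $2$-cell $u\tilde\partial$. The conceptual content, by contrast, is light: everything is a transcription of Proposition~\ref{propn:pow-univ-prop} across the equivalence $\Gr^S$, using that an arbitrary object $S\in\DOF(U)$ is automatically a groupoid and that the relevant DOFs over $S$ are $\s$-small even when $S$ itself need not be.
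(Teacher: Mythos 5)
Your proof is correct and takes essentially the same approach as the paper's: you pass to the slice over the test object (noting, as the paper does, that it is automatically groupoidal since it admits a DOF to the groupoid $U$), use the closure properties of $\s$-small morphisms and full faithfulness of $\Gr$ to convert the given monomorphism into a stable monomorphism $2$-cell, feed this into Proposition~\ref{propn:pow-univ-prop} to obtain $u \colon S \to \wt\pow$, induce the classifying morphism through the pullback defining $P_A$, and then use the base-change compatibility of $\Gr$ (Proposition~\ref{propn:generic-dof-limits-are-stable}) for both the verification and the uniqueness. The paper's proof is the same argument with $B$, $k$, $\kappa$, $\beta$, $v$ playing the roles of your $S$, $m$, $\alpha$, $\xi$, $r$.
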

\begin{proof}
  Fix DOFs $B\tox{\dot{b}}U$ and $R\tox{\dot{r}}U$ and a monomorphism
  \[
    \begin{tikzcd}[column sep=0]
      R\ar[rr, rightarrowtail, "k"]\ar[rd, "\dot{r}"']&&A \times_{U} B\ar[ld, "\pi"]\\
      &U
    \end{tikzcd}
  \]
  in $\DOF(U)$.
  From this, we obtain a morphism $v \colon B \to P_A$ over $U$ as follows.

  First, note that $\pi_1 \colon A \times_{U} P_A \to P_A$ is a DOF (being a pullback of a DOF), and so is $k$ (by the 2-of-3 property of DOFs) and hence also $k\pi_1 \colon R \to B$.
  Thus, we also have a monomorphism in $\DOF(B)$
  \[
    \begin{tikzcd}[column sep=0]
      R\ar[rr, rightarrowtail, "k"]\ar[rd, "k\pi_1"']&&A \times_{U} B\ar[ld, "\pi_1"]\\
      &B.
    \end{tikzcd}
  \]

  Now, $\pi_1$ is $\s$-small, being classified by $\dot{b} a \colon B \to \s$, and $k$ is $\s$-small, being a monic DOF, and hence $k\pi_1$ is $\s$-small as well.
  Letting $\rho \colon B \to \s$ be a morphism classifying $k\pi_1$, we thus have by the genericity of $p$ that $k$ is classified by a 2-cell $\kappa \colon B \tocellud{\rho}{\dot{b} a} \s$ which is a monomorphism since $\Gr^{B}$ is fully faithful; it is moreover a stable monomorphism by Proposition~\ref{propn:generic-dof-limits-are-stable}.

  Thus, by Proposition~\ref{propn:pow-univ-prop}, we obtain a morphism $u \colon B \to \tilde{\pow}$ (note that $B$ is groupoidal since $U$ is and $\dot{b}$ is a DOF) satisfying $u\dot{\pow} i = \dot{b}a$ and an isomorphism $\beta \colon u\tilde{\partial}_0 \toi \rho$ making the following square commute.
  \begin{equation}\label{eq:s-power-univ-final-square}
    \begin{tikzcd}
      u\tilde{\partial}_0\ar[r, "\sim"', "\beta"]\ar[d, >->, "u\tilde{\partial}"']&\rho\ar[d, >->, "\kappa"]\\
      u\dot{\pow} i\ar[r, equals]&\dot{b}a
    \end{tikzcd}
  \end{equation}

  We now obtain our desired morphism $v \colon B \to P_A$ from the diagram
  \[
    \begin{tikzcd}
      B\ar[ddr, "\dot{b}"', bend right]\ar[rd, dashed, "v"']
      \ar[rrd, "u", bend left=13pt]&&\\
      &P_A\ar[d, "\dot{P}_A"']\ar[r, "\pi_A"]\pb&\wt\pow\ar[d, "\dot{\pow}"]&\\
      &U\ar[r, "\bar{a}"]&\s^\iso.&
    \end{tikzcd}
  \]
  Note that, by construction, $v$ is a morphism over $U$ as required.

  Let us now see that $v$ has the required property, namely that we have a pullback square in $\DOF(U)$ (where the dotted morphism is sought):
  \begin{equation}\label{eq:pow-induced-pb-square}
    \begin{tikzcd}
      R\ar[r, dashed]\ar[d, "k", rightarrowtail]\pb&[10pt]\epsilon_A\ar[d, "j_A", rightarrowtail]\\
      A \times_{U} B\ar[r, "\id \times_{U} v"]&A \times_{U} P_A.
    \end{tikzcd}
  \end{equation}

  Consider the enlarged diagram
  \begin{equation}\label{eq:pow-induced-pb-square-enlarged}
    \begin{tikzcd}
      R\ar[rr, dashed]\ar[dr, "k"', rightarrowtail]&[-30pt]&[-30pt]
      v^*\epsilon_A\ar[r]\ar[dl, "v^*j_A"{near start}]\pb&[20pt]\epsilon_A\ar[d, "j_A", rightarrowtail]&[3pt]\\
      &A \times_{U} B\ar[rr, "\id_A \times_{U} v"]\ar[d, "\pi_1"']\pb
      &&A \times_{U} P_A\ar[d, "\pi_1"]&
      \\
      &B\ar[rr, "v"]&{}&
      P_A
      \ar[r, bend left, "\pi_A\tilde{\partial}_0"{name=p1s}, pos=0.45]
      \ar[r, bend right, "\dot{\pow} i"'{name=p2s}, pos=0.45]
      \ar[from=p1s, to=p2s, Rightarrow, "\pi_A\tilde{\partial}", shorten=3pt]&
      \s,
    \end{tikzcd}
  \end{equation}
  where we have chosen a pullback $v^*\epsilon_A$, and we now seek the dotted morphism making the triangle commute.
  But now, since by definition $j_A = \Gr^{P_A}(\pi_A\tilde{\partial})$, we have (with respect to appropriate specifications) that $v^*j_A = \Gr^B(v\pi_A\tilde{\partial})$ (\emph{cf.} Proposition~\ref{propn:generic-dof-limits-are-stable}).
  Since by definition $k = \Gr^B(\kappa)$, we may, by the commutativity of (\ref{eq:s-power-univ-final-square}), take the dotted morphism to be $\Gr^B(\beta)$ (where we recall that $u = v\pi_A$).

  Now suppose we had a second such morphism $v' \colon B \to P_A$, i.e., with $v'\dot{P}_A = \dot{b}$ and for which there is a pullback square (\ref{eq:pow-induced-pb-square}) with $v'$ in place of $v$.
  We would then have corresponding diagrams (\ref{eq:pow-induced-pb-square-enlarged}) and hence (\ref{eq:s-power-univ-final-square}).
  Thus $v' = v$ by the uniqueness part of Proposition~\ref{propn:pow-univ-prop}.
\end{proof}

\begin{thm}
  If $\CC$ is a corepita 2-category, then any plentiful DOF classifier $\s$ is an internal elementary topos.
\end{thm}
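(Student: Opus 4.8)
The plan is to unwind Definition~\ref{defn:power-objs} and recognize that the theorem is, for the most part, an assembly of the preceding results, with a single genuinely new verification. By that definition I must establish three things: that $\s$ has finite limits; that $\CC(A,\s)$ has power objects for every groupoid $A$; and that for each morphism $f\colon A'\to A$ of groupoids the functor $f^*\colon\CC(A,\s)\to\CC(A',\s)$ preserves finite limits and power objects. The first is exactly Corollary~\ref{cor:s-has-limits}, since a corepita 2-category has strict pullbacks of isofibrations and every DOF is an isofibration. The second is immediate from Proposition~\ref{propn:s-power-univ-first}: every object of $\CC(A,\s)$ is a morphism $a\colon A\to\s$, and for groupoidal $A$ the object $\bar a\pow$ (with $\bar a\colon A\to\s^\iso$ the unique core-lift of $a$) is a power object of $a$. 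Preservation of finite limits by $f^*$ is Proposition~\ref{propn:generic-dof-limits-are-stable}.

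The remaining point, and the heart of the proof, is that $f^*$ preserves power objects for $f\colon A'\to A$ a map of groupoids. First I would reduce to preservation of the canonical power objects $\bar a\pow$: since $f^*$ preserves finite limits and any two power objects of a given object are canonically isomorphic compatibly with their universal monomorphisms, preserving one power object per object forces preservation of all of them. So fix $a\colon A\to\s$. As $A'$ is a groupoid, $f^*a=fa$ is again arrow-wise iso, so its core-lift $\overline{f^*a}$ exists; and from $(f\bar a)i=f(\bar a i)=fa=f^*a$ together with uniqueness of the lift we get $f\bar a=\overline{f^*a}$, whence $f^*(\bar a\pow)=f\bar a\pow=\overline{f^*a}\,\pow$, which by Proposition~\ref{propn:s-power-univ-first} is itself a power object of $f^*a$. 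Thus $f^*$ sends the object $\bar a\pow$ to (the object underlying) a power object of $f^*a$, as wanted.

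What then remains, and what I expect to be the main obstacle, is checking that $f^*$ carries the universal monomorphism of Proposition~\ref{propn:s-power-univ-final} for $a$ to the one for $f^*a$ — everything else being formal. The key is that this universal monomorphism is constructed entirely out of the fixed data $\pow\colon\s^\iso\to\s$, the universal stable monomorphism $\tilde\partial$ of Proposition~\ref{propn:pow-univ-prop}, and the fully faithful anafunctors $\Gr^{(-)}$, all of which are compatible with restriction along $f$. Concretely, the commuting square of anafunctors relating $\Gr^A f^*$ with $f^*\Gr^{A'}$ established in the proof of Proposition~\ref{propn:generic-dof-limits-are-stable}, together with the evident compatibility of $\bar{(-)}$, $\pow$, and $\tilde\partial$ with precomposition by $f$, means that tracing the construction of Proposition~\ref{propn:s-power-univ-final} through $f^*$ reproduces verbatim the construction carried out for $f^*a$. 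Hence $f^*$ applied to the universal monomorphism for $a$ is the universal monomorphism for $f^*a=\overline{f^*a}\,\pow$, so $f^*$ preserves this power object and therefore, by the reduction above, all power objects. Combined with the three ingredients already in hand, this shows that $\s$ is an internal elementary topos.
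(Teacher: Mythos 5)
Your proposal is correct and takes essentially the same route as the paper: finite limits from Corollary~\ref{cor:s-has-limits}, power objects over groupoids from Proposition~\ref{propn:s-power-univ-first}, and preservation under $f^*$ via the identification $f\bar{a} = \overline{f^*a}$ (so the canonical power object is preserved) together with compatibility of the universal monomorphism's construction with restriction along $f$. The paper pins down that last compatibility with the explicit identities $j_A = \Gr^{P_A}(\pi_A\tilde{\partial})$ and $h\pi_A = \pi_{A'}$, which is precisely the ``tracing the construction through $f^*$'' that you describe.
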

\begin{proof}
  In light of Corollary~\ref{cor:s-has-limits} and Proposition~\ref{propn:s-power-univ-first}, the only thing remaining to prove is that $f^* \colon \CC(U,\s) \to \CC(U',\s)$ preserves power objects for each morphism $f \colon U' \to U$ with $U,U'$ groupoidal.

  This amounts to showing that for each $\s$-small DOF $\dot{a} \colon A \to U$ and pullback square
  \[
    \begin{tikzcd}
      A'\ar[r, "g"]\ar[d, "\dot{a}'"']\pb&A\ar[d, "\dot{a}"]\\
      U'\ar[r, "f"]&U,
    \end{tikzcd}
  \]
  $f^* \colon \DOF(U) \to \DOF(U')$ takes some power object for $\dot{a}$ to one for $\dot{a}'$.

  Recalling the construction of $P_A$ in Proposition~\ref{propn:s-power-univ-first}, and defining $P_{A'}$ similarly, we see immediately that we have a pullback square
  \[
    \begin{tikzcd}
      P_{A'}\ar[r, "h", dashed]\ar[d, "\dot{P}_{A'}"']\ar[rr, bend left, "\ddot{P}_{A'}"]\pb&
      P_A\ar[d, "\dot{P}_A"]\ar[r, "\ddot{P}_{A}"]&\s_*\ar[d, "p"]\\
      U'\ar[r, "f"]\ar[rr, bend right, "\bar{a}'\pow"]&U\ar[r, "\bar{a}\pow"]&S
    \end{tikzcd}
  \]
  so that $f^*P_A = P_{A'}$.
  Thus, it remains only to see that $f^*$ takes the universal monomorphism for $P_A$ to the one for $P_{A'}$, i.e., that we have a dotted morphism forming a pullback square
  \[
    \begin{tikzcd}
      \epsilon_{A'}\ar[r, dashed]\ar[d, "j_{A'}", rightarrowtail]\pb
      &[20pt]\epsilon_A\ar[d, "j_A", rightarrowtail]&\\
      A' \times_{U'} P_{A'}\ar[r, "g \times_{f} h"]\ar[d, "\pi_1"]\pb
      &A \times_{U} P_A\ar[d, "\pi_1"]&
      \\
      P_{A'}\ar[r, "h"]&
      P_A
      \ar[r, bend left, "\pi_A\tilde{\partial}_0"{name=p1s}, pos=0.45]
      \ar[r, bend right, "\pi_A\dot{\pow} i"'{name=p2s}, pos=0.45]
      \ar[from=p1s, to=p2s, Rightarrow, "\pi_A\tilde{\partial}", shorten=3pt]&
      \s.
    \end{tikzcd}
  \]
  Recalling that $j_A = \Gr^{P_A}(\pi_A\tilde{\partial})$ and $j_{A'} = \Gr^{P_{A'}}(\pi_{A'}\tilde{\partial})$, the claim follows since $h\pi_{A} = \pi_{A'}$ (\emph{cf.} the last part of the proof of Proposition~\ref{propn:s-power-univ-final}).
\end{proof}

We end this section by showing that a plentiful DOF classifier is ``essentially unique'' in the presence of Axiom~\ref{item:ax-UA} from \S\ref{sec:axioms}.
\begin{thm}\label{thm:plentiful-dof-ess-unique}
  Let $\CC$ be a corepita 2-category.
  \begin{enumerate}[(i)]
  \item Let $p_i \colon \s_{i*} \to \s_i$ ($i=1,2$) be plentiful generic DOFs in $\CC$, and suppose that $p_1$ is $p_2$-small, and let $f \colon \s_1 \to \s_2$ be a morphism classifying $p_1$.
    Then $f$ is fully faithful and logical (meaning that $f_* \colon \CC(A,\s_1) \to \CC(A,\s_2)$ is a logical functor for all groupoids $A \in \CC$).
  \item Suppose $A \in \CC$ is such that every DOF over $A$ is $\s$-small for \emph{some} plentiful DOF classifier $\s$.
    Then $\DOF(A)$ is an elementary topos, and for each plentiful DOF classifier $\s$, the anafunctor $\Gr^A \colon \CC(A,\s) \to \DOF(A)$ is a logical functor.
  \end{enumerate}
\end{thm}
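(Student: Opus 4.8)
The plan is to prove both parts by transporting everything along the fully faithful anafunctors $\Gr^A_i \colon \CC(A,\s_i) \to \DOF(A)$, and to exploit a fact implicit in the proof of Proposition~\ref{propn:s-power-univ-final}: the power objects constructed there satisfy a universal property ranging over \emph{all} DOFs over the (groupoidal) base, not merely the $\s$-small ones.

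For part (i), the first observation is that for every $A \in \CC$ the square of anafunctors relating $\Gr^A_1$, $f_* \colon \CC(A,\s_1) \to \CC(A,\s_2)$, and $\Gr^A_2$ commutes up to isomorphism: pulling $p_2$ back along $a \cdot f$ agrees, by pasting of pullback squares, with pulling $p_1$ back along $a$, since $f$ classifies $p_1$ (exactly as in Proposition~\ref{propn:generic-dof-limits-are-stable}). As $\Gr^A_1$ and $\Gr^A_2$ are fully faithful for every $A$, this forces $f_*$ to be fully faithful for every $A$, so $f$ is fully faithful. Moreover, since $p_1$ is $p_2$-small, every $\s_1$-small DOF is $\s_2$-small, so under the equivalences onto their essential images $f_*$ is identified with the full inclusion $\DOF_{\s_1}(A) \hookrightarrow \DOF_{\s_2}(A)$. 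For groupoidal $A$ both categories are elementary topoi by the theorem that a plentiful DOF classifier is an internal topos, and finite limits in each are computed as in $\DOF(A)$ by Proposition~\ref{propn:gr-preserves-reflects-limits}, so the inclusion preserves finite limits.

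The crux of part (i) is preservation of power objects. Here I would note that the proof of Proposition~\ref{propn:s-power-univ-final} never uses that the test object $B$ is $\s$-small: it uses only that $B$ is groupoidal (automatic over a groupoid base) and that $\pi_2$, $k$, and $k\pi_2$ are $\s$-small, which holds for arbitrary $B$. Hence the power object $P_A$ of an $\s_1$-small DOF $\dot a$ built from the $\s_1$-structure is in fact a power object of $\dot a$ in the full category $\DOF(A)$, and a fortiori in $\DOF_{\s_2}(A)$; by uniqueness of power objects it agrees with the one built from $\s_2$, and the universal monomorphism $j_A$ remains universal in $\DOF_{\s_2}(A)$. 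Thus the inclusion preserves power objects, so $f_*$ is a topos morphism.

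For part (ii), fix a plentiful $\s_0$ witnessing the hypothesis; then $\Gr^A_{\s_0}$ is fully faithful and essentially surjective, hence an equivalence $\CC(A,\s_0) \simeq \DOF(A)$, and $\DOF(A)$ has finite limits by Proposition~\ref{propn:gr-preserves-reflects-limits}. The same strong reading of Proposition~\ref{propn:s-power-univ-final} shows that when $A$ is groupoidal every DOF $\dot a$ over $A$ has a power object already in $\DOF(A)$, so $\DOF(A)$ is a topos; and for an arbitrary plentiful $\s$ the inclusion $\Gr^A_\s \colon \CC(A,\s) \simeq \DOF_\s(A) \hookrightarrow \DOF(A)$ preserves finite limits and power objects exactly as in part (i), hence is a topos morphism. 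The main obstacle is non-groupoidal $A$: there the base of the construction in Proposition~\ref{propn:s-power-univ-final} is not a groupoid, the test objects $B$ (DOFs over $A$) need not be groupoidal, and the classifying step via Proposition~\ref{propn:pow-univ-prop} — whose universal property of $\wt\pow$ only covers stable-monomorphism $2$-cells between \emph{arrow-wise isos} — no longer applies. I would resolve this by replacing the core-based object $\wt\pow$ with the genuine covariant power functor, using the non-representable reformulations and the full power-object construction developed in Appendix~\ref{sec:groupoid-stuff}, which classifies stable-monomorphism $2$-cells over an arbitrary base; the remaining bookkeeping (finite limits, uniqueness, and preservation along $\Gr^A_\s$) then goes through as before.
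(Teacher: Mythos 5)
Your proof of part (i), and of part (ii) in the case of a groupoidal base, coincides with the paper's own argument: the commuting triangle of anafunctors over $\DOF(A)$, full-faithfulness of $f_*$ inherited from that of the two $\Gr^A$'s, and preservation of finite limits and power objects via Propositions~\ref{propn:gr-preserves-reflects-limits} and~\ref{propn:s-power-univ-final}. One small correction: the fact you call ``implicit in the proof'' of Proposition~\ref{propn:s-power-univ-final} is explicit in its statement --- $j_A$ is asserted to be universal in $\DOF(U)$, the full category of DOFs over $U$, not just among the $\s$-small ones --- so no re-reading of that proof is needed.

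The gap is in your final paragraph. The paper reads (ii) with $A$ groupoidal --- as it must, since calling $\Gr^A \colon \CC(A,\s) \to \DOF(A)$ a ``topos morphism'' presupposes that $\CC(A,\s)$ is a topos, which is known only for groupoidal $A$ --- and its proof of (ii) then consists exactly of the two citations above. Your proposed repair for non-groupoidal $A$ does not work. First, Appendix~\ref{sec:groupoid-stuff} is unavailable under the present hypotheses: its constructions (e.g.\ Proposition~\ref{propn:power-morphisms-exist}) require $\CC$ to be groupoidant, via Theorem~\ref{thm:gpd-yoneda-rstr}, whereas this theorem assumes only corepita. Second, and more fundamentally, the covariant power morphism cannot play the role you assign it: its universal property (Definition~\ref{defn:power-morphism}) is itself formulated only against groupoidal test objects, and composing a classifying map $a \colon A \to \s$ with it would classify \emph{fibrewise} power objects, which over a non-groupoidal base do not form a power object in $\DOF(A)$. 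This is precisely the point of the Remark following Definition~\ref{defn:power-objs}: taking power objects object-wise does not yield a power object of a functor unless the domain is a groupoid. Concretely, in $\Cat$, where DOFs over $A$ correspond to functors from $A$ to (possibly large) sets, the power object of $F$ is $a \mapsto \Sub\pbig{\Hom_A(a,-) \times F}$, which depends on the hom-sets of $A$; nothing in the paper, in Appendix~\ref{sec:groupoid-stuff} or elsewhere, classifies stable-monomorphism 2-cells ``over an arbitrary base'' in the sense your sketch requires. So either one reads (ii) as the paper does, in which case your first two paragraphs already complete the proof and the third is superfluous, or one insists on arbitrary $A$, in which case your sketch leaves the essential difficulty untouched.
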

\begin{proof}
  Regarding (i), one verifies that the triangle of anafunctors
  \[
    \begin{tikzcd}[row sep=5pt]
      \CC(A,\s_1)\ar[dd, "f_*"']\ar[rd, "\Gr^A"{pos=0.4}]\\
      &\DOF(A)\\
      \CC(A,\s_2)\ar[ru, "\Gr^A"'{pos=0.4}]
    \end{tikzcd}
  \]
  commutes up to isomorphism for each $A \in \CC$.
  Since the two horizontal arrows are fully faithful, it follows that $f_*$ is as well.

  Similarly, since both horizontal arrows preserve finite limits and power objects (as follows from Propositions~\ref{propn:gr-preserves-reflects-limits}~and~\ref{propn:s-power-univ-final}), $f_*$ does as well.

  Regarding (ii), we have by Proposition~\ref{propn:gr-preserves-reflects-limits} that for \emph{any} $A$, $\DOF(A)$ has finite limits and $\Gr^A$ preserves them.
  And as we just mentioned, $\Gr^A$ also preserves power objects; hence, if every object of $\DOF(A)$ is in the image of $\Gr^A$ for some $\s$, it follows that $\DOF(A)$ has power objects.
\end{proof}

\section{Sufficiency of groupoids}\label{sec:suff-of-groupoids}
In this section, we express in two ways that a groupoidant 2-category $\CC$ is ``fully determined'' by its groupoidal objects.

The first way (\S\ref{subsec:internal-cats}) shows that $\CC$ is (strictly) equivalent to the 2-category of complete congruences in $\CC$ (a sub-2-category of the 2-category of internal categories in $\abs{\CC}$).
We also prove a variant of this, which says that $\CC$ admits a pita embedding into the 2-category of internal categories in a finitely complete category, and we apply this to show that we can define a duality involution in the sense of \cite{weber-2-toposes} on $\CC$.

The second way, (\S\ref{subsec:2-yoneda}) shows that the full sub-2-category $\CC_\gpd$ of $\CC$ on the groupoidal objects is \emph{dense}, in the sense that the 2-Yoneda embedding $\CC \to 2\Fun(\CC^\op,\Cat)$ remains an embedding after composing with the restriction 2-functor $\Fun(\CC^\op,\Cat) \to 2\Fun(\CC^\op_\gpd,\Cat)$.
Thus, we can define objects, morphisms, and 2-cells of $\CC$ ``representably'' simply by specifying a 2-functor (or strict natural transformation or modification, respectively) on the sub-2-category $\CC_{\gpd}$.

In \S\ref{subsec:opposites}, we apply this result to characterize the opposite of an object in $\CC$ (i.e., its image under the aforementioned duality involution), and in Appendix~\ref{sec:groupoid-stuff}, we apply it to ``legitimize'' the Definition~\ref{defn:power-objs} of internal topoi, which is given in terms of groupoids.

\subsection{Internal categories and the duality involution}\label{subsec:internal-cats}
\begin{defn}
  For any 1-category $\cC$, we write $\Cat(\cC)$ for the 2-category of internal categories in $\cC$, functors between them, and natural transformations between these (see, e.g., \cite[Definition~5.2]{helfer-sentai}).

  For a 2-category $\CC$, we define $\Cong(\CC) \subset \Cat(\abs{\CC})$ to be the full sub-2-category on the complete congruences in $\CC$.
\end{defn}

\begin{defn}
Given a groupoidant 2-category $\CC$, we define an ana-2-functor $\Nv \colon \CC \to \Cong(\CC)$ as follows.

On objects, we define $\Nv(X)$ to be ``the'' nerve
\[
  \begin{tikzcd}
    (X^{[2]})^\iso {
      \ar[r,shift left=12pt, "\pi_{01}"]
      \ar[r,shift right=12pt, "\pi_{12}"]
      \ar[r, "\pi_{02}"]
    }&
    (X^\to)^\iso {
      \ar[r, shift left=12pt, "\pi_0"]
      \ar[r, shift right=10pt, "\pi_1"]
      \ar[from=r, "e"']
    }&
    X^\iso.
  \end{tikzcd}
\]
More precisely, a specification for $\Nv$ at $X$ consists of a choice of core $X^\iso$ and cotensor-cores $(X^\to)^\iso$ and $(X^{[2]})^\iso$ (together with the associated data, $i \colon X^\iso \to X$, $i \partial \colon (X^\to)^\iso \tocell X$, and $\pi_{01} i \partial, \pi_{12} i \partial \colon (X^{[2]})^\iso \tocell X$).

Given a morphism $f \colon X \to Y$, we obtain an induced morphism $f^{[i]} \colon X^{[i]} \to Y^{[i]}$ for any $i$, and hence $(f^{[i]})^\iso \colon (X^{[i]})^\iso \to (Y^{[i]})^\iso$, and it is clear that these form an internal functor $\Nv(f) \colon \Nv(X) \to \Nv(Y)$.
It is also clear this makes $\Nv$ into a (1-)functor $\CC \to \Cong(\CC)$.

Finally, given a 2-cell $\alpha \colon X \tocellud{f}{g} Y$, we obtain an internal natural transformation $\Nv(\alpha) \colon \Nv(f) \to \Nv(g)$ as follows.
Such a natural transformation is by definition a morphism $\Nv(\alpha) \colon X^\iso \to (Y^\to)^\iso$, which we take to be the unique morphism with $\Nv(\alpha) i \partial = i \alpha$.
This morphism then by definition satisfies $\Nv(\alpha) \cdot \pi_1 = \Nv(f)_0$ and $\Nv(\alpha) \cdot \pi_1 = \Nv(g)_0$.

The remaining condition for $\Nv(\alpha)$ to be a natural transformation follows from the equality of the two ways of writing the horizontal composite of the 2-cells
\[
  \begin{tikzcd}
    (X^{[2]})^\iso {
      \ar[r, bend left, "i\partial_0"{name=sp}, pos=0.45,
          start anchor=north east, end anchor=north west]
      \ar[r, bend right, "i\partial_1"'{name=id}, pos=0.45,
          start anchor=south east, end anchor=south west]
    }&
    X {
      \ar[r, bend left, "f"{name=p1}, pos=0.5,
          start anchor=north east, end anchor=north west]
      \ar[r, bend right, "g"'{name=p2}, pos=0.5,
          start anchor=south east, end anchor=south west]
    }&
    Y.
    \ar[from=sp, to=id, "i\partial", "\sim"' sloped, Rightarrow, shorten=3pt]
    \ar[from=p1, to=p2, "\alpha", Rightarrow, shorten=3pt]
  \end{tikzcd}
\]
as a composition of two whiskerings.

The verification that $\Nv$, thus defined is (not only a 1-functor but) a 2-functor is a (somewhat lengthy but) straightforward exercise that we leave to the reader.
\end{defn}

\begin{thm}\label{thm:internal-cats-equivalence}
  If $\CC$ is a groupoidant 2-category, then the 2-functor $\Nv \colon \CC \to \Cong(\CC)$ is a strict equivalence of 2-categories.
\end{thm}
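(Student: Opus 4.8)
The plan is to verify the two defining conditions of a strict equivalence (as in \S\ref{sec:prelim}): essential surjectivity and strict full faithfulness. Since any nerve is a complete congruence, $\Nv$ indeed lands in $\Cong(\CC)$, and essential surjectivity is immediate from Axiom~\ref{item:ax-N}: given a complete congruence $C$, that axiom asserts precisely that $C$ is isomorphic to $\Nv(X)$ for some $X \in \CC$ (so we even get surjectivity up to isomorphism, not merely up to equivalence).

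The substance is thus to show that for each $X,Y \in \CC$ the functor $\Nv \colon \CC(X,Y) \to \Cong(\CC)(\Nv X, \Nv Y)$ is an \emph{isomorphism} of categories. Here I would factor this functor through the category $\Cone(\Nv X, Y)$ of cocones. By Axiom~\ref{item:ax-Q}, the tautological cocone exhibits $X$ as a strict quotient of $\Nv X$, so the comparison functor $\CC(X,Y) \to \Cone(\Nv X, Y)$ is an isomorphism of categories. It therefore suffices to produce an isomorphism of categories $\Cong(\CC)(\Nv X, \Nv Y) \cong \Cone(\Nv X, Y)$ through which $\Nv$ factors as this comparison.

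I would build this isomorphism from the universal properties of nerves and cores. In one direction, an internal functor $F \colon \Nv X \to \Nv Y$ is sent to the cocone under $\Nv X$ obtained by composing $F$ with the tautological cocone of $\Nv Y$, i.e.\ $\gamma = F_0\, i$ and $\gamma_{01} = F_1\,(i\partial)$; the cocone identities then follow from $F$ being an internal functor. In the other direction, given a cocone $(\gamma,\gamma_{01})$ with $\gamma \colon X^\iso \to Y$ and $\gamma_{01} \colon \pi_0\gamma \To \pi_1\gamma$, I use that $X^\iso$ and $(X^\to)^\iso$ are groupoids: by the universal property of the core $i \colon Y^\iso \to Y$, the morphism $\gamma$ factors uniquely through some $F_0 \colon X^\iso \to Y^\iso$, and $\gamma_{01}$, being a $2$-cell between arrow-wise isos, factors uniquely through a morphism $F_1 \colon (X^\to)^\iso \to (Y^\to)^\iso$ with $F_1\,(i\partial) = \gamma_{01}$. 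The remaining component $F_2$ on $(X^{[2]})^\iso$ is then forced by the pullback defining the object of composable pairs. On morphisms, a cocone morphism is a $2$-cell $\alpha$ between arrow-wise isos, which by the same core universal property is exactly the datum of an internal natural transformation $X^\iso \to (Y^\to)^\iso$; this matches the construction of $\Nv(\alpha)$.

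The main obstacle is the verification, in this last paragraph, that these two assignments are mutually inverse and that the cocone-level data translate \emph{exactly} into internal functoriality: concretely, that the two cocone equations $e\gamma_{01} = \id_\gamma$ and $\pi_{02}\gamma_{01} = (\pi_{01}\gamma_{01})(\pi_{12}\gamma_{01})$ correspond precisely to $F$ preserving internal identities and composition, and that compatibility of cocone morphisms corresponds to internal naturality. Because cores and cotensor-cores are defined by strict (injective-on-objects) universal properties, each factorization above is unique and strictly natural, so the resulting comparison is an \emph{isomorphism} of categories rather than a mere equivalence; chasing the definitions then shows that the triangle $\CC(X,Y) \to \Cong(\CC)(\Nv X,\Nv Y) \to \Cone(\Nv X,Y)$ commutes, identifying $\Nv$ with the quotient comparison isomorphism and completing the proof. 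A conceptually cleaner packaging would exhibit the quotient construction as a $2$-functor $\mathrm{colim} \colon \Cong(\CC) \to \CC$ left adjoint to $\Nv$, whose counit is invertible by Axiom~\ref{item:ax-Q} and whose unit is invertible by Axiom~\ref{item:ax-N}, so that $\Nv$ is one half of an adjoint equivalence; but the direct computation above seems the most economical.
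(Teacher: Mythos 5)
Your proposal is correct and follows essentially the same route as the paper: essential surjectivity is immediate from Axiom~\ref{item:ax-N}, and strict full faithfulness comes from Axiom~\ref{item:ax-Q} via composition with the tautological cocone, with the universal properties of cores and cotensor-cores translating cocone data into internal-functor (and internal-natural-transformation) data. The only difference is organizational: the paper constructs the inverse of $\Nv$ on hom-categories directly and verifies that the two composites are identities, whereas you interpolate the category $\Cone(\Nv(X),Y)$ and show the comparison functor $\Cong(\CC)(\Nv(X),\Nv(Y)) \to \Cone(\Nv(X),Y)$ is an isomorphism — the same verifications in a different packaging.
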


\begin{proof}
  It follows immediately from Axiom~\ref{item:ax-N} in Definition~\ref{defn:groupoidant} that $\Nv$ is essentially surjective.
  Hence, it remains to show that it is strictly fully faithful.

  Given an internal functor $f \colon \Nv(X) \to \Nv(Y)$, composing with the tautological cocone under $\Nv(Y)$ gives a cocone under $\Nv(X)$ with vertex $Y$, and hence (by Axiom~\ref{item:ax-Q}) a morphism $\bar{f} \colon X \to Y$, and one verifies that $f = \Nv(\bar{f})$ and that $\ol{\Nv(g)} = g$ for $g \colon X \to Y$.
  This shows that $\Nv \colon \CC(X,Y) \to \Cong(\CC)(\Nv(X),\Nv(Y))$ is bijective on objects.

  It remains to check, given $f,g \colon X \to Y$, that $\Nv$ gives a bijection from 2-cells $f \to g$ to internal natural transformations $\Nv(f) \to \Nv(g)$.

  Suppose we have an internal natural transformation $\alpha \colon \Nv(f) \to \Nv(g)$.
  We now define a 2-cell $\bar{\alpha} \colon f \to g$ using (the 2-categorical part of) the universal property of the quotient $X$ of $\Nv(X)$.
Recalling that $\alpha$ is by definition a morphism $\alpha \colon X^\iso \to (Y^\to)^\iso$ (satisfying certain conditions), we have a 2-cell
$\alpha i\partial \colon X^\iso \tocellud{if}{ig} Y$.
  To obtain $\bar{\alpha}$, it remains to show that the following square in $\CC\plbig{(X^\to)^\iso,Y}$ commutes.
  \[
    \begin{tikzcd}
      \partial_0^\iso i f {
        \ar[r, "i \partial f"]
        \ar[d, "\partial_0^\iso \alpha i \partial"']
      }&
      \partial_1 i f {
        \ar[d, "\partial_1^\iso \alpha i \partial"]
      }
      \\
      \partial_0 i g {
        \ar[r, "i \partial g"]
      }&
      \partial_1 i g
    \end{tikzcd}
  \]
  However, the commutativity of this square is precisely the naturality condition for $\alpha$.

  We thus obtain $\bar{\alpha} \colon f \to g$ satisfying $i\bar{\alpha} = \alpha i\partial$, and it is immediate from the definitions that $\alpha\mapsto\bar{\alpha}$ is an inverse to $\alpha\mapsto\Nv(\alpha)$.
\end{proof}

Let us call a square in a 2-category $\CC$ a \emph{1-pullback square} if it is a pullback square in $\abs{\CC}$.
Similarly, let us call an object $X^\to \in \CC$ with a 2-cell $\partial \colon X^\to \tocell X$ a \emph{1-arrow object} if $\partial_* \colon \CC(A,X^\to) \to \CC(A,X)^\to$ is a \emph{bijection on objects} for all $A \in \CC$.

As we noted in \S\ref{subsec:limits}, if a given cospan \emph{has} a strict pullback, then any 1-pullback of it will also be a strict pullback.
Similarly, if $X$ has a strict arrow object, any 1-arrow object $X^\to$ will be a strict arrow object.

\begin{lem}\label{lem:nv-preserves}
  For any groupoidant 2-category $\CC$, the ana-2-functor $\Nv \colon \CC \to \Cat(\abs{\CC})$ preserves terminal objects, 1-arrow objects, and 1-pullback squares.
\end{lem}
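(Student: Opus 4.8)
The plan is to exploit that each level of the nerve is assembled from the two basic operations $X \mapsto X^{[k]}$ (cotensor) and $X \mapsto X^\iso$ (core), namely $\Nv(X)_k = (X^{[k]})^\iso$ for $k = 0,1,2$, together with the fact that the relevant limits in $\Cat(\abs{\CC})$ are built out of limits in $\abs{\CC}$. Both building blocks preserve the finite limits in $\abs{\CC}$ that I need: the core preserves strict pullbacks and the terminal object by Lemma~\ref{lem:core-preserves-pullbacks} (and trivially), while the cotensor $(-)^{[k]}$ preserves them because $\CC(A,X^{[k]}) \cong \CC(A,X)^{[k]}$ and $\Fun([k],-)$ preserves limits, equivalently because $X^{[k]}$ is a strict weighted limit in the variable $X$.

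First I would dispatch the terminal object and the 1-pullback squares. Since $\CC(A,\tm^{[k]}) \cong \CC(A,\tm)^{[k]}$ is the terminal category and the core of a terminal object is terminal, every level of $\Nv(\tm)$ is the terminal object $\tm$ of $\abs{\CC}$, so $\Nv(\tm)$ is the terminal internal category and hence terminal in $\Cat(\abs{\CC})$. For 1-pullbacks I would use that a pullback in the underlying 1-category $\abs{\Cat(\abs{\CC})}$ of internal categories is computed levelwise, so it suffices to show each operation $X\mapsto(X^{[k]})^\iso$ sends the given square to a 1-pullback in $\abs{\CC}$. Reducing a 1-pullback to a strict pullback via the remark in \S\ref{subsec:limits} (a 1-pullback of a cospan admitting a strict pullback is strict, which covers the squares the lemma is applied to, namely strict pullbacks of isofibrations), the claim follows since $(-)^{[k]}$ and $(-)^\iso$ preserve strict pullbacks.

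The main work, and the principal obstacle, is the 1-arrow object. Writing $\partial\colon X^\to \tocellud{\partial_0}{\partial_1} X$ for the universal 2-cell, I would show that $\Nv(X^\to)$, equipped with the internal natural transformation $\Nv(\partial)\colon\Nv(X^\to)\to\Nv(X)$ obtained by applying the 2-functor $\Nv$ to $\partial$, is the arrow object of $\Nv(X)$ in $\Cat(\abs{\CC})$, hence a fortiori a 1-arrow object. The key is to match $\Nv(X^\to)$ levelwise with the standard construction of the internal arrow object $\Nv(X)^{[1]}$, whose levels are $(\Nv(X)^{[1]})_0 = \Nv(X)_1$ and $(\Nv(X)^{[1]})_1 = \Nv(X)_2\times_{\Nv(X)_1}\Nv(X)_2$ (the fibre product over the composition map $\pi_{02}$), these fibre products existing in $\abs{\CC}$ since they coincide with the cores of cotensors appearing now. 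Using the cotensor identity $X^{[1]\times[k]} = (X^{[k]})^{[1]} = (X^\to)^{[k]}$, the decomposition $[1]\times[1]\cong[2]\sqcup_{[1]}[2]$ (gluing two copies of $[2]$ along the outer face $\delta_{02}$), which cotensor turns into $X^{[1]\times[1]} \cong X^{[2]}\times_{X^\to}X^{[2]}$, and then Lemma~\ref{lem:core-preserves-pullbacks}, I obtain level-by-level isomorphisms
\[
  \Nv(X^\to)_k = ((X^\to)^{[k]})^\iso = (X^{[1]\times[k]})^\iso \cong (\Nv(X)^{[1]})_k,
\]
under which the composition face map of $\Nv(X)$ is exactly $\pi_{02}^\iso = (X^{\delta_{02}})^\iso$.

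The hard part is not these identifications of the levels but the bookkeeping: I must check that under the above isomorphisms every structure map (the faces $\pi_{ij}$, the degeneracy $e$, and in particular the composition map) of $\Nv(X^\to)$ agrees with that of the arrow-object construction applied to $\Nv(X)$, and that $\Nv(\partial)$ corresponds to the universal internal natural transformation. This is the ``lengthy but straightforward'' verification, and the point demanding care is that the combinatorial correspondence between cotensoring with the product simplices $[1]\times[k]$ and the iterated fibre-product description of the internal arrow object be compatible with all face and degeneracy maps simultaneously. Once the structure maps match, $\Nv(X^\to)$ is isomorphic over $\Nv(X)$ to its arrow object, completing the proof.
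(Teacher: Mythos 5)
Your treatment of terminal objects and 1-pullback squares is essentially the paper's own: it reduces to levels via the fact that $U \colon \absbig{\Cat(\abs{\CC})} \to \abs{\CC}^3$ reflects finite limits and then invokes Lemmas~\ref{lem:cotensor-preserves-pullbacks} and~\ref{lem:core-preserves-pullbacks}, exactly as you do (and, like you, it silently treats only those 1-pullbacks that are strict pullbacks, which is all the lemma is ever applied to). For the 1-arrow object you take a genuinely different route. The paper verifies the universal property directly: given an internal natural transformation $\alpha \colon C \tocell \Nv(X)$ from an arbitrary internal category $C$, it observes that $F_0$ is forced to equal $\alpha$, constructs $F_1 \colon C_1 \to \pbig{(X^\to)^\to}^\iso$ by exhibiting the commutative square in $\CC(C_1,X)$ that classifies it (its commutativity being exactly the naturality of $\alpha$), and then checks that $F$ is an internal functor. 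You instead identify $\Nv(X^\to)$ with the commutative-squares model of the arrow object of $\Nv(X)$, via $(X^\to)^{[k]} \cong X^{[1]\times[k]}$, the decomposition $[1]\times[1] \cong [2]\sqcup_{[1]}[2]$, and Lemma~\ref{lem:core-preserves-pullbacks}; this identification is correct, and the cotensors and pullbacks it needs do exist in a pita 2-category because $\delta_{02}$ is injective on objects, so $X^{\delta_{02}} = \pi_{02}$ is an isofibration. The one caution: since $\abs{\CC}$ need not be finitely complete, you cannot simply cite the standard fact that the internal category with object-of-objects $C_1$ and object-of-arrows $C_2 \times_{C_1} C_2$ (pullback over $\pi_{02}$) is a (1-)arrow object of $C$ in $\Cat(\abs{\CC})$; that universal property must be verified in this setting (it goes through, using only the specific pullbacks you construct), and unwinding it — an internal transformation $\alpha$ lands in $C_2\times_{C_1}C_2$ precisely by naturality — is essentially the computation the paper performs directly on $\Nv(X^\to)$. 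So your argument relocates, rather than avoids, the paper's core verification; what it buys is the structurally transparent statement $\Nv(X^\to) \cong \Nv(X)^{[1]}$, at the cost of carrying the pushout decomposition and the compatibility of all face and degeneracy maps through the identification.
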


\begin{proof}
  That $\Nv$ preserves terminal objects is obvious.

  Next, we note that the functor $U \colon \absbig{\Cat(\abs{\CC})} \to \abs{\CC}^3$ taking $C$ to $(C_0,C_1,C_2)$ reflects finite limits \cite[(3.6)]{street-cosmoi-of-internal-cats}, hence it suffices to show that the functor $\abs{\Nv} \circ U \colon \abs{\CC} \to \abs{\CC}^3$ preserves pullbacks.

  But now by Lemma~\ref{lem:cotensor-preserves-pullbacks} below, if we start with a strict pullback square $P$ in $\CC$, the resulting squares (using hopefully self-evident notation) $P^\to$ and $P^{[2]}$ will still be pullback squares, and by Lemma~\ref{lem:core-preserves-pullbacks}, so will the squares $P^\iso$, $(P^\to)^\iso$, and $(P^{[2]})^\iso$.
  Hence, we see that the square $\Nv(P)$ of internal categories will map under $U$ to a pullback square in $\abs{\CC}^3$, as desired.

  Next, let us see that $\Nv$ preserves 1-arrow objects, i.e., that $\Nv(\partial) \colon \Nv(X^\to) \tocell \Nv(X)$ is a 1-arrow object in $\Cat(\abs{\CC})$ for any $X \in \CC$.

  So we fix an arbitrary internal category $C \in \Cat(\abs{\CC})$, and we suppose that we are given an internal natural transformation $\alpha \colon C \tocellud{f}{g} \Nv(X)$, i.e., a morphism $\alpha \colon C_0 \to \Nv(X)_1 = (X^\to)^\iso$ (satisfying the relevant conditions).
  We want to show there is a unique internal functor $F \colon C \to \Nv(X^\to)$ with $F \cdot \Nv(\partial) = \alpha$.

  From the definitions, it follows that we must take $F_0 = \alpha \colon C_0 \to \Nv(X^\to)_0 = (X^\to)^\iso = \Nv(X)_1$, whereupon (no matter how we define $F_1$), we will have $F \cdot \Nv(\partial) = \alpha$, as desired.

  Next, we define $F_1 \colon C_1 \to \Nv(X^\to)_1 = \pbig{(X^\to)^\to}^\iso$.
  This is uniquely determined by $F_1 i\partial \colon C_1 \tocell X^\to$, which is in turn determined by the commutative square in $\CC(C_1,X)$ shown below on the left, which we define as shown on the right.
  \[
    \begin{tikzcd}
      F_1 i \partial_0 \partial_0
      \ar[r, "F_1i\partial\partial_0"]
      \ar[d, "F_1i\partial_0\partial"']&[20pt]
      F_1 i \partial_1 \partial_0
      \ar[d, "F_1i\partial_1\partial"]\\
      F_1 i \partial_0 \partial_1
      \ar[r, "F_1i\partial\partial_1"]&
      F_1 \partial_1 \partial_1
    \end{tikzcd}
    \qquad\qquad
    \begin{tikzcd}
      \pi_0f_0i\ar[r, "f_1i\partial"]\ar[d, "\pi_0\alpha i\partial"']&
      \pi_1f_0i\ar[d, "\pi_1\alpha i\partial"]\\
      \pi_0g_0i\ar[r, "g_1i\partial"]&\pi_1g_0i
    \end{tikzcd}
  \]
  In fact, the choice of vertical arrows is forced by the conditions $F_1\pi_j = \pi_j F_0 = \pi_j\alpha$ for $j = 0,1$, and the horizontal arrows are forced by the conditions $F \cdot \Nv(\partial_0) = f$ and $F \cdot \Nv(\partial_1) = g$ (it follows that $F$ is uniquely determined).
  That this square actually commutes is precisely the naturality condition for $\alpha$.

  It remains to see that $F$, thus defined, is an internal functor.
  To show that $F_2\pi_{02} = \pi_{02}F_1 \colon C_2 \to \Nv(X^\to)_1$, it suffices to show the equality of the (parallel) 2-cells $F_2\pi_{02}i\partial,\pi_{02}F_1i\partial \colon C_2 \tocell X^\to$, which would in turn follow from the equalities $F_2\pi_{02}i\partial\partial_j = \pi_{02}F_1i\partial\partial_j \colon C_2 \to X$ for $j = 0,1$.
  But now using that $i\partial\partial_j = (\partial_j^\to)^\iso i\partial = \Nv(\partial_j)_1 i\partial$, these last equalities follow from $f_2\pi_{02} = \pi_{02}f_1,g_2\pi_{02} = \pi_{02}g_1 \colon C_2 \to \Nv(X)_1$ (functoriality of $f$ and $g$).
  The equation $F_0e = eF_1 \colon C_0 \to \Nv(X^\to)_1$ is proven similarly.
\end{proof}

\begin{rmk}
  If $\Cat(\abs{\CC})$ is pita (for example, if $\CC$ itself has all strict finite limits \cite[(4.4)]{street-cosmoi-of-internal-cats}), it follows from Lemma~\ref{lem:nv-preserves} that $\Nv$ is pita (\emph{cf.} Theorem~\ref{thm:internal-cat-embedding} below).
  We have not checked whether $\Nv$ is pita in general.
\end{rmk}

\begin{lem}\label{lem:cotensor-preserves-pullbacks}
  Given a strict pullback square
  \[
    \begin{tikzcd}
      A\ar[r, "h"]\ar[d, "k"']\pb&B\ar[d, "f"]\\
      C\ar[r, "g"]&D
    \end{tikzcd}
  \]
  in a 2-category $\CC$ and a 1-category $J$, if the objects $A,B,C,D$ all have strict cotensors with $J$, then the induced square
  \[
    \begin{tikzcd}
      A^J\ar[r, "h^J"]\ar[d, "k^J"']&B^J\ar[d, "f^J"]\\
      C^J\ar[r, "g^J"]&D^J
    \end{tikzcd}
  \]
  is also a strict pullback square.
\end{lem}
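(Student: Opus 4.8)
The plan is to verify the strict‑pullback property of the induced cotensor square \emph{representably}: I would apply the 2‑functors $\CC(T,-)$ for every $T \in \CC$ and reduce the statement to the elementary fact that the functor‑category construction $(-)^J = \Fun(J,-) \colon \Cat \to \Cat$ preserves strict pullbacks, which holds because it is right adjoint to $-\times J$. First I would invoke the defining universal property of the cotensor from \S\ref{subsec:limits}: for each $T \in \CC$ and each $X \in \set{A,B,C,D}$ the comparison functor
\[
  \Theta^T_X \colon \CC(T,X^J) \to \CC(T,X)^J,
  \qquad
  \phi \mapsto \pbig{J \tox{\ev} \CC(X^J,X) \tox{\phi^*} \CC(T,X)},
\]
is an isomorphism of categories.

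The crux of the argument is to check that these isomorphisms are natural with respect to the induced morphisms: for any $m \colon X \to Y$ in $\CC$, the square
\[
  \begin{tikzcd}
    \CC(T,X^J)\ar[r, "(m^J)_*"]\ar[d, "\Theta^T_X"']
    &\CC(T,Y^J)\ar[d, "\Theta^T_Y"]\\
    \CC(T,X)^J\ar[r, "(m_*)^J"]&\CC(T,Y)^J
  \end{tikzcd}
\]
commutes, where $(m_*)^J$ denotes post‑composition with $m_* \colon \CC(T,X) \to \CC(T,Y)$. This is a direct computation: for $\phi \in \CC(T,X^J)$, using $(\phi m^J)^* = (m^J)^*\cdot\phi^*$ together with the classifying property $\ev\cdot(m^J)^* = \ev\cdot m_*$ of $m^J$, we get $\Theta^T_Y(\phi m^J) = \ev\cdot(m^J)^*\cdot\phi^* = \ev\cdot m_*\cdot\phi^*$; and since pre‑ and post‑composition commute ($m_*\cdot\phi^* = \phi^*\cdot m_*$), this equals $(\ev\cdot\phi^*)\cdot m_* = (m_*)^J\pbig{\Theta^T_X(\phi)}$, as required.

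Applying this naturality with $m = f,g,h,k$, I would conclude that, for each fixed $T$, the square obtained by applying $\CC(T,-)$ to the cotensor square is isomorphic, via the four comparison isomorphisms $\Theta^T_A,\Theta^T_B,\Theta^T_C,\Theta^T_D$, to the square obtained by applying $\Fun(J,-)$ to
\[
  \begin{tikzcd}
    \CC(T,A)\ar[r, "h_*"]\ar[d, "k_*"']&\CC(T,B)\ar[d, "f_*"]\\
    \CC(T,C)\ar[r, "g_*"]&\CC(T,D).
  \end{tikzcd}
\]
Since the original square is a strict pullback, this last square is a strict pullback of categories for every $T$; as $\Fun(J,-)$ preserves such pullbacks, its image is again a strict pullback. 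Hence the comparison functor $\CC(T,A^J) \to \CC(T,B^J)\times_{\CC(T,D^J)}\CC(T,C^J) = 2\Cone(T,-)$ is an isomorphism for every $T$, which is precisely the condition that the induced cotensor square be a strict pullback in $\CC$.

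The only genuine work is the naturality computation of the preceding paragraph; the remainder is the universal property of cotensors and the limit‑preservation of $\Fun(J,-)$. In particular this lemma is the exact cotensor analogue of Lemma~\ref{lem:core-preserves-pullbacks}, and the two together are what feed into the proof of Lemma~\ref{lem:nv-preserves}.
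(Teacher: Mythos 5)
Your proof is correct and follows essentially the same route as the paper's: apply $\CC(T,-)$, identify each $\CC(T,X^J)$ with $\Fun\pbig{J,\CC(T,X)}$ via the cotensor universal property, and use that $\Fun(J,-)$ preserves strict pullbacks of categories. Your explicit naturality computation for the comparison isomorphisms is precisely the step the paper compresses into ``it is not hard to check that the image of $f$ under the composite isomorphism is $(fh^J,fk^J)$,'' so the two arguments coincide.
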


\begin{proof}
  For any object $U \in \CC$, the universal properties of the pullback $A$ and the cotensors $A^J,B^J,C^J,D^J$ give the following chain of isomorphisms of categories
  \[
    \begin{split}
      \CC(U,A^J)&\toi
      \Fun\pbig{J,\CC(U,A)}\\&\toi
      \Fun\pbig{J,\CC(U,B) \times_{\CC(U,D)} \CC(U,C)}\\&\toi
      \Fun\pbig{J,\CC(U,B)} \times_{\Fun\pbig{J,\CC(U,D)}} \Fun\pbig{J,\Cat(U,C)}\\&\toi
      \CC(U,B^J) \times_{\CC(U,D^J)} \Cat(U,C^J)
    \end{split}
  \]
  Moreover, it is not hard to check that the image of an object or morphism $f$ in $\CC(U,A^J)$ under the composite isomorphism is $(fh^J,fk^J)$, which proves the claim.
\end{proof}

We deduce the following ``embedding theorem'':
\begin{thm}\label{thm:internal-cat-embedding}
  For any groupoidant 2-category $\CC$, there exists a finitely complete 1-category $\cC$ and a strictly fully faithful pita 2-functor $\CC \to \Cat(\cC)$.
\end{thm}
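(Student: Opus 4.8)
The plan is to factor the desired embedding as $\CC\xrightarrow{\Nv}\Cat(\abs\CC)\xrightarrow{\Cat(e)}\Cat(\cC)$, where $e\colon\abs\CC\to\cC$ is a fully faithful, finite-limit--preserving functor into a finitely complete $1$-category. The Yoneda embedding $e\colon\abs\CC\to[\abs\CC^\op,\Set]$ serves the purpose: it is fully faithful and preserves all limits that exist in $\abs\CC$, and its target is complete; if a smaller $\cC$ is wanted one may instead take the closure of the representables under finite limits inside $[\abs\CC^\op,\Set]$, which is again finitely complete and receives $e$ fully faithfully and continuously. Since $\cC$ is finitely complete, $\Cat(\cC)$ has all finite strict limits (in particular is pita) by \cite[(4.4)]{street-cosmoi-of-internal-cats}. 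I then set $F\defeq\Cat(e)\circ\Nv$ and claim that $F\colon\CC\to\Cat(\cC)$ is the required strictly fully faithful pita $2$-functor.

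First I would check strict fully faithfulness. By Theorem~\ref{thm:internal-cats-equivalence}, $\Nv$ is a strict equivalence onto the \emph{full} sub-$2$-category $\Cong(\CC)\subseteq\Cat(\abs\CC)$, hence strictly fully faithful as a $2$-functor into $\Cat(\abs\CC)$. For $\Cat(e)$, an internal functor (resp.\ internal natural transformation) between internal categories in $\abs\CC$ is a tuple of morphisms of $\abs\CC$ subject to equations; since $e$ is fully faithful, applying $e$ levelwise gives a bijection on such tuples, and since $e$ is faithful it reflects the defining equations. Thus $\Cat(e)\colon\Cat(\abs\CC)(C,D)\to\Cat(\cC)(eC,eD)$ is an isomorphism of categories for all $C,D$, i.e.\ $\Cat(e)$ is strictly fully faithful. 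Composing, $F$ is strictly fully faithful.

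It remains to show $F$ is pita, i.e.\ that it preserves the strict terminal object, strict pullbacks of isofibrations, and strict arrow objects. Preservation of the terminal object is immediate. For a strict pullback $P$ of an isofibration in $\CC$, the proof of Lemma~\ref{lem:nv-preserves} shows that $\Nv(P)$, viewed through the forgetful functor $U\colon\abs{\Cat(\abs\CC)}\to\abs\CC^3$, $C\mapsto(C_0,C_1,C_2)$, becomes a pullback square in $\abs\CC^3$. Applying $e$ levelwise and using that $e$ preserves finite limits gives a pullback square in $\cC^3$, which is the image of $F(P)$ under the analogous forgetful functor $\abs{\Cat(\cC)}\to\cC^3$; since the latter reflects finite limits \cite[(3.6)]{street-cosmoi-of-internal-cats}, $F(P)$ is a $1$-pullback square in $\Cat(\cC)$. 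As $\Cat(\cC)$ is finitely strictly complete, every cospan there has a strict pullback, so this $1$-pullback square is automatically a strict pullback, by the principle recalled in \S\ref{subsec:limits}.

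The main obstacle is preservation of arrow objects: unlike the terminal object and pullbacks, the $1$-arrow object property is phrased representably and is not obviously transported by the merely fully faithful $\Cat(e)$. I would handle it by identifying $F(X^\to)$ with the genuine strict arrow object $(FX)^{[1]}$ computed via finite limits in $\cC$. By Lemma~\ref{lem:nv-preserves}, $\Nv(X^\to)$ is a $1$-arrow object of $\Nv(X)$, with explicit data $\Nv(X^\to)_0=(X^\to)^\iso=\Nv(X)_1$ and $\Nv(X^\to)_1=((X^\to)^\to)^\iso$, and similarly at the next level. Now $(FX)^{[1]}$ has object of objects $(FX)_1=e((X^\to)^\iso)$ and object of morphisms the object of commutative squares of $FX$, a specific finite limit of the $e$-images of the nerve data $e(X^\iso),e((X^\to)^\iso),e((X^{[2]})^\iso)$. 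The point — which is exactly the computation underlying the arrow-object case of Lemma~\ref{lem:nv-preserves}, and uses Lemmas~\ref{lem:core-preserves-pullbacks} and~\ref{lem:cotensor-preserves-pullbacks} — is that this same finite limit is already computed in $\abs\CC$ by $((X^\to)^\to)^\iso$; since $e$ preserves finite limits it carries it to $e\big(((X^\to)^\to)^\iso\big)=F(X^\to)_1$. Matching the remaining structure maps and the universal $2$-cell in the same fashion yields an isomorphism $F(X^\to)\cong(FX)^{[1]}$ under which the universal $2$-cells correspond, so $F$ preserves arrow objects. Together with the preceding paragraph this shows $F$ is pita, completing the construction of the strictly fully faithful pita embedding $\CC\to\Cat(\cC)$.
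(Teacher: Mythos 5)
Your proposal is correct and follows essentially the same route as the paper's proof: factor the embedding as $\Nv$ followed by $\Cat(e)$ for a fully faithful, finite-limit-preserving $e \colon \abs{\CC} \to \cC$ into a finitely complete category (Yoneda, or the finite-limit closure of the representables), with strict full faithfulness coming from Theorem~\ref{thm:internal-cats-equivalence}, limit preservation from Lemma~\ref{lem:nv-preserves}, and the final upgrade from 1-limits to strict limits using that $\Cat(\cC)$ is finitely complete. If anything, your treatment of the arrow-object case is more careful than the paper's, which simply asserts that the composite preserves 1-arrow objects: your identification of $F(X^\to)$ with the strict arrow object $(FX)^{[1]}$, via the finite-limit presentation of $((X^\to)^\to)^\iso$ in $\abs{\CC}$, supplies exactly the step that full faithfulness of $\Cat(e)$ alone does not provide.
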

\begin{proof}
  If $\abs{\CC}$ itself is finitely complete, we can simply take $\cC = \abs{\CC}$ and take the 2-functor $\Nv \colon \CC \to \Cat(\cC)$.

  In general, we fix a category $\cC$ with finite limits and a finite-limit preserving fully faithful functor $F \colon \abs{\CC} \to \cC$ (for example, take $\cC = \Set^{\abs{\CC}^\op}$---or more conservatively, the full subcategory thereof consisting of finite limits of representables).
  Since $F$ is finite-limit preserving, it induces a 2-functor $\Cat(F) \colon \Cat(\abs{\CC}) \to \Cat(\cC)$, which also preserves finite 1-categorical limits (since the forgetful functors $\abs{\Cat(\abs{\CC})} \to \abs{\CC}^3$ and $\abs{\Cat(\cC)} \to \abs{\cC}^3$ preserve and reflect 1-limits \cite[(3.6)]{street-cosmoi-of-internal-cats}).
  Since $F$ is fully faithful, it follows that $\Cat(F)$ is strictly fully faithful.

  Thus, it follows from Theorem~\ref{thm:internal-cats-equivalence} that the composite $\Nv \circ \Cat(F) \colon \CC \to \Cat(\cC)$ is strictly fully faithful, and it follows from Lemma~\ref{lem:nv-preserves} that it preserves terminal objects, 1-arrow objects, and 1-pullback squares.
  Since $\Cat(\cC)$ has all finite limits \cite[(4.4)]{street-cosmoi-of-internal-cats}, it follows that $\Nv \circ \Cat(F)$ preserves all pita limits.
\end{proof}

\begin{cor}\label{cor:duality-involution}
  If $\CC$ is groupoidant, then $\CC$ admits a duality involution $\CC^\co \to \CC$ in the sense of \cite[Definition~2.14]{weber-2-toposes}.
\end{cor}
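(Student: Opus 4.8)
The plan is to transport a duality involution across the strict equivalence $\Nv \colon \CC \to \Cong(\CC)$ of Theorem~\ref{thm:internal-cats-equivalence}, after observing that the target 2-category $\Cong(\CC)$ carries an evident duality coming from the opposite-category construction on internal categories. Concretely, I would first recall the standard duality 2-functor on internal categories: for the 1-category $\abs{\CC}$ (in which the relevant pullbacks exist, being strict pullbacks of isofibrations), there is a strictly involutive 2-functor $(-)^\op \colon \Cat(\abs{\CC})^\co \to \Cat(\abs{\CC})$ sending an internal category $C$ to its opposite $C^\op$ — obtained by interchanging the source and target maps $\pi_0,\pi_1 \colon C_1 \to C_0$ and reversing the composition — sending an internal functor to its underlying data, and sending an internal natural transformation $\alpha \colon f \to g$ to the reversed transformation $\alpha^\op \colon g^\op \to f^\op$. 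The reversal of direction on 2-cells (with 1-cells kept in the same direction) is exactly the passage to $\co$, and one has $(C^\op)^\op = C$ on the nose.

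Next I would check that $(-)^\op$ restricts to $\Cong(\CC)^\co \to \Cong(\CC)$, i.e.\ that the opposite of a complete congruence is again a complete congruence. Since $C_0,C_1,C_2$ are unchanged, $C^\op$ is still groupoidal, and $\br{\pi_1,\pi_0}$ is $\br{\pi_0,\pi_1}$ composed with the symmetry isomorphism of $C_0 \times C_0$, hence still a DOF; so $C^\op$ is a gbo-congruence. For completeness the key point is that $C$-invertibility is symmetric: a morphism $A \to C_1$ is invertible in $\abs{\CC}(A,C)$ if and only if it is invertible in $\abs{\CC}(A,C^\op) = \abs{\CC}(A,C)^\op$. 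Consequently $C$ and $C^\op$ share the same object of isomorphisms $C_{\cong}$ together with the same morphism $\bar{e} \colon C_0 \to C_{\cong}$, so by Proposition~\ref{propn:rezk-equiv-crit} (equivalently Corollary~\ref{cor:complete-iff-ref-is-iso-ob}) $C^\op$ is complete precisely when $C$ is. This yields a strict duality involution $(-)^\op \colon \Cong(\CC)^\co \to \Cong(\CC)$.

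Finally I would transport this structure along the equivalence: choosing a quasi-inverse $Q \colon \Cong(\CC) \to \CC$ of $\Nv$, the composite $\CC^\co \xrightarrow{\Nv^\co} \Cong(\CC)^\co \xrightarrow{(-)^\op} \Cong(\CC) \xrightarrow{Q} \CC$ is a 2-functor $\CC^\co \to \CC$, and the involution data required by Weber's Definition~2.14 transports along $\Nv$, since structures of this kind are preserved and reflected by strict equivalences. The completeness-preservation step is immediate from the symmetry of invertibility, so I expect the only genuine obstacle to be matching the precise coherence demanded by \cite[Definition~2.14]{weber-2-toposes} — in particular the coherent involutivity $(-)^{\op\op} \cong \id$ — but because $(-)^\op$ is a \emph{strict} involution on $\Cong(\CC)$ and $\Nv$ is a strict equivalence, the corresponding isomorphisms on $\CC$ are furnished canonically by the unit and counit of the equivalence, which is exactly the data Weber's definition calls for.
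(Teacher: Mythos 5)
Your first two steps are sound and in fact match the paper's strategy: the duality does come from interchanging source and target maps of internal categories, and your check that $C^\op$ is again a complete congruence (the underlying objects are unchanged, $\br{\pi_1,\pi_0}$ is still a DOF, and $C$-invertibility is self-dual, so the object of isomorphisms and the morphism $\bar{e} \colon C_0 \to C_{\cong}$ are literally the same for $C$ and $C^\op$) is correct. The gap is in your final step, where you take the remaining content of \cite[Definition~2.14]{weber-2-toposes} to be only the coherent involutivity $(-)^{\op\op}\cong\id$. A duality involution in Weber's sense consists of strictly more than an involutive 2-functor $D \colon \CC^\co \to \CC$: it also includes a family of (suitably natural) equivalences
\[
  \DFib(A \times B,\, C) \toi \DFib(A,\, D(B) \times C)
\]
between categories of two-sided discrete fibrations. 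Your proposal never produces this data --- neither on $\Cong(\CC)$ nor on $\CC$ --- so there is nothing of the required kind to ``transport along $\Nv$''; producing and restricting exactly this data is where the paper's proof does its real work.

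Moreover, this data cannot simply be cited for $\Cat(\abs{\CC})$ or $\Cong(\CC)$: Weber's construction of the full duality-involution structure on a 2-category of internal categories (\cite[Example~2.16]{weber-2-toposes}) applies to $\Cat(\cC)$ with $\cC$ \emph{finitely complete}, and $\abs{\CC}$ is not assumed finitely complete ($\CC$ is merely pita). This is precisely why the paper routes through Theorem~\ref{thm:internal-cat-embedding}: it fixes a finite-limit-preserving fully faithful $F \colon \abs{\CC} \to \cC$ into a finitely complete $\cC$, takes Weber's involution $D$ on $\Cat(\cC)$, and then verifies --- and this is the bulk of the argument --- that Weber's equivalences carry two-sided discrete fibrations lying in the essential image of the embedding $\cF \colon \CC \to \Cat(\cC)$ back into that essential image. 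Concretely, given a two-sided discrete fibration $A \times B \ot E \to C$ in $\CC$, one must show the transformed object $E'$ is of the form $\cF(E'')$: this requires showing that $E'_1$, defined by a certain pullback in $\cC$, arises from a pullback that actually exists in $\CC$ (the paper gets this from an isofibration argument on $(A^\to)^\iso \times_{A^\iso} E^\iso \to E^\iso$), that the resulting morphism $E''_1 \to E^\iso \times E^\iso$ is a DOF, and that the gbo-congruence $E''$ so obtained is complete. None of this is automatic from the strict equivalence $\Nv$, and completing your outline honestly would essentially force you to reproduce these verifications.
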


\begin{proof}
  Fix a pita strictly fully faithful 2-functor $\cF \colon \CC \to \Cat(\cC)$ given as a composite $\CC \toi \Cong(\CC) \hto \Cat(\abs{\CC}) \tox{\Cat(F)} \Cat(\cC)$ as in the proof of Theorem~\ref{thm:internal-cat-embedding}, where $\cC$ is a finitely complete 1-category and $F \colon \abs{\CC} \to \cC$ is a functor preserving finite limits.

  According to \cite[Example~2.16]{weber-2-toposes}, the 2-category $\Cat(\cC)$ admits a duality involution $D \colon \Cat(\cC)^\co \to \Cat(\cC)$, whose action on objects is given by interchanging the source and target maps of internal categories.
  Since the essential image of $\cF$ is clearly invariant under $D$, we obtain an induced 2-functor $D' \colon \Cong(\CC)^\co \to \Cong(\CC)$.
  (There is a slight technicality here since \cite{weber-2-toposes} requires a duality involution to be a \emph{strict} involution, which $D'$ might not be; but this can be arranged using the axiom of choice.)

  We now want to see that the duality involution structure on $D$ induces one on $D'$.
  This structure on $D$ is given by a family of equivalences
  \begin{equation}\label{eq:duality-inv-equiv}
    \DFib(\Cat(\cC))(A \times B,C) \toi
    \DFib(\Cat(\cC))(A, D(B) \times C)
  \end{equation}
  for $A,B,C \in \Cat(\cC)$, where $\DFib$ denotes the category of 2-sided discrete fibrations (2SDFs) \cite[\S2.3]{weber-2-toposes}.
  Now $\cF$ preserves products and, by Remark~\ref{rmk:dofs-internally}, preserves DOFs (and preserves 2SDFs for similar reasons), 
  hence it suffices to show that the equivalences (\ref{eq:duality-inv-equiv}) take 2SDFs in the image of $\cF$ to 2SDFs in the image of $\cF$.

  Thus, given a 2SDF $A \times B \ot E \to C$ in $\CC$, and the 2SDF $\cF(A) \ot E' \to D(\cF(B)) \times \cF(C)$ obtained by applying (\ref{eq:duality-inv-equiv}) to its image under $\cF$, we need to show that $E'$ is in the essential image of $\cF$.

  Referring to the description of $E'$ in \cite[Example~2.16]{weber-2-toposes}, we see that $E'_0 = \cF(E)_0$, and so it remains to see that $E_1'$ is in the image of $F$ and that the resulting object of $\Cat(\abs{\CC})$ is a complete congruence.

  Now $E_1'$ is given as a pullback
  \[
    \begin{tikzcd}
      E_1' \ar[r, ""] \ar[d, ""'] \pb & \cF(A)_1 \times_{\cF(A)_0} \cF(E)_0 \ar[d, "\ell"] \\
      \cF(E)_0 \times_{\cF(B)_0 \times \cF(C)_0} (\cF(B)_1 \times \cF(C)_1) \ar[r, "\ell'"'] & \cF(E)_0
    \end{tikzcd}
  \]
  in $\cC$, where the morphisms $\ell$ and $\ell'$ are the ones given by the lifting condition in the 2SDF $\cF(A) \times \cF(B) \ot \cF(E) \to \cF(C)$.
  Now, one can check that the morphism $A^\to \times_B E \to E$ in $\CC$ corresponding to the right side of the above square is an isofibration, and hence so is the induced morphism on cores $(A^\to)^\iso \times_{A^\iso} E^\iso \to E^\iso$.
  Since the right-hand morphism in the above square is the image under $F$ of the latter morphism, it follows that the corresponding pullback square exists in $\CC$, and hence that $E_1'$ is indeed in the essential image of $F$; say $E_1' = F(E_1'')$ with $E_1'' \in \CC$.
  One can also check directly that the resulting morphism $E_1'' \to E^\iso \times E^\iso$ is a DOF.

  We thus have a gbo-congruence $E''$ in $\CC$.
  To see that it is complete, we note that under the isomorphism
  \[
    E''_1 \toi
    \pbigg{
      \pBig{E \times_{B \times C} \pbig{B^\to \times C^\to}}
      \times_{E}
      (A^\to \times_A E)
    }^\iso,
  \]
  a given element of $\CC(U,E_1'')$ (with $U \in \CC$) is a horizontal isomorphism if and only if its projections to $\CC(U,(A^\to)^\iso)$, $\CC(U,(B^\to)^\iso)$, and $\CC(U,(C^\to)^\iso)$ are.
  Using the corresponding description of $\Nv(E)_1 = (E^\to)^\iso$ in terms of $A$, $B$, and $C$, and the fact that $\Nv(E)_0 = E''_0$, the completeness of $E''$ follows from that of $\Nv(E)$.
\end{proof}

\subsection{The restricted 2-Yoneda embedding}\label{subsec:2-yoneda}
We write $2\Fun(\CC^\op,\Cat)$ for the 2-category of (strict) 2-functors, strict natural transformations, and modifications.
We have the strictly fully faithful (strict) Yoneda embedding $\CC \to 2\Fun(\CC^{\op},\Cat)$ taking $X \in \CC$ to $\wh{X} \defeq \CC(-,X)$.
Restriction to the full sub-2-category $\CC_{\gpd}$ on the groupoids in $\CC$ gives a 2-functor $2\Fun(\CC^{\op},\Cat) \to 2\Fun(\CC_\gpd^{\op},\Cat)$, and thus by composing we have 2-functor $\CC \to 2\Fun(\CC_\gpd^{\op},\Cat)$.
We write $(\wh{X})_\gpd$ for the image of $X$ under this 2-functor.

\begin{thm}\label{thm:gpd-yoneda-rstr}
  If $\CC$ is groupoidant, then the restricted Yoneda 2-functor $\CC \to 2\Fun(\CC_\gpd^{\op},\Cat)$ is strictly fully faithful.
  In detail:
  \begin{enumerate}[(i)]
  \item Two morphisms $f,g \colon X \to Y$ in $\CC$ are equal if and only if the functors $(f_*)_A,(g_*)_A \colon \CC(A,X) \to \CC(A,Y)$ are equal for all groupoids $A \in \CC$.
  \item Two 2-cells $\alpha,\beta \colon X \tocellud{f}{g} Y$ are equal if and only if the natural transformations $(\alpha_*)_A,(\beta_*)_A \colon (f_*)_A \to (g_*)_A$ are equal for all groupoids $A \in \CC$.
  \item Let $X,Y \in \CC$ and let $F \colon (\wh{X})_\gpd \to (\wh{Y})_\gpd$ be a strict natural transformation, i.e., a family of functors $F_A \colon \CC(A,X) \to \CC(A,Y)$ for all groupoids $A \in \CC$ which are natural with respect to all morphisms $u \colon A' \to A$ (i.e., $F_{A}u^* = u^*F_{A'} \colon \CC(A,X) \to \CC(A',Y)$---see also Remark~\ref{rmk:yoneda-surprise}).

    Then there is a (by (i) unique) morphism $f \colon X \to Y$ with $F_A = (f_*)_A$ for all $A \in \CC_{\mathrm{gpd}}$.
  \item Let $f,g \colon X \to Y$ be morphisms in $\CC$ and let $\alpha \colon f_* \to g_*$ be a modification, i.e., a family of natural transformations $\alpha_A \colon (f_*)_A \to (g_*)_A$ for all groupoids $A \in \CC$, which are natural with respect to all morphisms $h \colon A' \to A$ (i.e., $\alpha_{A}h^* = h^*\alpha_{A'} \colon \CC(A,X) \tocell \CC(A',Y)$).

    Then there is a (by (ii) unique) 2-cell $\alpha \colon f \to g$ with $\alpha_A = (\alpha_*)_A$ for all groupoids $A \in \CC$.
  \end{enumerate}
\end{thm}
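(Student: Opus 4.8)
The plan is to exploit Axiom~\ref{item:ax-Q}: every object $X$ is the strict quotient of its nerve $\Nv(X)$, whose constituent objects $X^\iso$, $(X^\to)^\iso$, and $(X^{[2]})^\iso$ are all groupoids. Thus the comparison functor $\CC(X,Y) \to \Cone(\Nv(X),Y)$ is an isomorphism, and a morphism $f \colon X \to Y$ is encoded by a cocone whose data---an object part $if \colon X^\iso \to Y$ and a 2-cell part $i\partial f$ over $(X^\to)^\iso$---live entirely over groupoids. The second ingredient is that any $a \colon A \to X$ out of a groupoid $A$ is an arrow-wise iso (since every 2-cell into $A$ is invertible), and so factors uniquely as $a = \bar a i$ through the core $i \colon X^\iso \to X$; this is what will let me pass from the values of a natural transformation on $X^\iso$ and $(X^\to)^\iso$ to its values on an arbitrary groupoid.

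For the uniqueness assertions (i) and (ii): the cocone associated to $f$ has object part $(f_*)_{X^\iso}(i)$ and 2-cell part $(f_*)_{(X^\to)^\iso}(i\partial)$. If $(f_*)_A = (g_*)_A$ for all groupoids $A$, then in particular these cocones agree (taking $A = X^\iso$ and $A = (X^\to)^\iso$), so $f = g$ by the isomorphism above. For (ii), the whiskering $i\alpha = (\alpha_*)_{X^\iso}(i)$ is precisely the cocone morphism associated to $\alpha$ under the 2-dimensional part of the quotient universal property, so $(\alpha_*)_{X^\iso} = (\beta_*)_{X^\iso}$ forces $\alpha = \beta$.

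For existence in (iii): given a strict natural transformation $F$, I would define a cocone under $\Nv(X)$ with vertex $Y$ by setting $\gamma := F_{X^\iso}(i)$ and $\gamma_{01} := F_{(X^\to)^\iso}(i\partial)$. Naturality of $F$ along $\pi_0,\pi_1 \colon (X^\to)^\iso \to X^\iso$ (using $\pi_j i = i\partial_j$) identifies the source and target of $\gamma_{01}$ with $\pi_0\gamma$ and $\pi_1\gamma$, as required; the two cocone axioms then follow from naturality along the degeneracy $e$ and from functoriality of $F$ on $(X^{[2]})^\iso$ applied to the faces $\pi_{01},\pi_{12},\pi_{02}$. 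Axiom~\ref{item:ax-Q} then produces a unique $f \colon X \to Y$ classifying this cocone, so in particular $if = F_{X^\iso}(i)$.

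The main obstacle is to upgrade this to $F_A = (f_*)_A$ for \emph{all} groupoids $A$, not merely those appearing in the nerve. This is where the core factorization is essential: for $a \colon A \to X$ with $A$ groupoidal, write $a = \bar a i$; then naturality of $F$ along $\bar a$ gives $F_A(a) = \bar a \cdot F_{X^\iso}(i) = \bar a \cdot if = af = (f_*)_A(a)$, and the analogous computation over $(X^\to)^\iso$ handles the morphisms of $\CC(A,X)$, so $F_A = (f_*)_A$. Part (iv) follows the same template: the component $\alpha_{X^\iso}(i) \colon if \to ig$ is a morphism of cocones (its compatibility being exactly the modification's naturality), hence it induces a unique 2-cell $\bar\alpha \colon f \to g$ via the 2-dimensional quotient property, and the same factorization argument verifies $(\bar\alpha_*)_A = \alpha_A$ on every groupoid $A$.
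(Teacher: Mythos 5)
Your proof is correct and takes essentially the same approach as the paper: both use Axiom~\ref{item:ax-Q} to encode morphisms and 2-cells as cocones under the nerve $\Nv(X)$, construct the cocone $\pbig{F_{X^\iso}(i),\,F_{(X^\to)^\iso}(i\partial)}$ from the given natural transformation (checking the cocone axioms via naturality and functoriality of $F$), and then extend from the nerve to arbitrary groupoids $A$ by factoring objects of $\CC(A,X)$ through the core $X^\iso$ and morphisms through $(X^\to)^\iso$. If anything, you are slightly more explicit than the paper in part (iv), where the passage from the cocone morphism to the 2-cell $\bar\alpha$ and the final verification $(\bar\alpha_*)_A=\alpha_A$ are left implicit.
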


\begin{proof}
  Given $X,Y \in \CC$, form a nerve $\Nv(X)$ of $X$.
  Then the statements (i) and (ii) follow immediately from the uniqueness part of the universal property of $X$ being a quotient of $\Nv(X)$ (since $\Nv(X)$ consists of groupoids).

  Now suppose we are given functors $F_A$ for each groupoid $A \in \CC$ as in (iii).
  Again form a nerve $\Nv(X)$:
  \[
    \begin{tikzcd}
      (X^{[2]})^\iso\ar[r,shift left=10pt, "\pi_{01}"]
      \ar[r,shift right=10pt, "\pi_{12}"]\ar[r, "\pi_{02}"]&
      (X^\to)^\iso\ar[r, shift left=10pt, "\pi_0"]
      \ar[r, shift right=10pt, "\pi_1"]\ar[from=r, "e"']
      \ar[d, shift right=7pt, shorten <=5pt, "\pi_0\gamma"'{name=0}]
      \ar[d, shift left=7pt, shorten <=5pt, "\pi_1\gamma"{name=1}]&
      X^\iso\ar[ld, "\gamma", bend left, shorten <=8pt]\\[10pt]
      &X
      \ar[from=0, to=1, Rightarrow, shorten=1pt, "\gamma_{01}"]
    \end{tikzcd}
  \]
  and let us set $C_i \defeq \Nv(X)_i = (X^{[i]})^\iso$.
  We obtain another cocone under $\Nv(X)$, with vertex $Y$:
  \[
    \begin{tikzcd}[column sep=80pt, row sep=50pt]
      (X^{[2]})^\iso\ar[r,shift left=10pt, "\pi_{01}"]
      \ar[r,shift right=10pt, "\pi_{12}"]\ar[r, "\pi_{02}"]&
      (X^\to)^\iso\ar[r, shift left=10pt, "\pi_0"]
      \ar[r, shift right=10pt, "\pi_1"]\ar[from=r, "e"']
      \ar[d, shift right=18pt, shorten <=5pt, "\pi_0 \cdot F_{C_0}(\gamma)=F_{C_1}(\pi_0\gamma)"', ""{name=0}]
      \ar[d, shift left=18pt, shorten <=5pt, "F_{C_1}(\pi_1\gamma)=\pi_1 \cdot F_{C_0}(\gamma)", ""'{name=1}]&
      X^\iso\ar[ld, "F_{C_0}(\gamma)", bend left=45pt, shorten <=8pt]\\[10pt]
      &Y.
      \ar[from=0, to=1, Rightarrow, shorten=1pt, "F_{C_1}(\gamma_{01})"]
    \end{tikzcd}
  \]
  This is indeed a cocone; using the naturality in $A$ of the collection of functors $F_A$:
  \[
    \begin{split}
      e \cdot F_{C_1}(\gamma_{01})&=
      F_{C_0}(e\gamma_{01})=
      F_{C_0}(\id_\gamma)=
      \id_{F_{C_0}\gamma}\\
      \pbig{\pi_{01} \cdot F_{C_1}(\gamma_{01})}\pbig{\pi_{12} \cdot F_{C_1}(\gamma_{01})}&=
      F_{C_2}(\pi_{01}\gamma_{01}) \cdot F_{C_2}(\pi_{12}\gamma_{01})\\&=
      F_{C_2}\pbig{(\pi_{01}\gamma_{01})(\pi_{12}\gamma_{01})}=
      F_{C_2}(\pi_{02}\gamma_{01})=
      \pi_{02} \cdot F_{C_1}(\gamma_{01}).
    \end{split}
  \]
  Thus, we obtain a morphism $f \colon X \to Y$ with $\gamma f = F_{C_0}(\gamma)$ and $\gamma_{01}f = F_{C_1}(\gamma_{01})$.
  It remains to check that $f$ has the desired property $F_A = f_*$ for all groupoids $A$.

  Fix a groupoid $A$ and a morphism $u \colon A \to X$; this factors through a unique $\bar{u} \colon A \to X^\iso$ since $A$ is a groupoid.
  We then have
  \[
    F_A(u) =
    F_A(\bar{u}\gamma) =
    \bar{u}\cdot F_{C_0}(\gamma) =
    \bar{u} \gamma f =
    uf
  \]
  as required.
  Similarly, given a 2-cell $\alpha \colon A \tocellud{u}{v} X$, we obtain a morphism $a \colon A \to (X^{[1]})^\iso$ with $a\gamma_{01} = \alpha$.
  Then
  \[
    F_A(\alpha) =
    F_A(a\gamma_{01}) =
    a \cdot F_{C_1}(\gamma_{01}) =
    a\gamma_{01}f =
    \alpha f.
  \]

  Next, given $f,g \colon X \to Y$ and a collection of natural transformations $\alpha_A \colon f_* \to g_*$ as in (iv), we obtain a 2-cell $(\alpha_{C_0})_{\gamma} \colon \gamma f \to \gamma g$.
  To see that this is a morphism of cocones $(\gamma f,\gamma_{01}f) \to (\gamma g,\gamma_{01}g)$, we need to verify the commutativity of the following square on the left.
  \[
    \begin{tikzcd}[column sep=40pt]
      \pi_0\gamma f\ar[r, "\pi_0(\alpha_{C_0})_\gamma"]\ar[d, "\gamma_{01}f"']&\pi_0\gamma g\ar[d, "\gamma_{01}g"]\\
      \pi_1\gamma f\ar[r, "\pi_1(\alpha_{C_0})_\gamma"]&\pi_1\gamma g
    \end{tikzcd}
    \qquad
    \begin{tikzcd}[column sep=40pt]
      \pi_0\gamma f\ar[r, "(\alpha_{C_1})_{\pi_0\gamma}"]\ar[d, "f_*\gamma_{01}"']&\pi_0\gamma g\ar[d, "g_*\gamma_{01}"]\\
      \pi_1\gamma f\ar[r, "(\alpha_{C_1})_{\pi_1\gamma}"]&\pi_1\gamma g
    \end{tikzcd}
  \]
  This is equal to the square on the right using the naturality in $A$ of the collection of natural transformations $\alpha_A$.
  But the square on the right commutes by the naturality of $\alpha_{C_1}$.
\end{proof}

\begin{rmk}\label{rmk:yoneda-surprise}
  There is an additional condition in the definition of $\set{F_A}_{A \in \CC}$ being a strict natural transformation which we did not mention in (iii) above, as it is not used in the proof: namely, that the $F_A \colon \CC(A,X) \to \CC(A,Y)$ should also be natural with respect to 2-cells $\alpha \colon A' \tocell A$ (i.e., $F_{A} \alpha^* = \alpha^* F_{A'} \colon \CC(A,X) \tocell \CC(A',Y)$).

  That this is not needed is somewhat surprising, since for a general 2-category $\CC$ and $X,Y \in \CC$, when proving that every strict natural transformation $\wh{X} \to \wh{Y}$ is of the form $f_*$ for some $f \colon X \to Y$, one \emph{does} need this condition.
  The point is that for a \emph{groupoidant} 2-category, this extra condition holds automatically.
  The same is true for an $\cF_\bo$-exact category (in the sense of Axiom~\ref{item:ax-BO} in \S\ref{sec:axioms}).
\end{rmk}

A consequence of (iv) above is:
\begin{cor}\label{cor:yoneda-iso}
  Two morphisms $f,g \colon X \to Y$ in $\CC$ are isomorphic if and only if there exists an invertible modification $f_* \toi g_*$ between the strict natural transformations $f_*,g_* \colon (\wh{X})_{\gpd} \to (\wh{Y})_\gpd$, i.e., a collection of natural isomorphisms $\phi_A \colon (f_*)_A \to (g_*)_A$ for all groupoids $A \in \CC$ with the naturality property $\phi_A u^* = u^* \phi_{A'} \colon \CC(A,X) \tocellud{f_* u^*}{u^* g_*} \CC(A',Y)$ for all $u \colon A' \to A$.
\end{cor}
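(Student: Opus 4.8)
The plan is to derive both directions from Theorem~\ref{thm:gpd-yoneda-rstr}, treating the forward implication as a routine whiskering calculation and the reverse implication as the substantive content. For the forward direction, I would suppose $f \cong g$, witnessed by an invertible $2$-cell $\phi \colon f \toi g$, and set $\phi_A = (\phi_*)_A$ for each groupoid $A$. Each such $\phi_A$ is then a natural isomorphism $(f_*)_A \to (g_*)_A$, and the required naturality $\phi_A u^* = u^* \phi_{A'}$ is precisely the functoriality of whiskering (i.e.\ the naturality of $(\phi_*)_A$ in $A$), so $\{\phi_A\}$ is an invertible modification $f_* \toi g_*$.

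For the reverse direction, I would start from a given invertible modification $\{\phi_A\}$. By Theorem~\ref{thm:gpd-yoneda-rstr}(iv) there is a unique $2$-cell $\phi \colon f \to g$ with $(\phi_*)_A = \phi_A$ for all groupoids $A$, and it remains only to show $\phi$ is invertible. First I would observe that the pointwise inverses $\{\phi_A\I\}$ again constitute a modification $g_* \to f_*$: its naturality square is obtained from that of $\{\phi_A\}$ simply by inverting both of the horizontal isomorphisms. Applying Theorem~\ref{thm:gpd-yoneda-rstr}(iv) a second time then yields a $2$-cell $\psi \colon g \to f$ with $(\psi_*)_A = \phi_A\I$. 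For the two composites $\phi\psi \colon f \to f$ and $\psi\phi \colon g \to g$, functoriality of whiskering with respect to vertical composition gives $((\phi\psi)_*)_A = \phi_A \cdot \phi_A\I = \id$ and likewise $((\psi\phi)_*)_A = \id$ for every groupoid $A$. Since the restricted Yoneda embedding is faithful on $2$-cells by Theorem~\ref{thm:gpd-yoneda-rstr}(ii), this forces $\phi\psi = \id_f$ and $\psi\phi = \id_g$, so $\phi$ is invertible and hence $f \cong g$.

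The main obstacle here is bookkeeping rather than any genuine difficulty. The only two points needing care are verifying that $\{\phi_A\I\}$ is once more a modification (so that part (iv) can be applied a second time to produce $\psi$), and checking that whiskering is compatible with vertical composition of $2$-cells in the correct order under the paper's geometric composition convention. Once the two applications of (iv) furnish $\phi$ and $\psi$, their being mutually inverse is entirely forced by the $2$-cell faithfulness of part (ii), and no additional construction is required.
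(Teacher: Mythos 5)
Your proof is correct and follows exactly the route the paper intends: the paper states this corollary as an immediate consequence of Theorem~\ref{thm:gpd-yoneda-rstr}, and your argument—whiskering for the forward direction, then two applications of part~(iv) (to $\{\phi_A\}$ and to the inverse modification $\{\phi_A\I\}$) together with the $2$-cell faithfulness of part~(ii) to force the composites to be identities—is precisely the omitted verification.
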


We also note the following ``weak'' variant of the above theorem.
However, as we will not be making use of it, we omit the somewhat lengthy proof, which parallels that of Theorem~\ref{thm:gpd-yoneda-rstr}~(iii) but is more involved due to the presence of various coherence isomorphisms.

The 2-category $2\Fun(\CC^\op_\gpd,\Cat)$ is a sub-2-category of the 2-category $2\Fun_\ps(\CC^\op_\gpd,\Cat)$ of 2-functors, \emph{pseudo}-natural transformations, and modifications.
Composing with the inclusion $2\Fun(\CC_\gpd^\op,\Cat)\hto 2\Fun_\ps(\CC_\gpd^\op,\Cat)$, we obtain a Yoneda 2-functor $\CC \to 2\Fun_{\ps}(\CC^\op_\gpd,\Cat)$.

We already know from Theorem~\ref{thm:gpd-yoneda-rstr} that this functor is locally fully faithful.
We now claim that it is also locally essentially surjective:
\begin{propn}
  If $\CC$ is groupoidant, then the 2-functor $\CC \to 2\Fun_{\ps}(\CC^\op_\gpd,\Cat)$ is fully faithful.
  In detail:

  Let $X,Y \in \CC$ and let $F \colon (\wh{X})_\gpd \to (\wh{Y})_{\gpd}$ be a pseudo-natural transformation, i.e., a collection of functors $F_A \colon \CC(A,X) \to \CC(A,Y)$ for each groupoid $A \in \CC$, and natural isomorphisms $F_u \colon F_Au^* \toi u^*F_{A'}$ for each $u \colon A' \to A$ such that
  \[
    \begin{tikzcd}
      u'u(F_Ag)\ar[r, "u'F_u"]
      \ar[rr, bend right=15pt, "F_{u' \cdot u}"']
      &u'(F_{A'}(ug))\ar[r, "F_{u'}"]&F_{A''}(u'ug)
    \end{tikzcd}
  \]
  commutes for all $A''\tox{u'}A'\tox{u}A\tox{g}X$.

  Then there exists a (by Corollary~\ref{cor:yoneda-iso} unique up to isomorphism) morphism $f \colon X \to Y$ and an invertible modification $F \toi f_*$.
\end{propn}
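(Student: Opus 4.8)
The plan is to mirror the proof of Theorem~\ref{thm:gpd-yoneda-rstr}(iii), realizing $f$ as the morphism classified by a cocone under $\Nv(X)$, but now absorbing the coherence isomorphisms $F_u$ of the pseudo-natural transformation into the structure cell of that cocone. First I would fix a nerve $\Nv(X)$ with components $C_i \defeq \Nv(X)_i = (X^{[i]})^\iso$ (all of which are groupoids) and the tautological cocone $(\gamma,\gamma_{01})$, where $\gamma = i \colon C_0 \to X$ and $\gamma_{01} = i\partial \colon C_1 \tocell X$. Setting $\gamma' \defeq F_{C_0}(\gamma) \colon C_0 \to Y$, I would then define the structure 2-cell $\gamma_{01}' \colon \pi_0\gamma' \to \pi_1\gamma'$ in $\CC(C_1,Y)$ as the vertical composite
\[
  \pi_0\gamma'
  \tox{(F_{\pi_0})_\gamma}
  F_{C_1}(\pi_0\gamma)
  \tox{F_{C_1}(\gamma_{01})}
  F_{C_1}(\pi_1\gamma)
  \tox{(F_{\pi_1})_\gamma\I}
  \pi_1\gamma',
\]
i.e.\ the image $F_{C_1}(\gamma_{01})$ conjugated by the coherence isomorphisms $F_{\pi_0},F_{\pi_1}$ evaluated at $\gamma$. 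In the strict case the two conjugating cells are identities and one recovers exactly the cocone used in Theorem~\ref{thm:gpd-yoneda-rstr}(iii).

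Next I would verify that $(\gamma',\gamma_{01}')$ is a genuine \emph{strict} cocone under $\Nv(X)$ with vertex $Y$; \textbf{this is the crux and the main obstacle}. The two cocone identities — the unit identity $e\gamma_{01}' = \id_{\gamma'}$ and the cocycle identity $\pi_{02}\gamma_{01}' = (\pi_{01}\gamma_{01}')(\pi_{12}\gamma_{01}')$ — must be deduced from the corresponding identities for $\gamma_{01}$ together with the functoriality of the $F_{C_i}$, and the point is precisely that the inserted coherence cells cancel. For the unit identity one whiskers by $e$ and uses the simplicial relations $e\pi_0 = e\pi_1 = \id_{C_0}$ together with the unit and cocycle coherence for $F$ applied to the composable pair $C_0 \tox{e} C_1 \tox{\pi_j} C_0$, which forces $e(F_{\pi_j})$ and $F_e$ to compose to the identity, after which $F_{C_0}(e\gamma_{01}) = F_{C_0}(\id_\gamma) = \id$ finishes the job. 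For the cocycle identity one whiskers by the three face maps $C_2 \to C_1$ and uses the relations $\pi_{02}\pi_0 = \pi_{01}\pi_0$, $\pi_{02}\pi_1 = \pi_{12}\pi_1$, and $\pi_{01}\pi_1 = \pi_{12}\pi_0$ together with the pseudo-naturality coherence relating $F_{\pi_{01}},F_{\pi_{12}},F_{\pi_{02}}$. This bookkeeping with the coherence cells is exactly what makes the pseudo-natural case more involved than the strict one.

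Having produced a strict cocone, Axiom~\ref{item:ax-Q} yields a unique $f \colon X \to Y$ with $\gamma f = \gamma' = F_{C_0}(\gamma)$ and $\gamma_{01}f = \gamma_{01}'$. It then remains to construct the invertible modification $F \toi f_*$. For a groupoid $A$ and a morphism $g \colon A \to X$, I would factor $g = \bar{g}\gamma$ through the unique $\bar{g} \colon A \to C_0 = X^\iso$, and take the component of $\phi_A$ at $g$ to be the coherence isomorphism $(F_{\bar{g}})_\gamma \colon f_*(g) = gf = \bar{g}\gamma f = \bar{g}F_{C_0}(\gamma) \toi F_A(\bar{g}\gamma) = F_A(g)$ (or its inverse, depending on the chosen direction of the modification). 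Naturality of $\phi_A$ in $g$ is checked by factoring a 2-cell $A \tocell X$ through $C_1$, exactly as in the last part of Theorem~\ref{thm:gpd-yoneda-rstr}; and the modification condition $\phi_A u^* = u^*\phi_{A'}$ for $u \colon A' \to A$ follows from the cocycle coherence of $F$ applied to $A' \tox{u} A \tox{\bar{g}} C_0$, using that $\overline{ug} = u\bar{g}$.

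Finally, the asserted uniqueness of $f$ up to isomorphism is immediate from Corollary~\ref{cor:yoneda-iso}: any two solutions $f,f'$ yield invertible modifications $f_* \toi F \toi f'_*$, hence $f \cong f'$. The only genuinely delicate part of the argument is the coherence cancellation in the second paragraph; everything else is a routine adaptation of the strict case.
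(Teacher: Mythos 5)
The paper does not actually record a proof of this proposition: it states the result and remarks only that the ``somewhat lengthy proof'' parallels that of Theorem~\ref{thm:gpd-yoneda-rstr}(iii), being more involved because of the coherence isomorphisms. Your proposal is a correct filling-in of exactly that omitted argument, and the key computations do go through as you claim: defining $\gamma_{01}'$ as $F_{C_1}(\gamma_{01})$ conjugated by $(F_{\pi_0})_\gamma$ and $(F_{\pi_1})_\gamma^{-1}$, the unit identity follows from naturality of $F_e$ at $\gamma_{01}$ (which, using $e\gamma_{01}=\id_\gamma$, converts $e F_{C_1}(\gamma_{01})$ into $(F_e)_{\pi_0\gamma}\cdot(F_e)_{\pi_1\gamma}^{-1}$) together with the cocycle coherence for $e\pi_j = \id_{C_0}$; the cocycle identity follows from naturality of $F_{\pi_{ij}}$ at $\gamma_{01}$ plus the coherence for the composites $\pi_{ij}\pi_k$, after which the middle coherence cells cancel and functoriality of $F_{C_2}$ together with $\pi_{02}\gamma_{01}=(\pi_{01}\gamma_{01})(\pi_{12}\gamma_{01})$ finishes it; Axiom~\ref{item:ax-Q} then produces $f$; and the components $(F_{\bar{g}})_\gamma$ do satisfy both naturality in $g$ (by factoring a 2-cell through $C_1$ and using the coherence for the composites $a\pi_j$) and the modification condition (via the coherence for $A' \tox{u} A \tox{\bar{g}} C_0$).

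Two small points. First, your unit-identity step appeals to ``unit coherence'' $F_{\id}=\id$, which is not among the stated hypotheses --- indeed the paper notes immediately after the proposition that this condition is not needed. This is harmless: taking $u=u'=\id_A$ in the displayed cocycle condition gives $F_{\id_A} = F_{\id_A}\cdot F_{\id_A}$, and invertibility forces $F_{\id_A}=\id$; so the identity $(e(F_{\pi_j})_\gamma)\cdot(F_e)_{\pi_j\gamma} = (F_{\id_{C_0}})_\gamma = \id$ that you need is available from the cocycle condition alone. Second, since $F$ is only pseudo-natural, the modification condition is not literally $\phi_A u^* = u^*\phi_{A'}$ but rather $(u(\phi_A)_g)\cdot(F_u)_g = (\phi_{A'})_{ug}$ for all $u \colon A' \to A$ and $g \colon A \to X$; this is, however, precisely what your appeal to the cocycle coherence together with $\overline{ug}=u\bar{g}$ proves, so the substance is correct.
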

We note that there is an additional condition in the definition of pseudo-natural transformation, analogous to the condition in Remark~\ref{rmk:yoneda-surprise}, that we have omitted in the above proposition as it is not needed in the proof; namely that $F_u \cdot (\alpha^* F_{A'}) = (F_{A} \alpha^*) \cdot F_v \colon \CC(A,X) \tocell \CC(A',Y)$ for each 2-cell $\alpha \colon A' \tocellud{u}{v} A$ in $\CC_{\gpd}$.
Similarly, the condition $F_{\id_A} = \id_{F_A}$ for $A \in \CC_{\gpd}$ is likewise not needed in the proof.

\subsection{Opposites}\label{subsec:opposites}
In Corollary~\ref{cor:duality-involution}, we showed that a groupoidant 2-category $\CC$ admits a duality involution, given by applying the equivalence $\Nv \colon \CC \to \Cong(\CC)$, and then interchanging the source and target morphisms on the resulting internal category.
We now study this operation in more detail.

\begin{defn}
  Let $\CC$ be a pita 2-category.

  Given an internal double category $C$ in $\CC$ given by
  \[
    \begin{tikzcd}
      C_2\ar[r,shift left=10pt, "\pi_{01}"]\ar[r,shift right=10pt, "\pi_{12}"]\ar[r, "\pi_{02}"]&
      C_1\ar[r, shift left=10pt, "\pi_0"]\ar[r, shift right=10pt, "\pi_1"]\ar[from=r, "e"']&
      C_0,
    \end{tikzcd}
  \]
  its \defword{opposite} is the internal double category $C^\op$ given by
  \[
    \begin{tikzcd}
      C_2\ar[r,shift left=10pt, "\pi_{12}"]\ar[r,shift right=10pt, "\pi_{01}"]\ar[r, "\pi_{02}"]&
      C_1\ar[r, shift left=10pt, "\pi_1"]\ar[r, shift right=10pt, "\pi_0"]\ar[from=r, "e"']&
      C_0.
    \end{tikzcd}
  \]

  Next, a \defword{contra-cocone under $C$} is defined to be a cocone under $C^\op$.
  In other words, it is a triple $(X,\gamma,\gamma_{10})$ with $X \in \CC$, $\gamma \colon C_0 \to X$, and $\gamma_{10} \colon \pi_1\gamma \to \pi_0\gamma$ satisfying
  \[
    \begin{split}
      i\gamma_{10}&=\id_\gamma\\
      \pi_{02}\gamma_{10}&=
      (\pi_{01}\gamma_{10})
      (\pi_{12}\gamma_{10}).
    \end{split}
    \qquad
    \begin{tikzcd}[row sep=10pt, column sep=40pt]
      \pi_{12}\pi_1\gamma\ar[dd, equals]\ar[r, "\pi_{12}\gamma_{10}"]&
      \pi_{12}\pi_0\gamma\ar[d, equals]\\
      &\pi_{01}\pi_1\gamma\ar[r, "\pi_{01}\gamma_{10}"]&\pi_{01}\pi_0\gamma\ar[d, equals]\\
      \pi_{02}\pi_1\gamma\ar[rr, "\pi_{02}\gamma_{10}"]&&\pi_{02}\pi_0\gamma
    \end{tikzcd}
  \]
  A \defword{contra-quotient} of $C$ is a quotient of $C^\op$.
\end{defn}

\begin{lem}\label{lem:op-reflection}
  If $C$ is a gbo-congruence in a pita 2-category $\CC$ with opposite $C^\op$, and if $r_C,r_{C^\op} \colon C_0^\to \to C_1$ are the respective reflection morphisms, then $r_{C^\op} = \tau r$ where $\tau \colon C_0^\to \to C_0^\to$ is the inversion morphism (Definition~\ref{defn:rezk-complete}).
\end{lem}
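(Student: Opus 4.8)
The plan is to reduce everything to the uniqueness of reflection morphisms. First I would check that $C^\op$ is itself a gbo-congruence, so that it actually has a reflection morphism $r_{C^\op}$ and that this morphism is \emph{unique}. Indeed $C^\op$ has the same objects $C_0,C_1,C_2$ as $C$, hence is still groupoidal, and its source/target map $\br{\pi_1,\pi_0}\colon C_1\to C_0\times C_0$ is a DOF, being the composite of the DOF $\br{\pi_0,\pi_1}$ with the symmetry isomorphism of $C_0\times C_0$. By Remark~\ref{rmk:reflector-in-cat} applied to $C^\op$ there is then exactly one reflection morphism, so it suffices to verify that $\tau r$ satisfies the defining property of a reflection morphism for $C^\op$ and invoke uniqueness.

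Next I would pin down the variance, which is where all the (minor) care is needed. Since $C_0$ is a groupoid, $\tau$ satisfies $\tau\partial=\partial\I$, and hence exchanges the boundary maps, $\tau\partial_0=\partial_1$ and $\tau\partial_1=\partial_0$. Recalling that the source and target maps of $C^\op$ are $\pi_1$ and $\pi_0$ respectively, the candidate $\tau r$ has the correct source and target for a reflection morphism of $C^\op$: $(\tau r)\pi_0=\tau(r\pi_0)=\tau\partial_0=\partial_1$ and $(\tau r)\pi_1=\tau(r\pi_1)=\tau\partial_1=\partial_0$.

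It then remains to produce the witnessing 2-cell. I would start from the witness $\bar\rho\colon r\to\partial_1 e$ for $r$, with $\bar\rho\pi_0=\partial$ and $\bar\rho\pi_1=\id_{\partial_1}$, whisker on the left by $\tau$ to get $\tau\bar\rho\colon\tau r\to\partial_0 e$, and compute $(\tau\bar\rho)\pi_0=\tau(\bar\rho\pi_0)=\tau\partial=\partial\I$ and $(\tau\bar\rho)\pi_1=\tau(\bar\rho\pi_1)=\tau\id_{\partial_1}=\id_{\partial_0}$. Finally, since $C_1$ is groupoidal the category $\CC(C_0^\to,C_1)$ is a groupoid, so I may invert to obtain $\und\rho'\defeq(\tau\bar\rho)\I\colon\partial_0 e\to\tau r$ with $\und\rho'\pi_0=\partial$ and $\und\rho'\pi_1=\id_{\partial_0}$. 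As the source/target maps of $C^\op$ are $\pi_1,\pi_0$, this is exactly the condition $\und\rho'\br{\pi_1,\pi_0}=\br{\id_{\partial_0},\partial}$ of the $\und\rho$-formulation of the reflection property for $C^\op$. Uniqueness then yields $r_{C^\op}=\tau r$.

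I do not expect a genuine obstacle here: the only delicate point is the bookkeeping of which projection is source and which is target in $C^\op$, together with the direction-reversal of the 2-cells after whiskering by $\tau$ and inverting. As an alternative that sidesteps the computation, one could apply the 2-functors $\CC(A,-)$, which preserve and jointly reflect reflection morphisms, opposites, and the inversion morphism $\tau$ (all being characterized representably), thereby reducing the identity to the statement in $\Cat$ that $R^\op(v)=R(v\I)$ for a gbo-congruence, which is immediate from the explicit description of $R$ in Proposition~\ref{propn:cat-gbo-char}.
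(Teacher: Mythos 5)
Your proof is correct and takes essentially the same route as the paper: the paper's proof also exhibits $\tau\bar\rho_C\I$ (your $(\tau\bar\rho)\I$, the same 2-cell) as the witness $\und\rho_{C^\op} \colon \partial_0 e \to \tau r$ satisfying $\und\rho_{C^\op}\br{\pi_1,\pi_0} = \br{\id_{\partial_0},\partial}$, and concludes by uniqueness of reflection morphisms. Your write-up simply makes explicit the "simple calculation" and the uniqueness/gbo-congruence checks that the paper leaves implicit.
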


\begin{proof}
  By definition of $r_{C^\op}$, to show that $\tau r = r_{\op}$ we must find a 2-cell $\und \rho_{C^\op} \colon C_0^\to \tocellud{\partial_0 e}{\tau r} C_1$ with $\und \rho_{C^\op}\br{\pi_1,\pi_0} = \br{\id_{\partial_0},\partial} \colon C_0^\to \tocell C_0 \times C_0$.
  A simple calculation shows that $\rho_{C^\op}\defeq \tau\bar \rho_C\I$ has the desired property, where by definition $\bar \rho_C \colon C_0^\to \tocellud{r}{\partial_1 e} C_1$ satisfies $\bar \rho_C \br{\pi_0,\pi_1} = \br{\partial,\id_{\partial_1}}$.
\end{proof}

\begin{defn}
  Given two objects $X,X^\op$ in a 2-category $\CC$, an \defword{opposition between $X$ and $X^\op$} is a complete congruence $C$ together with a quotient cocone $(X,\gamma,\gamma_{01})$ under $C$ and a contra-quotient contra-cocone $(X^\op,\gamma^\op,\gamma_{01}^\op)$ under $C$.
  \begin{equation}\label{eq:opposition}
    \begin{tikzcd}[column sep=40pt]
      &C_2 {
        \ar[d, shift right=10pt, "\pi_{01}"']
        \ar[d, "\pi_{02}" description]
        \ar[d, shift left=10pt, "\pi_{12}"]
      }
      \\
      X
      &C_1 {
        \ar[d, shift right=10pt, "\pi_0"']
        \ar[from=d, "e"']
        \ar[d, shift left=10pt, "\pi_1"]
      }
      \ar[l, shift right=5pt, "\pi_0\gamma"'{name=p0g}, xshift=-8pt, shorten=6pt]
      \ar[l, shift left=5pt, "\pi_1\gamma"{name=p1g}, xshift=-8pt, shorten=6pt]
      \ar[r, shift left=5pt, "\pi_1\gamma^\op"{name=p1go}, xshift=8pt, shorten=6pt]
      \ar[r, shift right=5pt, "\pi_0\gamma^\op"'{name=p0go}, xshift=8pt, shorten=6pt]
      \ar[from=p0g, to=p1g, "\gamma_{01}^{\ }", Rightarrow, shorten=1pt]
      \ar[from=p1go, to=p0go, "\gamma_{01}^\op"', Rightarrow, shorten=1pt]
      &
      X^\op
      \\
      &
      C_0 {
        \ar[lu, "\gamma", bend left]
        \ar[ru, "\gamma^\op"', bend right]
      }&
    \end{tikzcd}
  \end{equation}

  An \defword{opposite object} of $X$ is an object $X^\op$ together with an opposition between $X$ and $X^\op$.
  Note that if $\CC$ is groupoidant, every object has an opposite.
\end{defn}

It is clear that if $C$ gives an opposition between $X$ and $X^\op$, then $C^\op$ gives an opposition between $X^\op$ and $X$, and hence $X$ is also an opposite object of $X^\op$.

If $\CC$ is groupoidant and $X$ is a quotient of $C$, it follows that $C$ is a nerve of $X$, and hence that $X^\op$ is uniquely determined up to isomorphism.

We next given a ``representable'' characterization of opposite objects using Theorem~\ref{thm:gpd-yoneda-rstr}.
In what follows, we will sometimes conflate a functor $F \colon \cC^\op \to \cD$ with the functor $F^\op \colon \cC \to \cD^\op$.

\begin{defn}
  Fix objects $X,X^\op$ in a groupoidant 2-category $\CC$ and an opposition as in (\ref{eq:opposition}).
  Given a groupoid \( A \in \CC \) and a morphism $f \colon A \to X$, we obtain $f^\iso \colon A \to C_0$ (with $f^\iso\gamma = f$), and we set $f^\op\defeq f^\iso\gamma^\op \colon A \to X^\op$.
  Next, given a 2-cell $\alpha \colon A \tocellud{f}{g} X$, we obtain a morphism $a \colon A \to C_1$ with $\alpha = a\cdot \gamma_{01}$, and we set $\alpha^\op\defeq a \cdot \gamma_{01}^\op \colon A \tocellud{g^\op}{f^\op} X^\op$.
\end{defn}

\begin{propn}\label{propn:rep-op-defn}
  If $(C,\gamma,\gamma_{01},\gamma^\op,\gamma_{01}^\op)$ is an opposition between objects $X$ and $X^\op$ in a groupoidant 2-category $\CC$, then for any groupoid \( A \in \CC \), the operation $f \mapsto f^\op$ on morphisms and 2-cells defines an isomorphism of categories $(-)^\op \colon \CC(A,X)^\op \toi \CC(A,X^\op)$, and moreover this determines a strict natural isomorphism between $(\wh{X^\op})_\gpd$ and the 2-functor
  \[
    (\CC_\gpd)^\op\toix{\inv}
    (\CC_\gpd)^\coop\tox{(\wh{X})_\gpd^\co}
    \Cat^\co\tox{\op}
    \Cat,
  \]
  where $\inv$ is the 2-functor acting by the identity on objects and 1-cells, and by inversion on 2-cells.

  By Theorem~\ref{thm:gpd-yoneda-rstr}, the existence of this natural isomorphism determines $X^\op$ uniquely up to isomorphism.
\end{propn}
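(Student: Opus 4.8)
The plan is to establish the two assertions---that $(-)^\op_A$ is an isomorphism of categories for each groupoid $A$, and that these assemble into a strict natural isomorphism---separately, reducing both to the hom-category description of a nerve in Definition~\ref{defn:nerve}. Since $\CC$ is groupoidant and $X$ (resp.\ $X^\op$) is a quotient of the complete congruence $C$ (resp.\ $C^\op$, which by the symmetry of the opposition is again a complete congruence), the remark preceding the proposition gives $C = \Nv(X)$ and $C^\op = \Nv(X^\op)$. Fixing a groupoid $A$, every morphism $A \to X$ is an arrow-wise iso (since every $2$-cell $A' \tocell A$ is invertible), so the full subcategory of $\CC(A,X)$ on the arrow-wise isos is all of $\CC(A,X)$; hence, by Definition~\ref{defn:nerve}, the quotient cocone induces an isomorphism of categories $\Psi \colon \abs\CC(A,C) \toi \CC(A,X)$ sending $p \colon A \to C_0$ to $p\gamma$ and $a \colon A \to C_1$ to $a\gamma_{01}$, with inverse given on objects by $f \mapsto f^\iso$ and on morphisms by sending $\alpha$ to the unique $a \colon A \to C_1$ with $a\gamma_{01} = \alpha$. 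The contra-cocone $(\gamma^\op,\gamma_{01}^\op)$, being a quotient cocone for $C^\op = \Nv(X^\op)$, likewise induces $\Psi' \colon \abs\CC(A,C^\op) \toi \CC(A,X^\op)$, $p \mapsto p\gamma^\op$, $a \mapsto a\gamma_{01}^\op$.

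For the first assertion, I would observe that, since $C^\op$ is obtained from $C$ by interchanging $\pi_0,\pi_1$ and $\pi_{01},\pi_{12}$, there is a formal identification $\abs\CC(A,C^\op) = \abs\CC(A,C)^\op$. Unwinding the definitions of $f \mapsto f^\op$ and $\alpha \mapsto \alpha^\op$, one checks that $(-)^\op_A$ is precisely the composite $\CC(A,X)^\op \xrightarrow{(\Psi\I)^\op} \abs\CC(A,C)^\op = \abs\CC(A,C^\op) \xrightarrow{\Psi'} \CC(A,X^\op)$; being a composite of isomorphisms of categories, it is one. (Functoriality and bijectivity could instead be verified by hand---the former from the (contra-)cocone identities, the latter from the symmetric operation built out of $C^\op$---but the composite description is cleaner and makes both transparent at once.)

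For the naturality, the key computational input is that $(-)^\op$ commutes with precomposition: for $u \colon A' \to A$ in $\CC_\gpd$ and $f \colon A \to X$, uniqueness of the core factorization gives $(uf)^\iso = u f^\iso$, whence $(uf)^\op = u f^\op$, and similarly $(u\alpha)^\op = u\alpha^\op$ on $2$-cells. This is exactly the commutativity of the naturality squares for the family $\set{(-)^\op_A}$ with respect to the objects and $1$-cells of $\CC_\gpd$, exhibiting $\set{(-)^\op_A}$ as a strict natural isomorphism between $(\wh{X^\op})_\gpd$ and $G \defeq \op \circ (\wh{X})_\gpd^\co \circ \inv$ at the level of $1$-cells. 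I expect the main obstacle to be the bookkeeping of the three dualities in $G$---in particular, matching the inversion-on-$2$-cells built into $\inv$ against the order-reversal $\alpha \mapsto \alpha^\op$, so that the components land in the correct functor and have the stated variance; once this is pinned down, the remaining $2$-cell naturality condition follows from a computation parallel to $(uf)^\op = u f^\op$ (or is automatic for groupoidant $\CC$, as in Remark~\ref{rmk:yoneda-surprise}). The concluding uniqueness of $X^\op$ up to isomorphism is then immediate from Theorem~\ref{thm:gpd-yoneda-rstr}.
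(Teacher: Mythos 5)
Your first two steps are correct and agree with the paper's own proof: the identification $\abs{\CC}(A,C^\op) = \abs{\CC}(A,C)^\op$ together with the nerve descriptions of $\CC(A,X)$ and $\CC(A,X^\op)$ for groupoidal $A$ is precisely what the paper means by ``immediate from the fact that $C$ is a nerve of $X$ and the opposite of a nerve of $X^\op$'', and your computation $(uf)^\op = (uf)^\iso\gamma^\op = uf^\iso\gamma^\op = uf^\op$ is exactly the paper's verification of naturality with respect to the objects and 1-cells of $\CC_\gpd$.

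The gap is the naturality with respect to the \emph{2-cells} of $\CC_\gpd$, which you defer to ``bookkeeping'' and which is in fact the bulk of the paper's proof. Since the target 2-functor is $\op \circ (\wh{X})_\gpd^\co \circ \inv$, the condition to be checked at a 2-cell $\alpha$ between morphisms $u,v \colon A' \to A$ of $\CC_\gpd$ and at $f \colon A \to X$ is $(\alpha f)^\op = \alpha\I f^\op$ --- note the inverse forced by $\inv$ (the untwisted equation $(\alpha f)^\op = \alpha f^\op$ does not even typecheck, since the two sides have opposite directions). Neither of your proposed ways of discharging this works. It is not ``parallel to $(uf)^\op = uf^\op$'': that identity is a one-line uniqueness-of-core-factorization argument, whereas $(\alpha f)^\op = \alpha\I f^\op$ requires expressing the classifying morphism $A' \to C_1$ of the 2-cell $\alpha f$ through the reflection morphism $r_C$ (via Lemma~\ref{lem:nv-refl}, which gives $r_C\gamma_{01} = \partial\gamma$) and then comparing $r_C$ with $r_{C^\op}$ through the inversion morphism $\tau$ (Lemma~\ref{lem:op-reflection}, $r_{C^\op} = \tau r_C$) --- this is exactly where $\alpha\I$ comes from, and it is the content of the paper's displayed computations (\ref{eq:op-rep-pf-1})--(\ref{eq:op-rep-pf-2}). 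Nor is it ``automatic as in Remark~\ref{rmk:yoneda-surprise}'': the mechanism behind that remark is Theorem~\ref{thm:gpd-yoneda-rstr}~(iii), which applies to 1-cell-natural families whose domains are hom-categories $\CC(A,X)$, i.e.\ to transformations between restricted Yoneda 2-functors, and deduces 2-cell naturality because any such family is shown to be a whiskering $f_*$. Your family has domain $\CC(A,X)^\op$, and its source 2-functor carries the $\inv$-twist in its action on 2-cells; showing that this twisted 2-functor is representable \emph{compatibly with 2-cells} is exactly what the proposition asserts, so any appeal of this kind is circular. To complete the proof you need the reflection-morphism computation (or an equivalent argument supplying the inversion).
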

\begin{proof}
  For the first claim, it suffices to check that $(-)^\op$ is a functor, since it then has an inverse $(-)^\op \colon \CC(A,X^\op) \toi \CC(A,X)^\op$.
  But this is more or less immediate from the fact that $C$ is a nerve of $X$ and the opposite of a nerve of $X^\op$.

  For the second claim, we need only check the strict naturality of $(-)^\op$, since it is then clearly an isomorphism.
  That is, for each morphism $u \colon A' \to A$ and each 2-cell $\alpha \colon A' \tocellud{v}{u} A$ in $\CC_\gpd$, we need to check the commutativity of the square
  \[
    \begin{tikzcd}
      \CC(A,X)^\op\ar[r, "(-)^\op"]\ar[d, "(u^*)^\op"']&\CC(A,X^\op)\ar[d, "u^*"]\\
      \CC(A',X)^\op\ar[r, "(-)^\op"]&\CC(A',X^\op)
    \end{tikzcd}
    \qquad \AND \qquad
    \begin{tikzcd}[]
      \CC(A,X)^\op\ar[r, "(-)^\op"]
      \ar[d, "(u^*)^\op"'{name=ustop}, bend right=40pt]
      \ar[d, "(v^*)^\op"{name=vstop}, bend left=40pt]
      \ar[d, "(\alpha^*)^\op", from=ustop, to=vstop, Rightarrow, shorten=5pt]
      &\CC(A,X^\op)
      \ar[d, "u^*"'{name=ust}, bend right=40pt]
      \ar[d, "v^*"{name=vst}, bend left=40pt]
      \ar[d, "(\alpha\I)^*", from=ust, to=vst, Rightarrow, shorten=5pt]\\[30pt]
      \CC(A',X)^\op\ar[r, "(-)^\op"]&\CC(A',X^\op),
    \end{tikzcd}
  \]
  respectively.

  As for the first square, given $f \colon A \to X$, we have $(uf)^\op = (uf)^\iso\gamma^\op = uf^\iso\gamma^\op = uf^\op$.
  The verification for 2-cells $f \colon A \tocell X$ is similar.

  For the second square, fix $f \colon A \to X$.
  We need to show that $(\alpha f)^\op = \alpha\I f^\op$.

  We have the following morphisms:
  \begin{enumerate}[(i)]
  \item[(i)] $a \colon A' \to A^\to$ defined by the condition $a \partial = \alpha$,
  \item[(ii)] $f^\iso \colon A \to C_0$ with $f^\iso \gamma = f$, and by definition $f^\iso \gamma^\op = f^\op$,
  \item[(iii)] $(f^\iso)^\to \colon A^\to \to C_0^\to$ with $(f^\iso)^\to \partial = \partial f^\iso$,
  \item[(iv)] the reflection morphism $r_C \colon C_0^\to \to C_1$ for $C$.
  \end{enumerate}
  Now, by Lemma~\ref{lem:nv-refl}, we have $r_C \gamma_{01} = \partial \gamma \colon C_0^\to \tocell X$.
  It follows that
  \begin{equation}\label{eq:op-rep-pf-1}
    \pbig{a (f^\iso)^\to r_C} \gamma_{01} =
    a (f^\iso)^\to \partial \gamma =
    a \partial f^\iso \gamma =
    \alpha f^\iso \gamma =
    \alpha f,
  \end{equation}
  and hence, by definition, that $\pbig{a (f^\iso)^\to r_C} \gamma_{01}^\op = (\alpha f)^\op$.

  Now using Lemma~\ref{lem:op-reflection}, we have
  \begin{equation}\label{eq:op-rep-pf-2}
    (\alpha f)^\op =
    \pbig{a (f^\iso)^\to \tau \tau r_C} \gamma_{01}^\op =
    \pbig{a (f^\iso)^\to \tau r_{C^\op}} \gamma_{01}^\op =
    \pbig{a \tau (f^\iso)^\to r_{C^\op}} \gamma_{01}^\op,
  \end{equation}
  where $r_{C^\op} \colon C_0^\to \to C_1$ is the reflection morphism for $C^\op$.

  Computing as in (\ref{eq:op-rep-pf-1}), and using that $a \tau \partial = \alpha\I$, we have that the right hand side of (\ref{eq:op-rep-pf-2}) is equal to $\alpha\I f^\op$, as desired.
\end{proof}

We similarly have a representable characterization of the isomorphism $(-)^\op \colon \CC(X,Y)^\op \to \CC(X^\op,Y^\op)$ for general $X,Y \in \CC$:
\begin{propn}\label{propn:hom-op-representably}
  Given a morphism $f \colon X \to Y$ in a groupoidant 2-category $\CC$, there is a unique morphism $f^\op \colon X^\op \to Y^\op$ such that for all groupoids $A \in \CC$, the functor $(f^\op)_* \colon \CC(A,X^\op) \to \CC(A,Y^\op)$ is equal to the composite
  \[
    \CC(A,X^\op)\tox{(-)^\op}
    \CC(A,X)^\op\tox{(f_*)^\op}
    \CC(A,Y)^\op\tox{(-)^\op}
    \CC(A,Y^\op).
  \]
  Similarly, given a 2-cell $\alpha \colon X \tocellud{f}{g} Y$, there is a unique 2-cell $\alpha^\op \colon X^\op \tocellud{g^\op}{f^\op} Y^\op$ such that, for each groupoid $A$, the natural transformation $(\alpha^\op)_* \colon \CC(A,X^\op) \tocellud{(g^\op)_*}{(f^\op)_*} \CC(A,Y^\op)$ is equal to
  \[
    \CC(A,X^\op)\tox{(-)^\op}
    \twocell{\CC(A,X)^\op}{\ \quad\CC(A,Y)^\op}{(g_*)^\op}{(f_*)^\op}{(\alpha_*)^\op}
    \tox{(-)^\op}
    \CC(A,Y^\op).
  \]

  Moreover, these operations define an isomorphism of categories $(-)^\op \colon \CC(X,Y)^\op \to \CC(X^\op,Y^\op)$.
\end{propn}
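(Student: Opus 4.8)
The plan is to construct $f^\op$ and $\alpha^\op$ directly from the prescribed representable formulas, invoking the strict fullness and faithfulness of the restricted Yoneda embedding (Theorem~\ref{thm:gpd-yoneda-rstr}), and then to obtain functoriality and bijectivity of $(-)^\op$ by reducing everything to the representable isomorphisms $(-)^\op\colon\CC(A,X)^\op\toi\CC(A,X^\op)$ of Proposition~\ref{propn:rep-op-defn}. Throughout I fix chosen oppositions for $X$ and $Y$, so that for each groupoid $A$ we have strict-natural-in-$A$ isomorphisms $(-)^\op\colon\CC(A,X)^\op\toi\CC(A,X^\op)$ and likewise for $Y$.

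To define $f^\op$, I consider the family of functors $F_A\colon\CC(A,X^\op)\to\CC(A,Y^\op)$ given by the stated composite, rewritten as the conjugate of the functor $(f_*)^\op\colon\CC(A,X)^\op\to\CC(A,Y)^\op$ by the two representable $(-)^\op$-isomorphisms. To apply Theorem~\ref{thm:gpd-yoneda-rstr}~(iii), I must check that $\{F_A\}$ is strictly natural with respect to all $u\colon A'\to A$ between groupoids; naturality with respect to 2-cells is automatic by Remark~\ref{rmk:yoneda-surprise}. The two outer factors are strictly natural by Proposition~\ref{propn:rep-op-defn}, and the middle factor $(f_*)^\op$ is strictly natural because $f_*$ commutes strictly with $u^*$ (post-composition by $f$ interchanges with pre-composition by $u$), so that $(f_*)^\op$ commutes with $(u^*)^\op$, the action on morphisms of the twisted representable. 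As a composite of strict natural transformations is again one, Theorem~\ref{thm:gpd-yoneda-rstr}~(iii) yields a unique $f^\op\colon X^\op\to Y^\op$ with $(f^\op)_*=F_A$ for all groupoids $A$.

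The construction of $\alpha^\op$ is parallel, using Theorem~\ref{thm:gpd-yoneda-rstr}~(iv). Given $\alpha\colon f\to g$, the prescribed family of natural transformations $\beta_A\colon(g^\op)_*\to(f^\op)_*$ is the conjugate of $(\alpha_*)^\op$ by the same representable isomorphisms. Since $\{\alpha_*\}_A$ is a modification (again because whiskering $\alpha$ with $f,g$ commutes with pre-composition by $u$), its pointwise opposite $\{(\alpha_*)^\op\}_A$ is a modification as well, and conjugating by the strict natural isomorphisms of Proposition~\ref{propn:rep-op-defn} preserves the modification condition; hence $\{\beta_A\}$ is a modification, and Theorem~\ref{thm:gpd-yoneda-rstr}~(iv) produces the unique 2-cell $\alpha^\op\colon g^\op\to f^\op$ with $(\alpha^\op)_*=\beta_A$, of the asserted boundary.

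Finally, that $(-)^\op\colon\CC(X,Y)^\op\to\CC(X^\op,Y^\op)$ is an isomorphism of categories I check representably. Preservation of identities and of composites of 2-cells, as well as of the (geometric-order) composition of morphisms $X\tox{f}Y\tox{g}Z$, all translate via Theorem~\ref{thm:gpd-yoneda-rstr}~(i)--(ii) into the corresponding identities among the $F_A$ and $\beta_A$, which hold because the representable $(-)^\op$ of Proposition~\ref{propn:rep-op-defn} are functorial and because the duality $\op$ on $\Cat$ is a 2-functor (contravariant on 2-cells). For bijectivity I apply the whole construction again to the oppositions witnessing $X,Y$ as opposites of $X^\op,Y^\op$ (recall $C^\op$ is an opposition between $X^\op$ and $X$), obtaining a candidate inverse $\CC(X^\op,Y^\op)^\op\to\CC(X,Y)$; since at the representable level the two $(-)^\op$-isomorphisms compose to the identity, Theorem~\ref{thm:gpd-yoneda-rstr}~(i)--(ii) force the two constructions to be mutually inverse. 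The main bookkeeping obstacle is keeping the variances straight through the $\op$, $\co$, and $\inv$ twists of Proposition~\ref{propn:rep-op-defn}; once $(f_*)^\op$ is correctly identified as a strict natural transformation of the twisted representables, the strictness of the ambient setting makes all the composites go through with no coherence data to manage.
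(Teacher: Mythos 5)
Your proposal is correct and follows essentially the same route as the paper: existence of $f^\op$ and $\alpha^\op$ via Theorem~\ref{thm:gpd-yoneda-rstr}~(iii)--(iv) after using Proposition~\ref{propn:rep-op-defn} (plus the strict commutation of $f_*$ with $u^*$) to verify strict naturality and the modification condition, uniqueness via Theorem~\ref{thm:gpd-yoneda-rstr}~(i)--(ii), and the isomorphism claim by checking functoriality and exhibiting the inverse coming from the reverse opposition $C^\op$. The only difference is that you spell out details the paper leaves to the reader (notably the mutual-inverse verification), which is fine.
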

\begin{proof}
  The uniqueness follows immediately from Theorem~\ref{thm:gpd-yoneda-rstr}~(i)~and~(ii).
  The existence follows from Theorem~\ref{thm:gpd-yoneda-rstr}~(iii)~and~(iv), since using Proposition~\ref{propn:rep-op-defn}, we see that the given prescriptions define a strict natural transformation $(\wh{X^\op})_\gpd \to (\wh{Y^\op})_\gpd$ and a modification $(\wh{X^\op})_\gpd \tocellud{(g^\op)_*}{(f^\op)_*} (\wh{Y^\op})_\gpd$, respectively.

  For the last claim, it suffices to check that $(-)^\op$ is a functor, since it then has an inverse $(-)^\op \colon \CC(X^\op,Y^\op)^\op \to \CC(X,Y)$ (as the reader may verify).
  The fact that it is a functor follows immediately from the fact that given 2-cells $f\tox{\alpha}g\tox{\beta}h$, we have $((\alpha\beta)_*)^\op = (\beta_*)^\op(\alpha_*)^\op$, and similarly $((\id_f)_*)^\op = \id_{(f_*)^\op}$.
\end{proof}

\section{Examples of 2-topoi}\label{sec:examples}
In this section, we will prove that the 2-category $\Cat(\Set^\cC)$ of internal categories in any presheaf topos $\Set^{\cC}$ satisfies all the axioms of \S\ref{sec:axioms}.
We suspect this is true more generally for $\Cat(\cD)$ for any Grothendieck topos $\cD$, and more generally for any \emph{Grothendieck 2-topos} in the sense of \cite{street-two-sheaf}.

In fact, several of the axioms hold more generally in $\Cat(\cC)$ for any finitely complete 1-category $\cC$.
Hence we begin our discussion in this more general context.
Recall from \S\ref{sec:congruences} the notation $\cC(-,C) \colon \cC^\op \to \abs{\Cat}$ for an internal category $C$ in $\cC$.
\begin{lem}\label{lem:quasi-cotensors}
  Let $C$ be an internal category in a finitely complete category $\cC$.

  For any finite category $J$, the functor
  \[
    \Fun(J,\cC(-,C)) \colon
    \cC^\op \to \Set
  \]
  is representable.

  That is, there exists an object $C_J \in \cC$ and a functor $\ev = \ev_{J,C} \colon J \to \cC(C_J,C)$ such that for any $A \in \cC$ and functor $F \colon J \to \cC(A,C)$, there is a unique morphism $f \colon A \to C_J$ such that $F$ is equal to the composite of
  \[
    J {
      \tox{\ev_{J,C}}
    }
    \cC(C_J,C) {
      \tox{f^*}
    }
    \cC(A,C).
  \]
\end{lem}
\begin{proof}
  As in \S\ref{subsec:nerves}, we write \( [k] \in \Cat \) for the linear order on \( k + 1 \) elements.
  When \( J \) is the category \( [k] \) for \( k = 0,1,2 \), then it is easy to see that the functor \( \Fun(J,\cC(-,C)) \colon \cC^\op \to \Set \) is simply represented by the object \( C_k \).

  Now the functor \( \abs{\Cat}^\op \to \Set^{\cC^\op} \) taking \( J \) to \( \Fun(J,\cC(-,C)) \) preserves limits, as it is the composite of the Yoneda embedding \( \abs{\Cat}^\op \to \Set^{\abs{\Cat}} \) with the functor \( \Set^{\abs{\Cat}} \to \Set^{\cC^\op} \) given by precomposition with \( \cC(-,C) \colon \cC^\op \to \abs{\Cat} \), both of which preserve limits.
  Every finite category \( J \) is the colimit in \( \abs{\Cat} \) of the finite diagram \( \dom \colon \Delta_{\le 2}/J \to \Cat \), where \( \Delta_{\le 2} \subset \Cat \) the full subcategory on the objects $[0],[1],[2]$.
  It follows that \( \Fun(J,\cC(-,C)) \) is a finite limit of representables for every finite \( J \), and is thus representable, since \( \cC \) has finite limits.
\end{proof}

Note that it follows from the universal property that any functor $J \to J'$ induces a morphism $C_{J'} \to C_J$, and moreover this gives an anafunctor $C_{(-)} \colon \abs{\FinCat}^\op \to \cC$.
This anafunctor is (a partially defined) right adjoint to $\cC(-,C) \colon \cC \to \abs{\Cat}^{\op}$ and hence preserves limits (i.e., takes colimits in $\abs{\FinCat}$ to limits in $\cC$).

Now consider the category $\toy$ (the freestanding isomorphism).
We have a monomorphism $i \colon C_{\toy} \to C_1$ which, for each $A$, identifies $\CC(A,C_{\toy})$ with the set of isomorphisms in $\CC(A,C)$ (this is essentially the same as the \emph{object of isomorphisms} of Definition~\ref{defn:ob-of-isos}).

\begin{lem}\label{lem:catc-core-nat-trans}
  Given internal functors $F,G \colon C \to D$ between internal categories $C,D \in \Cat(\cC)$ in a finitely complete category $\cC$, an internal natural transformation $\alpha \colon F \to G$, given by a morphism $\alpha \colon C_0 \to D_1$ is invertible if and only if $\alpha$ factors through $i \colon D_{\toy} \to D_1$.
\end{lem}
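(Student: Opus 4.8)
The plan is to show that both conditions in the statement are equivalent to the single condition that $\alpha$, regarded as a morphism of the category $\cC(C_0,D)$, is an isomorphism.

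First I would treat the factorization condition. Since $i$ is a monomorphism, $\alpha$ factors through it exactly when the universal element does: that is, exactly when $\alpha = \id_{C_0}\cdot\alpha \in \cC(C_0,D_1)$ lies in the image of the injection $i_* \colon \cC(C_0,D_{\toy}) \to \cC(C_0,D_1)$ (a preimage $\bar\alpha$ is then the desired factoring morphism, $\bar\alpha\cdot i = \alpha$). By the defining property of $D_{\toy}$ recalled just above the statement, this image is precisely the set of isomorphisms of $\cC(C_0,D)$. So the right-hand condition is equivalent to $\alpha$ being an isomorphism in $\cC(C_0,D)$.

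Next I would relate this to invertibility. Vertical composition of internal natural transformations $F \to G \to H$ is computed on the underlying morphisms $C_0 \to D_1$ by pairing into $D_2$ and applying the internal composition $D_2 \to D_1$, which is exactly the composition law of $\cC(C_0,D)$; similarly $\id_F$ is the identity of the object $F_0$ there. Hence a vertical inverse $\beta \colon G \to F$ of $\alpha$ is in particular a two-sided inverse in $\cC(C_0,D)$, giving the ``only if'' direction at once. Combined with the previous step, the one remaining point is the converse: that when $\alpha$ is an isomorphism in $\cC(C_0,D)$, the inverse morphism is itself an internal natural transformation $G \to F$ (and hence a vertical inverse).

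This converse---the internal form of the fact that the pointwise inverse of a natural isomorphism is natural---is where the real work lies, and I would dispose of it by reducing to ordinary category theory via the Yoneda embedding $y \colon \cC \to \Set^{\cC^\op}$. Since $y$ is fully faithful and preserves all limits, it induces for internal categories $C,D$ bijections between internal functors (resp.\ natural transformations) $C \to D$ and their images $yC \to yD$, and, as $D_{\toy}$ is a finite limit (Lemma~\ref{propn:quasi-cotensors}), it carries $i$ to the object-of-isomorphisms inclusion of $yD$. Both conditions in the lemma are then preserved and reflected by $y$ (invertibility because an inverse is given by a morphism and equations; factorization through $i$ because $yi$ is still monic and $y$ is full). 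Under the standard identification $\Cat(\Set^{\cC^\op}) \cong \Fun(\cC^\op,\Cat)$ the whole situation becomes object-wise ordinary category theory, where the equivalence is the classical statement that a natural transformation is invertible if and only if each component is an isomorphism if and only if it factors through the object of isomorphisms; transferring back along $y$ yields the converse. The main obstacle is precisely this naturality-of-the-inverse step, and the point of the Yoneda reduction is that it lets us inherit it from the familiar $\Set$-based case rather than performing the internal diagram chase by hand.
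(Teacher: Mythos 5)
Your proof is correct, and its core principle is the same as the paper's: reduce internal invertibility to the classical fact that a natural transformation is invertible precisely when all its components are isomorphisms. But the implementation is genuinely different, and the comparison is instructive. The paper's proof is three lines: it asserts that $\alpha$ is invertible if and only if the induced natural transformation $\alpha_* \colon F_* \to G_*$ of functors $\cC(A,C) \to \cC(A,D)$ is invertible for every $A \in \cC$, then applies the componentwise criterion and the universal property of $D_\toy$ (taking $f = \id_{C_0}$ for one direction). The nontrivial half of that first assertion --- that pointwise invertibility forces the pointwise inverse to assemble into an internal natural transformation $G \to F$ --- is exactly the step you isolate as ``where the real work lies,'' and the paper leaves it implicit. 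Your route supplies it: you transfer the whole statement along the locally fully faithful 2-functor $\Cat(\cC) \to \Cat(\Set^{\cC^\op}) \cong \Fun(\cC^\op,\Cat)$ induced by the Yoneda embedding (using that $D_\toy$ is a finite limit, by Lemma~\ref{propn:quasi-cotensors}, hence carried to the object of isomorphisms of $yD$), where everything becomes objectwise ordinary category theory. Your preliminary step, identifying factorization through $i$ with being an isomorphism in $\cC(C_0,D)$ and internal invertibility with invertibility in $\cC(C_0,D)$, is also a slightly different decomposition than the paper's, though it is implicitly contained in the paper's chain of equivalences. What the paper's version buys is brevity and uniformity with its pervasive representable style; what yours buys is self-containedness --- the crux (naturality of the pointwise inverse, internalized) is explicitly identified and discharged rather than absorbed into an unproved equivalence.
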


\begin{proof}
  An internal natural transformation $\alpha \colon C_0 \to D_1$ from $F$ to $G$ is invertible if and only if the natural transformation $\alpha_* \colon \cC(A,C_0) \to \cC(A,D_1)$ from $F_*$ to $G_*$ is invertible for each $A \in \cC$.

  This is equivalent to $f \alpha$ being an isomorphism in $\cC(A,D)$ for each $f \colon A \to C_0$ in $\cC$, which is in turn equivalent to $\alpha$ factoring through $D_\toy$.
\end{proof}

Now, for a given internal category $D$ in a finitely complete category $\cC$, we may from an internal category $D^\iso$ with objects $D_{\toy\toy},D_\toy,D_0$ by applying the limit-preserving anafunctor $D_{(-)}$ to the corresponding internal category in $\abs{\FinCat}^\op$ with objects $(\toy\toy),(\toy),[0]$.
The latter is equipped with an internal functor to the standard internal category in $\abs{\FinCat}^\op$ with objects $[2],[1],[0]$, and hence we have an internal functor $i \colon D^\iso \to D$ in $\cC$.

\begin{propn}\label{propn:catc-core-exists}
  If $\cC$ is a finitely complete category, then $i \colon D^\iso \to D$ is a core of $D$ for each $D \in \Cat(\cC)$.
\end{propn}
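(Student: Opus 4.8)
The plan is to verify the two requirements in the definition of a (strict) core from \S\ref{subsec:groupoids}: that $i \colon D^\iso \to D$ is an arrow-wise iso, and that for each $A \in \Cat(\cC)$ the functor $i_* \colon \Cat(\cC)(A,D^\iso) \to \Cat(\cC)(A,D)$ is injective with image the subcategory of arrow-wise isos together with the invertible $2$-cells between them. First I would record the structure of $i$: it is levelwise, with $i_0 = \id_{D_0}$, with $i_1 = i \colon D_\toy \to D_1$ the (monomorphic) inclusion of the object of isomorphisms, and with $i_2 \colon D_{\toy\toy} \to D_2$ again a monomorphism. I would also note that $(D^\iso)_2 = D_\toy \times_{D_0} D_\toy$: since $\toy\toy$ is the pushout $\toy \cup_{[0]} \toy$ in $\abs{\FinCat}$ and the anafunctor $D_{(-)}$ of Lemma~\ref{propn:quasi-cotensors} turns colimits into limits, $D_{\toy\toy}$ is the corresponding pullback, so $D^\iso$ is genuinely an internal category and an internal functor into it is determined by its components in degrees $0$ and $1$. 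The single external input for everything is Lemma~\ref{lem:catc-core-nat-trans}: an internal natural transformation is invertible exactly when the morphism defining it factors through $i \colon D_\toy \to D_1$.

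The key intermediate step is the characterization: an internal functor $G \colon A \to D$ is an arrow-wise iso if and only if $G_1 \colon A_1 \to D_1$ factors through $i \colon D_\toy \to D_1$. For the forward implication I would use a single well-chosen test $2$-cell: let $\Delta(A_1)$ be the discrete internal category on $A_1 \in \cC$ (which exists in any $\cC$, requiring no colimits), let $u,v \colon \Delta(A_1) \to A$ be the internal functors $\dom,\cod \colon A_1 \to A_0$, and let $\alpha = \id_{A_1}$ be the internal natural transformation $u \to v$ with component $A_1 \to A_1$. Then the whiskered $2$-cell $\alpha G$ has defining morphism $\id_{A_1}\cdot G_1 = G_1 \colon A_1 \to D_1$; since $G$ is arrow-wise iso, $\alpha G$ is invertible, so $G_1$ factors through $i$ by Lemma~\ref{lem:catc-core-nat-trans}. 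Conversely, if $G_1$ factors through $i$, then for an arbitrary $2$-cell $\alpha \colon A' \tocellud{u}{v} A$ the whiskering $\alpha G$ is defined by $\alpha \cdot G_1 \colon A'_0 \to D_1$, which again factors through $i$, so $\alpha G$ is invertible by the same lemma; hence $G$ is an arrow-wise iso. Applied to $G = i$ itself (whose degree-$1$ component $\id_{D_\toy}\cdot i$ trivially factors through $i$), this shows $i$ is an arrow-wise iso.

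With this in hand I would assemble the universal property. Injectivity of $i_*$ on objects and on $2$-cells is immediate from $i_1$ being a monomorphism, since a functor into $D^\iso$ is determined by its degree-$0$ and $1$ components and a $2$-cell by its single defining morphism. For the image on objects: if $G = F i$ then $G_1 = F_1 \cdot i_1$ factors through $i$, so $G$ is arrow-wise iso; conversely, given an arrow-wise iso $G$, the characterization yields $F_1 \colon A_1 \to D_\toy$ with $F_1 i_1 = G_1$, and together with $F_0 = G_0$ and the induced $F_2 \colon A_2 \to (D^\iso)_2 = D_\toy \times_{D_0} D_\toy$ this defines an internal functor $F \colon A \to D^\iso$. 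That $F$ satisfies the source, target, identity, and composition axioms I would check by post-composing each equation with the monomorphism $i_1$ (using that $i$ is itself an internal functor with $i_0 = \id$) and reducing to the corresponding axiom for $G$. Finally $Fi = G$ since they agree in degrees $0$ and $1$. For the image on $2$-cells, a $2$-cell $\gamma \colon G \to G'$ between arrow-wise isos lies in the image of $i_*$ iff its defining morphism $A_0 \to D_1$ factors through $i$, which by Lemma~\ref{lem:catc-core-nat-trans} is precisely the condition that $\gamma$ be invertible; the lift is again an internal natural transformation into $D^\iso$ by the same mono-cancellation argument. This identifies the image as the arrow-wise isos together with the invertible $2$-cells between them, and groupoidality of $D^\iso$ then follows as in the definition.

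The genuinely substantive step is the arrow-wise-iso characterization in the second paragraph — in particular, finding the right test object to reduce the abstract quantification over all $2$-cells $A' \tocell A$ to the concrete factorization condition; the discrete internal category $\Delta(A_1)$ with its tautological $2$-cell does this in one stroke, and the remainder is the routine mono-reflection bookkeeping of the internal-functor and naturality equations. The only point requiring care is that these reductions rely on $i$ being levelwise monic and on $(D^\iso)_2$ being the pullback $D_\toy \times_{D_0} D_\toy$, both of which come directly from the construction of $D^\iso$ via the limit-preserving anafunctor $D_{(-)}$.
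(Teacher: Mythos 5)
Your proof is correct and takes essentially the same approach as the paper's: the same key characterization of arrow-wise isos as internal functors whose degree-$1$ component factors through $D_\toy$, proved with the same test object (the discrete internal category on $A_1$ with its tautological $2$-cell from dom to cod) and the same appeal to Lemma~\ref{lem:catc-core-nat-trans}. The only difference is that you spell out the mono-cancellation bookkeeping for the universal property, which the paper leaves to the reader as ``easy to see.''
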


\begin{proof}
  We claim that an internal functor $F \colon C \to D$ in $\cC$ is an arrow-wise isomorphism if and only if $F_1 \colon C_1 \to D_1$ factors through $D_{\toy}$.
  Indeed, if $F_1$ factors through $D_\toy$, then for any internal category $A$ and internal natural transformation $\alpha \colon A_0 \to C_1$, the composite $\alpha F_1$ will factor through $D_\toy$ and hence be a natural isomorphism by Lemma~\ref{lem:catc-core-nat-trans}; and conversely, if $\alpha F_1$ factors through $D_\toy$ for each internal natural transformation $\alpha \colon A \tocell C$, then we can find $A$ with $A_0 = C_1$ such that $\alpha = \id_{C_1} \colon C_1 \to C_1$ is a natural transformation.
  For example, we can take $A$ to be the ``discrete'' internal category on $C_1$, with $A_0 = A_1 = A_2 = C_1$ and all structural morphisms equal to $\id_{C_1}$ (or, once we know that $\Cat(\cC)$ has arrow objects, we can take $A = C^\to$ and $\alpha = \partial$ the universal 2-cell).

  From this it is easy to see that $D^\iso$ satisfies the universal property of the core with respect to morphisms; and the universal property with respect to 2-cells follows from Lemma~\ref{lem:catc-core-nat-trans}.
\end{proof}

Next, we consider the Axioms~\ref{item:ax-N}~and~\ref{item:ax-Q}, and the related Axioms~\ref{item:ax-EX}~and~\ref{item:ax-BO}.

First, let us prove that they are indeed related:
\begin{lem}\label{lem:exact-implies-groupoidant}
  If $\CC$ is corepita and satisfies \ref{item:ax-EX} and \ref{item:ax-BO}, then it also satisfies \ref{item:ax-N} and \ref{item:ax-Q}.
\end{lem}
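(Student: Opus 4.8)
The plan is to translate between the notions of this paper and those of \cite{bourke-garner-2-reg-ex} by means of the dictionary recorded in \S\ref{subsec:nerves}: a nerve $\Nv(X)$ is exactly the $\cF_\bo$-kernel of the core inclusion $i \colon X^\iso \to X$, and a (complete) gbo-congruence is a groupoidal $\cF_\bo$-congruence (satisfying the Rezk condition). Under this dictionary, Axiom~\ref{item:ax-EX} supplies two facts. From $\cF_\bo$-regularity: $\cF_\bo$-quotients of $\cF_\bo$-congruences exist, and every $\cF_\bo$-strong epi is the $\cF_\bo$-quotient of its own $\cF_\bo$-kernel. From $\cF_\bo$-exactness proper: every $\cF_\bo$-congruence is effective, i.e.\ is the $\cF_\bo$-kernel of its $\cF_\bo$-quotient. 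I would begin by checking that, for a gbo-congruence, the $\cF_\bo$-quotient cocone of \cite{bourke-garner-2-reg-ex} coincides with the strict cocone of \S\ref{subsec:nerves}; since the defining weight is a pie weight and all the limits in a corepita $\CC$ are strict, the $\cF_\bo$-quotient is in fact a \emph{strict} quotient in the sense used here.

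For Axiom~\ref{item:ax-Q}, I would fix $X \in \CC$; since $\CC$ is corepita it has a strict core $X^\iso$, so by the first clause of Axiom~\ref{item:ax-BO} the inclusion $i \colon X^\iso \to X$ is an $\cF_\bo$-strong epi. By $\cF_\bo$-regularity, $i$ is then the $\cF_\bo$-quotient of its $\cF_\bo$-kernel, which is precisely $\Nv(X)$. The resulting quotient cocone has apex $X$, leg $\gamma = i$, and comparison 2-cell $\gamma_{01} = i\partial$, which is exactly the tautological cocone under $\Nv(X)$. Hence, invoking the preliminary identification of quotients, the tautological cocone is a strict quotient.

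For Axiom~\ref{item:ax-N}, I would let $C$ be a complete congruence and form its $\cF_\bo$-quotient $q \colon C_0 \to Q$, an $\cF_\bo$-strong epi; by effectiveness, $C$ is the $\cF_\bo$-kernel $\ker_\bo(q)$. As $C_0$ is groupoidal and $Q$ has a core, $q$ factors as $C_0 \tox{\bar q} Q^\iso \tox{i} Q$, and the second clause of Axiom~\ref{item:ax-BO} shows $\bar q$ is again an $\cF_\bo$-strong epi, hence bijective on objects. The heart of the argument is that completeness of $C$ forces $\bar q$ to be fully faithful. Concretely, for the effective congruence $C = \ker_\bo(q)$ the object of isomorphisms $C_{\cong}$ is canonically the core $Q^\iso$ of the quotient, and under this identification the canonical map $\bar e \colon C_0 \to C_{\cong}$ is precisely $\bar q$; completeness says $\bar e$ is an equivalence (Corollary~\ref{cor:complete-iff-ref-is-iso-ob} and Proposition~\ref{propn:rezk-equiv-crit}), so $\bar q$ is an equivalence. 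A bijective-on-objects, fully faithful morphism is an equivalence, so $q = \bar q \cdot i$ is equivalent over $Q$ to the core inclusion $i \colon Q^\iso \to Q$; passing to $\cF_\bo$-kernels then gives $C \cong \ker_\bo(i) = \Nv(Q)$, so $C$ is a nerve.

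The main obstacle is the step identifying, for an effective congruence $C = \ker_\bo(q)$, the object of isomorphisms $C_{\cong}$ with the core $Q^\iso$ and the comparison $\bar e$ with $\bar q$: this is exactly where the abstract completeness condition is converted into the concrete statement that $C_0$ already contains all isomorphisms of $Q$ between objects in the image of $q$, which is what separates a general effective congruence from a nerve (an effective but incomplete congruence --- e.g.\ the $\cF_\bo$-kernel of a bijective-on-objects map out of a discrete object --- need not be a nerve). A secondary, bookkeeping-level obstacle is reconciling the strict quotients and cotensors used throughout with the a priori bicategorical colimits of \cite{bourke-garner-2-reg-ex}; I would resolve this using the pie-weight remark of \S\ref{subsec:nerves} together with the strictness of all limits in a corepita 2-category.
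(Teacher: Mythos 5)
Your skeleton coincides with the paper's proof: the same dictionary with \cite{bourke-garner-2-reg-ex}, the same argument for Axiom~\ref{item:ax-Q} (first clause of \ref{item:ax-BO} plus effectiveness of $\cF_\bo$-strong epis), and for Axiom~\ref{item:ax-N} the same reduction to showing that the factorization $\bar q \colon C_0 \to Q^\iso$ of the quotient $q$ is both bijective on objects (second clause of \ref{item:ax-BO}) and fully faithful. The gap is precisely at the step you call the heart of the argument. The claimed canonical identification of the object of isomorphisms $C_{\cong}$ with the core $Q^\iso$, under which $\bar e \colon C_0 \to C_{\cong}$ becomes $\bar q$, is false for effective gbo-congruences. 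Counterexample in $\Cat$: let $M$ be a monoid whose group of units $M^\times$ is nontrivial, let $Q$ be the one-object category on $M$, let $C_0 = [0]$, and let $q \colon C_0 \to Q$ be the unique (bijective-on-objects) functor; since $\Cat$ is $\cF_\bo$-exact, $q$ is the quotient of its kernel $C$, so $C$ is an effective gbo-congruence. Here $C_1 = q \downarrow q$ is the \emph{discrete} category on $M$ (the only morphisms available in $C_0 \times C_0$ are identities), so $C_{\cong}$ is the discrete category on $M^\times$, whereas $Q^\iso$ is the one-object groupoid on $M^\times$; these are not even equivalent. The identification you want can therefore not be established before completeness enters, and once completeness enters it essentially \emph{is} the conclusion (completeness gives $C_{\cong} \simeq C_0$, and $C_0 \cong Q^\iso$ via $\bar q$ is exactly what is to be proven), so the step as written is circular.

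What is true, and what the paper's proof uses instead, is the following. By definition of the $\cF_\bo$-kernel, the square with legs $c\pi_0, c\pi_1 \colon C_{\cong} \to C_0$, both followed by $q$, and invertible 2-cell $c\alpha$, exhibits $C_{\cong}$ as the \emph{pseudo-pullback} (iso-comma object) of $q$ along itself --- not as $Q^\iso$ --- and by the universal property of the core the corresponding square with $\bar q$ in place of $q$ is again a pseudo-pullback. Completeness gives, via Proposition~\ref{propn:rezk-equiv-crit}, that $\bar e$ is an equivalence, hence that both projections $c\pi_0, c\pi_1$ are equivalences (each admits $\bar e$ as a section); a pseudo-pullback of $\bar q$ along itself whose projections are equivalences says exactly that $\bar q$ is pseudo-monic (Remark~\ref{rmk:pseudo-monic}), and since $Q^\iso$ is groupoidal, pseudo-monic implies fully faithful. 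Note also a secondary point at your last step: bijective-on-objects together with fully faithful yields an \emph{isomorphism}, not merely an equivalence, and this is needed --- Axiom~\ref{item:ax-N} asserts that $C$ \emph{is} a nerve, i.e.\ that $q$ is a core inclusion, whereas passing to kernels along a mere equivalence would only show $C$ equivalent to a nerve.
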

In the following two proofs, we assume familiarity with the definitions and notation from \cite{bourke-garner-2-reg-ex}.

\begin{proof}
  Assume \ref{item:ax-EX} and \ref{item:ax-BO}.

  The nerve of $X \in \CC$ is the same thing as the $\cF_{\bo}$-kernel of $i \colon X^\iso \to X$ in the sense of \cite[\S5.1]{bourke-garner-2-reg-ex}, and the quotient in our sense of a given congruence is the same as the $\cF_{\bo}$-quotient.
  The gbo-congruences in our sense are precisely the $\cF_{\bo}$-congruences which are level-wise groupoids, by \cite[Proposition~22]{bourke-garner-2-reg-ex}.

  Now, given any $X$, since $i \colon X^\iso \to X$ is by assumption an $\cF_{\bo}$-strong epi map, it follows from \cite[Propositions~3~and~22 and Corollary~20]{bourke-garner-2-reg-ex} that $i$ is effective, which means that the tautological cocone under $\Nv(X)$ is a quotient, so we have \ref{item:ax-Q}.

  As for \ref{item:ax-N}, we have, again by \cite[Proposition~22]{bourke-garner-2-reg-ex}, that any gbo-congruence $C$ is the kernel of its quotient $q \colon C_0 \to X$, so we only need to see that if $C$ is complete, then $C_0$ is a core of $X$ with inclusion morphism $q$.

  But now $q$ factors as $C_0 \tox{\bar q} X^\iso \tox{i} X$, and $q$ is an $\cF_{\bo}$-strong epi by \cite[Corollary~20]{bourke-garner-2-reg-ex}), hence $\bar{q}$ is as well by the second part of Axiom~\ref{item:ax-BO}.
  We now claim that if $C$ is complete, then $\bar{q}$ is also $\cF_{\bo}$-monic (that is to say, fully faithful), and hence an isomorphism, and hence that $q$ is a core inclusion, as desired.

  The completeness of $C$ says, by Proposition~\ref{propn:rezk-equiv-crit}, that the morphism $\bar{e} \colon C_0 \to C_{\cong}$ is an equivalence, where $c \colon C_{\cong} \to C_1$ is the object of isomorphisms, and hence both the projections $c \pi_0,c \pi_1 \colon C_{\cong} \to C_0$ (for which $\bar{e}$ is a section) are equivalences as well.
  Now, by definition of $C$ being an $\cF_{\bo}$-kernel of $q$, we have a lax pullback square as shown below the left.
  It follows from this and the definition of $C_{\cong}$ that the square in the middle is a pseudo-pullback, and hence that the resulting square on the right is also a pseudo-pullback.
  \[
    \begin{tikzcd}
      C_1 {
        \ar[r, "\pi_0"]
        \ar[d, "\pi_1"']
      }&
      C_0 {
        \ar[d, "q"]
        \ar[dl, Rightarrow, "\alpha"']
      }\\
      C_0\ar[r, "q"']&
      X
    \end{tikzcd}
    \qquad\qquad
    \begin{tikzcd}
      C_{\cong} {
        \ar[r, "c \pi_0"]
        \ar[d, "c \pi_1"']
      }&
      C_0 {
        \ar[d, "q"]
        \ar[dl, Rightarrow, "\sim"' sloped, "c \alpha"']
      }\\
      C_0\ar[r, "q"']&
      X
    \end{tikzcd}
    \qquad\qquad
    \begin{tikzcd}
      C_{\cong} {
        \ar[r, "c \pi_0"]
        \ar[d, "c \pi_1"']
      }&
      C_0 {
        \ar[d, "\bar{q}"]
        \ar[dl, Rightarrow, "\sim"' sloped]
      }\\
      C_0\ar[r, "\bar{q}"']&
      X^\iso
    \end{tikzcd}
  \]
  Since $c\pi_0,c\pi_1$ are equivalences, this implies that $\bar{q}$ is \emph{pseudo-monic} (see Remark~\ref{rmk:pseudo-monic}) and hence, since $X^\iso$ is a groupoid, that $\bar{q}$ is fully faithful, as claimed.
\end{proof}

Next, let us show \ref{item:ax-BO}.
\begin{propn}\label{propn:i-is-bo}
  If $\cC$ is a finitely complete category, then $i \colon C^\iso \to C$ is an $\cF_{\bo}$-strong epi for every $C \in \Cat(\cC)$, and for any two morphisms $A \tox{f} B \tox{g} C$ in $\Cat(\cC)$, if $g$ and $fg$ are both $\cF_{\bo}$-strong epis, then so is $f$.
\end{propn}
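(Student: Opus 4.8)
The plan is to reduce both assertions to a single characterization: an internal functor $F \colon A \to B$ in $\Cat(\cC)$ is an $\cF_{\bo}$-strong epi if and only if its object component $F_0 \colon A_0 \to B_0$ is an isomorphism in $\cC$. Since the $\cF_{\bo}$-monics are precisely the fully faithful internal functors, the $\cF_{\bo}$-strong epis are the maps left-orthogonal to these, so the task is to identify this left class. First I would produce the (bijective-on-objects, fully-faithful) factorization of an arbitrary $F$: using that $\cC$ is finitely complete, form the ``full image'' $\wt{A}$ with $\wt{A}_0 = A_0$ and $\wt{A}_1 \defeq (A_0 \times A_0) \times_{B_0 \times B_0} B_1$ (the pullback of $\br{\pi_0,\pi_1} \colon B_1 \to B_0 \times B_0$ along $F_0 \times F_0$), with composition, units, source and target inherited from $B$ via $F_0$. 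Then $A \to \wt{A}$ is the identity on objects, hence bijective-on-objects, and $\wt{A} \to B$ is fully faithful by construction. A routine orthogonality argument then shows that $F$ is left-orthogonal to all fully faithful maps exactly when the fully faithful factor is invertible; since that factor has object component $F_0$, this happens exactly when $F_0$ is invertible, giving the desired characterization.

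For the first assertion I would simply note that the core is built with $(C^\iso)_0 = C_0$, and that $i \colon C^\iso \to C$ is the image under $C_{(-)}$ of the defining internal functor in $\abs{\FinCat}^\op$, whose $[0]$-component is the identity. Hence $i_0 = \id_{C_0}$ is an isomorphism, and the characterization gives at once that $i$ is an $\cF_{\bo}$-strong epi. Note that $i$ is typically not full, its morphism component being the inclusion $C_\toy \to C_1$ of the isomorphisms; this is harmless, as the characterization constrains only $F_0$.

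For the cancellation property, suppose $A \tox{f} B \tox{g} C$ with $g$ and $fg$ both $\cF_{\bo}$-strong epis. By the characterization, $g_0$ and $(fg)_0 = f_0 g_0$ are both isomorphisms, whence $f_0 = (f_0 g_0) g_0\I$ is an isomorphism and $f$ is an $\cF_{\bo}$-strong epi.

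The hard part will be the characterization itself, not the two deductions. The orthogonal-factorization-system axioms only guarantee that the left class is closed under composition and satisfies the ``forward'' cancellation (if $f$ and $fg$ are strong epis then $g$ is); the ``backward'' cancellation asserted in the second clause does \emph{not} hold for a general strong-epi class. What makes it true here is precisely that the class is cut out by an isomorphism condition on $F_0$, for which both cancellations are trivial. Thus the real work is establishing --- via the finite-limit construction above, together with the Bourke--Garner framework \cite{bourke-garner-2-reg-ex} --- that the enhanced $\cF_{\bo}$-strong epis in $\Cat(\cC)$ are exactly the internally bijective-on-objects functors; once this is in hand, both clauses of the proposition are immediate.
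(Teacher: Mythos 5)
Your proposal follows essentially the same route as the paper: both rest on the single characterization that a morphism $F$ in $\Cat(\cC)$ is an $\cF_{\bo}$-strong epi if and only if $F_0$ is an isomorphism, from which the statement about $i$ (since $i_0 = \id_{C_0}$ by the construction of the core) and the two-sided cancellation are immediate. The only difference is that the paper simply cites this characterization from Bourke's thesis (\cite[Proposition~2.61]{bourke-thesis}), whereas you propose to prove it via the (bijective-on-objects, fully-faithful) factorization and a ``routine orthogonality argument''; be aware that this step is less routine than your sketch suggests, since in the Bourke--Garner framework the $\cF_{\bo}$-strong epis are defined by an \emph{enhanced} ($\cF$-categorical) lifting property rather than plain 1-categorical orthogonality against fully faithful maps, so the 2-dimensional aspect of the orthogonality must also be handled -- which is exactly why citing the result is the cleaner move.
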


\begin{proof}
  It is clear from the description of $C^\iso$ (just before Proposition~\ref{propn:catc-core-exists}) that $i_0 \colon (C^\iso)_0 \to C_0$ is an isomorphism.
  Both claims now follow from the fact that a morphism $F \colon A \to B$ in $\Cat(\cC)$ is an $\cF_{\bo}$-strong epi if and only if $F_0 \colon A_0 \to B_0$ is an isomorphism (\cite[Proposition~2.61]{bourke-thesis}).
\end{proof}

We summarize what we have so far:
\begin{propn}\label{propn:internal-cat-axioms}
  If $\cC$ is a finitely complete category, then $\Cat(\cC)$ satisfies Axioms \ref{item:ax-L}, \ref{item:ax-C}, \ref{item:ax-N}, \ref{item:ax-Q}, \ref{item:ax-EX}, and \ref{item:ax-BO} from \S\ref{sec:axioms}.

  If $\cC$ is moreover cartesian closed, then $\Cat(\cC)$ also satisfies Axiom~\ref{item:ax-CC}, i.e., is a cartesian closed 2-category.
\end{propn}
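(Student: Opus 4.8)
The plan is to assemble the six axioms from the results already established in this section, together with two further facts, and to treat the cartesian-closed clause separately. First I would dispose of Axioms~\ref{item:ax-L} and~\ref{item:ax-C}. For~\ref{item:ax-L}, since $\cC$ is finitely complete, $\Cat(\cC)$ has all finite strict weighted 2-limits by \cite[(4.4)]{street-cosmoi-of-internal-cats}; in particular it has a strict terminal object, strict arrow objects, and strict pullbacks of isofibrations, so it is pita. For~\ref{item:ax-C}, Proposition~\ref{propn:catc-core-exists} shows that every $D \in \Cat(\cC)$ has a core $i \colon D^\iso \to D$. Together these say that $\Cat(\cC)$ is corepita, which is exactly the standing hypothesis needed to invoke Lemma~\ref{lem:exact-implies-groupoidant}.

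Next I would verify~\ref{item:ax-BO} and~\ref{item:ax-EX} and then deduce~\ref{item:ax-N} and~\ref{item:ax-Q} from them. Axiom~\ref{item:ax-BO} is precisely Proposition~\ref{propn:i-is-bo}, which in fact gives the stronger property (anticipated in \S\ref{sec:axioms}) that $f$ is an $\cF_\bo$-strong epi whenever $g$ and $fg$ are. For~\ref{item:ax-EX}, I would cite that $\Cat(\cC)$ is $\cF_\bo$-exact in the sense of \cite[\S5.1]{bourke-garner-2-reg-ex} for any finitely complete $\cC$; the work here is matching their notions of $\cF_\bo$-congruence, $\cF_\bo$-kernel and $\cF_\bo$-quotient to the ones used in \S\ref{sec:congruences}, a correspondence already set up in the proof of Lemma~\ref{lem:exact-implies-groupoidant}. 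Granting~\ref{item:ax-EX} and~\ref{item:ax-BO}, Axioms~\ref{item:ax-N} and~\ref{item:ax-Q} then follow immediately from Lemma~\ref{lem:exact-implies-groupoidant}, since $\Cat(\cC)$ is corepita.

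Finally, for the ``moreover'' clause I would establish~\ref{item:ax-CC} directly. Assuming $\cC$ is cartesian closed as well as finitely complete, I would construct, for $C, D \in \Cat(\cC)$, an internal functor category $[C,D]$ whose object-of-objects and object-of-arrows are cut out of exponentials such as $D_0^{C_0}$, $D_1^{C_1}$, $D_1^{C_0}$ in $\cC$ by finite-limit (equalizer) conditions encoding functoriality and the naturality of internal natural transformations. I would then verify that $[C,D]$ represents the hom in the two-dimensional sense, namely that there is an isomorphism of categories $\Cat(\cC)(B \times C, D) \cong \Cat(\cC)(B, [C,D])$, 2-natural in $B$.

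The obstacle is, I expect, twofold. The citation for~\ref{item:ax-EX} is only as clean as the translation between the present framework and that of \cite{bourke-garner-2-reg-ex}, so some care is needed to confirm that their $\cF_\bo$-exactness really specializes here to ``every complete congruence is a nerve'' together with ``tautological cocones are quotients''—though, as noted, much of this bookkeeping is already absorbed into Lemma~\ref{lem:exact-implies-groupoidant}. The genuinely new content is~\ref{item:ax-CC}: the one-dimensional universal property of $[C,D]$ is classical internal category theory, but checking the two-dimensional part—that the displayed bijection is an isomorphism of hom-\emph{categories}, i.e.\ that it is correct on internal natural transformations and respects their composition—is where the enriched content resides and is the step requiring the most care.
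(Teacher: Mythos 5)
Your proposal is correct and follows the paper's decomposition almost exactly: Axiom~\ref{item:ax-L} via \cite[(4.4)]{street-cosmoi-of-internal-cats}, Axiom~\ref{item:ax-C} via Proposition~\ref{propn:catc-core-exists}, Axiom~\ref{item:ax-BO} via Proposition~\ref{propn:i-is-bo}, Axiom~\ref{item:ax-EX} by citation to \cite{bourke-garner-2-reg-ex} (the paper points specifically at their Proposition~60), and then Axioms~\ref{item:ax-N} and~\ref{item:ax-Q} deduced from Lemma~\ref{lem:exact-implies-groupoidant}, exactly as you arrange it.

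The one place you diverge is Axiom~\ref{item:ax-CC}. The paper disposes of it with a single citation to \cite[(4.5)]{street-cosmoi-of-internal-cats}, whereas you propose to rebuild the internal hom $[C,D]$ by hand, cutting its object-of-objects and object-of-arrows out of exponentials like $D_0^{C_0}$, $D_1^{C_1}$, $D_1^{C_0}$ by finite-limit conditions, and then verifying the isomorphism of hom-categories $\Cat(\cC)(B \times C, D) \cong \Cat(\cC)(B,[C,D])$, 2-naturally in $B$. That construction is the standard one underlying Street's result, so your route works; you are essentially reproving the cited statement. What the citation buys is brevity and the avoidance of exactly the verification you flag as delicate (the two-dimensional part of the universal property, i.e.\ correctness on internal natural transformations and their compositions); what your direct approach buys is self-containedness. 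You also correctly identify that the translation between the congruence/nerve/quotient language of \S\ref{sec:congruences} and the $\cF_\bo$-machinery of \cite{bourke-garner-2-reg-ex} is the real bookkeeping behind Axiom~\ref{item:ax-EX}, and that this is already absorbed into Lemma~\ref{lem:exact-implies-groupoidant}, which is precisely how the paper handles it.
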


\begin{proof}
  Let $\cC$ be a finitely complete category and let $\CC = \Cat(\cC)$.

  It is well known that $\CC$ has all strict finite limits (and in particular all pita limits); for example, it is asserted in \cite[(4.4)]{street-cosmoi-of-internal-cats}.
  In fact, pullbacks (and the terminal object) in $\CC$ are constructed object-wise, and arrow objects can be constructed using Lemma~\ref{lem:quasi-cotensors} by a similar construction to the one in Proposition~\ref{propn:catc-core-exists}.

  That $\CC$ is cartesian closed when $\cC$ is, is proven in \cite[(4.5)]{street-cosmoi-of-internal-cats}.

  Proposition~\ref{propn:catc-core-exists} shows that $\CC$ has cores.

  It is proven in \cite[Proposition~60]{bourke-garner-2-reg-ex} that $\CC$ is $\cF_\bo$-exact (i.e., satisfies \ref{item:ax-EX}); we have that $\CC$ satisfies \ref{item:ax-BO} by Proposition~\ref{propn:i-is-bo}; and hence by Lemma~\ref{lem:exact-implies-groupoidant} that $\CC$ also satisfies \ref{item:ax-N}~and~\ref{item:ax-Q}.
\end{proof}

\subsection{Colimits and cocores}
For the verification of the remaining axioms, we restrict our attention to the 2-category $\Cat(\Set^{\cC})$ of internal categories in the category of set-valued functors $\Set^{\cC}$.
This is strictly equivalent to the 2-category $\Cat^\cC$ of strict category-valued functors $\cC \to \Cat$, strict natural transformations, and modifications.

\begin{propn}\label{propn:presheaf-bicomplete}
  The 2-category \( \Cat^{\cC} \) has all small bilimits and bicolimits, and in fact all strict limits and colimits, and has cocores.
\end{propn}

\begin{proof}
  Strict limits and colimits are identical with limits and colimits in the sense of enriched category theory.
  Thus, the first claim follows from the general result \cite[Proposition~3.75]{kelly-basic-concepts}.

  The cocores in \( \Cat^{\cC} \) are given pointwise; that is, for \( X \in \Cat^{\cC} \), define \( \ol X \) as the composite of \( X \) with the 2-functor \( \Cat \to \Cat \) taking each category to its cocore.
  The canonical morphisms \( X a \to \ol X a \) for \( a \in \cC \) assemble into a natural transformation \( X \to \ol X \), and this makes \( \ol X \) into a cocore of \( X \).
\end{proof}

\begin{rmk}\label{rmk:probably-bc}
  By Proposition~\ref{propn:internal-cat-axioms}, \( \Cat(\cC) \) has \emph{finite} limits under much weaker assumptions on \( \cC \) than the assumption in Proposition~\ref{propn:presheaf-bicomplete} that \( \cC \) is a presheaf category.

  I was informed by the anonymous referee that it has recently been proven that \( \Cat(\cC) \) also has finite \emph{colimits} under much weaker assumptions (for instance, that \( \cC \) is an elementary topos with natural numbers objects) \cite{hughes-miranda-colimits}, and that part of this had already been proven in \cite[Corollary~6.1]{johnstone-wraith-alg-theories-in-toposes}.
\end{rmk}

\subsection{DOF classifiers}\label{subsec:examples-dof-classifiers}
We can obtain DOF classifiers in $\Cat^\cC$ using \cite[Example~4.7]{weber-2-toposes}.
Here, we consider the 2-functor $\Sp \colon \Cat \to \Cat^\cC$ which is a right 2-adjoint to the ``domain-category-of-the-Grothendieck-construction'' 2-functor $\el \colon \Cat^\cC \to \Cat$.
Explicitly, for $\cD \in \Cat$, $\Sp(\cD) \colon \cC \to \Cat_{\cU_2}$ is the functor $\Fun((-)/\cC,\cD)$ taking $c \in \cC$ to the category of functors $\Fun(c/\cC,\cD)$.

In \loccit, it is shown that for any generic DOF $p \colon E \to B$ in $\Cat$, the morphism $\Sp(p) \colon \Sp(E) \to \Sp(B)$ is again a generic DOF (in $\Cat^{\cC}$).

We will now show that $\Sp(p)$ is plentiful for appropriate $p$.
We begin with several lemmas concerning DOFs and generic DOFs in $\Cat^\cC$.

\begin{lem}\label{lem:dofs-in-fun-c-cat}
  Given a category $\cC$ and a morphism $f \colon X \to Y$ in $\CC = \Cat^\cC$, the following are equivalent:
  \begin{enumerate}[(i)]
  \item\label{item:dofs-in-fun-c-cat-dof} $f$ is a DOF in $\Cat^\cC$.
  \item\label{item:dofs-in-fun-c-cat-ptwise} The functor $f_A \colon X(A) \to Y(A)$ is a DOF for each $A \in \cC$.
  \item\label{item:dofs-in-fun-c-cat-el} The functor $\el(f) \colon \el(X) \to \el(Y)$ is a DOF.
  \end{enumerate}
\end{lem}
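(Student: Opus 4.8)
The plan is to establish the two equivalences \ref{item:dofs-in-fun-c-cat-dof}$\Leftrightarrow$\ref{item:dofs-in-fun-c-cat-ptwise} and \ref{item:dofs-in-fun-c-cat-ptwise}$\Leftrightarrow$\ref{item:dofs-in-fun-c-cat-el} separately. The first rests on the fact that all strict limits in $\CC = \Cat^\cC$ are computed pointwise over $\cC$ (as used in showing that $\Cat(\cC)$ has pita limits), so that $X^\to$ is the functor $A \mapsto X(A)^\to$, the morphism $f^\to$ has components $(f_A)^\to$, and the arrow-object square of Remark~\ref{rmk:dofs-internally} for $f$ is the pointwise assembly of the corresponding squares for the $f_A$. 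By Remark~\ref{rmk:dofs-internally}, $f$ is a DOF in $\Cat^\cC$ if and only if this square is a strict pullback; a square in $\Cat^\cC$ is a strict pullback exactly when each of its components at $A \in \cC$ is one, and that component is precisely the arrow-object square of $f_A$, so a second application of Remark~\ref{rmk:dofs-internally}, now in $\Cat$, identifies this with each $f_A$ being a DOF. (The implication \ref{item:dofs-in-fun-c-cat-dof}$\Rightarrow$\ref{item:dofs-in-fun-c-cat-ptwise} alone also follows at once from noting that $\ev_A \colon \Cat^\cC \to \Cat$ preserves the pointwise-computed pita limits, hence is a pita $2$-functor and so preserves DOFs by Remark~\ref{rmk:dofs-internally}.)

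For \ref{item:dofs-in-fun-c-cat-ptwise}$\Leftrightarrow$\ref{item:dofs-in-fun-c-cat-el} I would argue directly from the description of the category of elements, exactly as in the discussion of $\El(F)$ in the introduction. An object of $\el(X)$ is a pair $(A,x)$ with $A \in \cC$ and $x \in X(A)$, a morphism $(A,x) \to (B,y)$ is a pair $(u,\phi)$ with $u \colon A \to B$ in $\cC$ and $\phi \colon X(u)(x) \to y$ in $X(B)$, and $\el(f)$ acts by $(A,x) \mapsto (A, f_A x)$ and $(u,\phi) \mapsto (u, f_B \phi)$. A morphism of $\el(Y)$ out of $\el(f)(A,x) = (A, f_A x)$ has the form $(u,\psi)$ with $\psi \colon Y(u)(f_A x) \to z$; by naturality of $f$ we have $Y(u)(f_A x) = f_B(X(u)x)$, so writing $w = X(u)x$ this is a morphism $\psi \colon f_B(w) \to z$ in $X(B)$'s image. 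Since $\el(f)$ is the identity on $\cC$-components, any lift of $(u,\psi)$ is forced to have $\cC$-component $u$, so the existence and uniqueness of a lift reduces exactly to the existence and uniqueness, in the fibre $X(B)$, of a morphism $\phi \colon w \to y$ with $f_B(\phi) = \psi$ (and then automatically $f_B(y) = z$). Thus $\el(f)$ is a DOF if and only if for every $B$, every $w \in X(B)$ of the form $X(u)x$, and every $\psi \colon f_B(w) \to z$ there is a unique such $\phi$; and since every $w \in X(B)$ is of this form (take $A = B$, $u = \id_B$, $x = w$), this is precisely the assertion that each $f_B$ is a DOF.

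The only real care is needed in this second equivalence, and that is where I expect the main obstacle to lie: one must check that the reduction to the fibre is genuinely \emph{forced} (because the $\cC$-component of any lift is determined by $\el(f)$ being the identity on $\cC$-components), and that the quantifier over $w \in X(B)$ implicit in the DOF condition for $\el(f)$ really ranges over all of $X(B)$, so that one recovers the full pointwise DOF condition and nothing weaker. Neither equivalence uses any generic or classifier structure, so the lemma holds for an arbitrary $f$; it is exactly this freedom that will let us transfer the genericity and plentifulness of $\Sp(p)$ to and from the corresponding statements in $\Cat$.
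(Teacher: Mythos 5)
Your proof is correct, and it reaches the same two equivalences as the paper by genuinely different means. For (i)$\Leftrightarrow$(ii) the paper argues representably: the forward direction is the 2-categorical Yoneda lemma (from $\CC(\wh{A},X) \cong X(A)$, naturally in $X$, a DOF $f$ yields DOFs $f_A$), and the converse is an explicit lifting argument, building the lifted natural transformation $\tilde{g}$ and modification $\tilde{\alpha}$ componentwise and using uniqueness of lifts to verify naturality. You instead apply Remark~\ref{rmk:dofs-internally} twice, together with the fact that strict pullbacks and arrow objects in $\Cat^\cC$ are computed pointwise; this is more compact and conceptual. One small caveat: Remark~\ref{rmk:dofs-internally} applied in $\Cat$ characterizes $f_A$ being a DOF \emph{as a morphism of the 2-category} $\Cat$, whereas statement (ii) concerns the 1-categorical notion; closing that gap either uses the standard identification of the two (whose nontrivial direction is exactly the componentwise-lifting argument the paper writes out, one level down), or, more cheaply, the observation from \S\ref{subsec:limits} that since the cospan has a strict pullback it suffices to check the pullback property in $\abs{\Cat}$, where the comparison functor $X(A)^\to \to X(A) \times_{Y(A)} Y(A)^\to$ being an isomorphism is literally the unique-lifting condition. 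This is standard either way, so it is a presentational point, not a gap. For (ii)$\Leftrightarrow$(iii) the paper simply cites, without proof, a general fibrewise criterion for a morphism of opfibrations over $\cC$ to be a DOF; your direct computation with the explicit description of $\el$ --- in particular the observations that the $\cC$-component of any lift is forced and that every $w \in X(B)$ arises as $X(u)x$ (take $u = \id_B$), so that the fibrewise condition quantifies over all of $X(B)$ --- amounts to a proof of precisely the instance of that general fact needed here, making your write-up more self-contained than the paper's on this point.
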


\begin{proof}
  We first prove \ref{item:dofs-in-fun-c-cat-dof}~$\To$~\ref{item:dofs-in-fun-c-cat-ptwise}.
  For $A \in \cC$, consider the representable functor $\wh{A} = \cC(A,-) \in \CC$ (note that this functor $\wh{A} \colon \cC \to \Cat$ takes values in discrete categories).
  By the 2-categorical Yoneda lemma, we have $\CC(\wh{A},X) \cong X(A)$ (isomorphism of categories) for any $X$.

  Hence, if $p \colon X \to Y$ is a DOF, then by definition, so is $p_* \colon \CC(\wh{A},X) \to \CC(\wh{A},Y)$, and hence (by the naturality of the Yoneda isomorphism) also $p_A \colon X(A) \to Y(A)$.

  Conversely, suppose that each $p_A$ is a DOF.
  We want to show that $p_* \colon \CC(U,X) \to \CC(U,Y)$ is a DOF for each $U \in \CC$.
  Given morphisms $f,g \colon U \to Y$, a 2-cell $\alpha \colon f \to g$, and a lift $\tilde{f} \colon U \to X$ of $f$, we obtain a unique lift $\tilde{\alpha}_A \colon \tilde{f}_A \to g_A$ of $\alpha_A \colon f_A \to g_A$.
  \[
    \begin{tikzcd}
      &[20pt]X\ar[d]\\
      U{
        \ar[r, bend left=20pt, "f"{name=f}]
        \ar[r, bend right=20pt, "g"'{name=g}]
        \ar[ru, bend left, "\tilde{f}"]
      }
      &Y
      \ar[from=f, to=g, Rightarrow, shorten=2pt, "\alpha"]
    \end{tikzcd}
    \qquad
    \begin{tikzcd}
      &[30pt]X(A)\ar[d]\\
      U(A){
        \ar[r, bend left=10pt, "f_A"{name=f}]
        \ar[r, bend right=15pt, "g_A"'{name=g}]
        \ar[ru, bend left, "\tilde{f}_A"{name=tf}]
        \ar[ru, bend left=5pt, ""{name=tg},"\tilde{g}_A"'{pos=0.6}, dashed]
      }
      &Y(A)
      \ar[from=f, to=g, Rightarrow, shorten=2pt, "\alpha_A"]
      \ar[from=tf, to=tg, Rightarrow, shorten <=2pt, shorten >=-4pt, "\tilde{\alpha}_A"{pos=0.9}, dashed]
    \end{tikzcd}
  \]
  One verifies that the $\tilde{g}_A$ constitute a natural transformation $\tilde{g} \colon U \to X$ and the $\tilde{\alpha}_A$ a modification $\tilde{\alpha} \colon \tilde{f} \to \tilde{g}$, and the uniqueness of the $\alpha_A$ implies that of $\tilde{\alpha}$.

  The equivalence of \ref{item:dofs-in-fun-c-cat-ptwise} and \ref{item:dofs-in-fun-c-cat-el} is a special case of the following general fact: given a morphism of opfibrations
  \begin{equation}\label{eq:dofs-in-fun-c-cat-opfib-mor}
    \begin{tikzcd}[column sep=0pt]
      \cE\ar[dr, "E"']\ar[rr, "F"]&&\cE'\ar[dl, "E'"]\\
      &\cC
    \end{tikzcd}
  \end{equation}
  (i.e., the triangle commutes, $E$ and $E'$ are opfibrations and $F$ preserves cocartesian morphisms), $F$ is a DOF if and only if the fibrewise restrictions $\rstr{F}{E\I(C)} \colon E\I(C) \to (E')\I(C)$ of $F$ are DOFs for all $C \in \cC$.
\end{proof}

\begin{lem}\label{lem:generic-dofs-in-cat}
  Given a DOF $p \colon \s_* \to \s$ in $\Cat$:
  \begin{enumerate}[(i)]
  \item\label{item:generic-dofs-in-cat-gen} $p$ is generic if and only if, for all $s,s' \in \s$, the operation $f \mapsto f_*$ gives a bijection from $\s(s,s')$ to the set of functions $p\I(s) \to p\I(s')$.
  \item\label{item:generic-dofs-in-cat-small} If $p$ is generic (and assuming the axiom of choice), a DOF $f \colon X \to Y$ in $\Cat$ is $p$-small if and only if each fibre of $f$ is isomorphic to some fibre of $p$.
  \end{enumerate}
\end{lem}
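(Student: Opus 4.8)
The plan is to pass everything through the standard correspondence between discrete opfibrations over a category and set-valued functors on it. To the DOF $p \colon \s_* \to \s$ I associate the functor $F = F_p \colon \s \to \Set$ sending each $s$ to its fibre $p\I(s)$ and each $f \colon s \to s'$ to the function $f_* \colon p\I(s) \to p\I(s')$ determined by the unique opcartesian lifts; this $f \mapsto f_*$ is precisely the operation in the statement. For each $A \in \Cat$ the same construction gives an equivalence $\DOF(A) \simeq \Fun(A,\Set)$ (the category of elements), and under it the pullback anafunctor $\Gr^A \colon \Cat(A,\s) \to \DOF(A)$ from \S\ref{subsec:sof-reformulation} is identified, up to isomorphism of anafunctors, with post-composition by $F$, i.e.\ $\Fun(A,F)$: pulling $p$ back along $g \colon A \to \s$ yields the DOF over $A$ whose classifying functor sends $a$ to $F(g(a))$. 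I would establish this identification first, since all three implications rest on it.

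For part (i), I would invoke the reformulation of genericity (the concluding corollary of \S\ref{subsec:sof-reformulation}): $p$ is generic iff $\Gr^A$ is fully faithful for every $A$. Taking $A = \tm$, where $\Cat(\tm,\s) \cong \s$ and $\DOF(\tm) \simeq \Set$, full faithfulness of $\Gr^\tm$ is verbatim the assertion that $f \mapsto f_*$ is a bijection $\s(s,s') \to \{\,p\I(s) \to p\I(s')\,\}$; this yields the ``only if'' direction and, at the same time, shows the stated condition is exactly full faithfulness of $F$. For ``if'', I would use the routine fact that post-composition by a fully faithful functor is fully faithful, so $F$ fully faithful makes $\Gr^A \cong \Fun(A,F)$ fully faithful for every $A$, whence $p$ is generic.

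For part (ii), the ``only if'' direction is immediate: if $f$ is a bipullback of $p$ along some $g \colon Y \to \s$ then $f\I(y) \cong p\I(g(y))$, so every fibre of $f$ is isomorphic to a fibre of $p$. For the converse, write $G = F_f \colon Y \to \Set$; the hypothesis says each $G(y)$ lies in the essential image $\mathcal{E} \subseteq \Set$ of $F$. Since $F$ is fully faithful by (i) together with genericity, $F \colon \s \to \mathcal{E}$ is an equivalence, so by the axiom of choice I may fix a pseudo-inverse $K \colon \mathcal{E} \to \s$ and let $g \colon Y \to \s$ be the functor sending $y$ to $K(G(y))$. Then $y \mapsto F(g(y))$ is naturally isomorphic to $G$, which means $g^*p \cong f$ as DOFs over $Y$; thus $f$ is a bipullback of $p$, and being a DOF it is therefore $p$-small by the remark following Definition~\ref{defn:generic-dof}.

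The step I expect to be the real work is the honest verification, in the first paragraph, that the category-of-elements equivalence $\DOF(A) \simeq \Fun(A,\Set)$ is natural in $A$ and carries pullback of DOFs to composition of classifying functors, so that $\Gr^A$ is genuinely identified with $\Fun(A,F)$; this is the bookkeeping on which both (i) and (ii) depend. Given that identification, (i) is formal and (ii) reduces to the elementary observation that a functor landing objectwise in the essential image of a fully faithful functor lifts through it up to isomorphism — which is exactly the point at which the axiom of choice is needed, as flagged in the statement.
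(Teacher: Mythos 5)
Your proposal is correct, but it takes a genuinely different route from the paper on the substantive direction of (i). The paper's proof of the ``only if'' direction is the same as yours (instantiate genericity at $A = [0]$ with the two functors picking out $s$ and $s'$), but for ``if'' the paper works directly with Definition~\ref{defn:generic-dof}: given the two pullback squares, $g$, and $\ddot{\gamma}$, it constructs the required 2-cell $\gamma$ pointwise --- $\gamma_a$ is the unique morphism of $\s$ whose action on fibres sends $\hat{s} \in p\I(f(a))$ to the codomain of $\ddot{\gamma}_{\hat{a}}$ --- and then verifies naturality of $\gamma$ by hand against the naturality squares of $\ddot{\gamma}$. You instead route everything through the Grothendieck correspondence $\DOF(A) \simeq \Fun(A,\Set)$, identify $\Gr^A$ (up to isomorphism of anafunctors) with post-composition by the fibre functor $F_p$, and then quote the reformulation of genericity from \S\ref{subsec:sof-reformulation} together with the standard fact that post-composition with a fully faithful functor is fully faithful. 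Your approach buys conceptual uniformity --- both (i) and (ii) fall out of the single identification $\Gr^A \cong \Fun(A,F_p)$, and your treatment of (ii) (choose a pseudo-inverse to $F_p$ onto its essential image, which is exactly where choice enters) is in fact more detailed than the paper's, which dismisses (ii) as ``straightforward based on what we just proved.'' What it costs is precisely the bookkeeping you flag in your first paragraph: one must check that the fibre-functor equivalence carries strict pullback of $p$ along $g$ to $g \cdot F_p$ on objects \emph{and} that it carries the morphism $\Gr^A(\gamma)$ characterized in Lemma~\ref{lem:gr-is-a-function} to the whiskering $\gamma F_p$; unwinding that second point is essentially the same computation the paper performs directly, so the two arguments have comparable total content, with yours packaging it more reusably and the paper's staying self-contained.
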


\begin{proof}
  For the $(\To)$ direction of \ref{item:generic-dofs-in-cat-gen}, assuming $p$ is generic, for any $s,s'$, consider the functors $f,f' \colon [0] \to \s$ from the trivial category $[0]$ taking the unique object of $[0]$ to $s$ and $s'$ respectively, and form the pullbacks $f^*p,(f')^*p$.
  Our desired conclusion about $p$ with respect to $s$ and $s'$ then amounts precisely to the genericity property of $p$ with respect to these two pullback squares.

  We next consider the $(\oT)$ direction of \ref{item:generic-dofs-in-cat-gen}.

  Assume the hypothesis on $p$.
  Referring to the notation from Definition~\ref{defn:generic-dof}, given pullback squares as in (\ref{eq:pb-squares}), a morphism $g \colon F \to H$ and a 2-cell $\ddot{\gamma} \colon \ddot{f} \to g \ddot{h}$, we need to show that there is a unique 2-cell $\gamma$ as in (\ref{eq:generic-diags}) with $\ddot{\gamma} p = \dot{f} \gamma$.

  Given $a \in A$, we are forced to define $\gamma_a \colon f(a) \to h(a)$ to be the unique morphism such that $(\gamma_a)_* \colon p\I\pbig{f(a)} \to p\I\pbig{h(a)}$ is the function taking $\hat{s} \in p\I\pbig{f(a)}$ to the codomain of $\ddot{\gamma}_{\hat{a}}$, where $\hat{a} \in F$ is the unique object with $\ddot{f}(\hat{a}) = \hat{s}$ and $\dot{f}(\hat{a}) = a$.
  It remains to see that $\gamma_a$, thus defined, is natural.

  Fix $u \colon a \to a'$ in $A$.
  To see that
  \begin{equation}\label{eq:generic-dofs-in-cat-squre}
    \begin{tikzcd}
      f(a)\ar[r, "f(u)"]\ar[d, "\gamma_a"']&f(a')\ar[d, "\gamma_{a'}"]\\
      h(a)\ar[r, "h(u)"]&h(a')
    \end{tikzcd}
  \end{equation}
  commutes, it suffices to show that $\pbig{f(u) \cdot \gamma_{a'}}_* = \pbig{\gamma_a \cdot h(u)}_* \colon p\I\pbig{f(a)} \to p\I\pbig{h(a')}$.
  This follows from the commutativity, for each $\hat{s} \in p\I\pbig{f(a)}$ of the naturality square
  \[
    \begin{tikzcd}
      \hat{s}\ar[r, equals]&[-25pt]
      \ddot{f}(\hat{a}) \ar[r, "\ddot{f}(\hat{u})"] \ar[d, "\ddot{\gamma}_{\hat{a}}"']&
      \ddot{f}(\hat{a'}) \ar[d, "\ddot{\gamma}_{\hat{a}'}"]\\
      &\ddot{h}\pbig{g(\hat{a})} \ar[r, "\ddot{h}\pbig{g(\hat{u})}"]&
      \ddot{h}\pbig{g(\hat{a'})}
    \end{tikzcd}
  \]
  for $\ddot{\gamma}$ at $\hat{u} \colon \hat{a} \to \hat{a'}$, which lies over (\ref{eq:generic-dofs-in-cat-squre}).
  Here, $\hat{a} \in F$ is the unique object with $\ddot{f}(\hat{a}) = \hat{s}$ and $\dot{f}(\hat{a}) = a$, and $\hat{u}$ is the unique lift of $u$ with domain $\hat{a}$.

  As for \ref{item:generic-dofs-in-cat-small}, one implication is obvious, and the other implication is straightforward based on what we just proved.
\end{proof}

\begin{lem}\label{lem:el-preserves-reflects-pbs}
  The 2-functor $\el \colon \Cat^\cC \to \Cat$ preserves and reflects strict pullback squares.
\end{lem}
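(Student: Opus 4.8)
The plan is to exploit the fact, recorded in the discussion of Axiom~\ref{item:ax-L} for $\Cat(\cC)$, that strict pullbacks in $\Cat^\cC \cong \Cat(\Set^\cC)$ are computed \emph{object-wise} over $\cC$: a square is a strict pullback if and only if each of its components at $c \in \cC$ is a strict pullback in $\Cat$. So it suffices to relate $\el$ to these object-wise pullbacks. Recall that $\el(X)$ has objects the pairs $(c,x)$ with $c \in \cC$ and $x \in X(c)$, and that a morphism $(c,x) \to (c',x')$ is a pair $(u,\phi)$ with $u \colon c \to c'$ in $\cC$ and $\phi \colon X(u)(x) \to x'$ in $X(c')$.

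First I would establish \emph{preservation}. Let $A$ be the object-wise pullback of a cospan $B \tox{f} D \xot{g} C$ in $\Cat^\cC$, so that $A(c) = B(c) \times_{D(c)} C(c)$ for each $c$. Unwinding the description above, an object of $\el(A)$ is a triple $(c,b,x)$ with $b \in B(c)$, $x \in C(c)$, and $f_c(b) = g_c(x)$ in $D(c)$; by the explicit form of pullbacks in $\Cat$ this is exactly the data of an object of $\el(B) \times_{\el(D)} \el(C)$. A morphism of $\el(A)$ consists of $u \colon c \to c'$ together with a morphism of $A(c') = B(c') \times_{D(c')} C(c')$, i.e.\ a pair $(\phi_B,\phi_C)$ with $f_{c'}(\phi_B) = g_{c'}(\phi_C)$; using naturality of $f$ and $g$ to identify $f_{c'}(B(u)(b))$ with $D(u)(f_c(b))$, this matches precisely a pair of morphisms of $\el(B)$ and $\el(C)$ lying over a common morphism of $\el(D)$. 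Hence the canonical comparison $\el(A) \to \el(B) \times_{\el(D)} \el(C)$ is an isomorphism, so $\el$ sends object-wise — and therefore all strict — pullback squares to strict pullback squares.

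Next I would show that $\el$ \emph{reflects isomorphisms}, which together with preservation yields reflection of pullbacks. Suppose $\phi \colon A \to P$ in $\Cat^\cC$ has $\el(\phi)$ an isomorphism of categories. Since $\el(\phi)(c,a) = (c,\phi_c(a))$, bijectivity of $\el(\phi)$ on objects forces each $\phi_c$ to be bijective on objects. For full faithfulness, observe that a morphism of $\el(A)$ with $u = \id_c$ is exactly a morphism of $A(c)$, and $\el(\phi)$ acts on it by $\phi_c$; as $\el(\phi)$ is fully faithful, each $\phi_c$ is fully faithful. Thus every $\phi_c$ is an isomorphism of categories, so $\phi$ is an isomorphism in $\Cat^\cC$.

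Finally I would assemble the reflection statement. Given a commuting square in $\Cat^\cC$ whose image under $\el$ is a strict pullback, form the object-wise pullback $P$ of the same cospan together with the induced comparison $\phi \colon A \to P$. By the preservation step, $\el$ carries $P$ to $\el(B) \times_{\el(D)} \el(C)$ and $\el(\phi)$ to the comparison out of $\el(A)$; the hypothesis makes the latter an isomorphism, whence $\phi$ is an isomorphism by the reflect-isomorphisms step, and so the original square is a strict pullback. The only mildly delicate point is the bookkeeping in the preservation step — correctly matching the compatibility condition $f_{c'}(\phi_B) = g_{c'}(\phi_C)$ across the naturality squares of $f$ and $g$ — but this is routine once the explicit descriptions of $\el$ and of pullbacks in $\Cat$ are laid side by side.
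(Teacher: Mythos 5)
Your proof is correct, but it takes a genuinely different route from the paper's. The paper factors $\el$ as $\Cat^\cC \tox{\Gr} \OFs(\cC) \tox{\dom} \Cat$, where $\OFs(\cC)$ is the 2-category of split opfibrations over $\cC$ and the Grothendieck construction $\Gr$ is a strict equivalence (hence preserves and reflects all strict limits); it then argues that the domain 2-functor $\OFs(\cC) \to \Cat$ preserves and reflects strict pullbacks because a pullback in $\OFs(\cC)$ may be formed by pulling back the underlying categories in $\Cat$ and canonically equipping the result with a split opfibration structure, mirroring the proof of Proposition~\ref{propn:gr-preserves-reflects-limits}. You instead work directly in $\Cat^\cC$: you use that strict pullbacks there are computed pointwise, verify by an explicit element-level computation that $\el$ of a pointwise pullback is the pullback of the categories of elements, and then upgrade preservation to reflection via the observation that $\el$ reflects isomorphisms of categories. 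Your route is more elementary and self-contained (no detour through $\OFs(\cC)$ or appeal to the equivalence $\Gr$), and the reflects-isomorphisms step is a clean, reusable device for deducing reflection from preservation whenever the relevant limits exist in the source; the paper's route is shorter on the page and isolates the conceptual reason the lemma holds -- pullbacks of split opfibrations are created at the level of total categories -- which applies uniformly to other finite limits as well. The bookkeeping you flag (matching $f_{c'}(\phi_B) = g_{c'}(\phi_C)$ across the naturality squares) is indeed the only delicate point, and you handle it correctly.
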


\begin{proof}
  $\el$ factors as $\Cat^\cC \tox{\Gr} \OFs(\cC) \tox{\dom} \Cat$, where $\OFs$ is the 2-category of split opfibrations over $\cC$, and the Grothendieck-construction 2-functor $\Gr$ is a strict equivalence and hence preserves and reflects all finite limits.

  In more detail, an object $(\cE,p)$ of $\OFs$ is a functor $\cE \tox{P} \cC$ together with a choice, for each $f \colon c \to c'$ in $\cC$ and $e \in \cE$ with $P(e) = c$, of a cocartesian lift $\tilde{f}_e$ of $f$ with $\dom(\tilde{f}_e)=e$, satisfying $(\wt{\id_c})_e = \id_e$ and $(\wt{fg})_e = \tilde{f}_e \cdot (\tilde{g})_{\cod(\tilde{f}_e)}$.
  A morphism $F \colon (\cE,P) \to (\cE',P')$ is a functor $F \colon \cE \to \cE'$ preserving the specified cocartesian lifts, and a 2-cell $\alpha \colon (\cE,P) \tocellud{F}{G} (\cE',P')$ is a natural transformation $\alpha \colon F \to G$ with $\alpha P' = \id_{P}$.

  It remains to see that $\OFs(\cC) \tox{\dom} \Cat$ preserves and reflects strict pullback squares.
  The argument is similar to that given in the proof of Proposition~\ref{propn:gr-preserves-reflects-limits}: one shows that to form a strict pullback in $\OFs(\cC)$, we may first form the pullback of the domains in $\Cat$, and there is then a canonical way to supply the result with the structure of a split opfibration, making the resulting square a strict pullback in $\OFs(\cC)$.
\end{proof}

\begin{lem}\label{lem:p-small-in-fun-c-cat}
  Given a category $\cC$, a generic DOF $p \colon E \to B$ in $\Cat$ and a DOF $f \colon X \to Y$ in $\Cat^\cC$, the following are equivalent:
  \begin{enumerate}[(i)]
  \item\label{item:p-small-in-fun-c-cat-f} $f$ is $\Sp(p)$-small.
  \item\label{item:p-small-in-fun-c-cat-f-a} The DOF $f_A$ in $\Cat$ is $p$-small for all $A \in \cC$.
  \item\label{item:p-small-in-fun-c-cat-el-f} The DOF $\el(f)$ in $\Cat$ is $p$-small.
  \end{enumerate}
\end{lem}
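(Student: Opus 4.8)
The plan is to prove the cycle (i)$\To$(ii)$\ToT$(iii)$\To$(i). Throughout, note that by Lemma~\ref{lem:dofs-in-fun-c-cat} all of $f$, the $f_A$, and $\el(f)$ are DOFs, so each of the three smallness conditions is meaningful; moreover, since each of these morphisms is a DOF, the fibrewise criterion of Lemma~\ref{lem:generic-dofs-in-cat}(ii) (which invokes choice) applies, and for a DOF it suffices to test $p$-smallness by a bipullback.

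First I would dispatch (ii)$\ToT$(iii) by a direct comparison of fibres. The objects of $\el(Y)$ are the pairs $(A,y)$ with $A \in \cC$ and $y \in Y(A)$, and the fibre of the DOF $\el(f)$ over such an object is exactly the fibre $f_A\I(y)$ of $f_A$. Thus the collection of isomorphism classes of fibres of $\el(f)$ is the union over $A \in \cC$ of the classes of fibres of the $f_A$. By Lemma~\ref{lem:generic-dofs-in-cat}(ii), $\el(f)$ is $p$-small if and only if each of its fibres is isomorphic to a fibre of $p$, which holds if and only if each $f_A$ is $p$-small; this is precisely (ii)$\ToT$(iii).

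For (i)$\To$(ii): being $\Sp(p)$-small, $f$ fits in a strict pullback square over $\Sp(p)$, and since evaluation $\Cat^\cC \to \Cat$ at $A$ preserves strict pullbacks, $f_A$ is a strict pullback of $\Sp(p)_A = \Fun(A/\cC, p)$. The fibre of $\Sp(p)_A$ over $\beta \colon A/\cC \to B$ is the set of lifts of $\beta$ through the DOF $p$; because $A/\cC$ has the initial object $\id_A$, such a lift is determined uniquely by a lift of $\beta(\id_A)$, so this fibre is canonically isomorphic to $p\I(\beta(\id_A))$. Hence every fibre of $\Sp(p)_A$ is a fibre of $p$, so $\Sp(p)_A$ is $p$-small by Lemma~\ref{lem:generic-dofs-in-cat}(ii), and $f_A$, being a pullback of it, is $p$-small by Proposition~\ref{propn:more-closure-props}(iii).

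The hard part is (iii)$\To$(i), where I would use the adjunction $\el \dashv \Sp$. Assume $\el(f)$ is $p$-small and let $\bar g \colon \el(Y) \to B$ classify it; let $g \colon Y \to \Sp(B)$ be its transpose and $f' \colon X' \to Y$ the strict pullback of $\Sp(p)$ along $g$, so $f'$ is $\Sp(p)$-small. Applying $\el$ (which preserves strict pullbacks, Lemma~\ref{lem:el-preserves-reflects-pbs}) and pasting with the counit square of $\el \dashv \Sp$ at $p$ exhibits $\el(f')$ as a strict pullback of $p$ along $\bar g$, \emph{provided} that counit square is itself a strict pullback. This is the crux: it again reduces to $A/\cC$ having an initial object together with $p$ being a DOF, the fibre of $\el\Sp(p)$ over $(A,\beta)$ being $p\I(\beta(\id_A))$, which matches the fibre of $p$ over $\varepsilon_B(A,\beta) = \beta(\id_A)$. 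Granting this, $\el(f)$ and $\el(f')$ are both pullbacks of $p$ along $\bar g$ and hence isomorphic over $\el(Y)$ by the genericity of $p$. Finally I would descend this isomorphism along $\el$: the functor $\el \colon \DOF(Y) \to \DOF(\el Y)$ is faithful (a morphism over $Y$ in $\Cat^\cC$ is determined by its components) and in fact full, since a functor over $\el(Y)$ between the $\el$-images of DOFs over $Y$ automatically preserves cocartesian lifts—because DOFs reflect isomorphisms—so it lies in the image of the Grothendieck equivalence $\Cat^\cC \simeq \OFs(\cC)$ of Lemma~\ref{lem:el-preserves-reflects-pbs}. A fully faithful functor reflects isomorphisms, so $f \cong f'$ in $\DOF(Y)$, whence $f$ is a bipullback of $\Sp(p)$ and therefore $\Sp(p)$-small. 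The main obstacle is thus the twofold use of the coslice computation—once to identify the fibres of $\Sp(p)_A$, once to see the counit square is a pullback—together with the full-faithfulness of $\el$ on the DOF-slices needed to transport the final isomorphism back to $\Cat^\cC$.
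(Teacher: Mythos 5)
Your overall architecture is sound and uses the same ingredients as the paper: the adjunction $\el \dashv \Sp$, the fact that the counit square at $p$ is a pullback (which the paper simply cites from Weber's Example~4.7, and which your coslice computation correctly re-derives), Lemma~\ref{lem:el-preserves-reflects-pbs}, and the fibre criterion of Lemma~\ref{lem:generic-dofs-in-cat}~\ref{item:generic-dofs-in-cat-small}. Your (ii)$\ToT$(iii) is exactly the paper's argument, and your pointwise proof of (i)$\To$(ii), identifying the fibres of $\Sp(p)_A$ with fibres of $p$ via the initial object of $A/\cC$, is a correct alternative. Where you diverge is the passage from (iii) back to (i): the paper proves (i)$\ToT$(iii) in one stroke, by noting that under the adjunction a commutative square from $\el(f)$ to $p$ and its transpose from $f$ to $\Sp(p)$ determine each other, and that the former is the pasting of $\el(\text{transposed square})$ with the counit pullback square; since $\el$ both preserves \emph{and reflects} strict pullbacks, one square is a pullback if and only if the other is. This avoids introducing the comparison object $f'$ altogether, and with it the need to descend an isomorphism $\el(f) \cong \el(f')$ back along $\el$.

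That descent step is where your proof has a genuine gap. You need $\el \colon \DOF(Y) \to \DOF(\el(Y))$ to be full, and your justification --- that a functor $h$ over $\el(Y)$ between $\el$-images of DOFs ``automatically preserves cocartesian lifts because DOFs reflect isomorphisms'' --- only shows that $h$ takes cocartesian morphisms to cocartesian morphisms. To lie in the image of the \emph{split} Grothendieck correspondence $\Cat^\cC \simeq \OFs(\cC)$ invoked in Lemma~\ref{lem:el-preserves-reflects-pbs}, $h$ must preserve the \emph{specified} lifts strictly, which is stronger: a cocartesian morphism over $\phi$ is the chosen lift only up to a vertical isomorphism. The gap is fillable: if $h$ sends the chosen lift of $\phi$ at $(A,w)$ to $(\phi,\beta)$ with $\beta$ invertible, then since $h$ lies over $\el(Y)$ and $\el(q)$ sends chosen lifts to chosen lifts, one gets $q'_{A'}(\beta) = \id$; as $q'_{A'}$ is a DOF, the unique lift of an identity with given domain is an identity, so $\beta = \id$, giving strict preservation and hence fullness (faithfulness is as you say). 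Two minor corrections: the isomorphism between two strict pullbacks of the same cospan follows from the universal property of pullbacks, not from the genericity of $p$; and in your counit-square argument, what makes the fibre computation sufficient is the general fact that a functor over a common base between two DOFs which is bijective on each fibre is an isomorphism --- worth stating, since ``matching fibres'' alone does not make a square a pullback.
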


\begin{proof}
  We first prove \ref{item:p-small-in-fun-c-cat-f}~$\ToT$~\ref{item:p-small-in-fun-c-cat-el-f}.
  By the adjunction $\el \dashv \Sp$, commutative squares
  \begin{equation}\label{eq:p-small-in-fun-c-cat-square-1}
    \begin{tikzcd}
      \el(X)\ar[r, ""]\ar[d, "\el(f)"']\pb&E\ar[d, "p"]\\
      \el(Y)\ar[r, ""]&B
    \end{tikzcd}
\end{equation}
  in $\Cat$ are in bijection with commutative squares
  \begin{equation}\label{eq:p-small-in-fun-c-cat-square-2}
    \begin{tikzcd}
      X\ar[r, ""]\ar[d, "f"']&\Sp(E)\ar[d, "\Sp(p)"]\\
      Y\ar[r, ""]&\Sp(B)
    \end{tikzcd}
\end{equation}
  in $\Cat^{\cC}$.

  Now, it is proven in \cite[Example~4.7]{weber-2-toposes} that the naturality square with respect to $p$
  \[
    \begin{tikzcd}
      \el\pbig{\Sp(E)}\ar[r, ""]\ar[d, "\el\pbig{\Sp(p)}"']&E\ar[d, "p"]\\
      \el\pbig{\Sp(B)}\ar[r, ""]&B
    \end{tikzcd}
  \]
  for the counit of the adjunction $\el \dashv \Sp$ is a pullback square.

  By Lemma~\ref{lem:el-preserves-reflects-pbs}, it follows (using the 2-of-3 property of pullback squares) that (\ref{eq:p-small-in-fun-c-cat-square-1}) is a strict pullback if and only if (\ref{eq:p-small-in-fun-c-cat-square-2}) is, which proves the claim.

  The equivalence \ref{item:p-small-in-fun-c-cat-f-a}~$\ToT$~\ref{item:p-small-in-fun-c-cat-el-f} is a special case of the more general claim that given a morphism of opfibrations $F$ as in (\ref{eq:dofs-in-fun-c-cat-opfib-mor}) such that $F$ is a DOF, $F$ is $p$-small if and only if $\rstr{F}{E\I(C)} \colon E\I(C) \to (E')\I(C)$ is $p$-small for all $C \in \cC$.
  This, in turn, is immediate from Lemma~\ref{lem:p-small-in-fun-c-cat}~\ref{item:generic-dofs-in-cat-small}.
\end{proof}

\label{universes}
Before proceeding, let us clarify our conventions concerning universes, about which we will have to be careful in what follows.
A \emph{Grothendieck universe} (or just \emph{universe}) is a non-empty set $\cU$ which is (i) transitive (i.e., $a \in b \in \cU$  implies $a \in \cU$) and which is closed under (ii) power sets, and (iii) unions indexed by elements of $\cU$ \cite[Exposé~1]{sga-4-1}\cite[\S1.1]{jonshon-yau-2-categories}.

For a universe $\cU$, we write $\Set_\cU$ for the category of sets that are elements of $\cU$ (and $\Set_{\cU*}$ for the category of pointed sets) and $\Cat_\cU$ for the 2-category of categories that are elements of $\cU$.

\begin{propn}\label{propn:sp-set-is-plentiful}
  Let $\cU' \in \cU$ be two universes, and let $p=p_{\cU'} \colon \Set_{\cU'*} \to \Set_{\cU'}$ be the forgetful functor (this is a generic DOF in $\Cat_{\cU}$ by Lemma~\ref{lem:generic-dofs-in-cat}).

  Then the generic DOF $\Sp(p_\cU) \colon \Sp(\Set_{\cU'*}) \to \Sp(\Set_{\cU'})$ in $\Cat_{\cU}^\cC$ is plentiful.
\end{propn}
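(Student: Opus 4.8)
The plan is to verify the three conditions (D1)--(D3) in the definition of plentiful for $\s \defeq \Sp(\Set_{\cU'})$, the genericity of $\Sp(p)$ being already supplied by \cite[Example~4.7]{weber-2-toposes} together with Lemma~\ref{lem:generic-dofs-in-cat}. The guiding idea is to transport each condition to the corresponding statement about the forgetful DOF $p = p_{\cU'}$ in $\Cat_\cU$ by means of the pointwise characterizations in Lemmas~\ref{lem:dofs-in-fun-c-cat} and~\ref{lem:p-small-in-fun-c-cat}, and then to settle the $\Cat$-level statements using the closure properties of the universe $\cU'$ via the fibre description in Lemma~\ref{lem:generic-dofs-in-cat}~\ref{item:generic-dofs-in-cat-small}. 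Since (D1) and (D2) do not refer to $\Mon(\s)$, I would prove them first; this makes $\s$ pre-plentiful, hence $\s$ has finite limits (Corollary~\ref{cor:s-has-limits}) and $\Mon(\s)$ exists (Proposition~\ref{propn:mon-exists}), so that (D3) can then be stated and proved.

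First I would record that $p_{\cU'}$ is itself pre-plentiful in $\Cat_\cU$. By Lemma~\ref{lem:generic-dofs-in-cat}~\ref{item:generic-dofs-in-cat-small}, a DOF in $\Cat$ is $p_{\cU'}$-small exactly when each of its (discrete) fibres is $\cU'$-small, since the fibres of $p_{\cU'}$ are precisely the sets in $\cU'$. For (D1): a monic DOF has subsingleton fibres, and $\emptyset,\{*\}\in\cU'$. For (D2): the fibre of a composite $fg$ over $z$ is $\bigcup_{y\in g\I(z)}f\I(y)$, a $\cU'$-indexed union of $\cU'$-small sets, hence $\cU'$-small by the union axiom for universes. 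To lift these to $\s$: monomorphisms and DOFs in $\Cat^\cC$ are both detected pointwise (the former since monos in $\abs{\Cat^\cC}$ are computed in the functor $1$-category, the latter by Lemma~\ref{lem:dofs-in-fun-c-cat}), so a monic DOF in $\Cat^\cC$ is pointwise a monic DOF, and a composite of $\Sp(p)$-small DOFs is pointwise $(fg)_A = f_A g_A$, a composite of $p_{\cU'}$-small DOFs. In each case pointwise $p_{\cU'}$-smallness gives $\Sp(p)$-smallness by Lemma~\ref{lem:p-small-in-fun-c-cat}, yielding (D1) and (D2).

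The substance is (D3): the $\Sp(p)$-smallness of the monomorphism SOF $\partial_1^\iso\colon\Mon(\s)^\iso\to\s^\iso$. As this is a SOF rather than a DOF, I would first pass to its DOF-collapse, which by the equivalence established in \S\ref{subsec:sofs} (a SOF is $\s$-small iff it admits an $\s$-small DOF-collapse) reduces (D3) to the $\Sp(p)$-smallness of a DOF. To check the latter I would again reduce pointwise via Lemma~\ref{lem:p-small-in-fun-c-cat}: using that cores and monomorphism objects are computed pointwise in $\Cat^\cC$ along each evaluation $\ev_A$ (so that $\s^\iso(A)=\s(A)^\iso$ and similarly for $\Mon$ and the DOF-collapse), the $A$-component is, up to equivalence, the DOF-collapse of the monomorphism SOF of the presheaf category $\s(A)=\Fun(A/\cC,\Set_{\cU'})$. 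Its fibre over an object $F$ is the set $\mathrm{Sub}(F)$ of subobjects of $F$, and the whole point is that this is $\cU'$-small: a subobject is a compatible choice of subsets $S_x\subseteq F(x)$, so $\mathrm{Sub}(F)$ embeds into $\prod_{x\in A/\cC}\mathcal{P}(F(x))$, which lies in $\cU'$ by closure of $\cU'$ under power sets and under products indexed by the ($\cU'$-small) category $A/\cC$. Lemma~\ref{lem:generic-dofs-in-cat}~\ref{item:generic-dofs-in-cat-small} then gives pointwise $p_{\cU'}$-smallness, completing (D3).

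I expect the main obstacle to be the bookkeeping in (D3): verifying that $\Mon(\s)^\iso\to\s^\iso$, and its DOF-collapse, are genuinely computed pointwise as the corresponding monomorphism fibrations of the presheaf categories $\Fun(A/\cC,\Set_{\cU'})$ -- that is, that the formation of monomorphism objects commutes with the evaluations $\ev_A$ -- and in pinning down the exact smallness hypothesis on $\cC$ (one needs each $A/\cC$ to be $\cU'$-small, i.e.\ $\cC$ itself $\cU'$-small) that keeps the power-object fibres inside $\cU'$ rather than merely inside $\cU$.
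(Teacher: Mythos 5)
Your strategy is the same as the paper's: show $p_{\cU'}$ is itself plentiful in $\Cat_\cU$ by fibre-counting via Lemma~\ref{lem:generic-dofs-in-cat}~\ref{item:generic-dofs-in-cat-small}, transfer (D1) and (D2) through Lemma~\ref{lem:p-small-in-fun-c-cat} (you use the pointwise criterion~\ref{item:p-small-in-fun-c-cat-f-a}, the paper the $\el$-criterion~\ref{item:p-small-in-fun-c-cat-el-f}; both work, since monos in $\abs{\Cat}^\cC$ are indeed pointwise), and for (D3) reduce the SOF $\partial_1^\iso \colon \Mon(\s)^\iso \to \s^\iso$ to a DOF-collapse whose fibres over $F \colon A/\cC \to \Set_{\cU'}$ are subfunctor sets, bounded inside $\cU'$ by power sets and $\cU'$-indexed products. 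That fibre count is exactly the paper's, and your explicit remark that one needs each $A/\cC$ (hence $\cC$) to be $\cU'$-small is correct and is in fact more careful than the paper, which uses this tacitly.

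The genuine gap is precisely the step you set aside as bookkeeping: the existence, in the \emph{strict} 2-category $\Cat_\cU^\cC$, of a DOF-collapse of $\Mon(\s)^\iso \to \s^\iso$ that is ``computed pointwise''. This does not follow from forming the collapse in each $\Cat$-fibre: a DOF-collapse requires an equivalence in $\Cat_\cU^\cC$, i.e.\ a \emph{strictly natural} functor admitting a strictly natural quasi-inverse, and a family of equivalences indexed by $A \in \cC$, chosen fibrewise, need not assemble into one -- pointwise equivalence of strict natural transformations is strictly weaker than equivalence in $\Cat^\cC$. The paper closes this hole with two ingredients your outline is missing. First, it writes down a canonical strictly natural candidate: the subfunctor fibration $\Sub \subset \Mon(\Sp(\Set_{\cU'}))^\iso$, whose value at $A$ is the category of subfunctor inclusions in $\Fun(A/\cC,\Set_{\cU'})^\to$ and isomorphisms between them; this is strictly natural because precomposition along $f^* \colon B/\cC \to A/\cC$ carries subfunctor inclusions to subfunctor inclusions, and the projection $\Sub \to \s^\iso$ is a pointwise DOF, hence a DOF by Lemma~\ref{lem:dofs-in-fun-c-cat}. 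Second, it upgrades the pointwise equivalences $\Sub(A) \hto \Mon(\s(A))^\iso$ to an equivalence in $\Cat^\cC$ by rigidity: since each $\Sub(A) \to \s^\iso(A)$ is a DOF, quasi-inverses over the base are unique and satisfy strict identities (Proposition~\ref{propn:equiv-of-mors-props}~\ref{item:equiv-of-more-props-dof-good}), so the fibrewise quasi-inverses are forced to be strictly natural. With this construction inserted, your fibre bound finishes the proof exactly as in the paper; without it, the claim that (D3) ``reduces pointwise'' is unsupported.
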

\begin{proof}
  First observe that $p$ itself is plentiful.
  More precisely, referring to the defining properties \ref{item:plentiful-ax-first}-\ref{item:plentiful-ax-last} of a plentiful DOF from \S\ref{sec:axioms}, and appealing to Lemma~\ref{lem:generic-dofs-in-cat}~\ref{item:generic-dofs-in-cat-small}, we have that $p$ satisfies \ref{item:plentiful-ax-monos} as soon as $\cU'$ contains all subsets of some one-element set, $p$ satisfies \ref{item:plentiful-ax-composites} as soon as $\cU'$ is closed under disjoint unions indexed by sets that are elements of $\cU$, and $p$ satisfies \ref{item:plentiful-ax-pow} as soon as $\cU'$ is additionally closed under power sets (this follows upon verifying that, in $\Cat$, the DOF collapse of $\partial_1^\iso \colon \Mon(\Set_\cU)^\iso \to \Set_\cU^\iso$ is given by the forgetful functor $\wt \pow \to (\Set_\cU)^\iso$ from the groupoid $\wt \pow \subset (\Set_\cU^\to)^\iso$ of subset inclusions).

  It follows, using Lemma~\ref{lem:p-small-in-fun-c-cat}, and the fact that (by Lemma~\ref{lem:el-preserves-reflects-pbs}) $\el \colon \Cat^\cC \to \Cat$ preserves monomorphisms, that $\Sp(p)$ is also pre-plentiful, i.e., satisfies \ref{item:plentiful-ax-first}-\ref{item:plentiful-ax-pre-last}.

  To see that $\Sp(p)$ satisfies \ref{item:plentiful-ax-pow}, we first make some observations about the 2-category $\Cat^\cC$, which are straightforward to verify (making use of the representable functors $\wh{A} \colon \cC \to \Cat$ as in the proof of Lemma~\ref{lem:dofs-in-fun-c-cat}):
  \begin{enumerate}[(i)]
  \item The core of an object $F \in \Cat_\cU^\cC$ is the functor $F^\iso$ which is the composite of $F$ with the core functor $(-)^\iso \colon \abs{\Cat_\cU} \to \abs{\Cat_\cU}$.
    (Note: the latter does \emph{not} naturally extend to a 2-functor $\Cat_\cU \to \Cat_\cU$.)
  \item If $F \in \Cat_\cU^\cC$ has finite limits, then $\Mon(F)$ is given by the composite of $F$ with the partial functor $\abs{\Cat_\cU} \dto \abs{\Cat_\cU}$ taking each finite limit category to its category of monomorphisms, and taking each functor to the induced functor on monomorphisms.
  \end{enumerate}
  Now recall that $\Sp(\Set_{\cU'})(A) = \Fun(A/\cC,\Set_{\cU'})$ and $\Sp(\Set_{\cU'})(f)$ for $f \colon A \to B$ is the functor $(f^*)^* \colon \Fun(A/\cC,\Set_{\cU'}) \to \Fun(B/\cC,\Set_{\cU'})$, where $f^* = (-) \cdot f \colon B/\cC \to A/\cC$.

  Now, for each $A \in \cC$, let $\Sub(A)$ be the subcategory of $\Fun(A/\cC,\Set_{\cU'})^\to$ consisting of inclusions of subfunctors and isomorphisms between these.
  It follows from the above observations that this defines a subfunctor $\Sub \colon \cC \to \Cat_\cU$ of $\Mon(\Sp(\Set_{\cU'}))^\iso$, and we have a commutative triangle
  \[
    \begin{tikzcd}[column sep=0pt]
      \Sub\ar[dr, "\partial_2"']\ar[rr, hookrightarrow]&&\Mon(\Sp(\Set_{\cU'}))^\iso\ar[dl, "\partial_2^\iso"]\\
      &\Sp(\Set_{\cU'})^\iso,
    \end{tikzcd}
  \]
  where $\partial_2 \colon \Sub \to \Sp(\Set_{\cU'})^\iso$ a DOF by Lemma~\ref{lem:dofs-in-fun-c-cat}.

  Now, the inclusion $i \colon \Sub \to \Mon(\Sp(\Set_{\cU'}))^\iso$ is such that $i_A$ is an equivalence for each $A$.
  Since $(\partial_2)_A$ is a DOF for each $A$, it follows from Proposition~\ref{propn:equiv-of-mors-props}~\ref{item:equiv-of-more-props-dof-good} that $i$ is itself an equivalence, and hence that $\partial_2$ is a DOF collapse of $\partial_2^\iso$.

  Hence, to check that $\Sp(p)$ satisfies \ref{item:plentiful-ax-pow}, it suffices to check that $\partial_2$ is $p$-small, i.e., that $(\partial_2)_A \colon \Sub(A) \to \Fun(A/\cC,\Set_{\cU'})$ has $\cU'$-small fibres (or rather, fibres isomorphic to $\cU'$-small sets) for each $A \in \cC$.
  But the fibre over a given $F \colon A/\cC \to \Set_{\cU'}$ is the set of subfunctors of $F$, and this is isomorphic to a subset of $\bigsqcup_{B \in \Ob(A/\cC)}\pow(F(B)) \in {\cU'}$.
\end{proof}

\begin{cor}\label{cor:except-UA}
  If $\cU$ is a universe that contains another universe $\cU' \in \cU$, then the 2-category $\Cat_{\cU}$ satisfies all the axioms of \S\ref{sec:axioms}, except possibly Axiom~\ref{item:ax-UA}.
\end{cor}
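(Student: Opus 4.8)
The plan is simply to assemble the results of this section after recognizing $\Cat_\cU$ as an instance of $\Cat(\cC)$. First I would record the identification $\Cat_\cU\cong\Cat(\Set_\cU)$: a category is an element of $\cU$ exactly when its object- and morphism-sets are (one direction uses transitivity of $\cU$, the other its closure under the relevant set-theoretic operations), so that a category in $\cU$ is the same datum as an internal category in $\Set_\cU$; this correspondence takes functors to internal functors and natural transformations to internal natural transformations, hence is a (strict) isomorphism of 2-categories.

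Since $\Set_\cU$ is an elementary topos, and in particular a finitely complete and cartesian closed $1$-category, I would then invoke the summarizing Proposition stated just before \S\ref{subsec:examples-dof-classifiers} (applied with $\cC=\Set_\cU$) to conclude that $\Cat_\cU\cong\Cat(\Set_\cU)$ satisfies Axioms~\ref{item:ax-L},~\ref{item:ax-C},~\ref{item:ax-N},~\ref{item:ax-Q},~\ref{item:ax-EX},~and~\ref{item:ax-BO}, and---using the cartesian closedness of $\Set_\cU$---also Axiom~\ref{item:ax-CC}.

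It then remains only to produce a plentiful generic DOF, i.e.\ to verify Axiom~\ref{item:ax-D}. Here I would use the further identification $\Cat_\cU\cong\Cat_\cU^{\tm}$, where $\tm$ denotes the terminal category (since $\Set^{\tm}=\Set$), and apply Proposition~\ref{propn:sp-set-is-plentiful} with $\cC=\tm$: its hypothesis is precisely that $\cU$ contains a universe $\cU'\in\cU$, which is our assumption, and which moreover guarantees $\Set_{\cU'}\in\Cat_\cU$ so that $p_{\cU'}$ is genuinely a DOF classifier internal to $\Cat_\cU$. For $\cC=\tm$ the comma category $c/\tm$ is again $\tm$, so that $\Sp(\cD)=\Fun(\tm,\cD)\cong\cD$ and $\Sp$ is (isomorphic to) the identity; thus Proposition~\ref{propn:sp-set-is-plentiful} yields that the forgetful functor $p_{\cU'}\colon\Set_{\cU'*}\to\Set_{\cU'}$ is a plentiful generic DOF in $\Cat_\cU$. (In fact the first paragraph of the proof of that proposition already establishes this directly.) Since Axiom~\ref{item:ax-UA} is explicitly excluded, this completes the verification.

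The only points requiring any care---and the nearest thing to an obstacle---are the universe-bookkeeping underlying the two identifications $\Cat_\cU\cong\Cat(\Set_\cU)\cong\Cat_\cU^{\tm}$ and, in particular, confirming that $\cU'\in\cU$ makes $\Set_{\cU'}$ an element of $\cU$ (hence an object of $\Cat_\cU$). All of these are routine consequences of the universe axioms, so no new mathematical input is needed beyond the cited results.
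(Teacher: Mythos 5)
Your proposal is correct and is essentially the paper's own (implicit) argument: the paper derives this corollary by combining the summarizing proposition that $\Cat(\cC)$ satisfies Axioms \ref{item:ax-L}, \ref{item:ax-C}, \ref{item:ax-N}, \ref{item:ax-Q}, \ref{item:ax-EX}, \ref{item:ax-BO} (and \ref{item:ax-CC} when $\cC$ is cartesian closed) with Proposition~\ref{propn:sp-set-is-plentiful}, whose proof begins precisely by observing that $p_{\cU'}\colon\Set_{\cU'*}\to\Set_{\cU'}$ is itself plentiful in $\Cat_\cU$, which is Axiom~\ref{item:ax-D} for $\Cat_\cU$. Your universe-bookkeeping identifications $\Cat_\cU\cong\Cat(\Set_\cU)\cong\Cat_\cU^{\tm}$ and the check that $\Set_{\cU'}\in\Cat_\cU$ are exactly the routine points the paper leaves implicit.
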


We note that the existence of a universe containing another universe is a stronger assumption than the mere existence of universe, though it is certainly implied by the commonly assumed \emph{Axiom of Universes}, which says that every set is contained in some universe.

For the purposes of finding a 2-category satisfying Axiom~\ref{item:ax-UA} from \S\ref{sec:axioms}, it would seem that we need a universe $\cU$ such that each $x \in \cU$ is contained in some further universe $x \in \cU' \in \cU$, the existence of which goes beyond even Axiom of Universes.
However, we can get away with slightly less:

Let us say that a set $\cU$ is a \emph{pre-universe} if it satisfies (i),(ii) in the definition of universe above on p.~\pageref{universes}, and in addition satisfies (iii$'$) $\cU$ is closed under \emph{finite} unions.
It is then clear (say, in ZF), that every set is contained in a pre-universe, and that, assuming the Axiom of Universes, there exists a pre-universe $\cU$ such that every $x \in \cU$ is contained in a universe $x \in \cU' \in \cU$.

For a pre-universe $\cU$, we use the notation $\Set_\cU$ and $\Cat_\cU$ as above.

It is not hard to see that the above results, and in particular Proposition~\ref{propn:sp-set-is-plentiful}, still holds if $\cU$ is only assumed to be a pre-universe.
The one subtlety is that the 2-functor $\el$ with domain $\Cat_{\cU}^\cC$ can no longer be considered to have codomain $\Cat_{\cU}$, and we must instead use $\Cat_{\mathcal{V}}$ for some larger pre-universe $\mathcal{V}$.

We conclude:

\begin{cor}\label{cor:also-UA}
  If $\cU$ is a pre-universe such that any $x \in \cU$ is contained in some universe $x \in \cU' \in \cU$, then the 2-category $\Cat_{\cU}$ satisfies all the axioms of \S\ref{sec:axioms}.
\end{cor}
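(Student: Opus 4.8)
The plan is to reduce the ``groupoidant'' axioms to results already proved for $\Cat(\cC)$, dispatch \ref{item:ax-D} as a special case of Proposition~\ref{propn:sp-set-is-plentiful}, and then spend the real effort on the one genuinely new axiom, \ref{item:ax-UA}, which is handled by a set-theoretic argument exploiting the containment hypothesis on $\cU$.

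First I would observe that $\Cat_\cU$ coincides with $\Cat(\Set_\cU)$: a $\cU$-small category is exactly an internal category in $\Set_\cU$, since by transitivity its object and morphism sets lie in $\cU$. Moreover $\Set_\cU$ is finitely complete, as a pre-universe is closed under power sets and finite unions, hence under finite products (via the singleton/pairing construction $\{a\}\in\pow(\pow(a))$ and a finite union) and equalizers. Therefore the (unlabelled) Proposition stating that $\Cat(\cC)$ satisfies \ref{item:ax-L}, \ref{item:ax-C}, \ref{item:ax-N}, \ref{item:ax-Q} for any finitely complete $\cC$ applies verbatim with $\cC=\Set_\cU$, settling all four groupoidant axioms at once.

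Next, for \ref{item:ax-D} I would note that the hypothesis furnishes at least one universe $\cU'\in\cU$ (apply it to any $x\in\cU$). Specialising Proposition~\ref{propn:sp-set-is-plentiful} to $\cC=\tm$ — where $\Sp$ and $\el$ become the identity, since $\ast/\tm\cong\tm$ — shows that $p_{\cU'}\colon\Set_{\cU'*}\to\Set_{\cU'}$ is a plentiful generic DOF in $\Cat_\cU$. Here I must check two things: that the auxiliary objects $\Set_{\cU'}$, $\Set_{\cU'*}$, $\Mon(\Set_{\cU'})^\iso$, $\wt\pow$ and their structure maps are genuinely elements of $\cU$ (so that they lie in $\Cat_\cU$), which follows from $\cU'\in\cU$ using only closure of $\cU$ under power sets; and that the proof of Proposition~\ref{propn:sp-set-is-plentiful} survives when $\cU$ is merely a pre-universe. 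At the level $\cC=\tm$ the latter is immediate, because the problematic $\cU$-indexed unions in $\el$ never arise, so one may take the auxiliary pre-universe $\mathcal{V}$ to be $\cU$ itself; this is precisely the ``subtlety'' flagged before the statement.

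The heart of the argument is \ref{item:ax-UA}. Given two DOFs $p_i\colon X_i\to Y_i$ in $\Cat_\cU$, I would form the element $x=\{\Ob X_1,\Ob X_2\}\in\cU$ (using closure of $\cU$ under pairing, itself a consequence of power sets and finite unions) and invoke the hypothesis to obtain a universe $\cU'$ with $x\in\cU'\in\cU$. By transitivity of $\cU'$ we get $\Ob X_1,\Ob X_2\in\cU'$, and since $\cU'$ is closed under power sets, every fibre of $p_i$ — being a subset of $\Ob X_i$ — is an element of $\cU'$. Lemma~\ref{lem:generic-dofs-in-cat}\,\ref{item:generic-dofs-in-cat-small} then shows each $p_i$ is $p_{\cU'}$-small, and $p_{\cU'}$ is plentiful by the previous step, giving \ref{item:ax-UA}. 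The main obstacle I anticipate is not any single deep step but the bookkeeping of \emph{which} universe each object inhabits: confirming that all the categorical constructions used to build $\Set_{\cU'}$ and its monomorphism and power objects stay inside $\cU$ under the weaker pre-universe closure properties, and checking that ``$p_{\cU'}$-small in $\Cat_\cU$'' is detected fibrewise exactly as in $\Cat$. Both are routine but demand care.
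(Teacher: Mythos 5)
Your proposal is correct and takes essentially the same approach the paper intends: identify $\Cat_\cU$ with $\Cat(\Set_\cU)$ to settle the groupoidant axioms, specialize Proposition~\ref{propn:sp-set-is-plentiful} at $\cC=\tm$ (where the $\el$-codomain subtlety indeed disappears) to obtain a plentiful generic DOF, and derive Axiom~\ref{item:ax-UA} by choosing, for any two given DOFs, a universe $\cU'\in\cU$ containing their object sets, so that Lemma~\ref{lem:generic-dofs-in-cat} makes both of them $p_{\cU'}$-small. The only point to add is that ``all the axioms'' also includes \ref{item:ax-CC}, \ref{item:ax-EX}, and \ref{item:ax-BO}, which follow from the same unlabelled Proposition you already cite together with the observation that $\Set_\cU$ is cartesian closed for a pre-universe $\cU$ (since $b^a\subseteq\pow(a\times b)$, hence $b^a\in\cU$, for $a,b\in\cU$).
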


Similarly, in Corollary~\ref{cor:except-UA}, it is enough to assume that \( \cU \) is a pre-universe.

\appendix

\section{Finite limit sketches}\label{sec:pbt-skectches}
In this appendix, we prove that, for an object $X$ with finite limits in a pita 2-category, we can form cotensors of $X$ not only with finite categories, but also finite \emph{finite-limit sketches}.
This will be used in Appendix~\ref{sec:groupoid-stuff}, but is also of obvious independent interest, as it says that we can form the category of models in $X$ of any (finitary) essentially algebraic theory.

A \defword{pbt (``pullbacks and terminal objects'') sketch} $J = (\abs{J},P,T)$ is a category $\abs{J}$ together with a set $P$ of commutative squares in $\abs{J}$ and a set $T$ of objects in $\abs{J}$; it is a \defword{finite} pbt sketch if $\abs{J}$ (hence also $P$ and $T$) is finite.
A \defword{model} $J \to \cD$ of $J$ in a category $\cD$ is a functor $\abs{J} \to \cD$ taking each square in $P$ to a pullback square in $\cD$ and each object in $T$ to a terminal object in $\cD$.
(Pbt sketches are closely related to \emph{finite limit sketches}, and are (nearly) the same as the ``flSk sketches'' of \cite{makkai-generalized-sketches-2}.
These notions are all practically equivalent to one another, and there is considerable flexibility in the definition; for example, dropping the requirement that the squares in \( P \) be commutative would again result in an essentially equivalent notion.)

We will also need the following refinement of the notion of model: if $\CC$ is a 2-category and $A,B \in \CC$, then a \defword{stable model} $J \to \CC(A,B)$ is a model for which moreover each specified square or object is taken to a \emph{stable} pullback square or terminal object in $\CC(A,B)$ in the sense of Remark~\ref{rmk:pointwise-limits}.

Given an object $X \in \CC$ and a pbt sketch $J$, a \defword{strict cotensor of $X$ with $J$} is an object $X^{J}$ together with a stable model $\ev=\ev_{J,X} \colon J \to \CC(X^J,X)$ with the following universal property: for each $A \in \CC$, the functor $\CC(A,X^J) \to \Fun(\abs{J},\CC(A,X))$ defined as in (\ref{eq:cotensor-morphism}) on p.~\pageref{eq:cotensor-morphism} is an isomorphism onto the subcategory consisting of stable models $J \to \CC(A,X)$.

In \S\ref{subsec:limits}, we mentioned the abuse of notation that, given a category $K$ and a cotensor $X^K$, we may sometimes conflate a diagram $K \to \CC(A,X)$ with the corresponding morphism $A \to X^K$.
Similarly, for a pbt sketch $J$, we may conflate a morphism $A \to X^J$ with the corresponding stable model $J \to \CC(A,X)$.

A \defword{morphism of pbt sketches} is a functor of underlying categories taking each specified square or object to a specified square or object; a morphism of sketches $J \to J'$ can be composed with a (stable) model of $J'$ to produce a (stable) model of $J$.
Given a morphism $F \colon J \to J'$ of pbt-sketches and cotensors $X^J$ and $X^{J'}$, there is an induced morphism $X^F \colon X^{J'} \to X^J$ classifying the stable model $J \tox{F} J' \tox{\ev} \CC(X^{J'},X)$.

Note that if there exists a cotensor $X^{\abs J}$ of $X$ with the underlying category $\abs{J}$ of $J$, then there is a morphism (in fact, an isofibration)
\begin{equation}\label{eq:sketch-cotensor-forget}
  X^{J} \tox{\phi_{X,J}} X^{\abs J}
\end{equation}
classifying (the underlying functor of) $\ev_{J,X}$.

We will prove:
\begin{propn}\label{propn:sketch-cotensors-exist}
  If $X \in \CC$ has finite limits and $\CC$ is pita, then there exists a cotensor $X^J$ for any finite pbt sketch $J$.
\end{propn}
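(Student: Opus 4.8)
The plan is to realize $X^J$ as a single pullback of an isofibration over the ordinary cotensor $X^{\abs J}$, cutting out precisely the locus on which the sketch conditions hold. Since $\CC$ is pita it has strict finite cotensors with all finite categories (these are finite PIE limits, built concretely from products, arrow objects, and pullbacks of the isofibrations $X^\to \to X$); in particular $X^{\abs J}$, the arrow object $X^{[1]} = X^\to$, the cotensor $X^\Lambda$ with the cospan category $\Lambda = (\bullet \to \bullet \leftarrow \bullet)$, and the cotensor $X^{\toy}$ with the free-standing isomorphism all exist. Write $Y_0 \defeq X^{\abs J}$, so $\CC(A,Y_0) = \Fun(\abs J, \CC(A,X))$ for each $A$, and recall from the discussion after Definition~\ref{defn:has-finite-limits} that $X$ having finite limits means exactly that each diagonal $\Delta \colon X \to X^K$ ($K$ finite) has a right adjoint $\lim_K$, the resulting limits being moreover stable.

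First I would manufacture, for each sketch condition, a canonical comparison $2$-cell on $Y_0$ and turn it into an $X^\to$-valued morphism. For a specified square $p \in P$, given by a functor $\sigma \colon \square \to \abs J$ with apex $a = \sigma(0,0)$ and underlying cospan $\sigma|_\Lambda \colon \Lambda \to \abs J$, the induced morphisms give $\ev_a \colon Y_0 \to X$ and $\lambda_p \colon Y_0 \to X^\Lambda$, and composing the latter with $\lim_\Lambda$ yields the internal pullback $L_p \defeq \lambda_p \cdot \lim_\Lambda \colon Y_0 \to X$. The legs of $\sigma$ exhibit $\ev_a$ as the summit of a cone over $\lambda_p$, so the universal property of the internal limit supplies a canonical comparison $2$-cell $\gamma_p \colon Y_0 \tocellud{\ev_a}{L_p} X$; by the universal property of the arrow object this is classified by a morphism $c_p \colon Y_0 \to X^\to$ with $c_p\partial_0 = \ev_a$ and $c_p\partial_1 = L_p$. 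For a specified terminal $t \in T$ one proceeds identically, using the internal terminal object $1 \colon \tm \to X$ (right adjoint to $X \to \tm$) and the unique $2$-cell $\ev_t \to 1$, obtaining $c_t \colon Y_0 \to X^\to$. Bundling these over $P \sqcup T$ gives one morphism $c \colon Y_0 \to (X^\to)^{N}$, where $N \defeq \abs P + \abs T$.

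Next I would cut out the locus on which every $c_p$ and $c_t$ is invertible. The functor $[1] \to \toy$ is injective on objects, so it induces (as recalled in \S\ref{subsec:limits}) an isofibration $j \colon X^{\toy} \to X^\to$, which is moreover pseudo-monic (Remark~\ref{rmk:pseudo-monic}): for each $A$ the functor $\CC(A,X^{\toy}) \to \CC(A,X)^\to$ is injective with image the \emph{isomorphisms} and is fully faithful onto that image. Hence $j^N \colon (X^{\toy})^N \to (X^\to)^N$ is again a pseudo-monic isofibration, and I may form the pita pullback
\[
  \begin{tikzcd}
    X^J \ar[r]\ar[d, "\iota"'] & (X^{\toy})^{N}\ar[d, "j^{N}"]\\
    Y_0\ar[r, "c"'] & (X^\to)^{N},
  \end{tikzcd}
\]
taking $X^J$ to be the top-left corner and $\ev_{J,X}$ the composite $J \tox{\ev_{\abs J, X}} \CC(Y_0,X) \tox{\iota^*} \CC(X^J,X)$. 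Applying $\CC(A,-)$ to this square (a strict pullback of an isofibration, hence preserved) identifies $\CC(A,X^J)$ with the full subcategory of $\CC(A,Y_0) = \Fun(\abs J, \CC(A,X))$ on those functors $D$ for which each $c_p(D)$ and $c_t(D)$ is invertible, the fullness coming from $j$ being pseudo-monic. But $c_p(D)$ is the comparison from $D(a)$ to the pullback of $D\circ\sigma|_\Lambda$, which is invertible exactly when $D$ sends $\sigma$ to a pullback square, and $c_t(D)$ is invertible exactly when $D(t)$ is terminal; so the image is precisely the full subcategory of stable models, and the same computation shows $\ev_{J,X}$ is itself a stable model.

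The main obstacle, and the only genuinely non-formal step, is the construction of the comparison $2$-cells $\gamma_p$ (and $\gamma_t$) together with the verification that these — and the invertibility conditions carved out by $X^{\toy}$ — are \emph{stable}, i.e.\ compatible with the reindexing functors $f^*$. This is exactly what the stability clause in Definition~\ref{defn:has-finite-limits} provides, and once it is in hand the remaining steps reduce to the routine manipulations of cotensor and pullback universal properties sketched above.
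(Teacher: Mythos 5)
Your proof is correct, but it takes a genuinely different route from the paper's. The paper first proves a lemma constructing the cotensors $X^\pbsquare$ and $X^{\tm_!}$ with the single-pullback-square and single-terminal-object sketches -- using the join $\toy * \lrcorner$ and a \emph{chosen stable pullback of the universal cospan} in $\CC(X^\lrcorner,X)$ -- and then obtains the general $X^J$ by two successive strict pullbacks of the isofibrations $\prod_{p\in P}\phi_{X,\pbsquare}$ and $\prod_{t\in T}\phi_{X,\tm_!}$ along the comparison maps out of $X^{\abs J}$. You instead invoke the adjoint characterization of ``having finite limits'' (the right adjoints $\lim_\Lambda$ to $\Delta$ and $1 \colon \tm \to X$ to $!_X$, which the paper records as a remark of Street after Definition~\ref{defn:has-finite-limits}), build the comparison $2$-cells $c_p, c_t \colon X^{\abs J} \to X^\to$ internally, and carve out their joint invertibility locus by a single pullback of $j^N \colon (X^{\toy})^N \to (X^\to)^N$. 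Both proofs ultimately rest on the same two mechanisms -- expressing ``is a (stable) limit cone'' as invertibility of a comparison to a fixed stable limit, and detecting invertibility via a cotensor with the free-standing isomorphism -- but your version is more uniform and compact, and would generalize to any class of limits admitting an internal adjoint, whereas the paper's version works directly from its official definition of ``has finite limits'' without passing through the adjoint characterization (whose nontrivial direction, if unwound, is essentially the paper's step of choosing a stable pullback of the universal cospan), and produces the reusable intermediate objects $X^\pbsquare$, $X^{\tm_!}$. Two citations in your write-up are loose but harmless: Remark~\ref{rmk:pseudo-monic} concerns monic DOFs, whereas the full faithfulness of $j_*$ with image the isomorphisms follows directly from the universal property of the cotensor $X^{\toy}$; and the compatibility of the $\gamma_p$ with reindexing is automatic from whiskering, the stability clause of Definition~\ref{defn:has-finite-limits} being what guarantees the adjoints exist in the first place.
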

See \cite{street-sketches-in-bicats-arxiv} for a related result.

The proof will parallel the construction of cotensors in a category with PIE limits, which is (a special case of) what is done in \cite[\S2]{power-robinson-pie-limits}.
For the proof of the proposition, we will also need to use the simplest special cases of it, which we now prove separately:
\begin{lem}
  Suppose $\CC$ is pita and $X \in \CC$ has finite limits.
  If $J$ is the pbt sketch $\pbsquare$ consisting of a single pullback square or the sketch $\tm_!$ consisting of a single terminal object, then the cotensor $X^J$ exists.
\end{lem}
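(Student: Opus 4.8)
The plan is to realize each of these cotensors as an \emph{inverter} of a canonical comparison 2-cell, using two ingredients: that a pita 2-category has all finite PIE limits and in particular inverters (each inverter is built from one inserter and two equifiers, so is a finite PIE limit), and the characterization—recalled after Definition~\ref{defn:has-finite-limits}—that $X$ has finite limits precisely when each diagonal $\Delta \colon X \to X^K$ admits a right adjoint in $\CC$.

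First consider $J = \pbsquare$. Its underlying category $\abs{J}$ is the commutative-square poset $K = [1] \times [1]$, and since $\CC$ is pita the ordinary cotensor $X^K$ exists, with $\CC(A, X^K) \cong \Fun(K, \CC(A,X))$ identifying morphisms $A \to X^K$ with commutative squares in $\CC(A,X)$. Let $\Lambda \hookrightarrow K$ be the cospan on the three objects other than the initial corner, let $\rho \colon X^K \to X^\Lambda$ be the induced restriction, and let $\pi \colon X^K \to X$ evaluate at the corner. Because $X$ has finite limits, the diagonal $\Delta \colon X \to X^\Lambda$ has a right adjoint $\lim \colon X^\Lambda \to X$ computing stable pullbacks; set $L \defeq \rho \lim \colon X^K \to X$. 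The commutativity 2-cells carried by the evaluation $\ev_{K,X}$ assemble into a cone 2-cell $\pi \Delta \To \rho$ in $\CC(X^K, X^\Lambda)$, whose transpose under $\Delta \dashv \lim$ is a canonical comparison 2-cell $c \colon \pi \To L$ in $\CC(X^K, X)$. I then define $X^{\pbsquare}$ to be the inverter $j \colon X^{\pbsquare} \to X^K$ of $c$. For $J = \tm_!$ the construction is the same but simpler: here $\abs{J} = [0]$ and $X^{[0]} = X$, the right adjoint to $\Delta \colon X \to X^\emptyset = \tm$ is a morphism $R \colon \tm \to X$ selecting the stable terminal object, and $T \defeq {!}_X R \colon X \to X$ is terminal in $\CC(X,X)$ (since $yT = {!}_A R$ is the stable terminal object $t_A$ for every $y \colon A \to X$). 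There is thus a unique 2-cell $c \colon \id_X \To T$, and I set $X^{\tm_!}$ to be its inverter.

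It remains to verify the universal property in each case. For a morphism $w \colon A \to X^K$ classifying a square $S$ in $\CC(A,X)$, the whiskered 2-cell $wc \colon w\pi \To wL$ is exactly the canonical comparison from the apex of $S$ to the stable limit $\lim$ of the cospan part of $S$; hence $wc$ is invertible if and only if $S$ is a stable pullback (being isomorphic to a stable limit is equivalent to being one, by uniqueness of limits together with the definition of stability). The defining universal property of the inverter then says that $j_* \colon \CC(A, X^{\pbsquare}) \to \CC(A, X^K)$ is injective on objects and fully faithful, with image the full subcategory on those $w$ with $wc$ invertible—that is, the full subcategory of stable pullback squares, which is precisely the subcategory of stable models of $\pbsquare$. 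Restricting $\ev_{K,X}$ along $j$ gives the required stable model $\ev_{J,X} \colon J \to \CC(X^{\pbsquare}, X)$, and it is a stable pullback because $jc$ is invertible by construction. The argument for $\tm_!$ is identical, with ``stable pullback square'' replaced by ``stable terminal object'' and $L$, $\rho\lim$ replaced by $T$.

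The main point to get right is the bookkeeping identifying \emph{stable} limits with the images of the right adjoints $\lim$ and $R$ supplied by $X$ having finite limits—so that invertibility of the comparison $c$ captures stability and not merely the pointwise limit condition—and confirming that the inverter's universal property yields an isomorphism onto the full subcategory of models rather than merely a faithful image. Everything else is a routine transcription of the classical cone/comparison description of pullbacks and terminal objects into the representable language of $\CC$.
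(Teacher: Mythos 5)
Your construction is correct, but it follows a genuinely different route from the paper's. You build $X^{\pbsquare}$ (and $X^{\tm_!}$) as the \emph{inverter} of a canonical comparison 2-cell $c \colon \pi \To \rho\lim$, which rests on two facts imported from outside the lemma: that a pita 2-category has finite PIE limits (asserted without proof in \S\ref{subsec:limits}), and the direction ``$X$ has finite limits $\Rightarrow$ $\Delta \colon X \to X^{\Lambda}$ has a right adjoint'' of the characterization recalled after Definition~\ref{defn:has-finite-limits} --- a statement the paper cites from Street but never proves, and whose proof (assembling the pointwise stable limits into a single morphism $\lim \colon X^{\Lambda} \to X$ with unit and counit in $\CC$) is of roughly the same substance as the lemma itself. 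The paper instead argues directly from Definition~\ref{defn:has-finite-limits}: it forms one stable pullback square $p \colon X^{\lrcorner} \to X^{\square}$ of the universal cospan in $\CC(X^{\lrcorner},X)$, and then isolates the squares isomorphic to the chosen pullback of their cospan by taking a strict pullback of the isofibration $X^{i_2} \colon X^{\toy*\lrcorner} \to X^{\square}$ along $p$, where $\toy*\lrcorner$ is the join of the free-standing isomorphism with the cospan; no adjunction and no inverter is needed. At the level of hom-categories the two constructions carve out exactly the same full subcategory (squares, resp.\ objects, whose canonical comparison to a stable limit is invertible), and your key verification --- that $wc$ is invertible iff $w$ classifies a stable pullback, stability being automatic because isomorphism to a stable limit transports along each $f^*$ --- is essentially the same check the paper performs. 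So your route buys brevity and a conceptual packaging in terms of standard 2-categorical technology (adjoint-to-diagonal plus PIE limits), while the paper's route buys self-containedness, using nothing beyond pita limits and a single pullback in a hom-category.
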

\begin{proof}
  We do the case of a pullback square; the other case is similar but easier.
  Note that in this case, a stable model of $J = \pbsquare$ in $\CC(A,B)$ is just a stable pullback square in $\CC(A,B)$.

  To begin with, we form a cotensor $X^{\lrcorner}$ of $X$ with a freestanding cospan $\lrcorner$.
  We thus have a universal cospan $\ev_{\lrcorner,X}$ in $\CC(X^\lrcorner,X)$, and we next form a pullback of $\ev_{\lrcorner,X}$ to obtain a (stable) pullback square $p \colon X^\lrcorner \to X^\square$ in $\CC(X^\lrcorner,X)$, where $X^\square$ is the cotensor of $X$ with a freestanding commutative square $\square$.

  Now consider the finite category $\toy * \lrcorner$, the join (see \S\ref{subsec:gpd-stuff-finite-lims}) of a freestanding isomorphism $\toy$ with a cospan.
  The two inclusions $j_0,j_1 \colon \tm\hto \toy$, where $\tm$ is the terminal category, induce morphisms $i_k \colon \square \toi \tm*\lrcorner\htox{j_k}(\toy * \lrcorner)$ (for $k = 0,1$).

  We now form $X^\pbsquare$ as a strict pullback
  \[
    \begin{tikzcd}
      X^\pbsquare\pb\ar[r, "\pi_1"]\ar[d, "\pi_0"']&X^{\toy * \lrcorner}\ar[d, "X^{i_1}"]\ar[r, "X^{i_0}"]&X^{\square}\\
      X^\lrcorner\ar[r, "p"]&X^\square
    \end{tikzcd}
  \]
  (which exists as $X^{i_1}$ is an isofibration).
  The desired universal stable model of $\pbsquare$, we claim, is the stable pullback square $\pi_1 X^{i_0}$ in $\CC(X^\pbsquare,X)$.
  To see that this is indeed a stable pullback square, note that it suffices to show that $\pi_1 X^{i_1}$ is a stable pullback square, since if two cones over a given span are isomorphic, then one is a (stable) pullback square if and only if the other is; but $\pi_1 X^{i_1} = \pi_0 p$, and $p$ is a stable pullback square by definition.

  To see that $X^\pbsquare$ has the desired universal property, it suffices, using the universal property of $X^\square$, to show that for each $A \in \CC$, the functor $(\pi_1X^{i_0})_* \colon \CC(A,X^\pbsquare) \to \CC(A,X^\square)$ is an isomorphism onto the full subcategory on the objects of $\CC(A,X^\square)$ classifying pullback squares in $\CC(A,X)$ (which are automatically stable, since $X$ is assumed to have finite limits).

  Full-faithfulness of $(\pi_1 X^{i_0})_*$ follows from the fact that $X^{i_0}$ and $p$ are fully faithful and the fact that fully faithful morphisms are stable under strict pullbacks and composition.

  To prove that $(\pi_1X^{i_0})_*$ is injective on objects and has the correct image, fix a pullback square $q \colon A \to X^{\square}$ in $\CC(A,X)$, with underlying cospan $r \colon A \to X^\lrcorner$.
  \[
    \begin{tikzcd}
      A
      \ar[rd, dashed, "t"]
      \ar[rrd, dashed, "s", bend left=5pt]
      \ar[rrrd, "q", bend left=13pt]
      \ar[rdd, "r"', bend right=10pt]
      &\\
      &X^\pbsquare\pb\ar[r, "\pi_1"]\ar[d, "\pi_0"]&X^{\toy * \lrcorner}\ar[d, "X^{i_1}"]\ar[r, "X^{i_0}"']&X^{\square}\\
      &X^\lrcorner\ar[r, "p"]&X^\square
    \end{tikzcd}
  \]
  Thus, $r p$ and $q$ are both pullbacks of the same cospan $r$, and hence assemble into a unique diagram $s \colon A \to X^{\toy * \lrcorner}$ in $\CC(A,X)$ satisfying $s X^{i_0} = q$ and $s X^{i_1} = rp$.
  The universal property of the pullback square then gives our desired unique morphism $t \colon A \to X^\pbsquare$ with $t \pi_1 X^{i_0} = q$.
\end{proof}

\begin{proof}[\textrm{\bf Proof of Proposition~\ref{propn:sketch-cotensors-exist}}.]
  Fix a finite pbt sketch $J = (\abs J,P,T)$ and an object $X \in \CC$ having finite limits.
  To begin with, we form a cotensor $X^{\abs{J}}$.
  Each $p \in P$ is a commutative square $\square \tox{p} \abs{J}$, and gives a morphism $X^p \colon X^{\abs{J}} \to X^{\square}$.
  Now form the cotensor $X^\pbsquare$, so that we have a morphism $\phi_{X,\pbsquare} \colon X^\pbsquare \to X^\square$ as in (\ref{eq:sketch-cotensor-forget}), and form a strict pullback
  \[
    \begin{tikzcd}
      X^{J,\mathrm{pb}}\ar[r, "\pi_1"]\ar[d, "\pi_0"']\pb&[20pt]\prod_{p \in P}X^\pbsquare\ar[d, "\prod_{p \in P}\phi_{X,\pbsquare}"]\\
      X^{\abs{J}}\ar[r, "\br{X^p}_{p \in P}"]&\prod_{p \in P}X^\square.
    \end{tikzcd}
  \]
  Note that these are finite products since $P$ is finite, and that the morphism on the right is an isofibration.

  The morphism $\pi_0$ is fully faithful since $\prod_{p \in P}\phi_{X,\pbsquare}$ is, and we see immediately that a morphism $A \to X^{\abs J}$ factors through $\pi_0$ if and only if the functor $J \to \CC(A,X)$ which it classifies takes each pullback square $p \colon \square \to J$ in $P$ to a stable pullback square in $\CC(A,X)$.

  To finish the construction, we now do the same thing but with terminal objects.
  Each $t \in T$ gives a morphism $\tm \tox{t} \abs{J}$ and hence a morphism $X^{J,\mathrm{pb}} \tox{\pi_0} X^{\abs{J}} \tox{t^*} X$.
  We then form a cotensor $X^{\tm_!}$ of $X$ with the sketch $\tm_!$ consisting of a single terminal object, so that we have a morphism $\phi_{X,\tm_!} \colon X^{\tm_!} \to X$, and we form the pullback
  \[
    \begin{tikzcd}
      X^J\ar[r, ""]\ar[d, ""'] \pb &[25pt]
      \prod_{t \in T}X^{\tm_!}\ar[d, "\prod_{t \in T}\phi_{X,\tm!}"]\\
      X^{J,\mathrm{pb}}\ar[r, "\br{\pi_0 t^*}_{t \in T}"] &
      \prod_{t \in T}X
    \end{tikzcd}
  \]
  Then $X^J$ is the desired cotensor.
\end{proof}

\begin{cor}[Proposition~\ref{propn:mon-exists}]\label{cor:pbt-monos}
  If $\CC$ is pita and $X \in \CC$ has finite limits, then the object $\Mon(X)$ of monomorphisms in $X$ exists.
\end{cor}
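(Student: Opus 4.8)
The plan is to realize $\Mon(X)$ as a cotensor $X^J$ of $X$ with a suitably chosen finite pbt sketch $J$, and then to quote Proposition~\ref{propn:sketch-cotensors-exist}. The sketch I would use has underlying category $\abs{J} = [1]$ (so that an ordinary model is just an arrow), a single designated pullback square $s \colon \square \to [1]$, and no designated terminal objects. Here $s$ is the \emph{degenerate} commutative square collapsing three corners of $\square$ to the object $0$ and the remaining corner to $1$, so that both non-identity edges of $\square$ are sent to the arrow $0 \to 1$. Since $\abs{J}$ is finite, $J$ is a finite pbt sketch, and Proposition~\ref{propn:sketch-cotensors-exist} (applicable because $\CC$ is pita and $X$ has finite limits) produces the cotensor $X^J$ together with its universal stable model $\ev_{J,X} \colon J \to \CC(X^J,X)$.

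The key point is then a direct unwinding: a stable model of $J$ in $\CC(A,X)$ is exactly a stable monomorphism. A functor $[1] \to \CC(A,X)$ is the same datum as a 2-cell $\alpha \colon A \tocell X$, say $\alpha \colon f \to g$, and precomposing with $s$ yields the square
\[
  \begin{tikzcd}
    f\ar[r, equals]\ar[d, equals]&f\ar[d, "\alpha"]\\
    f\ar[r, "\alpha"']&g
  \end{tikzcd}
\]
in $\CC(A,X)$. This square is a pullback precisely when $\alpha$ is a monomorphism, and it is a \emph{stable} pullback precisely when $h^*$ preserves it for every $h \colon C \to A$, i.e.\ when $h^*\alpha = h\alpha$ is a monomorphism in $\CC(C,X)$ for every such $h$ — which is exactly the defining condition for $\alpha$ to be a stable monomorphism. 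Thus a stable model of $J$ in $\CC(A,X)$ is the same as a stable monomorphism 2-cell into $X$.

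Combining these, the universal property of $X^J$ from Proposition~\ref{propn:sketch-cotensors-exist} states that for each $A \in \CC$ the comparison functor $\CC(A,X^J) \to \Fun(\abs{J},\CC(A,X)) = \CC(A,X)^\to$ is injective with image the stable models of $J$, which by the previous paragraph is the full subcategory of stable monomorphisms; and the underlying 2-cell $\partial$ of $\ev_{J,X}$ is itself a stable monomorphism since $\ev_{J,X}$ is a stable model. This is verbatim the universal property of a monomorphism object, so $(X^J,\partial)$ is the desired $\Mon(X)$. I do not expect any serious obstacle here: the only step needing genuine care is the verification in the second paragraph that the \emph{stability} of the degenerate pullback square corresponds to the universal quantifier over all $h$ in the definition of a stable monomorphism, which rests on the identification of the whiskering $h\alpha$ with the precomposition action $h^*\alpha$ under the paper's conventions.
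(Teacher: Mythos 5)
Your proposal is correct and takes essentially the same approach as the paper: its proof uses exactly the same finite pbt sketch with underlying category $[1]$, no designated terminal objects, and the single degenerate square sending three corners to $0$ and one to $1$, then invokes Proposition~\ref{propn:sketch-cotensors-exist}. The unwinding in your second paragraph --- that stable models of this sketch in $\CC(A,X)$ are precisely the stable monomorphism 2-cells --- is the verification the paper leaves implicit, and you carry it out correctly.
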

\begin{proof}
  Take $J = (\abs{J},P,T)$ to be the pbt sketch where $\abs{J} = [1]$ is the free-standing arrow (as in \S\ref{subsec:nerves}), and where $T = \emptyset$ and $P$ consists of the single square
  \[
    \begin{tikzcd}
      0\ar[r]\ar[d]&0\ar[d]\\
      0\ar[r]&1.
    \end{tikzcd}
  \]
  Then the cotensor $X^J$ has the universal property of $\Mon(X)$.
\end{proof}

We end by noting that there is an obvious notion of \defword{tbp-sketch} (``terminal object and binary products'') analogous to that of pbt-sketch.
The proof of Proposition~\ref{propn:sketch-cotensors-exist} works just as well for tbp-sketches and gives:
\begin{propn}\label{propn:tbp-sketch}
  If $X \in \CC$ has finite products and $\CC$ is pita, then $X^J$ exists for any tbp-sketch $J$.
\end{propn}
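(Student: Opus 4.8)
The plan is to run the proof of Proposition~\ref{propn:sketch-cotensors-exist} essentially verbatim, replacing the pullback-square data of a pbt-sketch with the binary-product data of a tbp-sketch throughout. Concretely, a tbp-sketch is a triple $J = (\abs J, B, T)$ with $B$ a set of spans in $\abs J$ (the designated binary-product cones) and $T$ a set of objects, and a stable model $J \to \CC(A,X)$ sends each span in $B$ to a stable binary product and each object of $T$ to a stable terminal object. The shape dictionary is: the freestanding commutative square $\square$ is replaced by the freestanding span $V$ (an apex with two legs $V \to \ell$, $V \to r$); the freestanding cospan $\lrcorner$ is replaced by the discrete two-object category $\two$ (two objects, no non-identity arrows); and the single-pullback-square sketch $\pbsquare$ is replaced by the single-binary-product sketch, which I denote $\times_!$. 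Since $X$ is assumed to have finite products, binary products in each $\CC(A,X)$ are automatically stable (exactly as finite limits made pullbacks automatically stable in the original proof), so the word ``stable'' can be carried along harmlessly.

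The one step requiring genuine (though routine) adaptation is the base lemma producing $X^{\times_!}$, the analogue of the lemma constructing $X^\pbsquare$. First I would form the cotensor $X^{\two}$ with the two-object discrete category, whose universal model is a pair of objects in $\CC(X^{\two},X)$; taking their product gives a stable product span, classified by a morphism $p \colon X^{\two} \to X^{V}$ into the cotensor with the freestanding span. Next, mirroring the use of $\toy * \lrcorner$, I would form the join $\toy * \two$ of the freestanding isomorphism with $\two$, so that $\tm * \two \cong V$ and the two inclusions $j_1,j_2 \colon \tm \hookrightarrow \toy$ induce $i_k \colon V \hookrightarrow \toy * \two$. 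Then $X^{\times_!}$ is defined as the strict pullback of $X^{i_2} \colon X^{\toy * \two} \to X^{V}$ along $p$, which exists because $X^{i_2}$ is an isofibration (as $i_2$ is injective on objects). The universal stable binary product is $\pi_2 X^{i_1}$, and the verification that this has the required universal property --- full-faithfulness of the comparison functor from $X^{i_1}$ and $p$ being fully faithful, and injectivity-on-objects-with-correct-image from the pullback property --- is word-for-word the argument in the $X^\pbsquare$ lemma, with ``pullback square over a cospan'' replaced by ``product span over a pair of objects.''

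With $X^{\times_!}$ in hand, the main construction is a transcription of the proof of Proposition~\ref{propn:sketch-cotensors-exist}: form $X^{\abs J}$; for each span $b \in B$ obtain the classifying morphism $s_b \colon X^{\abs J} \to X^{V}$; form the finite product $\prod_{b \in B} X^{\times_!}$ with its isofibration $\prod_b \phi_{X,\times_!}$ to $\prod_b X^{V}$; pull back $\br{s_b}_{b \in B}$ against it to obtain an intermediate object $X_1$; and then cut down to the terminal-object constraints in $T$ exactly as in the original proof (this half is literally identical, since the single-terminal sketch $\tm_!$ is unchanged). I do not expect a real obstacle here: the only place where the product-versus-pullback distinction could bite is the base lemma, and there the join trick transfers cleanly because it depends only on the principle that ``being a limit cone over a fixed base is detected by an isomorphism to the canonical such cone,'' which holds for products just as for pullbacks. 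Thus the main point to get right, and the only thing worth spelling out in a full proof, is the construction of $X^{\times_!}$ above.
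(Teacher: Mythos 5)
Your proposal is correct and takes exactly the paper's approach: the paper's entire proof of this proposition is the one-line remark that the proof of Proposition~\ref{propn:sketch-cotensors-exist} ``works just as well for tbp-sketches,'' and your transcription --- replacing the cospan $\lrcorner$ by the discrete two-object category, the square $\square$ by the freestanding span $V$, and $\pbsquare$ by the single-product sketch, with the join $\toy * \two$ playing the role of $\toy * \lrcorner$ and the same isofibration/stability observations --- is precisely what that remark intends. You in fact supply more detail than the paper does, and the details check out.
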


\section{Power objects and exponentials}\label{sec:groupoid-stuff}
This appendix has three objectives.

The first is to introduce the notion of an object $X$ in a 2-category $\CC$ \emph{having exponential objects} (or being \emph{cartesian closed}).
Since we defined an internal elementary topos $X$ as an object having finite limits and power objects---as opposed to having finite limits, exponentials, and a subobject classifier---we managed to avoid discussing exponentials, and we now fill that gap.
The definition is the expected ``representable'' one, and like the definition of $X$ having power objects (and for the same reason), it makes reference only to the \emph{groupoids} in $\CC$.

Secondly, we give alternative, (non-``representable'') definitions of an object $X$ having finite limits, power objects, or exponentials; in particular, in the latter two cases, the definition avoids making special reference to the groupoids in $\CC$ (though the definition does still make use of the cores of certain objects).
The definition in all three cases follows the same general pattern, reminiscent of the ``operations on diagrams'' introduced in \cite[\S1.1]{makkai-ultraproducts}.
In the latter two cases, the definition also makes use of the cotensors with sketches from Appendix~\ref{sec:pbt-skectches}.

In the case of finite limits, the alternative definition is, in general, not quite equivalent to $X$ having finite limits, but rather $\CC(A,X)$ having finite limits for each \emph{groupoid} $A$, and these being stable along morphisms $A' \to A$ with $A'$ a groupoid.
However, \emph{assuming $\CC$ is groupoidant}, this latter condition is seen, using the results of \S\ref{subsec:2-yoneda}, to be equivalent to $X$ having finite limits.

Thus, altogether, the alternative definitions are meant to deepen the conviction that, in the context of a groupoidant 2-category $\CC$, the definitions of an object $X$ having power objects or exponentials are ``correct''---in spite of the fact that they seemingly somewhat artificially make reference only to the groupoids in $\CC$.

The third objective is also related to this: we show, again assuming $\CC$ is groupoidant---and again using the results of \S\ref{subsec:2-yoneda}---that if a an object $X$ has power objects or exponentials, then the natural covariant and contravariant power functors $X \to X$ and $X^\op \to X$ and exponential functor $X^\op \times X \to X$ can be defined internally.

Related to this, we end with some comments on the question of the cocompleteness of a plentiful generic DOF $\s$ in Remark~\ref{rmk:cocompleteness}.

\subsection{Finite limits}\label{subsec:gpd-stuff-finite-lims}
Recall that given two categories $\cC,\cD$, their \defword{join} $\cC * \cD$ is formed from the disjoint union of $\cC$ and $\cD$ by adjoining a single morphism from each object in $\cC$ to each object in $\cD$.

Given a category $J$, we set $\wh{J} = [0] * J$ (this is the \emph{cone} on $J$), and $\wwh{J} = [1] * J$ (where $[0]$ and $[1]$ are as in \S\ref{subsec:nerves}).

We write $i_0,i_1 \colon \wh{J} \to \wwh{J}$ for $\delta_0*\id_J,\delta_1*\id_J$.

Given a pita category $\CC$ and objects $A,X \in \CC$, we then have that for a diagram $u \colon A \to X^J$ of shape $J$, a limit of $u$ is the same thing as a lift $\bar{u} \colon A \to X^{\wh{J}}$ of $u$, such that for any other lift $v$ of $u$, there is a unique 2-cell $\alpha \colon v \to u$ with $\alpha X^{i} = \id_u$, where $i \colon J \to \wh{J}$ is the canonical inclusion into the join $[0] * J$.
\[
  \begin{tikzcd}
    &[30pt] X^{\wh{J}}\ar[d, "X^{i}"]\\
    A\ar[r, "u"']
    &X^J
    \ar["\forall v", ""{name=0, anchor=center, inner sep=0}, bend left=30pt, from=2-1, to=1-2]
    \ar["\bar{u}"', ""{name=1, anchor=center, inner sep=0}, from=2-1, to=1-2]
    \ar["\exists!\,\alpha", shorten <=3pt, shorten >=3pt, Rightarrow, dashed, from=0, to=1].
  \end{tikzcd}
\]
That $\bar{u}$ is a \emph{stable} limit is expressed by requiring that for each $f \colon A' \to A$, the composite $f\bar{u} \colon A' \to X^{\wh{J}}$ is still a limit (of $f u$).

Note, moreover, that given $v \colon A \to X^{\wh{J}}$, a 2-cell $\alpha \colon v \to u$ with $\alpha X^{i} = \id_u$ is the same thing as a morphism $w \colon A \to X^{\wwh{J}}$ with $v = w X^{i_0} \colon A \to X^{\wh{J}}$ and $u = w X^{i_1} \colon A \to X^{\wh{J}}$.
\begin{equation}\label{eq:diag-limt-cond}
  \begin{tikzcd}
    &[30pt]
    X^{\wwh{J}}
    \ar[d, "X^{i_0}"', shift right]
    \ar[d, "X^{i_1}", shift left]\\
    &X^{\wh{J}}\ar[d, "X^{i}"]\\
    A\ar[r, "u"']
    \ar[ru, "v", shift left]
    \ar[ru, "\bar{u}"', shift right]
    \ar[ruu, "w", bend left=10pt]
    &X^J
  \end{tikzcd}
\end{equation}

\begin{propn}\label{propn:fin-lims-equiv}
  Let $\CC$ be a pita 2-category.
  Then for any $X \in \CC$ and any finite category $J$, the following are equivalent:
  \begin{enumerate}
  \item[(i)] $X$ has finite limits of shape $J$.
  \item[(ii)] There exists a morphism $\ell \colon X^J \to X^{\wh{J}}$ with $\ell X^{i} = \id_{X^J}$ and satisfying the following equivalent conditions:
    \begin{enumerate}
    \item[(ii-a)] For each $A \in X$ and each diagram $u \colon A \to X^J$ of shape $J$ in $\CC(A,X)$, the cone $u \ell \colon A \to X^{\wh{J}}$ over $u$ is a limit cone.
    \item[(ii-b)] If we form a strict pullback of the isofibration $X^{i_1} \colon X^{\wwh{J}} \to X^{\wh{J}}$ along $\ell$
      \[
        \begin{tikzcd}
          Y\ar[r, "\pi_1"]\ar[d, "\pi_0"']\pb&X^{\wwh{J}}\ar[d, "X^{i_1}"]\ar[r, "X^{i_0}"]&X^{\wh{J}}\\
          X^J\ar[r, "\ell"]&X^{\wh{J}}
          &
          X^J
          \ar[from=l, "X^{i}"]
          \ar[from=u, "X^{i}"]
          ,
        \end{tikzcd}
      \]
      then the composite $\pi_1X^{i_0}$ is an isomorphism.
    \end{enumerate}
  \end{enumerate}
\end{propn}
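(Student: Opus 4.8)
The plan is to prove the two equivalences separately: first, for a fixed section $\ell \colon X^J \to X^{\wh J}$ of $X^i$ (i.e.\ one with $\ell X^i = \id_{X^J}$), that the conditions (ii-a) and (ii-b) are equivalent; and then that (i) is equivalent to the existence of such an $\ell$. All the relevant cotensors $X^J$, $X^{\wh J}$, $X^{\wwh J}$ exist because $\CC$ is pita and $J$ (hence $\wh J$ and $\wwh J$) is finite, and the pullback $Y$ in (ii-b) exists because $X^{i_1}$ is an isofibration; this is the general fact recorded at the end of \S\ref{subsec:limits}, applied to the functor $i_1 = \delta_1 * \id_J \colon \wh J \to \wwh J$, which is injective on objects.

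For (ii-a)$\iff$(ii-b), the key observation is that a $1$-cell is an isomorphism in $\absbig\CC$ precisely when it is representably a bijection on objects: $\pi_0 X^{i_0}$ is an isomorphism iff $(\pi_0 X^{i_0})_* \colon \Ob\CC(A,Y) \to \Ob\CC(A,X^{\wh J})$ is a bijection for every $A \in \CC$. I would then unfold $\CC(A,Y)$ using the universal properties of the strict pullback and of the cotensors: an object of $\CC(A,Y)$ amounts to a $J$-diagram $u$ in $\CC(A,X)$, an arbitrary cone $v$ over $u$ (namely $w X^{i_0}$), and a morphism of cones $v \to u\ell$ (namely $w$), the target being forced to be the $\ell$-cone $u\ell$ by the pullback condition $u'\ell = w X^{i_1}$ together with restriction along $X^i$. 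Under this description $(\pi_0 X^{i_0})_*$ is exactly the map sending such a triple to the cone $v$, so its bijectivity on objects for all $A$ is precisely the statement that for every cone $v$ over every diagram $u$ there is a unique morphism of cones $v \to u\ell$ — i.e.\ that $u\ell$ is the limit (terminal) cone of $u$. This is (ii-a). The main care here is the combinatorial bookkeeping with the joins and the contravariance of $X^{(-)}$, in particular identifying the restrictions along $i_0$ and $i_1$ with the two apexes of a morphism of cones.

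For (i)$\iff$(ii), the essential point is that a single global $\ell$ forces stability. Assuming (i), I would apply the existence of a stable $J$-limit to the universal diagram $\id_{X^J} \in \CC(X^J,X^J) \cong \CC(X^J,X)^J$: its limit cone is a lift $\ell \colon X^J \to X^{\wh J}$ with $\ell X^i = \id_{X^J}$, and for arbitrary $u \colon A \to X^J$, stability of this limit along $u$ gives that $u\ell = u^*(\ell)$ is a limit of $u^*(\id_{X^J}) = u$, which is (ii-a). Conversely, given $\ell$ satisfying (ii-a), each $\CC(A,X)$ has $J$-limits via $u \mapsto u\ell$, and these are automatically stable: for $f \colon A' \to A$ one has $f^*(u\ell) = (fu)\ell = (f^*u)\ell$, which is again a limit cone by (ii-a) applied over $A'$; hence $f^*$ preserves $J$-limits and $X$ has finite limits of shape $J$ in the sense of Definition~\ref{defn:has-finite-limits}. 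I expect the one genuine subtlety to be the representable-isomorphism reduction together with the cone-bookkeeping in the second paragraph; the stability argument, by contrast, is immediate once one notes that all the limit cones are produced by the one morphism $\ell$.
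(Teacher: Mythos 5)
Your proposal is correct and takes essentially the same route as the paper's proof: $\ell$ is obtained as the (stable) limit cone over the universal diagram $\id_{X^J} \in \CC(X^J,X)^J$, the converse stability argument is exactly $f^*(u\ell) = (fu)\ell$, and (ii-a)$\iff$(ii-b) is proved by unfolding $\CC(A,Y)$ via the pullback and cotensor universal properties and noting that $\pi_0 X^{i_0}$ is an isomorphism iff it is representably bijective on objects. No gaps.
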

\begin{proof}
  If $X$ has finite limits of shape $J$, we take $\ell \colon X^J \to X^{\wh{J}}$ to be a (necessarily stable) limit cone over $\id_{X^J} \colon X^J \to X^J$; then $\ell X^{i} = \id_{X^J}$, and by stability, $u \ell \colon A \to X^{\wh{J}}$ is a limit cone in $\CC(A,X)$ for each $u \colon A \to X^J$, whence (ii-a).

  Conversely, given $\ell \colon X^J \to X^{\wh{J}}$ as in (ii-a), each diagram $u \colon A \to X^J$ of shape $J$ has a limit $u \ell$, and it is stable because, given $f \colon A' \to A$, the diagram $f (u \ell) = (f u) \ell \colon A' \to X^{\wh{J}}$ is by assumption again a limit cone.

  It remains to see (ii-a)~$\ToT$~(ii-b), for which we use the above characterization of limits, according to which (ii-a) is equivalent to there existing, for each $v \colon A \to X^{\wh{J}}$, a unique $w \colon A \to X^{\wwh{J}}$ with
  \[
    \tag{$\star$}\label{eq:fin-lims-equiv-cond-on-w}
    w X^{i_0} = v \quad \AND \quad w X^{i_1} = v X^{i} \ell.
  \]

  But now note that $(\pi_1)_* \colon \CC(A,Y) \to \CC(A,X^{\wwh{J}})$ gives a bijection between those $y \colon A \to Y$ with $y \pi_1 X^{i_0} = v$ and those $w \colon A \to X^{\wwh{J}}$ satisfying (\ref{eq:fin-lims-equiv-cond-on-w}).

  Hence, we see that (ii-a) is equivalent to there being, for each $v \colon A \to X^{\wh{J}}$, a unique $y \colon A \to Y$ with $y \pi_1 X^{i_0} = v$, i.e., to $\pi_1 X^{i_0}$ being an isomorphism.
  \qedhere
\end{proof}
\begin{rmk}
  A third equivalent condition to (ii-a) and (ii-b) in the above proposition is that $\ell \colon X^J \to X^{\wh{J}}$ is right adjoint to $X^{i} \colon X^{\wh{J}} \to X^J$.
\end{rmk}

\begin{propn}\label{propn:gpd-fin-lims-equiv}
  Let $\CC$ be a corepita 2-category.
  Then for any $X \in \CC$ and finite category $J$, the following are equivalent:
  \begin{enumerate}
  \item[(i)] $\CC(A,X)$ has finite limits of shape $J$ for all groupoids $A \in \CC$ and these are preserved by $f^*$ for all $f \colon A' \to A$ with $A'$ a groupoid.
  \item[(ii)] There exists a morphism $\ell \colon (X^J)^\iso \to (X^{\wh{J}})^\iso$ with $\ell (X^{i})^{\iso} = \id_{(X^J)^\iso}$ and satisfying the following equivalent conditions:
    \begin{enumerate}
    \item[(ii-a)] For each groupoid $A \in X$ and each diagram $u \colon A \to X^J$ of shape $J$ in $\CC(A,X)$, the cone $u^\iso \ell i \colon A \to X^{\wh{J}}$ over $u$ is a limit cone.
    \item[(ii-b)] If we form a strict pullback of the isofibration $(X^{i_1})^{\iso} \colon (X^{\wwh{J}})^\iso \to (X^{\wh{J}})^\iso$ along $\ell$
      \[
        \begin{tikzcd}
          Y\ar[r, "\pi_1"]\ar[d, "\pi_0"']\pb&(X^{\wwh{J}})^{\iso}\ar[d, "(X^{i_1})^{\iso}"]\ar[r, "(X^{i_0})^{\iso}"]&(X^{\wh{J}})^{\iso}\\
          (X^J)^{\iso}\ar[r, "\ell"]&(X^{\wh{J}})^{\iso}
          &
          (X^J)^\iso
          \ar[from=l, "X^{i}"]
          \ar[from=u, "X^{i}"]
          ,
        \end{tikzcd}
      \]
      then the composite $\pi_1(X^{i_0})^{\iso}$ is an isomorphism.
    \end{enumerate}
  \end{enumerate}
\end{propn}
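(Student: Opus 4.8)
The plan is to run the proof of Proposition~\ref{propn:fin-lims-equiv} essentially verbatim, inserting cores throughout and restricting attention to \emph{groupoidal} test objects $A$. The one genuinely new ingredient, which I would record first, is a reduction principle for cores: if $A$ is a groupoid and $Z \in \CC$ has a core $i \colon Z^\iso \to Z$, then every morphism $g \colon A \to Z$ is an arrow-wise iso (since every 2-cell $\alpha \colon A' \tocell A$ is invertible), hence factors uniquely as $g = g^\iso i$; and since $i$ is a monomorphism, this factorization is natural, $(fg)^\iso = f g^\iso$ for any $f \colon A' \to A$ with $A'$ a groupoid. Taking $Z = X^K$ for $K \in \set{J,\wh J,\wwh J}$ (all finite, so the cotensors exist) yields, for each groupoid $A$, a bijection $\abs{\CC}(A,(X^K)^\iso) \cong \abs{\CC}(A,X^K) \cong \Fun(K,\CC(A,X))$ intertwining the morphisms induced by $(X^i)^\iso,(X^{i_0})^\iso,(X^{i_1})^\iso$ with those induced by $X^i,X^{i_0},X^{i_1}$. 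I would also note here that $(X^{i_1})^\iso$ is an isofibration --- being the core of the isofibration $X^{i_1}$ (itself an isofibration since $i_1$ is injective on objects, by the last remark of \S\ref{subsec:limits}), and the core of an isofibration is easily seen to be one --- so that the strict pullback $Y$ exists; and that $Y,(X^J)^\iso,(X^{\wh J})^\iso,(X^{\wwh J})^\iso$ are all groupoids (the first being, representably, a pullback of groupoids).

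For (i)~$\Leftrightarrow$~(ii-a) I would use the ``universal diagram'' trick. For (i)~$\Rightarrow$~(ii), apply (i) to the groupoid $A_0 = (X^J)^\iso$ and the universal diagram $\mathbf{u}_0 = i \colon A_0 \to X^J$ (the diagram classified by $\id_{A_0}$): a limit cone $\bar{\mathbf{u}}_0 \colon A_0 \to X^{\wh J}$ exists, factors through the core as $\bar{\mathbf{u}}_0 = \ell i$ (as $A_0$ is a groupoid), and the identity $\bar{\mathbf{u}}_0 X^i = \mathbf{u}_0$ together with $i$ monic forces $\ell (X^i)^\iso = \id_{(X^J)^\iso}$. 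Then any diagram $u = u^\iso i \colon A \to X^J$ (with $A$ a groupoid) has associated cone $u^\iso \ell i = (u^\iso)^\ast \bar{\mathbf{u}}_0$, the image of the limit cone under $(u^\iso)^\ast \colon \CC(A_0,X) \to \CC(A,X)$; since $u^\iso$ is a morphism of groupoids, the stability clause of (i) says $(u^\iso)^\ast$ preserves such limits, giving (ii-a). Conversely, (ii-a) directly yields that each diagram $u$ has limit cone $u^\iso \ell i$, with stability along $f \colon A' \to A$ (both groupoids) following from $f^\ast(u^\iso \ell i) = (fu)^\iso \ell i$, again a limit by (ii-a).

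For (ii-a)~$\Leftrightarrow$~(ii-b) I would transcribe the corresponding argument of Proposition~\ref{propn:fin-lims-equiv}, now with cores and groupoidal $A$. Using the characterization of limits in~(\ref{eq:diag-limt-cond}) and the bijections above, (ii-a) is equivalent to: for every groupoid $A$ and every $v \colon A \to (X^{\wh J})^\iso$ there is a unique $w \colon A \to (X^{\wwh J})^\iso$ with $w(X^{i_0})^\iso = v$ and $w(X^{i_1})^\iso = v(X^i)^\iso\ell$ (such a $w$ lands in the core precisely because $A$ is a groupoid). Exactly as in \emph{loc.\ cit.} --- and using that $i\cdot i_0$ and $i\cdot i_1$ agree on $J$ --- the universal property of $Y$ identifies these $w$ with the $y \colon A \to Y$ satisfying $y\pi_0(X^{i_0})^\iso = v$, so (ii-a) says precisely that $(\pi_0(X^{i_0})^\iso)_\ast$ is a bijection on $\abs{\CC}(A,-)$ for every groupoid $A$. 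Since $Y$ and $(X^{\wh J})^\iso$ are groupoids, an elementary argument (testing at $A = Y$ and $A = (X^{\wh J})^\iso$) shows a morphism between groupoids is an isomorphism as soon as it induces such bijections on all groupoidal test objects; this gives (ii-a)~$\Leftrightarrow$~(ii-b).

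The main subtlety --- and essentially the only departure from the non-groupoidal case --- is the interplay between conditions quantified over \emph{all} objects and those quantified over \emph{groupoids}: the isomorphism in (ii-b) is a priori external, whereas (ii-a) only constrains groupoidal $A$, and the bridge is precisely that $Y$ and $(X^{\wh J})^\iso$ are groupoids, so that isomorphisms between them are detected on groupoidal test objects. Beyond this, the work lies in bookkeeping the core factorizations --- verifying that the bijections $\abs{\CC}(A,(X^K)^\iso) \cong \Fun(K,\CC(A,X))$ intertwine all the structural morphisms, and that the stability clause of (i) matches exactly the preservation of limits by $(u^\iso)^\ast$ along morphisms of groupoids --- all of which is routine.
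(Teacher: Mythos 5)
Your proof is correct and takes essentially the same approach as the paper's: the paper's (deliberately terse) proof consists of precisely your two ingredients, namely replacing $X^J$, $X^{\wh{J}}$, $X^{i}$, etc.\ by their cores when testing against groupoidal $A$ in the characterization of stable limits, and the fact that a morphism between groupoidal objects is an isomorphism as soon as it induces bijections $\abs{\CC}(A,-)$ for all groupoidal $A$. Your write-up simply fills in the details the paper leaves implicit, such as the naturality of core factorizations, the use of $i \colon (X^J)^\iso \to X^J$ as the universal diagram in place of $\id_{X^J}$, and the verification that $(X^{i_1})^\iso$ is an isofibration.
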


\begin{proof}
  The proof is essentially the same as that of the previous proposition, where we use that in the characterization (\ref{eq:diag-limt-cond}) of stable limits, if $A$ is a groupoid, we may replace $X^J$, $X^{\wh{J}}$, $X^{i}$, and so on, by $(X^J)^{\iso}$, $(X^{\wh{J}})^{\iso}$, $(X^{i})^{\iso}$, and so on; and also that a morphism $f \colon X \to Y$ of groupoids in $\CC$ is an isomorphism if and only if $f_* \colon \CC(A,X) \to \CC(A,Y)$ is a bijection on objects for all groupoids $A \in \CC$.
\end{proof}

Now we come to the main point:
\begin{propn}\label{propn:lim-gpd-nongpd-equiv}
  If $\CC$ is a groupoidant 2-category, then for any $X \in \CC$ and finite category $J$, the conditions of Propositions~\ref{propn:fin-lims-equiv}~and~\ref{propn:gpd-fin-lims-equiv} are equivalent.
\end{propn}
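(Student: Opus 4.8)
My plan is to reduce everything to a comparison of the two conditions~(i), since within each of Propositions~\ref{propn:fin-lims-equiv} and~\ref{propn:gpd-fin-lims-equiv} the conditions (i) and (ii) are already known to be equivalent. Thus it suffices to prove that ``$\CC(A,X)$ has stable $J$-limits for all $A$'' is equivalent to ``$\CC(A,X)$ has stable $J$-limits for all \emph{groupoids} $A$ (stably along maps of groupoids).'' One direction is immediate by restriction. For the converse I start from the section $\ell \colon (X^J)^\iso \to (X^{\wh J})^\iso$ of $(X^i)^\iso$ furnished by Proposition~\ref{propn:gpd-fin-lims-equiv}, and I aim to manufacture from it a genuine morphism $\ell' \colon X^J \to X^{\wh J}$ satisfying condition~(ii) of Proposition~\ref{propn:fin-lims-equiv}.

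The key tool is the density of groupoids, Theorem~\ref{thm:gpd-yoneda-rstr}. First I would define, for each groupoid $A$, a functor $F_A \colon \CC(A,X^J) \to \CC(A,X^{\wh J})$: on an object $u$ (necessarily an arrow-wise iso, so $u = u^\iso i$) set $F_A(u) = u^\iso\ell i$, which is a limit cone over $u$ by condition~(ii-a) of Proposition~\ref{propn:gpd-fin-lims-equiv}; on a morphism $\alpha$ let $F_A(\alpha)$ be the unique induced map of limit cones lying over $\alpha$. The crucial verification is that $\{F_A\}$ is a \emph{strict} natural transformation $(\wh{X^J})_\gpd \to (\wh{X^{\wh J}})_\gpd$: on objects this holds because core factorizations compose strictly, $(hu)^\iso = h\,u^\iso$, and $\ell$ is a single fixed morphism; on morphisms it holds because stability of the limits along a map $h$ of groupoids identifies $h^*F_A(u)$ with $F_{A'}(hu)$, whence $h^*F_A(\alpha)$ and $F_{A'}(h^*\alpha)$ are both the unique cone map over $h^*\alpha$. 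Theorem~\ref{thm:gpd-yoneda-rstr}(iii) then produces a unique $\ell'$ with $(\ell')_* = F_A$ on groupoids, and Theorem~\ref{thm:gpd-yoneda-rstr}(i), together with the facts that $\ell$ is a section of $(X^i)^\iso$ and that each $F_A(\alpha)$ lies over $\alpha$, gives $\ell'X^i = \id_{X^J}$.

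It remains to check condition~(ii-b) for $\ell'$: forming the strict pullback $Y$ of the isofibration $X^{i_1}$ along $\ell'$, the composite $\pi_0 X^{i_0} \colon Y \to X^{\wh J}$ should be an isomorphism. Here I would exploit that a strictly fully faithful 2-functor reflects isomorphisms, so that Theorem~\ref{thm:gpd-yoneda-rstr} lets me test whether $\pi_0 X^{i_0}$ is invertible by testing whether $(\pi_0 X^{i_0})_* \colon \CC(A,Y)\to\CC(A,X^{\wh J})$ is an isomorphism of categories for every \emph{groupoid} $A$. Applying the limit-preserving 2-functor $\CC(A,-)$ carries the pullback $Y$ to the pullback of categories appearing in the proof of Proposition~\ref{propn:fin-lims-equiv}, with $(\ell')_*$ replaced by $F_A$; since $F_A$ produces limit cones, the very computation proving (ii-a)$\To$(ii-b) there, now carried out inside $\CC(A,X)$, shows $(\pi_0 X^{i_0})_*$ is an isomorphism. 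This yields condition~(ii), and hence condition~(i), of Proposition~\ref{propn:fin-lims-equiv}. I expect the main obstacle to be the strict naturality of $\{F_A\}$ in the second paragraph --- it is precisely there that one must lean on the fixed morphism $\ell$ and on the stability hypothesis rather than on a merely pointwise choice of limits --- with the reduction of the isomorphism condition to groupoids being the decisive, but more routine, use of groupoidant-ness.
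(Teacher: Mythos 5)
Your proposal follows the paper's proof almost step for step: the reduction to one implication, the construction of $F_A(u) = u^\iso\ell i$ with morphism action determined by lying over $\alpha$, the verification of strict naturality via $(hu)^\iso = hu^\iso$ and stability, the invocation of Theorem~\ref{thm:gpd-yoneda-rstr}~(iii) to produce $\ell'$, and the verification of condition (ii-b) by testing against groupoids and using that the restricted Yoneda embedding reflects isomorphisms.

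There is, however, one under-justified step, and it is precisely the one you dismiss as ``more routine.'' The computation in the proof of Proposition~\ref{propn:fin-lims-equiv} establishing (ii-a)~$\To$~(ii-b) shows only that for each $v \colon A \to X^{\wh{J}}$ there is a \emph{unique object} $y \colon A \to Y$ with $y\,\pi_0 X^{i_0} = v$; that is, carried out inside $\CC(A,X)$ for a fixed groupoid $A$, it gives bijectivity of $(\pi_0 X^{i_0})_*$ \emph{on objects} only. In Proposition~\ref{propn:fin-lims-equiv} this sufficed because $A$ ranged over all of $\CC$, so the 1-categorical Yoneda lemma for $\abs{\CC}$ produced the inverse morphism. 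In your setting $A$ ranges only over groupoids, and, as you yourself correctly state, what must be verified is that $(\pi_0 X^{i_0})_*$ is an \emph{isomorphism of categories} for each groupoid $A$ --- bijective on morphisms as well as objects --- since only then does the strictly fully faithful restricted Yoneda embedding reflect invertibility. Bijectivity on morphisms does not follow from the cited computation; it requires a separate (easy) argument: given two diagrams of shape $\wwh{J}$ whose restrictions along $i_1$ are limit cones, any morphism between their restrictions along $i_0$ extends uniquely to a morphism of the $\wwh{J}$-diagrams (and similarly, morphisms of limit cones of shape $\wh{J}$ are uniquely determined by their restrictions to $J$). This is exactly the content of the parenthetical remark in the paper's proof (``By a similar argument, one can also show that $(\pi_0X^{i_0})_*$ is in fact an isomorphism\dots''). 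With that supplement your argument is complete and coincides with the paper's.
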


\begin{proof}
  Clearly, the condition Proposition~\ref{propn:fin-lims-equiv}.(i) is stronger than Proposition~\ref{propn:gpd-fin-lims-equiv}.(i), so we only need to prove the reverse implication.

  Thus, fix a morphism $\ell \colon (X^J)^\iso \to X^{\wh{J}}$ as in Proposition~\ref{propn:fin-lims-equiv}.(ii), so that, for each diagram $u \colon A \to X^J$ with $A$ a groupoid, we have a stable limit diagram $u^\iso \ell i \colon A \to X^{\wh{J}}$ over $u$.

  For each groupoid $A$, we then have a functor $F_A \colon \CC(A,X^J) \to \CC(A,X^{\wh{J}})$ taking each diagram $u$ to its chosen stable limit diagram $u^\iso \ell i$, and whose action on morphisms is determined by the additional condition $F_A (X^{i})_* = \id_{\CC(A,X^J)}$.

  Given $u \colon A' \to A$, we have, for each $v \colon A \to X^J$, that $u \cdot F_A(v) = u v^\iso \ell i = F_{A'}(uv)$, and likewise with morphisms in $\CC(A,X^J)$.
  Hence we see that the $F_A$ constitute a natural transformation $F \colon (X^J)_\gpd \to (X^{\wh{J}})_\gpd$.

  Thus, by Theorem~\ref{thm:gpd-yoneda-rstr}~(iii), there is a (unique) morphism $\ell' \colon X^J \to X^{\wh{J}}$ with $F = \ell'_*$.
  It follows that $\ell'$ satisfies the condition of Proposition~\ref{propn:fin-lims-equiv}~(ii-a) with respect to \emph{groupoids} $A$ and hence, by the proof of that proposition, that $(\pi_1X^{i_0})_* \colon \CC(A,Y) \to \CC(A,X^{\wh{J}})$ is a bijection on objects for all groupoids $A$.
  By a similar argument, one can also show that $(\pi_1X^{i_0})_*$ is in fact an isomorphism.
  (Here, one uses that there is a unique morphism of limit diagrams of shape $\wh{J}$ extending any given morphism of their restrictions to $J$---and similarly, there is a unique morphism of diagrams of shape $\wwh{J}$ extending a given morphism between their restrictions along $\wh{J} \htox{i_0} \wwh{J}$, assuming the restrictions along $\wh{J} \htox{i_1} \wwh{J}$ are limit diagrams.)

  Hence, using Theorem~\ref{thm:gpd-yoneda-rstr}~(i)~and~(iii), we conclude that $\pi_1X^{i_0}$ is an isomorphism, so $\ell'$ satisfies the condition Proposition~\ref{propn:fin-lims-equiv}~(ii-b).
\end{proof}

\subsection{Power objects}\label{subsec:gpd-stuff-powers}
We now discuss power objects.
We begin by observing that they can be characterized in a similar way to the characterization of finite limits at the beginning of \S\ref{subsec:gpd-stuff-finite-lims} above.

We define the following finite categories.
\begin{equation}\label{eq:pow-pbt-sketches}
  \abs{K} = \pbig{0}
  \quad
  \abs{\wc{K}} = \left(
    \begin{tikzcd}[sep=13pt,font=\small]
      &3\ar[d, >->]\\
      &2\ar[ld]\ar[rd]\ar[ld]\ar[d, phantom, "\lrcorner"{rotate=-45, pos=0}]\\
      0&{}&1
    \end{tikzcd}
  \right)
  \quad
  \abs{\wwc{K}} = \left(
    \begin{tikzcd}[sep=13pt,font=\small]
      &3'\ar[rr]\ar[d, >->]\pb&{}&3\ar[d, >->]\\
      &2'\ar[rr]\ar[rd]\ar[ld]\ar[d, phantom, "\lrcorner"{rotate=-45, pos=0}]&{}&2\ar[ld]\ar[rd]\ar[d, phantom, "\lrcorner"{rotate=-45, pos=0}]\\
      1'\ar[rrrr, bend right=8pt]&{}&0&{}&1
    \end{tikzcd}
  \right)
\end{equation}
Each category is a preorder generated by the displayed graph.

Note that we have an inclusion $i \colon \abs{K} \to \abs{\wc{K}}$, as well as two embeddings $i', i'' \colon \abs{\wc{K}} \to \abs{\wwc{K}}$, the first being an inclusion, and the second taking $1,2,3$ to $1',2',3'$.

Moreover, we make $\abs{K}$ into a pbt sketch (see Appendix~\ref{sec:pbt-skectches}) $K = (\abs{K},P_K,T_K)$, and likewise with $\abs{\wc{K}}$ and $\abs{\wwc{K}}$, in the way indicated in the diagram; and we see that the functors $i,i',i''$ then become morphisms of sketches.
Here, the monomorphisms $\tto$ are encoded in a pbt sketch as in the proof of Corollary~\ref{cor:pbt-monos}, and the ``\rotatebox[origin=r]{-45}{$\lrcorner$}'' symbols indicate product diagrams.
(Note that in order to represent the desired products as pullbacks, we must add an additional object to the diagram and declare it to be terminal---or more directly, we can just extend the notion of pbt sketch to also allow for the specification of binary products.
We assume one of these two things has been done.)

Now, given a pita 2-category $\CC$, observe that for an object $X \in \CC$ with finite limits, $X$ has power objects if and only if for each groupoid $A \in \CC$ and each $u \colon A \to X^{K} = X$, there is a morphism $\bar{u} \colon A \to X^{\wc{K}}$ with $\bar{u} X^i = u$ and such that, for any $v \colon A \to X^{\wc{K}}$ with $v X^{i} = u$, there is a unique $w \colon A \to X^{\wwc{K}}$ with $w X^{i} = \bar{u}$ and $w X^{i'} = v$ (compare (\ref{eq:diag-limt-cond})).
\[
  \begin{tikzcd}
    &[30pt]
    X^{\wwc{K}}
    \ar[d, "X^{i'}"', shift right, pos=0.6]
    \ar[d, "X^{i}", shift left]\\
    &X^{\wc{K}}\ar[d, "X^{i}"]\\
    A\ar[r, "u"']
    \ar[ru, "v", shift left]
    \ar[ru, "\bar{u}"', shift right]
    \ar[ruu, "w", bend left=10pt]
    &X
  \end{tikzcd}
\]

The proof of Proposition~\ref{propn:gpd-fin-lims-equiv} now carries over immediately to the present context to give:
\begin{propn}\label{propn:gpd-pow-objs-equiv}
  Let $\CC$ be a corepita 2-category.
  Then for any $X \in \CC$ with finite limits, the following are equivalent:
  \begin{enumerate}
  \item[(i)] $X$ has power objects (i.e., is an internal elementary topos).
  \item[(ii)] There exists a morphism $P \colon X^\iso = (X^K)^\iso \to (X^{\wc{K}})^\iso$ with $P (X^{i})^{\iso} = \id_{X^\iso}$ and satisfying the following equivalent conditions:
    \begin{enumerate}
    \item[(ii-a)] For each groupoid $A \in X$ and each $u \colon A \to X$, the model $u^\iso P i \colon A \to X^{\wc{K}}$ of $\wc{K}$ in $\CC(A,X)$ is a power object diagram.
    \item[(ii-b)] If we form a strict pullback of the isofibration $(X^{i'})^{\iso} \colon (X^{\wwc{K}})^\iso \to (X^{\wc{K}})^\iso$ along $P$
      \[
        \begin{tikzcd}
          Y\ar[r, "\pi_1"]\ar[d, "\pi_0"']\pb&(X^{\wwc{K}})^{\iso}\ar[d, "(X^{i'})^{\iso}"]\ar[r, "(X^{i''})^{\iso}"]&(X^{\wc{K}})^{\iso}\\
          X^{\iso}\ar[r, "P"]&(X^{\wc{K}})^{\iso}
          &
          X^\iso
          \ar[from=l, "(X^{i})^\iso"]
          \ar[from=u, "(X^{i})^\iso"]
          ,
        \end{tikzcd}
      \]
      then the composite $\pi_1(X^{i''})^{\iso}$ is an isomorphism.
      \hfill
      \qed
    \end{enumerate}
  \end{enumerate}
\end{propn}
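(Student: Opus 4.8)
The plan is to exploit the near-identical structure between this proposition and Proposition~\ref{propn:gpd-fin-lims-equiv}, whose proof I am instructed to carry over. The key observation is that the finite categories $\abs{K}$, $\abs{\wc{K}}$, $\abs{\wwc{K}}$, together with the morphisms of pbt-sketches $i,i',i''$, play \emph{exactly} the role that $J$, $\wh{J}$, $\wwh{J}$ and $i,i_0,i_1$ played in the finite-limits case. Indeed, the diagram immediately preceding the statement, characterizing when $X$ has power objects via the existence of lifts $\bar{u}$ and the unique-extension condition on $w$, is the precise analogue of display~(\ref{eq:diag-limt-cond}). So first I would observe that, by Appendix~\ref{sec:pbt-skectches} (specifically Proposition~\ref{propn:sketch-cotensors-exist}, applicable since $X$ has finite limits and $\CC$ is pita), the cotensors $X^K$, $X^{\wc{K}}$, $X^{\wwc{K}}$ all exist; moreover $X^K = X$ since $K$ has a single object, and the induced morphisms $X^i$, $X^{i'}$, $X^{i''}$ are isofibrations because $i, i', i''$ are injective on objects (as noted at the end of \S\ref{subsec:limits}). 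This guarantees that the strict pullback of the isofibration $(X^{i'})^\iso$ along $P$ in condition (ii-b) exists.

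Next I would establish the equivalence of (i) with the ``groupoidal'' reformulation. The content here is exactly parallel to Proposition~\ref{propn:gpd-fin-lims-equiv}: a power object of $u \colon A \to X$ in $\CC(A,X)$ is the same as a lift $\bar{u}$ of $u$ along $X^i$ satisfying a unique-extension property with respect to $w \colon A \to X^{\wwc{K}}$, and the two clauses (ii-a) and (ii-b) translate this existence-and-uniqueness into, respectively, a pointwise statement about $P$ producing power-object diagrams and an isomorphism statement about $\pi_0(X^{i''})^\iso$. The crucial points I would carry over verbatim are: (1) that for a groupoid $A$ one may replace $X^K, X^{\wc{K}}, X^{\wwc{K}}$ and the structure maps by their cores, using the core's universal property (a morphism factors through the core iff it lands in the arrow-wise isos, which for groupoidal $A$ is automatic); and (2) that a morphism of groupoids $f$ in $\CC$ is an isomorphism iff $f_*$ is a bijection on objects for all groupoidal $A$, which is what turns ``$\pi_0(X^{i''})^\iso$ is pointwise bijective on objects over groupoids'' into ``$\pi_0(X^{i''})^\iso$ is an isomorphism''. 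The passage between (ii-a) and (ii-b) themselves is the bijection argument via $(\pi_0)_*$ exactly as in the proof of Proposition~\ref{propn:fin-lims-equiv}.

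One genuinely new wrinkle compared with the finite-limits case is the role of $X$ having finite limits as a standing hypothesis: power objects are only defined for objects with finite limits, and this is what licenses the existence of the sketch-cotensors and, via Remark following Definition~\ref{defn:power-objs}, the automatic stability of the relevant pullbacks and terminal objects appearing in the models of $K, \wc{K}, \wwc{K}$. I would note explicitly that because $X$ has finite limits, the models of the pbt-sketches in $\CC(A,X)$ that appear are automatically \emph{stable} models, so no separate stability bookkeeping beyond what the sketch-cotensor formalism already provides is needed. The main obstacle I anticipate is not conceptual but organizational: I must check that the unique-extension clause defining a power object (the universality of the monomorphism $\epsilon_X \tto X \times P(X)$) is faithfully encoded by the unique-$w$ condition in the displayed diagram, i.e.\ that the pbt-sketch $\wc{K}$ really does cut out ``power object diagram'' and $\wwc{K}$ cuts out ``morphism of power object diagrams''. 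Once that encoding is granted — which is exactly the remark made just before the statement — the proof is, as claimed, a transcription of the argument for Proposition~\ref{propn:gpd-fin-lims-equiv}, and I would simply write ``The proof of Proposition~\ref{propn:gpd-fin-lims-equiv} now carries over immediately'' and indicate the substitutions $J \rightsquigarrow K$, $\wh{J} \rightsquigarrow \wc{K}$, $\wwh{J} \rightsquigarrow \wwc{K}$, $i_0 \rightsquigarrow i$, $i_1 \rightsquigarrow i'$, with $i''$ supplying the second comparison map.
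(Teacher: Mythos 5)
Your proposal is correct and follows essentially the same route as the paper: the paper's entire proof consists of the observation (stated just before the proposition) that having power objects is equivalent to the lift/unique-extension condition on the sketch-cotensors $X^K$, $X^{\wc K}$, $X^{\wwc K}$, followed by the remark that the proof of Proposition~\ref{propn:gpd-fin-lims-equiv} carries over immediately, which is exactly your plan including the two key points (replacing cotensors by their cores over groupoidal $A$, and detecting isomorphisms of groupoids via bijectivity on objects against groupoids).
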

Now let us see that we can construct the covariant and contravariant power object morphisms $X \to X$ and $X^\op \to X$ for any internal elementary topos $X$.

Given objects $A,B \in \cC$ in an elementary topos $\cC$, and power object diagrams on $A$ and $B$ with associated power objects $PA$ and $PB$, any morphism $f \colon A \to B$ induces, in a well-known manner, morphisms $f_* \colon PA \to PB$ and $f^* \colon PB \to PA$ (see e.g., \cite[IV.1,IV.3]{mac-lane-moerdijk}).
Thus, a choice of power object diagram for each $A \in \cC$ gives rise to covariant and contravariant power object functors $\pow \colon \cC \to \cC$ and $\pow \colon \cC^\op \to \cC$.

A choice of power object diagram for each object corresponds to a section $s \colon \cC^\iso \to (\cC^{\wc{K}})^\iso$ of the projection $(\cC^i)^\iso \colon (\cC^{\wc{K}})^\iso \to \cC^\iso$ (with $\wc{K}$ as in (\ref{eq:pow-pbt-sketches}) above) taking each object $A \in \cC$ to a power object diagram $s(A)$.

\begin{defn}\label{defn:power-morphism}
  Given a corepita 2-category $\CC$ and an internal elementary topos $X \in \CC$, we say that a morphism $\pow \colon X \to X$ is a \defword{covariant power morphism} if there exists a morphism $P \colon X^\iso \to (X^{\wc{K}})^\iso$ as in Proposition~\ref{propn:gpd-pow-objs-equiv} such that:
  \begin{enumerate}[(i)]
  \item For each groupoid $A \in \CC$ and $f \colon A \to X$, the object $f \pow \in \CC(A,X)$ is the power object underlying the power object diagram $f^\iso Pi \colon A \to X^{\wc{K}}$.
  \item For each groupoid $A$ and 2-cell $\alpha \colon A \tocellud{f}{g} X$, the morphism $\alpha \pow \colon f \pow \to g \pow$ in $\CC(A,X)$ is the morphism between power objects induced by $\alpha$ and the power object diagrams $f^\iso P i$ and $g^\iso P i$.
  \end{enumerate}
  A \defword{contravariant power morphism} $\pow \colon X^\op \to X$ is defined similarly, except that now for $f \colon A \to X$, it is $A \tox{f^\op} X^\op \tox{\pow} X$ which should be the power object underlying $f^\iso P i$, and for a 2-cell $\alpha \colon A \tocellud{f}{g} X$, the morphism $\alpha^\op \pow \colon g^\op \pow \to g^\op \pow$ should be the contravariant morphism induced by $\alpha$, $f^\iso P i$, and $g^\iso P i$.
\end{defn}

\begin{propn}\label{propn:power-morphisms-exist}
  Let $\CC$ be a groupoidant 2-category and suppose $X \in \CC$ is an internal elementary topos.
  Then there exist covariant and contravariant power morphisms for $X$, and they are each unique up to isomorphism.
\end{propn}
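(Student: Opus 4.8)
The plan is to build both power morphisms representably, using the fixed internal choice of power objects supplied by Proposition~\ref{propn:gpd-pow-objs-equiv}, and then invoke Theorem~\ref{thm:gpd-yoneda-rstr} to realize the resulting family of functors as an actual morphism in $\CC$.

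First I would fix a morphism $P \colon X^\iso \to (X^{\wc{K}})^\iso$ as in Proposition~\ref{propn:gpd-pow-objs-equiv}; it exists because $X$ is an internal elementary topos. For each groupoid $A \in \CC$ the category $\CC(A,X)$ has finite limits and power objects, hence is an (ordinary) elementary topos, and every $f \colon A \to X$ factors uniquely as $f^\iso i$ through the core. Thus $P$ assigns to each object $f \in \CC(A,X)$ the power object diagram $f^\iso P i \colon A \to X^{\wc{K}}$, whose value at the object $0$ of $\wc{K}$ is $f$ (using $P(X^i)^\iso = \id_{X^\iso}$) and whose value at $1$ is a power object of $f$. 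A choice of power object for every object of a topos determines a covariant power functor, so we obtain $\pow_A \colon \CC(A,X) \to \CC(A,X)$ sending $f$ to the value at $1$ of $f^\iso P i$ and each 2-cell $\alpha \colon f \to g$ to the induced morphism of power objects.

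Next I would check that $\set{\pow_A}$ is a strict natural transformation $(\wh{X})_\gpd \to (\wh{X})_\gpd$, i.e.\ that $\pow_A u^* = u^* \pow_{A'}$ for every $u \colon A' \to A$ of groupoids. On objects this is immediate: since $(uf)^\iso = u f^\iso$ by the universal property of the core, the diagram $(uf)^\iso P i$ equals the restriction $u \cdot (f^\iso P i)$, so reading off the value at $1$ commutes with $u^*$. On 2-cells one uses that, by Definition~\ref{defn:power-objs}, $u^* \colon \CC(A,X) \to \CC(A',X)$ preserves finite limits and power objects, hence carries the topos-theoretic construction of the induced morphism of power objects for $\alpha$ to that for $u^* \alpha$; functoriality of $\pow_A$ itself is just functoriality of the power functor in $\CC(A,X)$. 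With naturality established, Theorem~\ref{thm:gpd-yoneda-rstr}~(iii) produces a morphism $\pow \colon X \to X$ with $(\pow_*)_A = \pow_A$ for all groupoids $A$, which is by construction a covariant power morphism witnessed by $P$. For the contravariant case I would run the same argument through the opposite: using Proposition~\ref{propn:rep-op-defn}, define $G_A \colon \CC(A,X^\op) \to \CC(A,X)$ as the composite of $(-)^\op \colon \CC(A,X^\op) \toi \CC(A,X)^\op$ with the contravariant power functor $\CC(A,X)^\op \to \CC(A,X)$ determined by $P$, so that $G_A(f^\op)$ is the power object underlying $f^\iso P i$; naturality of $\set{G_A}$ combines the strict naturality of $(-)^\op$ with the same preservation argument applied to the contravariant induced morphisms, and Theorem~\ref{thm:gpd-yoneda-rstr}~(iii) yields $\pow \colon X^\op \to X$ with $(\pow_*)_A = G_A$.

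Finally, uniqueness up to isomorphism follows from Corollary~\ref{cor:yoneda-iso}. Given two covariant power morphisms $\pow,\pow'$, for each groupoid $A$ and each $f$ both $f\pow$ and $f\pow'$ are power objects of $f$ in $\CC(A,X)$, so there is a canonical isomorphism $\phi_A \colon \pow_A \toi \pow'_A$ of functors, natural in $f$. These canonical comparison isomorphisms are preserved by the power-object-preserving restrictions $u^*$, giving $\phi_A u^* = u^* \phi_{A'}$, so $\set{\phi_A}$ is an invertible modification and hence $\pow \cong \pow'$; the contravariant case is identical. The main obstacle I anticipate is the naturality verification in the second and third paragraphs — specifically, pinning down that the topos-theoretic formula for the induced (direct/inverse image) morphism of power objects is genuinely preserved by the restriction functors $u^*$, since this is precisely what lets the representable machinery of Theorem~\ref{thm:gpd-yoneda-rstr} apply.
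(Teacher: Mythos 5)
Your proposal is correct and follows essentially the same route as the paper's proof: fix $P$ from Proposition~\ref{propn:gpd-pow-objs-equiv}, define the functors $\CC(A,X) \to \CC(A,X)$ representably, verify strict naturality via the fact that power-object-preserving functors preserve the induced morphisms between power objects, apply Theorem~\ref{thm:gpd-yoneda-rstr}~(iii), and obtain uniqueness from Corollary~\ref{cor:yoneda-iso} by checking that the canonical comparison isomorphisms form a modification. Your treatment of the contravariant case via Proposition~\ref{propn:rep-op-defn} also matches the paper's.
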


\begin{proof}
  We treat the case of the covariant power morphism; the proof for the contravariant power morphism is similar, using (the first part of) Proposition~\ref{propn:rep-op-defn}.

  Fix a morphism $P \colon X^\iso \to (X^{\wc{K}})^\iso$ as in Proposition~\ref{propn:gpd-pow-objs-equiv}.

  For each groupoid $A \in \CC$, consider the functor $F_A \colon \CC(A,X) \to \CC(A,X)$ taking each $f$ to the power object underlying the power object diagram $f^\iso Pi$, and having the evident action on morphisms, as in (ii) above.

  An argument similar to that in the proof of Proposition~\ref{propn:lim-gpd-nongpd-equiv} shows that the $F_A$ can be made into a natural transformation $F \colon (\wh{X})_{\iso} \to (\wh{X})_{\iso}$.
  Here, we make use the fact that any functor preserving (finite limits and) power objects also preserves the induced morphisms between power objects.

  Hence by Theorem~\ref{thm:gpd-yoneda-rstr}, there exists a unique morphism $\pow \colon X \to X$ with $\pow_* = F$.

  To prove the uniqueness of $\pow$ up to isomorphism, it suffices by Corollary~\ref{cor:yoneda-iso} to show that given a second morphism $P' \colon X^\iso \to (X^{\wc{K}})^\iso$ as above, the resulting natural transformation $F' \colon (\wh{X})_{\iso} \to (\wh{X})_{\iso}$ is isomorphic to $F$.

  For each groupoid $A \in \CC$, we define a natural isomorphism $\phi_A \colon F_A \toi F_A'$ by letting $(\phi_A)_f$ for $f \colon A \to X$ be the morphism $F_A(f) \to F_A'(f)$ induced by $\id_f$ with respect to the chosen power object diagrams $f^\iso Pi$ and $f^\iso P'i$.
  One then checks that the $\phi_A$ constitute a modification, again using the aforementioned fact that functors preserving power objects also preserve the induced morphisms.
\end{proof}

\subsection{Exponentials}\label{subsec:exponentials}
We now turn to the case of cartesian closed objects $X \in \CC$.
We have not yet introduced this concept, but there is an obvious definition analogous to that of $X$ having power objects: $X$ is \defword{cartesian closed} if for each groupoid $A \in \CC$, the category $\CC(A,X)$ is cartesian closed, and for each $f \colon A' \to A$ with $A'$ a groupoid, the functor $f^ * \colon \CC(A,X) \to \CC(A',X)$ preserves products and exponentials.

As with power objects, there is no obvious way to extend this condition to non-groupoidal $A$, because exponentials have the opposite covariance in their two arguments---and moreover, it is simply not the case in $\Cat$ for a general non-groupoidal $A$ that if $X$ is cartesian closed, then so is $\Cat(A,X)$.

Now, with the same conventions as in (\ref{eq:pow-pbt-sketches}) above, define the following finite tbp sketches (see Proposition~\ref{propn:tbp-sketch}).
\[
  L = \left(
    \begin{tikzcd}[sep=13pt,font=\small]
      0&1
    \end{tikzcd}
  \right)
  \quad
  \wc{L} = \left(
    \begin{tikzcd}[sep=13pt,font=\small]
      2&3\ar[l]\ar[rd]\ar[rrd, bend left]\ar[d, phantom, "\lrcorner"{rotate=-45, pos=0}]\\
      &{}&0&1
    \end{tikzcd}
  \right)
  \quad
  \wwc{L} = \left(
    \begin{tikzcd}[sep=13pt,font=\small]
      2&3\ar[l]\ar[rd]\ar[rrd, bend left]\ar[d, phantom, "\lrcorner"{rotate=-45, pos=0}]\\
      &{}&0&1\\
      2'\ar[uu]&3'\ar[l]\ar[ru]\ar[uu, shorten={10pt}]\ar[rru, bend right]
      \ar[u, phantom, "\urcorner"{rotate=45, pos=0}]
    \end{tikzcd}
  \right)
\]
Again, we have an inclusion $i \colon L \to \wc{L}$ and embeddings $i',i'' \colon \wc{L} \to \wwc{L}$

The same proof as that of Propositions~\ref{propn:gpd-fin-lims-equiv}~and~\ref{propn:gpd-pow-objs-equiv} gives:
\begin{propn}\label{propn:gpd-exp-equiv}
  Let $\CC$ be a corepita 2-category.
  Then for any $X \in \CC$ with finite products, the following are equivalent:
  \begin{enumerate}
  \item[(i)] $X$ is cartesian closed.
  \item[(ii)] There exists a morphism $E \colon (X \times X)^\iso = (X^L)^\iso \to (X^{\wc{L}})^\iso$ with $E (X^{i})^{\iso} = \id_{(X \times X)^\iso}$ and satisfying the following equivalent conditions:
    \begin{enumerate}
    \item[(ii-a)] For each groupoid $A \in X$ and each $f,g \colon A \to X$, the model $\br{f,g}^\iso E i \colon A \to X^{\wc{L}}$ of $\wc{L}$ in $\CC(A,X)$ is an exponential object diagram.
    \item[(ii-b)] If we form a strict pullback along the isofibration $(X^{i'})^{\iso} \colon (X^{\wwc{L}})^\iso \to (X^{\wc{L}})^\iso$ along $E$
      \[
        \begin{tikzcd}
          P\ar[r, "\pi_1"]\ar[d, "\pi_0"']\pb&(X^{\wwc{L}})^{\iso}\ar[d, "(X^{i'})^{\iso}"]\ar[r, "(X^{i''})^{\iso}"]&(X^{\wc{L}})^{\iso}\\
          (X \times X)^{\iso}\ar[r, "E"]&(X^{\wc{L}})^{\iso}
          &
          (X \times X)^\iso
          \ar[from=l, "X^{i}"]
          \ar[from=u, "X^{i}"]
          ,
        \end{tikzcd}
      \]
      then the composite $\pi_1(X^{i''})^{\iso}$ is an isomorphism.
      \hfill
      \qed
    \end{enumerate}
  \end{enumerate}
\end{propn}

Next, let us see that we can construct the exponential functor for a cartesian closed object $X \in \CC$ in a groupoidant 2-category $\CC$.

For such an $X$, we define the notion of \defword{exponential morphism} $\exp \colon X^\op \times X \to X$ in an analogous manner to that of power morphism (Definition~\ref{defn:power-morphism}): there should be a morphism $E \colon X^\iso \to (X^{\wc{L}})^\iso$ as in Proposition~\ref{propn:gpd-exp-equiv} such that, for each $f,g \colon A \to X$ with $A$ a groupoid, the object $\br{f^\op,g}\exp \in \CC(A,X)$ is the exponential object underlying the diagram $\br{f^\op,g}^\iso Ei$, and given $f',g' \colon A \to X$ and 2-cells $\alpha \colon f' \to f$ and $\beta \colon g \to g'$, the 2-cell $\br{\alpha^\op,\beta} \colon \br{f^\op,g} E \to \br{(f')^\op,g'} E$ is the corresponding induced morphism.

The proof of Proposition~\ref{propn:power-morphisms-exist} carries over to the present context to give:
\begin{propn}\label{propn:exp-morphisms-exist}
  If $\CC$ is a groupoidant 2-category, then for any cartesian closed object $X \in \CC$, there is an exponential morphism $\exp \colon X^\op \times X \to X$, and it is unique up to isomorphism.
\end{propn}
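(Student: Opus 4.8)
The plan is to transcribe the proof of Proposition~\ref{propn:power-morphisms-exist} almost verbatim, the only genuinely new ingredient being the bookkeeping forced by the mixed variance of the exponential and the consequent appearance of the opposite object $X^\op$ in the domain. First I would fix a morphism $E \colon (X \times X)^\iso \to (X^{\wc{L}})^\iso$ as in Proposition~\ref{propn:gpd-exp-equiv}; this provides, for each groupoid $A$ and each pair $f,g \colon A \to X$, a chosen exponential object diagram $\br{f^\op,g}^\iso E i$ in $\CC(A,X)$, together with the object $\br{f^\op,g}\exp$ underlying it, as in the definition of exponential morphism preceding the statement.

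Next, for each groupoid $A$, I would assemble these choices into a functor $G_A \colon \CC(A,X^\op \times X) \to \CC(A,X)$. The canonical isomorphism $\CC(A,X^\op \times X) \toi \CC(A,X^\op) \times \CC(A,X)$, composed with the representable description $(-)^\op \colon \CC(A,X)^\op \toi \CC(A,X^\op)$ from Proposition~\ref{propn:rep-op-defn}, identifies the domain with $\CC(A,X)^\op \times \CC(A,X)$; under this identification $G_A$ is simply the exponential functor of the cartesian closed category $\CC(A,X)$, sending $(f,g)$ to the exponential object underlying $\br{f^\op,g}^\iso E i$, with the contravariant--covariant action on morphisms by the induced maps between exponential diagrams prescribed in the definition. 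That $G_A$ is a well-defined functor is precisely the cartesian closedness of $\CC(A,X)$.

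I would then verify that the $G_A$ are strictly natural in $A$, i.e.\ that $G_A\, u^* = u^* G_{A'}$ for every morphism $u \colon A' \to A$ between groupoids, so that they constitute a strict natural transformation $G \colon (\wh{X^\op \times X})_\gpd \to (\wh{X})_\gpd$. This is where the argument of Proposition~\ref{propn:lim-gpd-nongpd-equiv} (and of Proposition~\ref{propn:power-morphisms-exist}) is reused, now invoking three facts: that $u^*$ preserves finite products, exponentials, and the induced morphisms between exponentials (by the definition of $X$ being cartesian closed), and that $(-)^\op$ is strictly natural in $A$ (Proposition~\ref{propn:rep-op-defn}, resp.\ Proposition~\ref{propn:hom-op-representably}), so that the contravariant slot is transported compatibly. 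Theorem~\ref{thm:gpd-yoneda-rstr}~(iii) then produces a unique $\exp \colon X^\op \times X \to X$ with $\exp_* = G$, which by construction is an exponential morphism. For uniqueness up to isomorphism I would argue exactly as in the final paragraph of Proposition~\ref{propn:power-morphisms-exist}: given a second choice $E'$ with resulting transformation $G'$, the morphisms between the exponential objects underlying $\br{f^\op,g}^\iso E i$ and $\br{f^\op,g}^\iso E' i$ induced by the identities define natural isomorphisms $\phi_A \colon G_A \toi G_A'$, which form an invertible modification $G \toi G'$ (again using preservation of the induced morphisms), whence $\exp \cong \exp'$ by Corollary~\ref{cor:yoneda-iso}.

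I expect the only real obstacle to be the naturality verification of the third paragraph: keeping the $(-)^\op$ identifications of the contravariant variable straight while simultaneously invoking preservation of exponentials by $u^*$, so that one genuinely lands with domain $X^\op \times X$ rather than $X \times X$. Everything else is a routine adaptation of the power-object proof, and I would only sketch those parts, referring the reader to Propositions~\ref{propn:lim-gpd-nongpd-equiv}~and~\ref{propn:power-morphisms-exist} for the details.
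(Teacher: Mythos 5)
Your proposal is correct and matches the paper exactly: the paper's own proof consists precisely of the observation that the argument for Proposition~\ref{propn:power-morphisms-exist} carries over, with the mixed variance handled via the representable description of $(-)^\op$ from Propositions~\ref{propn:rep-op-defn}~and~\ref{propn:hom-op-representably}, followed by Theorem~\ref{thm:gpd-yoneda-rstr} for existence and Corollary~\ref{cor:yoneda-iso} for uniqueness. Your fuller write-up of the naturality bookkeeping is a faithful expansion of what the paper leaves implicit.
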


Finally, we point out that the notion of $X$ being cartesian closed discussed here is (assuming $\CC$ is groupoidant!) stronger than the one given in \cite[Definition~8.1]{weber-2-toposes}:
\begin{propn}\label{propn:weak-cart-clos}
  Let $\CC$ be a groupoidant 2-category.
  If $X \in \CC$ is cartesian closed in the above sense, then for any morphism from the terminal object $x \colon \tm \to X$, the morphism $(- \times x) \colon X \to X$ defined as the composite
  \[
    X \toi X \times \tm \tox{\id_X \times x} X \times X \tox{m} X,
  \]
  has a right adjoint, where $m$ is a right adjoint to $\Delta \colon X \to X \times X$.
\end{propn}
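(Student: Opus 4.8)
The plan is to exhibit an explicit right adjoint built out of the exponential morphism $\exp \colon X^\op \times X \to X$ of Proposition~\ref{propn:exp-morphisms-exist}, and then to verify the adjunction ``representably'' on groupoids by means of Theorem~\ref{thm:gpd-yoneda-rstr}. First I would fix an exponential morphism $\exp$ for $X$ and, writing $x^\op \colon \tm \to X^\op$ for the image of $x$ under the isomorphism $(-)^\op \colon \CC(\tm,X) \to \CC(\tm,X^\op)$ of Proposition~\ref{propn:rep-op-defn} (recall that $\tm$ is a groupoid), take as the candidate right adjoint the composite
\[
  R \colon X \toi \tm \times X \tox{x^\op \times \id_X} X^\op \times X \tox{\exp} X.
\]
Writing $L = (- \times x)$ for the morphism in the statement, the goal is then to produce a unit $\eta \colon \id_X \to LR$ and counit $\epsilon \colon RL \to \id_X$ satisfying the triangle identities.

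The key computation is to identify the functors $L_*, R_* \colon \CC(A,X) \to \CC(A,X)$ for each groupoid $A \in \CC$. Denote by $x_A \colon A \to \tm \tox{x} X$ the constant morphism. Since $m$ is right adjoint to $\Delta$, the functor $m_* \colon \CC(A,X \times X) \to \CC(A,X)$ is the binary product of $\CC(A,X)$, and unwinding the definition of $L$ gives $L_*(g) = g \times x_A$ for $g \colon A \to X$. On the other side, unwinding $R$ and using the naturality $(x_A)^\op = {!_A} \cdot x^\op$ of the opposite operation (Proposition~\ref{propn:rep-op-defn}), we get $R_*(g) = \br{(x_A)^\op,g}\exp$, which by the defining property of the exponential morphism (Proposition~\ref{propn:gpd-exp-equiv}) is precisely the exponential object $g^{x_A}$ in $\CC(A,X)$; moreover $R_*$ carries morphisms and $2$-cells to the induced maps between exponentials. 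Thus $L_* \dashv R_*$ for each groupoid $A$, the (co)unit being that of the cartesian-closed structure of $\CC(A,X)$ --- which exists precisely because $A$ is a groupoid and $X$ is cartesian closed in our sense.

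Finally, I would assemble these pointwise adjunctions into an adjunction $L \dashv R$ in $\CC$. Since $L_*$ and $R_*$ are postcomposition with the fixed morphisms $L$ and $R$, each restriction functor $u^* \colon \CC(A,X) \to \CC(A',X)$ (for $u \colon A' \to A$) commutes with them strictly; and because $X$ is cartesian closed, $u^*$ preserves products and exponentials and hence carries the unit of $L_* \dashv R_*$ over $A$ to the one over $A'$. This naturality is exactly the modification condition, so the family $\{\eta_A\}$ of units defines, via Theorem~\ref{thm:gpd-yoneda-rstr}~(iv), a $2$-cell $\eta \colon \id_X \to LR$ with $(\eta_*)_A = \eta_A$, and likewise a counit $\epsilon \colon RL \to \id_X$. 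The two triangle identities for $(\eta,\epsilon)$ then hold because applying $(-)_*$ turns them into the triangle identities for $\eta_A,\epsilon_A$ in each $\CC(A,X)$, whence they follow from Theorem~\ref{thm:gpd-yoneda-rstr}~(ii). I expect the main obstacle to be precisely this last assembly step --- specifically, checking that the pointwise units are natural in $A$ (i.e.\ form a genuine modification), which is where the stability clause of cartesian closedness (that $f^*$ preserves exponentials) is indispensable; the identification $R_* \cong (-)^{x_A}$, by contrast, is bookkeeping around the definitions of $\exp$ and the opposite.
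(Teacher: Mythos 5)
Your proposal follows the paper's own proof almost exactly: the same candidate right adjoint $\br{!_X x^\op,\id_X}\exp$ built from the exponential morphism of Proposition~\ref{propn:exp-morphisms-exist}, the same pointwise identification of $L_*$ and $R_*$ over groupoids, and the same assembly of the pointwise adjunctions into $2$-cells $\eta,\epsilon$ via Theorem~\ref{thm:gpd-yoneda-rstr}~(ii)~and~(iv). However, two of your steps are looser than what the paper does, and the first is a genuine (if repairable) gap. When you say the counit of $L_* \dashv R_*$ is ``that of the cartesian-closed structure of $\CC(A,X)$'', this does not typecheck as stated: the evaluation morphism supplied by the exponential diagram $\br{!_A x, g}^\iso E i$ has as its domain the product object \emph{internal to that diagram} (the image of the vertex $3$ of $\wc{L}$), whereas $L_*R_*(g)$ is the product formed by $m$. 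These two products are canonically isomorphic but need not be equal, so the evaluation morphism is not literally a component of a natural transformation $L_*R_* \to \id$. The paper handles this by normalizing the choice of $E$: writing $j \colon S \to \wc{L}$ for the inclusion of the product part of the sketch, one may assume the product diagram $E X^{j}$ embedded in $E$ is exactly the one determined by $m$, after which the evaluation morphisms really are counit components. (Alternatively one can insert the canonical isomorphism, but then its strict compatibility with restriction must also be verified.)

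The second point concerns your justification of the modification condition. Preservation of products and exponentials by $u^*$ does not by itself imply that $u^*$ carries the unit over $A$ to the unit over $A'$: for fixed functors $L_* \dashv R_*$, the unit/counit data is not unique, and preservation only gives agreement up to canonical isomorphism, whereas a modification requires the strict equality $\eta_A u^* = u^* \eta_{A'}$ demanded in Theorem~\ref{thm:gpd-yoneda-rstr}~(iv). What actually makes the step work --- and what the paper uses --- is that the chosen diagrams restrict \emph{strictly}: $u\br{!_A x, g}^\iso E i = \br{!_{A'}x, ug}^\iso E i$ (and likewise for the $m$-product data), because everything in sight is post-composition with fixed morphisms and $2$-cells of $\CC$; uniqueness in the universal properties then forces the restricted (co)units to coincide on the nose. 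With these two repairs your argument goes through and coincides with the paper's proof.
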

\begin{proof}
  We fix an exponential morphism $\exp \colon X^\op \times X \to X$ and define our putative right adjoint $(-)^x \colon X \to X$ as the composite
  \[
    X
    \tox{\br{!_Xx^\op,\id}}
    X^\op \times X
    \tox{\exp}
    X,
  \]
  where $! = !_X \colon X \to \tm$ is the morphism to the terminal object.
  We want to see that this is indeed a right adjoint.

  First, a few preliminary remarks.
  By definition, associated with the exponential morphism $\exp$ is the morphism
  $E \colon (X \times X)^\iso \to (X^{\wc{L}})^\iso$.

  Let us write $S$ for the tbp-sketch with a single product diagram $S = (2 \ot 3 \to 0)$, so that we have an inclusion $j \colon S \to \wc{L}$.
  Note that the composite $E i X^{j} \colon (X \times X)^\iso \to X^S$ is a product diagram over the objects
  \[
    \br{\pi_0^\iso i^\op,\pi_1^\iso i}\exp
    ,\ \pi_0^\iso i^\op
    \in \CC((X \times X)^\iso,X).
  \]
  On the other hand, the morphism $m$ (or rather the counit of the corresponding adjunction) also determines a product diagram over these objects.

  We can assume that $E$ has been so chosen so that these agree; i.e., that the product appearing in the universal exponential object $E$ is the one determined by $m$.

  Now, fix a groupoid $A \in \CC$.
  Since $\CC(A,-) \colon \CC \to \Cat$ preserves adjunctions, we have that $\CC(A,X) \times \CC(A,X) \toi \CC(A,X \times X) \tox{m_*} \CC(A,X)$ is a product functor, and in particular that the functor $\pbig{(- \times x)_*}_A \colon \CC(A,X) \to \CC(A,X)$ takes each object $f \in \CC(A, X)$ to a product $f \times (!x) \in \CC(A,X)$ in the category \( \CC(A,X) \), and takes each morphism $f \to f'$ in \( \CC(A,X) \) to the induced morphism $f \times (!x) \to f' \times (!x)$.

  On the other hand, by definition of the exponential morphism $\exp$, the functor $\pbig{(-)^x_*}_A \colon \CC(A,X) \to \CC(A,X)$ is just the functor forming an exponential object with $!x \in \CC(A,X)$.

  It follows that we have an adjunction $\pbig{(- \times x)_*}_A\dashv\pbig{(-)^x_*}_A$ for each groupoid $A$.
  We claim that the unit and counit of these adjunctions constitute modifications $\id_{(\wh{X})^\iso} \to (- \times x)_* \pbig{(-)^x}_*$ and $\pbig{(-)^x}_* (- \times x)_* \to \id_{(\wh{X})^\iso}$, respectively .

  Indeed, for fixed $f \colon A \to X$ ($A$ a groupoid), the counit at $A$ is just the evaluation morphism of the exponential diagram for $f^{!x}$.
  This exponential diagram is
  \[
    A \tox{\br{!x,f}^\iso} (X \times X)^\iso \tox{Ei} X^{\wh{L}}
  \]
  and hence the evaluation morphism is the image of the arrow $3 \to 1$ of $\wc{L}$ in this model of $\wc{L}$.
  This defines a modification because, for $u \colon A' \to A$, we have $u \br{!x,f}^\iso E i = \br{!x,uf}^\iso E i$.

  Similarly, for the unit, we have the canonical morphism $\id_X \to \br{!x^\op,\br{!x,\id_X} m} \exp$ in $\CC(X,X)$ and the unit at $f \colon A \to X$ is just given by composing with this morphism, which again clearly determines a modification.

  We conclude from Theorem~\ref{thm:gpd-yoneda-rstr}~(iv) that we have 2-cells $\eta \colon \id_X \to (- \times x)(-)^x$ and $\varepsilon \colon (-)^x(- \times x) \to \id_X$ giving rise to these modifications, and it then follows from Theorem~\ref{thm:gpd-yoneda-rstr}~(ii) that these are a unit and counit giving the desired adjunction.
\end{proof}

\begin{rmk}\label{rmk:cocompleteness}
  In a 2-topos in our sense (more specifically, in a groupoidant 2-category), we have that any plentiful DOF classifier is (an internal elementary topos and hence) cartesian closed.
  On the other hand, in \cite[Theorem~8.6]{weber-2-toposes}, it is proven that a DOF classifier $\s$ is cartesian closed (in the sense of Proposition~\ref{propn:weak-cart-clos}) under the assumption that $\s$ is \emph{cocomplete} (i.e., has ``$\s$-small colimits'') in the sense of \cite[Definition~3.8~and~Corollary~5.4]{weber-2-toposes} (a notion originating in \cite{street-walters-yoneda-structure}).

  This thus raises the question of whether a plentiful generic DOF $\s$ is automatically cocomplete, so that Weber's proof of cartesian closedness applies in our setting.
  We do not know the answer to this question, but we will make some general remarks on the question of cocompleteness.

  For one, we \emph{can} show that $\s$ is cocomplete if we take this to mean ``having finite colimits and arbitrary coproducts''.
  Indeed, since $\s$ is an elementary topos, it does have (stable) finite colimits.
  And it is not hard to see that $\s$ has arbitrary (weighted) colimits of diagrams $M \to \s$ in the sense of \loccit whenever $M$ is \emph{$\s$-small-discrete} in the sense that $!_M \colon M \to \tm$ is an $\s$-small DOF.
  This is closely related to the fact that any locally cartesian closed category $\cC$ is cocomplete when regarded as an indexed category over itself \cite[\S B1.4,~Lemma~1.4.7]{johnstone-elephant-vol-1}.

  By contrast, the notion of cocompleteness from \cite[Definition~3.8~and~Corollary~5.4]{weber-2-toposes} requires that such colimits exist whenever $M$ is \emph{$\s$-essentially-small} in the sense that both $M$ and the presheaf object $\wh{M}$ are $\s$-locally-small (see \S\ref{subsec:historical}).
  Though $\s$-small-discrete objects are clearly $\s$-locally-small, we do not know if they are $\s$-essentially-small.

  As a final remark, we should mention that the motivation for this notion of $\s$-essentially-small---namely, the proof in \cite{freyd-street-size-of-categories} that a category $\cC$ is essentially small as soon as $\cC$ and $\Set^{\cC^\op}$ are locally small---is, at least as it is given, not intuitionistically valid.
  Thus, from a foundational perspective, this may not be the appropriate internalization of the notion of smallness to a 2-topos.

  Here is an alternative characterization of essentially small objects which can be expressed in a general 2-topos and which \emph{is} intuitionistically valid:
  a category $\cC$ is essentially small if and only if it is locally small and there exists an essentially injective (i.e., injective on isomorphism classes of objects) functor $\cC^\iso \to X$ with $X$ small discrete---or, equivalently, a functor $\cC^\iso \to X$ which is full and essentially surjective.
  The latter notion can be defined for a morphism in an arbitrary 2-category in terms of the (full essentially surjective, faithful) factorization system, see \cite[Example~7.9]{dupont-vitale-factorization-systems}.
  (Note that there is another, simpler characterization of essential smallness of $\cC$, which is that it admits an essential surjection from a small discrete category---but, surprisingly, this equivalence also does not seem to be intuitionistically valid, even if ``discrete category'' is replaced by ``setoid''.)

  This notion of smallness, and the resulting notion of cocompleteness, seems worthy of further investigation.
  For example, it is an interesting question whether a plentiful generic DOF $\s$ is always cocomplete in this sense.
\end{rmk}

\printbibliography
\end{document}